\theoremstyle{plain}
\newtheorem{theorem}{Theorem}[section]
\newtheorem*{theorem*}{Theorem}
\newtheorem*{theoremA}{Theorem A}
\newtheorem*{theoremB}{Theorem B}
\newtheorem*{theoremC}{Theorem C}
\newtheorem*{theoremD}{Theorem D}
\newtheorem{lemma}[theorem]{Lemma}
\newtheorem{proposition}[theorem]{Proposition}
\newtheorem{fact}[theorem]{Fact}
\newtheorem*{fact*}{Fact}
\newtheorem{corollary}[theorem]{Corollary}
\newtheorem{claim}{Claim}
\numberwithin{claim}{theorem}
\newtheorem*{claim*}{Claim}
\theoremstyle{definition}
\newtheorem{definition}[theorem]{Definition}
\newtheorem*{definition*}{Definition}
\newtheorem*{notation*}{Notation}
\theoremstyle{remark}
\newtheorem{remark}[theorem]{Remark}
\newtheorem*{remark*}{Remark}
\newtheorem*{examples*}{Examples}
\newtheorem*{question*}{Question}
\newtheorem*{note*}{Note}
\newcommand{\N}{\mathbb{N}}
\newcommand{\Q}{\mathbb{Q}}
\newcommand{\CC}{\mathbb{C}}
\newcommand{\R}{\mathbb{R}}
\DeclareMathAlphabet\mathbfcal{OMS}{cmsy}{b}{n}
\newcommand{\F}{\mathbb{F}}
\newcommand{\Gl}{\mathbf{GL}}
\newcommand{\Gln}{\Gl_{n}}
\newcommand{\alg}[1]{{\ol{#1}}}
\newcommand{\sep}[1]{\ol{#1}^{\mathrm{sep}}}
\newcommand{\perf}[1]{\ol{#1}^{\mathrm{perf}}}
\newcommand{\trdeg}{\mathrm{trdeg}}
\newcommand{\ppow}[2][\relax]{{#2}_{p^{#1}}}
\newcommand{\ppowi}[1]{\ppow[\infty]{#1}}
\newcommand{\ra}[1][]{\xrightarrow{#1}}
\newcommand{\Set}[2]{\{#1\mid #2\}}
\newcommand{\defn}[1]{\mathbf{#1}} 
\newcommand{\Mod}[1]{\mathrm{#1}} 
\newcommand{\dD}{\defn{D}}
\newcommand{\dX}{\defn{X}}
\newcommand{\dY}{\defn{Y}}
\newcommand{\dZ}{\defn{Z}}
\newcommand{\dV}{\defn{V}}
\newcommand{\dW}{\defn{W}}
\newcommand{\dT}{\defn{T}}
\newcommand{\dU}{\defn{U}}
\newcommand{\mM}{\Mod{M}}
\newcommand{\mL}{\Mod{L}}
\newcommand{\mK}{\Mod{K}}
\newcommand{\ol}[1]{\overline{#1}}
\newcommand{\xx}{\times}
\newcommand{\dd}{\partial}
\newcommand{\Emph}[1]{\emph{#1}}
\newcommand{\Def}[1]{\Emph{#1}}
\newcommand{\Sort}[1]{\mathrm{\mathbf{#1}}}
\newcommand{\rf}{{\mathbf k}}
\newcommand{\vg}{{\mathbf \Gamma}}
\newcommand{\VF}{\Sort{VF}}
\newcommand{\RV}{\Sort{RV}}
\renewcommand{\O}{\mathcal{O}}
\newcommand{\bO}{\mathbfcal{O}}
\newcommand{\m}{\mathfrak{m}}
\newcommand{\boldmax}{\boldsymbol{\mathfrak{m}}}
\providecommand{\G}{}
\renewcommand{\G}{\mathcal{G}}
\newcommand{\bG}{\mathbfcal{G}}
\newcommand{\Gim}{\mathcal{G}^{\mathrm{im}}}
\newcommand{\bGim}{\mathbfcal{G}^{\mathrm{im}}}
\renewcommand{\k}{\mathbf{k}}
\newcommand{\K}{\Sort{K}}
\newcommand{\ACF}{\mathrm{ACF}}
\newcommand{\ACVF}{\mathrm{ACVF}}
\newcommand{\SCVF}{\mathrm{SCVF}}
\newcommand{\SCF}{\mathrm{SCF}}
\newcommand{\SCH}{\mathrm{SCH}}
\newcommand{\SCVH}{\mathrm{SCVH}}
\newcommand{\VDF}{\mathrm{VDF}_{\mathcal{EC}}}
\newcommand{\res}{\operatorname{res}}
\newcommand{\rv}{\operatorname{rv} }
\newcommand{\val}{\operatorname{val}}
\newcommand{\Res}{\operatorname{Res}}
\renewcommand{\div}{\operatorname{div}}
\newcommand{\latt}{\mathbf{S}}
\newcommand{\tor}{\mathbf{T}}
\renewcommand{\a}{\overline{a}}
\renewcommand{\b}{\overline{b}}
\newcommand{\x}{\overline{x}}
\newcommand{\y}{\overline{y}}
\newcommand{\dcl}{\mathrm{dcl}}
\newcommand{\cl}{\mathrm{cl}}
\newcommand{\acl}{\mathrm{acl}}
\newcommand{\St}{\mathrm{St}}
\newcommand{\tp}{\mathrm{tp}}
\newcommand{\car}{\mathrm{char}}
\newcommand{\Aut}{\mathrm{Aut}}
\newcommand{\code}[1]{\ulcorner #1\urcorner}
\newcommand{\eq}[1]{{#1}^{\mathrm{eq}}}
\newcommand{\NIP}{\mathrm{NIP}}
\newcommand{\clvK}[1]{\cl_{\val}^{K}(#1)}
\newcommand{\clvL}[1]{\cl_{\val}^{L}(#1)}
\renewcommand{\L}{\mathcal{L}}
\renewcommand{\phi}{\varphi}
\newcommand{\LOAG}{\L_{\mathrm{oag}}}
\newcommand{\LDIV}{\L_{\div}}
\newcommand{\LDIVpe}{\L_{\div,p,e}}
\newcommand{\LGamma}{\L_{\Gamma}}
\newcommand{\LGammape}{\L_{\Gamma,p,e}}
\newcommand{\LkGamma}{\L_{\Gamma k}}
\newcommand{\LkGammape}{\L_{\Gamma k,p,e}}
\newcommand{\LRV}{\L_{\mathrm{RV}}}
\newcommand{\LRVpe}{\L_{\mathrm{RV},p,e}}
\newcommand{\LG}{\L_{\G}}
\newcommand{\LGpe}{\L_{\G,p,e}}
\newcommand{\Lrg}{\L_{\mathrm{ring}}}
\newcommand{\Zar}[1]{\overline{#1}^{Zar}}
\def\Ind#1#2{#1\setbox0=\hbox{$#1x$}\kern\wd0\hbox to 0pt{\hss$#1\mid$\hss}
\lower.9\ht0\hbox to 0pt{\hss$#1\smile$\hss}\kern\wd0}
\def\Notind#1#2{#1\setbox0=\hbox{$#1x$}\kern\wd0\hbox to 0pt{\mathchardef
\nn="3236\hss$#1\nn$\kern1.4\wd0\hss}\hbox to 
0pt{\hss$#1\mid$\hss}\lower.9\ht0
\hbox to 0pt{\hss$#1\smile$\hss}\kern\wd0}
\def\ind{\mathop{\mathpalette\Ind{}}}
\newcommand{\Lp}{\L_{\mathrm{P}}}
\newcommand{\FF}{\mathbf{F}}
\newcommand{\lin}[1]{\mathbf{ld}_{#1}}
\newcommand{\flin}[1]{\ell_{#1}}
\newcommand{\Tp}{\mathrm{T}_{\mathrm{P}}}
\newcommand{\ldis}[1][]{\ind^{\mathrm{ld}}_{#1}}
\newcommand{\restr}[2]{\left. #1\right|_{#2}}
\newcommand{\Frac}[1]{\mathrm{Frac}(#1)}
\newcommand{\tensor}[1][]{\otimes_{#1}}
\newcommand{\tL}{\widetilde{\L}}
\newcommand{\tT}{\widetilde{T}}
\newcommand{\hL}{{\widehat{\L}}}
\newcommand{\hT}{{\widehat{T}}}
\newcommand{\Lpp}{\Lp^{\star}}
\newcommand{\Tpp}{\Tp^{\star}}
\newcommand{\substr}[1][]{\leq_{#1}}
\newcommand{\card}[1]{|#1|}
\newcommand{\subsel}{\preccurlyeq}
\newcommand{\Lpv}{\L_{\mathrm{P},\val}}
\newcommand{\Tpv}{\mathrm{T}_{\mathrm{P},\val}}
\newcommand{\FFRV}{\FF^{\RV}}
\newcommand{\Lppv}{\L_{\mathrm{P},\val}^{\star}}
\newcommand{\Tppv}{\Tpv^{\star}}
\newcommand{\Balls}{\mathbf{B}}
\newcommand{\tN}{\widetilde{N}}
\newcommand{\hN}{\widehat{N}}
\newcommand{\hA}{\widehat{A}}
\title{Imaginaries in separably closed valued fields}
\author{Martin Hils}
\address{Westf\"{a}lische Wilhelms-Universit\"{a}t M\"{u}nster, Institut f\"{u}r Mathematische Logik und Grundlagenforschung, Einsteinstr. 62, D-48149 M\"{u}nster, Germany}
\email{hils@uni-muenster.de}
\thanks{The first author was partially supported by ANR through ValCoMo (ANR-13-BS01-0006) and by DFG through SFB 878.}
\author{Moshe Kamensky}
\address{Department of Mathematics, Ben-Gurion University, Be'er-Sheva, 
Israel}
\email{kamenskm@math.bgu.ac.il}
\thanks{The second author was partially supported by 
ISF through grant no.~1382/15, 
and by ERC through FP7/2007-2013 (ERC Grant agreement no.~291111). 
The latter grant also funded a visit of the first author to Jerusalem 
in spring 2015 during which part of this research was carried out.}
\author{Silvain Rideau}
\address{University of California, Berkeley, Mathematics Department, Evans 
Hall, Berkeley, CA, 94720-3840, United States of America}
\email{silvain.rideau@berkeley.edu}
\thanks{The third author was partially supported by ANR through ValCoMo (ANR-13-BS01-0006).}
\keywords{Model Theory, Separably Closed Valued Fields, Classification of Imaginaries, Stable Domination}
\subjclass[2010]{Primary: 03C60; Secondary: 12J20, 03C45, 03C98, 03C10}
\date{\today }
\begin{document}

\bibliographystyle{plain}

\begin{abstract}
We show that separably closed valued fields of finite imperfection degree (either with \(\lambda\)-functions or commuting Hasse derivations) eliminate imaginaries in the geometric language. We then use this classification of interpretable sets to study stably dominated types in those structures. We show that separably closed valued fields of finite imperfection degree are metastable and that the space of stably dominated types is strict pro-definable.
\end{abstract}

\maketitle


\section{Introduction}

Since the work of Ax-Kochen and Ershov on the model theory of Henselian valued fields of residue characteristic 0, yielding, e.g., the approximative solution of Artin's conjecture via the famous transfer principle between $\Q_p$ and $\F_p((t))$ for large $p$, valued fields have been a constant source for deep model-theoretic applications. One may mention here Denef's rationality results for certain Poincar\'e series, and the foundations of motivic integration. 

But it is only with the work of Haskell, Hrushovski and Macpherson \cite{HaHrMa06,HaHrMa08} on the model theory of algebraically closed valued fields (\(\ACVF\)) that the methods of geometric model theory have been made available for the study of valued fields. Then, in their groundbreaking work (\cite{HrLo16}, see also \cite{Duc13}), Hrushovski and Loeser used these newly available tools to give a new account of the topological tameness properties of the  Berkovich analytification $\dV^{an}$ of a quasi-projective algebraic variety \(\dV\). The purpose of this paper is to study separably closed valued fields of finite imperfection degree from the point of view of geometric model theory, in the light of these results. 

\medskip

Our first goal is the classification of the interpretable sets, also called 
\emph{imaginary sorts}. Recall that a set is interpretable if it is the 
quotient of a definable set by a definable equivalence relation. A theory is 
said to eliminate imaginaries if for every interpretable set $\dX$ there is a 
definable bijection between $\dX$ and a definable set, in other words, if the 
category of definable sets is closed under quotients. In valued fields, 
contrarily to what happens, e.g., in various contexts of algebraically closed 
fields with operators, it is not the case, in general, that the interpretable 
sets can all be understood in terms of the definable subsets of Cartesian 
powers of the field. There are some truly new quotients that appear and, in 
the case of \(\ACVF\), these new quotients are all described by Haskell, 
Hrushovski and Macpherson \cite{HaHrMa06} as certain quotients of linear 
algebraic groups by definable subgroups. They show that it is enough to add 
to the valued field sort, for any $n\geq 1$, the set of $\O$-lattices in 
$K^n$, which is given by $\mathbf{GL}_n(K)/\mathbf{GL}_n(\O)$, as well as the 
union of all $s/\mathfrak{m} s$, where $s$ is an $\O$-lattice in $K^n$. Here \(\O\) denotes the valuation ring of \(K\) and $\mathfrak{m}$ denotes the maximal ideal of $\O$. These sorts are called the 
\emph{geometric sorts} and we will denote them by $\bG$.

Since then, it has been shown that various theories of valued fields eliminate imaginaries down to the geometric sorts, namely the theory RCVF of real closed valued fields (by work of Mellor \cite{Mel06}), the theories of $p$-adic fields and of ultraproducts of $p$-adic fields (by work of Hrushovski, Martin and the third author \cite{HrMaRi14}), and the theory $\VDF$ of existentially closed valued differential fields $(K,v,\partial)$ of residue characteristic 0 satisfying $v(\partial(x))\geq v(x)$ for all $x$ (by work of the third author \cite{RidVDF}).

Let us now consider separably closed valued fields. Recall that if $K$ is a non perfect separably closed field of characteristic $p>0$, then $[K:K^p]=p^{e}$ for some $e\in\N^{*}\cup\{\infty\}$, and the elementary theory of $K$ is determined by $e$, the so-called \emph{imperfection degree} or \emph{Ershov invariant} of $K$. Hrushovski's model-theoretic proof of the relative Mordell-Lang Conjecture in positive characteristic \cite{Hru96} illustrates that the theory of separably closed non perfect 
fields -- a stable theory -- is a model-theoretic framework which provides a very useful approach for the study of questions from (arithmetic) algebraic geometry in positive characteristic.

In the valued context, the completions of the theory \(\SCVF\) of separably closed non-trivially valued fields are determined, as in the case without valuation, by the imperfection degree. One also has an explicit description of the definable sets, by work of Delon and, more recently, work of Hong \cite{Hon-QE}.

Our first result is that, as in the case of \(\ACVF\), the geometric sorts are sufficient to describe all the interpretable sets in separably closed valued field of finite degree of imperfection:

\begin{theoremA}[Theorem\,\ref{T:SCVF-EI}]
The theory $\SCVF_{p,e}$ of separably closed valued fields of finite degree of imperfection $e$, with the elements of a $p$-basis named by constants, eliminates imaginaries down to the geometric sorts.
\end{theoremA}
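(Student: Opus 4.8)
The plan is to reduce elimination of imaginaries in $\SCVF_{p,e}$ to the known case of $\ACVF$. The key tool is the criterion from \cite{HaHrMa06}: a theory $T$ eliminates imaginaries down to a collection of sorts $\mathcal{S}$ provided (i) $T$ has the \emph{density of definable types} property over $\mathcal{S}$-closed sets (equivalently, every type over an $\mathcal{S}$-closed model extends to a definable type, or more precisely one can ``code'' definable sets via their definable schemes), and (ii) certain unary sets have ``generic'' reparametrizations. In practice, following \cite{HrMaRi14} and \cite{RidVDF}, the route is: first establish that $\SCVF_{p,e}$ \emph{weakly eliminates imaginaries} down to $\bG$ together with possibly some extra sorts coming from the $p$-basis/$\lambda$-functions or Hasse derivations, and then show those extra sorts are already coded in $\bG$.

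First I would set up the language carefully: work in $\LG$ augmented by constants for a fixed $p$-basis, and either the $\lambda$-functions (giving $\LGpe$) or the iterative Hasse derivations. The model-theoretic starting point is the quantifier elimination of Delon/Hong \cite{Hon-QE}, which describes definable sets in $\SCVF_{p,e}$ in terms of the $\RV$ (or $\bG$) structure together with the $\lambda$-functions; a separably closed valued field embeds into its algebraic (= separable) closure's henselization, and the ambient $\ACVF$ controls the valuation-theoretic part. The crucial structural observation is that $K^{1/p^\infty}$, the perfect hull, carries an $\ACVF$ structure, and $K = \bigcap_n K^{p^n}$-type conditions (really: $K$ is defined inside $K^{\mathrm{alg}}$ by the $\lambda$-functions / $p$-basis) are \emph{definable} there. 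So one gets a pair of structures $(K^{\mathrm{alg}}, K)$ and can try to transfer coding from $\ACVF$ to $\SCVF_{p,e}$.

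The main technical steps I would carry out: (1) Prove that every $\SCVF_{p,e}$-definable set, viewed inside the $\ACVF$-reduct $K^{\mathrm{alg}}$, has a canonical basis (code) in $\bG(K^{\mathrm{alg}})$, using EI for $\ACVF$. (2) Show this code is in fact fixed by $\Aut(K^{\mathrm{alg}}/K)$ and hence lies in the ``definable closure'' inside the $\SCVF$ structure; the subtlety is that $\dcl$ in $\SCVF_{p,e}$ is strictly larger than in $\ACVF$ (it sees $p$-th roots of named elements via the $\lambda$-functions), so I must check the code descends to a $K$-point of $\bG$ — this is where finiteness of $e$ is essential, as it makes the relevant transcendence/separability bookkeeping finite-dimensional. (3) Handle the new imaginaries genuinely introduced by the $\lambda$-functions: a priori one might need sorts like ``$\bG$ of the $\lambda$-closure,'' but because the $p$-basis is \emph{named}, these are interpretable over $\bG(K)$ and collapse. (4) Assemble via the weak-EI-plus-finite-imaginaries criterion of \cite{HaHrMa06} (invariant/definable type density), using that $\SCVF_{p,e}$ is an ``enrichment'' of $\ACVF$ by a stable, stably embedded-ish piece.

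The hard part will be step (2)–(3): controlling how the $\lambda$-functions (equivalently the finite imperfection structure) interact with the geometric imaginaries, and proving no genuinely new quotient sorts appear beyond $\bG$. Concretely, the obstacle is showing that the canonical base of a definable set computed in the $\ACVF$-hull is already represented by a tuple from $\bG(K)$ and not merely from $\bG(K^{1/p})$ or some $\lambda$-closure — i.e.\ that the ``linear-algebra over $\mathcal{O}$'' coding is insensitive to the separable-closure enrichment. I expect this to require a careful analysis of $\mathcal{O}$-lattices in $K^n$ versus $(K^{\mathrm{alg}})^n$ and a descent argument using that $K$ is separably closed (so $H^1$ vanishing / Galois descent applies on the relevant algebraic groups $\mathbf{GL}_n$), combined with the finiteness of $e$ to keep the $p$-basis data bounded.
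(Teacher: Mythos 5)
There is a genuine gap, and it sits exactly where the real work of the theorem lies. Your step (1) -- ``every $\SCVF_{p,e}$-definable set, viewed inside the $\ACVF$-reduct $\alg{K}$, has a canonical code in $\bG(\alg{K})$'' -- is not well-posed: a definable set $\dX\subseteq\VF^n(K)$ is a set of $K$-points, not an object of the $\ACVF$-structure $\alg{K}$. To apply elimination of imaginaries in $\ACVF_{p,p}$ one must first choose a quantifier-free $\LDIV(K)$-formula $\psi$ with $\psi(K)=\dX$, and different choices give different sets $\psi(\alg{K})$, whose $\ACVF$-codes need not be permuted correctly by automorphisms of $K$ stabilizing $\dX$. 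The entire content of the paper's proof of Theorem A is the construction of a \emph{canonical} (automorphism-equivariant) such $\psi(\alg{K})$: after a $\lambda$-resolution reduces $\dX$ to a semi-algebraic set, Hong's density theorem (Proposition~\ref{P:vdense}, proved via the implicit function theorem on the smooth locus) lets one assume $\dX$ is dense in $\psi(\alg{K})$; an induction on the Zariski dimension of $\Zar{\dX}$, using Corollary~\ref{C:dim}, reduces to $\dX$ valuatively closed in $\Zar{\dX}(K)$, and then the valuative closure $\clvL{\dX}$ is the canonical $\ACVF$-definable set whose code works. Your proposal never addresses this choice problem, so the ``code computed in the $\ACVF$-hull'' that steps (2)--(4) manipulate is not yet defined; this is the missing idea, not a routine verification.

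Two further points of your plan are off-target. First, invariance under $\Aut(\alg{K}/K)$ is vacuous: $\alg{K}/K$ is purely inseparable, so that group is trivial; what one actually needs is that automorphisms of $K$ stabilizing $\dX$ lift (uniquely) to $\alg{K}$ and stabilize the canonical set $\clvL{\dX}$, hence fix its code. Likewise, descending the code from $\bG(\alg{K})$ to $\bG(K)$ is not a Galois descent or $H^1$-vanishing argument (there is no Galois theory for $\alg{K}/K$); it is the elementary observation (Lemma~\ref{L:SCVF-ACVF}(2), resting on density of $K$ in $\alg{K}$) that $\latt_n(K)=\latt_n(\alg{K})$ and $\tor_n(K)=\tor_n(\alg{K})$, while a field point of $\alg{K}$ is interdefinable with its $p^m$-th power, which lies in $K$. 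Finiteness of $e$ enters only through the $\lambda$-resolutions (a finite tuple of $\lambda$-functions reducing definable sets to semi-algebraic ones), not through any ``transcendence bookkeeping'' in the descent. Finally, the definable-type-density criterion you invoke is the strategy of the paper's \emph{second} proof (for $\SCVH_{p,e}$, from which Theorem A also follows), but making it work requires substantial machinery you do not supply: quantifier elimination for the dense pair $(\alg{K},K)$, transfer of definability of types from the pair to $\ACVF_{p,p}$ via a uniformly stably embedded (maximally complete, $\R$-valued) model, and coding of finite sets; as stated, your outline names the criterion without providing the ingredients that would verify its hypotheses.
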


We prove this rather directly, reducing first to semi-algebraic sets using \(\lambda\)-functions and then performing a topological reduction to the corresponding result in $\ACVF_{p,p}$.  The crucial ingredient is Hong's density theorem from \cite{HonPhD} whose proof we include for convenience.

However, it seems more appropriate for practical purposes to work in the 
(strict) reduct obtained when, instead of working over a $p$-basis, one 
adds a sequence of $e$ commuting Hasse derivations (see \cite{ZieSCH}) to the 
language of valued fields. The situation is much trickier in this context. It 
is no longer the case that a definable subset of a Cartesian power of the 
field is necessarily in definable bijection with a semi-algebraic set. In 
order to reduce questions to $\ACVF_{p,p}$, prolongations come into the 
picture. In Corollary\,\ref{C:SCVH-EI}, we prove that the analogue of Theorem 
A also holds in \(\SCVH_{p,e}\), the theory of existentially closed valued 
fields with \(e\) commuting Hasse derivations.

Note that Theorem A follows formally from this second result, but we 
chose to present both proofs as the shorter topological proof seems 
interesting and instructive to us. The first proof consists in finding a 
canonical way of representing a semi-algebraic set definable in a separably 
closed valued field \(K\) by the \(K\)-points of a set definable in \(\alg{K}\models\ACVF_{p,p}\). The 
second proof, inspired by the work of the third author on the theory $\VDF$ 
(\cite{RidVDF}), is much more local. We only achieve a correspondence 
between \(K\) and \(\alg{K}\) at the level of types.

The main technical result which allows us, in the second proof, to reduce questions about definable sets to questions about types is the following density result for definable types, with parameters from the geometric sorts:

\begin{theoremB}[Theorem\,\ref{thm:SCVH dense def}]
Let $K\models\SCVH_{p,e}$, and let $\dX\subseteq\VF^{n}$ be a \(K\)-definable set. Let 
$A = \bG(\eq{\acl}(\code{\dX}))$. Then, there exists an $A$-definable type $p$ 
such that $p(x)\vdash x\in\dX$.
\end{theoremB}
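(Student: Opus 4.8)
The plan is to follow the strategy that has proved successful for proving density of definable types in other henselian settings (notably \(\VDF\) in \cite{RidVDF}), adapting it to the separably closed situation via prolongations. The overall goal is, given a \(K\)-definable set \(\dX\subseteq\VF^{n}\) with canonical parameter \(\code{\dX}\) and \(A = \bG(\eq{\acl}(\code{\dX}))\), to produce an \(A\)-definable type concentrating on \(\dX\). I would proceed by induction on \(n\), the dimension of the ambient \(\VF\)-space.

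First I would pass to the algebraically (separably) closed valued field \(\alg{K}\models\ACVF_{p,p}\) and transfer \(\dX\) to a definable set there via the prolongation construction: an element \(a\in K^{n}\) is encoded by the sequence of its Hasse-derivative data, \(\nabla_m(a)\), living in a Cartesian power of \(\alg{K}\), and a \(K\)-definable subset of \(\VF^n\) corresponds (for suitable \(m\)) to the \(K\)-points of an \(\alg{K}\)-definable set \(\dX'\) up to the constraint cutting out the image of the prolongation. The key point here is to control parameters: one must check that \(\code{\dX'}\) is interdefinable (over the named \(p\)-basis / Hasse data) with \(\code{\dX}\), so that the geometric part of its algebraic closure is still contained in \(A\). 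Then I would invoke the density of definable types with geometric parameters in \(\ACVF\) — this is the analogue of Theorem B in the algebraically closed case, due to Haskell–Hrushovski–Macpherson — to get an \(A\)-definable type \(q\) in \(\alg{K}\) concentrating on \(\dX'\).

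The heart of the argument, and the main obstacle, is to descend \(q\) to an \(A\)-definable type \(p\) in \(K\models\SCVH_{p,e}\) that still implies \(x\in\dX\). The issue is that not every type over \(\alg{K}\) extending the generic data is realized by a point whose prolongation sequence is \emph{consistent}, i.e., actually comes from a single element of a separably closed valued field; one needs \(q\) to be chosen so that it is "prolongation-compatible" (the \(\nabla_m\) coordinates cohere under the transition maps and the Hasse-derivation identities hold at the level of the type). I expect to handle this by working inside the class of types whose restriction to the prolongation coordinates is generic in the appropriate sense — using that \(\SCVH_{p,e}\) is the model companion, so a coherent finitely-satisfiable system of prolongation constraints \emph{is} realized in a model — and by a careful choice at each stage of the induction (peeling off one \(\VF\)-coordinate at a time, so that after fixing generic values for the first coordinates the residual fibre is again a definable set of lower dimension to which the induction hypothesis applies, now over \(A\) together with the chosen coordinates). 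Definability of \(p\) over \(A\) is then inherited from definability of \(q\) over \(A\) together with the fact that the descent is given by a fixed interpretation.

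Finally, I would verify the two things that make the induction close up: that \(p\vdash x\in\dX\) (immediate from \(q\vdash x'\in\dX'\) and the correspondence between \(\dX\) and \(\dX'\) on the relevant points), and that at each stage the new parameters added (the generically chosen \(\VF\)-coordinate) do not enlarge \(\bG(\eq{\acl}(-))\) beyond what is permitted — this is where one uses that \(A = \bG(\eq{\acl}(\code{\dX}))\) is already "large enough" and that generic coordinates over a geometric base contribute nothing new to the geometric algebraic closure, a fact ultimately reducible to the corresponding statement in \(\ACVF\). The delicate bookkeeping of parameters through the prolongation encoding, rather than any single hard estimate, is what I anticipate will require the most care.
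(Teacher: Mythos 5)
Your overall shape -- prolong to \(\ACVF_{p,p}\), find a definable type there over geometric parameters, pull it back -- is indeed the strategy of the paper (inspired by the \(\VDF\) case), but there is a genuine gap at exactly the step you yourself call the heart of the argument, and your proposed fix does not close it. The object you must hit with an \(\ACVF\)-definable type is not an \(\ACVF\)-definable set: a finite-level prolongation \(\dX'\) is \(\alg{K}\)-definable, but the relevant target is the set of \emph{coherent} prolongation points coming from \(K\), i.e.\ the full \(D_\omega(\dX)\), and this is only a strict pro-definable set in the dense pair \((\alg{K},K)\), since the small field is not \(\ACVF\)-definable. Hence the Haskell--Hrushovski--Macpherson density theorem applied to \(\dX'\) produces a type over which you have no control relative to the prolongation image; ``genericity'' does not help, because the coherent points form a pair-definable (not \(\ACVF\)-definable) subset of \(\dX'\), and a generic type on \(\dX'\) will in general be inconsistent with it. Even if consistency could be arranged level by level, or coordinate by coordinate as in your proposed induction, your outline provides no mechanism forcing these successive choices to cohere into a single type that is definable over \(A\) rather than over an ever-growing parameter set; the appeal to \(\SCVH_{p,e}\) being a model companion only yields realizability of coherent systems in some model, not definability of the resulting type over \(A\).

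What is actually needed, and what the paper develops, is an analysis of the pair \((\alg{K},K)\) itself: quantifier elimination for the dense-pair theory \(\Tppv\) (Theorem~\ref{thm:EQ valued}), which gives stable embeddedness and purity of \(\k\), \(\vg\) and of the small field, completeness of \(\Tppv\) (Corollary~\ref{cor:complete th pair}), and the existence of a maximally complete model with value group \(\R\) (Proposition~\ref{prop:complete model pair}). One then applies Proposition~\ref{prop:dens def} inside \(\eq{(\Tppv)}\) to obtain a type consistent with the strict pro-definable set \(D_\omega(\dX)\) which is definable \emph{in the pair}, and Proposition~\ref{prop:def enrich} (via the uniformly stably embedded model) to descend that definability to \(\ACVF_{p,p}\) over \(\bG\) of the parameters, as in Corollary~\ref{cor:denseDef}; finally, stable embeddedness of \(K\) in the pair together with Lemma~\ref{L:SCVF-ACVF}(2) takes care of the parameter bookkeeping you gesture at. None of this machinery appears in your proposal, and without it the descent step remains unproved.
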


Here, $\code{\dX}$ denotes the canonical parameter of $\dX$ in \(\eq{K}\).  
Recall that a type $p(x)$ over some structure $M$ is said to be definable if 
for every formula $\phi(x;y)$ there exists a formula $\theta(y)$ over \(M\), 
usually denoted by $\mathrm{d}_{p} x\,\phi(x;y)$, such that for all $m\in M$:
\[\phi(x;m)\in p\text{ if and only if }M\models\theta(m).\]

The proof of Theorem B follows the same strategy as in the case of \(\VDF\) mentioned above. The main new technical point is a quantifier elimination result for dense pairs of valued fields satisfying certain conditions. We then show that the pair $(\alg{K},K)$, for $K$ a model of \(\SCVH_{p,e}\), satisfies these conditions.

It follows immediately from this density statement that $\SCVH_{p,e}$, considered in the geometric sorts $\bG$, has weak elimination of imaginaries. That finite sets are coded in $\bG$ can easily be transferred from the corresponding result in $\ACVF_{p,p}$. This approach also has the added benefit of giving us, as a by-product, the fact that any type over an algebraically closed set has an automorphism invariant global extension, which is an important technical result for what follows.

In order to be able to classify imaginaries in the language of valued fields 
alone (in the case of finite imperfection degree, or even in the case of 
infinite degree of imperfection), it seems that one would need new ideas. In 
these cases, the goal would be to give a classification relative to those 
imaginaries which are definable in the field \emph{without valuation}.

\medskip

The second part of this paper is devoted to studying metastability and stably 
dominated types, first introduced by Haskell, Hrushovski and Macpherson 
\cite{HaHrMa06,HaHrMa08} to prove elimination of imaginaries in \(\ACVF\) 
down to the geometric sorts. A type is said to be stably dominated if its 
``generic'' extensions are controlled by pure stable interpretable subsets of 
the structure. A typical example is the generic type of the valuation ring 
$\bO$ which is controlled by the residue map. Haskell, Hrushovski and 
Macpherson show that in a model of \(\ACVF\), over the value group, there are 
``many'' stably dominated types. One says that \(\ACVF\) is \emph{metastable} 
and this gives a formal meaning to the idea that  a model of \(\ACVF\) is 
controlled in a very strong sense by its value group and its residue field.  
Precise definitions can be found in Section\,\ref{S:metastability}.

In \cite{HrLo16}, Hrushovski and Loeser use the machinery of geometric model theory in $\ACVF$ to construct a model-theoretic avatar $\widehat{\dV}$, whose points are given by the stably dominated types concentrating on $\dV$, of $\dV^{an}$, the Berkovich analytification of a quasi-projective algebraic variety \(\dV\). Among other things, they show that $\widehat{\dV}$ admits a definable strong deformation retraction onto a $\Sort{\Gamma}$-internal subset $\Sort{\Sigma}$, where $\Sort{\Gamma}$ is the value group. Since, the divisible ordered Abelian group \(\Sort{\Gamma}\) is the natural model-theoretic framework for piecewise linear geometry, this result implies that, without any smoothness assumption on \(\dV\), $\dV^{an}$ is locally contractible and admits a strong deformation retraction onto a piecewise linear space.

Our first result regarding these questions is that stable domination in a model $K$ of $\SCVH_{p,e}$ is characterized, via prolongations, by stable domination in the algebraic closure of $K$ (Proposition\,\ref{P:char st dom}). From this, we deduce the following result, using a description of definable closure obtained in 
Proposition~\ref{prop:descr dcl}:

\begin{theoremC}[Corollary \ref{C:SCVH-metastable}]
  The theories $\SCVH_{p,e}$ and $\SCVF_{p,e}$ are metastable over the value group $\vg$.
\end{theoremC}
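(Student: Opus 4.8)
The plan is to deduce metastability of $\SCVH_{p,e}$ over $\vg$ from the known metastability of $\ACVF_{p,p}$ (Haskell--Hrushovski--Macpherson) by transporting everything along prolongations via Proposition~\ref{P:char st dom}, and then to obtain the statement for $\SCVF_{p,e}$ formally. Recall that metastability over $\vg$ requires elimination of imaginaries (available by Theorem~\ref{T:SCVF-EI} and Corollary~\ref{C:SCVH-EI}), the \emph{invariant extension property} -- every type over an algebraically closed set has a global automorphism-invariant extension -- and the existence of \emph{metastability bases}: every small parameter set $A$ is contained in a small set $B$ such that $\tp(c/B\,\vg(Bc))$ is stably dominated for every tuple $c$. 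The second ingredient is already available: as noted after Theorem~B, density of definable types gives that any type over an algebraically closed set in $\SCVH_{p,e}$ has a global invariant extension. So the work is entirely in producing metastability bases.

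I would work inside a sufficiently saturated $\M\models\SCVH_{p,e}$ and in its algebraic closure $\alg\M=\M^{1/p^{\infty}}$, which is then a sufficiently saturated model of $\ACVF_{p,p}$; note that $\M$ is separably closed, so its value group is already $\ell$-divisible for every prime $\ell\neq p$, and passing to $\alg\M$ only adds the $p$-divisible hull of the value group. Given a small set $A$, use metastability of $\ACVF_{p,p}$ to choose a small $B\supseteq A$ in $\M$ large enough that its field-theoretic algebraic closure $\alg B\subseteq\alg\M$ is a metastability basis for $\ACVF_{p,p}$ (for instance, arranging that $\alg B$ is maximally complete). Now let $c$ be any tuple from $\M$ and let $\nabla c$ be its prolongation, the tuple of all Hasse-derivatives of $c$, regarded as a point of $\alg\M$. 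By the metastability basis property of $\alg B$, the $\ACVF$-type of $\nabla c$ over $\alg B\cup\Gamma(\alg B\,\nabla c)$ is stably dominated, where $\Gamma$ is the value group of $\alg\M$.

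It remains to transport this to the type $\tp^{\SCVH}(c/B\,\vg(Bc))$. Let $C$ be the substructure generated by $B$ together with $\vg(Bc)$. By Proposition~\ref{P:char st dom}, stable domination of $\tp^{\SCVH}(c/C)$ is equivalent to stable domination of $\tp^{\ACVF}(\nabla c/\alg C)$, so it is enough to show that, over $\alg B$, the parameter sets $\alg C$ and $\alg B\cup\Gamma(\alg B\,\nabla c)$ carry the same information. In one direction, each Hasse-derivative of $c$ lies in $\dcl^{\SCVH}(c)$, so its valuation lies in $\vg(\dcl^{\SCVH}(Bc))=\vg(Bc)$; together with $p$-divisibility of $\Gamma$ this gives $\Gamma(\alg B\,\nabla c)\subseteq\dcl^{\ACVF}(\alg B\cup\vg(Bc))$. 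The reverse direction, and the precise identification of $\alg C$, is exactly where the explicit description of definable closure in $\SCVH_{p,e}$ from Proposition~\ref{prop:descr dcl} is used. Granting this, Proposition~\ref{P:char st dom} turns the $\ACVF$-stable-domination statement above into stable domination of $\tp^{\SCVH}(c/B\,\vg(Bc))$, so $B$ is a metastability basis; combined with the invariant extension property, $\SCVH_{p,e}$ is metastable over $\vg$.

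For $\SCVF_{p,e}$, it suffices to note that after naming a $p$-basis by constants, $\SCVF_{p,e}$ and $\SCVH_{p,e}$ become interdefinable -- the $\lambda$-functions and the Hasse derivations along the chosen $p$-basis are mutually definable -- and that metastability over $\vg$ is preserved both when constants are added to the language (a metastability basis for $A$ together with the $p$-basis still works) and under interdefinability (definable sets, types, the stable part and $\vg$ being unchanged); hence it follows from the case of $\SCVH_{p,e}$. I expect the main obstacle to be the comparison in the third paragraph: determining exactly which value-group elements $\nabla c$ generates over $\alg B$ in $\ACVF_{p,p}$ and matching them with $\vg(Bc)$ on the separably closed side, so that stable domination transfers in both directions. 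This is precisely what forces one to have a workable description of definable closure in $\SCVH_{p,e}$, and it is where one must check that the extra $p$-divisibility created by passing to $\alg\M$ contributes nothing beyond $\vg(Bc)$; a secondary technical point is ensuring that small $B$ with $\alg B$ a metastability basis for $\ACVF_{p,p}$ exist cofinally, which is delicate because $\alg B$ genuinely enlarges the value group.
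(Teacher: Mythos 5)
Your proposal follows essentially the same route as the paper: the invariant extension property is Corollary~\ref{C:SCVH-EI}, and metastability bases are produced exactly as in Proposition~\ref{P:SCVF-met-basis}, by taking a separably maximally complete $B$ (equivalently, $\alg{B}$ maximally complete, Lemma~\ref{lem:max complete SCVF}), invoking Haskell--Hrushovski--Macpherson over $\alg{B}$, transferring stable domination along prolongations via Proposition~\ref{P:char st dom}, matching the value-group parameters via Proposition~\ref{prop:descr dcl}, and deducing the $\SCVF_{p,e}$ case since it is an enrichment of $\SCVH_{p,e}$ by constants. The only caveat is that your worry about $\alg{B}$ enlarging the value group is vacuous: a non-trivially valued separably closed field is dense in its algebraic closure, so $\vg$ is already divisible and $\alg{B}/B$ is immediate once $B$ is chosen to be a separably maximally complete model, which is exactly the paper's choice.
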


Secondly, we establish the analogue of an important technical result from \cite{HrLo16}. Before we may state this result, we need to recall some notions. A \emph{pro-definable} set in $U$ is a set of the form $\dD=\varprojlim_{i\in I}\dD_i$, where $(\dD_i)_{i\in I}$ is a projective system in the category of definable sets and $I$ is a small index set. If all the $\dD_i$ and the transition maps are $C$-definable, $\dD$ is called $C$-pro-definable. A pro-definable function is a (bounded) family of definable functions (equivalently a function whose graph is pro-definable). The ($C$-)pro-definable sets form a category with respect to ($C$-)pro-definable maps. If the pro-definable set $\dD=\varprojlim_{i\in I}\dD_i$ is isomorphic to a pro-definable set with surjective transition functions, it is called \emph{strict pro-definable}. Equivalently, for every $i\in I$, the set $\pi_i(\dD)\subseteq \dD_i$ is definable (and not just type-definable). Here, $\pi_i$ denotes the projection map on the $i$th coordinate. Dually, one defines (strict) ind-definable sets. We refer to \cite[Section 
2.2]{HrLo16} for the basic properties of these notions.

Let $\dX$ be a $C$-definable set in \(\ACVF\). In \cite{HrLo16} it is shown that there is a strict $C$-pro-definable set $\widehat{\dX}$ such that for any $A\supseteq C$, the set $\widehat{\dX}(A)$ is in canonical bijection with the set of $A$-definable global stably dominated types $p(x)$ such that $p(x)\vdash x\in\dX$. If this is the case in a theory $T$, we say (rather loosely) that the set of stably dominated types in $T$ is strict pro-definable. From the proof in \cite{HrLo16}, we extract an abstract condition on a metastable NIP theory $T$ which implies that the set of stably dominated types is strict pro-definable in $T$. This yields the following:

\begin{theoremD}[Corollaries\,\ref{C:SCVF-strict-pro} and \ref{C:VDF-strict-pro}]
The set of stably dominated types is strict pro-definable in $\SCVH_{p,e}$ as well as in $\VDF$.
\end{theoremD}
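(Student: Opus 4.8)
The plan is to isolate, from the Hrushovski--Loeser construction in \cite{HrLo16}, a purely formal criterion on a metastable NIP theory $T$ that guarantees strict pro-definability of the space of stably dominated types, and then to verify this criterion for $\SCVH_{p,e}$ and for $\VDF$. Recall that for a $C$-definable set $\dX$ one wants a $C$-pro-definable set $\widehat{\dX}$ whose $A$-points, for $A \supseteq C$, are exactly the $A$-definable global stably dominated types concentrated on $\dX$. The first step is bookkeeping: a stably dominated type $p$ over a model is determined by its restriction to the stable part together with the data of how that restriction dominates $p$; using metastability over $\vg$, one shows that $p$ is coded by a bounded amount of information, namely (the code of) a stably dominated type over an $\vg$-generated base together with finitely many "$\vg$-coordinates". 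This gives a pro-definable parametrization $\widehat{\dX} = \varprojlim_i \widehat{\dX}_i$ where each $\widehat{\dX}_i$ records the $\phi$-definition $\mathrm{d}_p x\,\phi(x;y)$ for $\phi$ ranging over a finite set of formulas, exactly as in \cite[\S3]{HrLo16}. That $\widehat{\dX}$ is pro-definable (the family of $\phi$-definitions is uniformly definable) is the NIP input, via the definability of types in stably dominated situations; that it is $C$-pro-definable uses that stably dominated types over algebraically closed sets have automorphism-invariant global extensions, which for $\SCVH_{p,e}$ is exactly the by-product of Theorem B noted in the introduction, and for $\VDF$ is known from \cite{RidVDF}.

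The heart of the matter, and the step I expect to be the main obstacle, is \emph{strictness}: showing that each $\pi_i(\widehat{\dX}) \subseteq \widehat{\dX}_i$ is definable rather than merely type-definable. In \cite{HrLo16} this is where the real work lies, and it rests on two ingredients that must be abstracted correctly. First, one needs that the property "$(\text{the tuple of } \phi\text{-definitions indexed by } i) \text{ extends to a global stably dominated type on } \dX$" is definable in the parameter; this is where one invokes a uniform version of the equivalence between stable domination and the orthogonality/generic-stability conditions, together with the finite-rank or Noetherianity features of the stable part — in $\ACVF$ this is the residue field being a pure algebraically closed field, and in $\SCVH_{p,e}$ and $\VDF$ the stable part is again (essentially) a pure algebraically closed field, so the same definability is available. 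Second, one needs definability of the condition "this stably dominated type concentrates on $\dX$", which reduces to the definability of the map sending a stably dominated type to its canonical base in $\vg$ together with the dimension bookkeeping. The abstract condition we extract is, roughly: $T$ is metastable NIP over $\vg$, finite sets are coded, types over algebraically closed sets have invariant extensions, and the stable part is "definably Noetherian" in the sense that uniform families of stably dominated types have definable images under the relevant invariants.

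Granting the abstract criterion, the verifications are then comparatively routine and largely already in place. For $\SCVH_{p,e}$: metastability over $\vg$ is Theorem C; NIP follows from NIP of $\SCVF_{p,e}$ (a reduct up to the Hasse-derivation structure, whose NIP is inherited from $\ACVF_{p,p}$ via the prolongation/transfer apparatus of Theorem B), coding of finite sets and invariant extensions are the by-products of Theorem B recorded in the introduction, and the structure of the stable part is read off from the characterization of stable domination via prolongations (Proposition\,\ref{P:char st dom}), which reduces everything to the algebraically closed valued field $\alg{K} \models \ACVF_{p,p}$ where the Hrushovski--Loeser analysis applies verbatim. For $\VDF$: metastability and the relevant structure are in \cite{RidVDF}, and again stable domination reduces to the underlying $\ACVF$ of residue characteristic $0$. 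In both cases one then feeds the data into the abstract criterion to conclude that the space of stably dominated types is strict pro-definable, which is the assertion of Corollaries\,\ref{C:SCVF-strict-pro} and \ref{C:VDF-strict-pro}.
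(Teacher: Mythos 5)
Your high-level strategy is the same as the paper's: extract an abstract criterion for metastable NIP theories from the Hrushovski--Loeser construction, get pro-definability of $\widehat{\dX}$ from uniform definability of generically stable types plus the identification of stably dominated types with types orthogonal to $\vg$, and then verify the criterion for $\SCVH_{p,e}$ and $\VDF$. The gap is in the step you yourself flag as the heart of the matter: strictness. Your proposed abstract condition --- that the stable part is ``definably Noetherian'' in the sense that ``uniform families of stably dominated types have definable images under the relevant invariants'' --- is essentially a restatement of the conclusion (definability of the projections $\pi_i(\widehat{\dX})$) rather than a checkable hypothesis, so as written the criterion is circular and the strictness step is not actually proved. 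The paper's criterion is concrete: (a) over a metastability basis $C$ (and every parameter set embeds in one), $\St_C(B)$ is interdefinable with $\bigcup_i\rf_i(B)$ for finitely many fixed $\emptyset$-definable stable sorts $\rf_i$; and (b) the multi-sorted structure $\bigcup_i\rf_i$ is nfcp. The mechanism is then: one writes down, for each coordinate, an ind-definable candidate set $Z$ of functions $g_0:\Sort{W}\to\vg^n$ witnessed by a definable map $h:\dX\to\Sort{L}$ into a finite product of the $\rf_i$ together with a definable $\phi$-type $q_0$ on $\Sort{L}$; the ind-definability of $Z$ rests precisely on Poizat's nfcp characterization (the set of parameters giving the $\phi$-definition of a consistent complete global type is definable), and the equality $Z=\pi_i(\widehat{\dX})$ uses stable domination over a metastability basis plus orthogonality of $\vg$ to the stable part. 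None of this is recoverable from your formulation, and without some substitute for the nfcp input the argument does not go through.

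A second, related problem: you assert that for $\VDF$ ``the stable part is again (essentially) a pure algebraically closed field, so the same definability is available.'' This is false: the residue field in $\VDF$ is a stably embedded pure model of DCF$_0$, not of ACF. The point is not fatal --- DCF$_0$ is nfcp, which is exactly what the criterion needs --- but this has to be verified on its own (the paper does it via Shelah's characterization: stability plus elimination of $\exists^\infty$ in the imaginary expansion, the latter following from the description of algebraic closure in DCF$_0$), not imported ``verbatim'' from the ACVF case. Similarly, for $\SCVH_{p,e}$ the reduction of stable domination to $\ACVF_{p,p}$ via prolongations (Proposition~\ref{P:char st dom}) enters through metastability and the identification $\St_K(B)=\dcl(\rf(B))$ over separably maximally complete bases, which needs Proposition~\ref{P:StA} and the fact that lattices over such $K$ have $K$-definable bases; it is not a wholesale transfer of the Hrushovski--Loeser theorem. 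So: right skeleton, but the decisive hypothesis (nfcp of the stable part, exploited via Poizat) and its verification in the two target theories are missing.
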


The paper is organized as follows. In Section\,\ref{S:prelim} we gather some preliminaries about valued fields, the model theory of algebraically closed valued fields, separably closed fields as well as separably closed valued fields. We then present, in Section \ref{S:density}, the density theorem for semi-algebraic sets, and we prove Theorem A, namely that \(\SCVF_{p,e}\) eliminates imaginaries down to the geometric sorts. Section \ref{S:pairs} is devoted to the proof of Theorem B and of the fact that the geometric sorts classify the imaginaries even when working with Hasse derivations. This section mostly consists in the proof of a quantifier elimination result, of independent interest, for dense pairs of valued fields.

Subsequently, we show that various notions in $\SCVH_{p,e}$ reduce in the nicest possible way to $\ACVF_{p,p}$. In the short Section \ref{S:dcl-acl}, we establish this for the definable and the algebraic closure; Section \ref{S:metastability} gives 
a complete description of the stable stably embedded sets in $\SCVH_{p,e}$ (they are more or less the same as in $\ACVF_{p,p}$) as well as of the stably dominated types, in terms of stable domination of the prolongation in $\ACVF_{p,p}$. Putting all this together, we obtain the metastability of $\SCVH_{p,e}$ (Theorem C). In a final section we present an abstract framework, for metastable NIP theories, which guarantees the strict pro-definability of the set of stably dominated types, and we show that both $\SCVH_{p,e}$ and $\VDF$ fall under this framework, thus establishing Theorem D. 

\subsection{Acknowledgments}
This collaboration began during the Spring 2014 MSRI program {\em Model Theory, Arithmetic Geometry and Number Theory}. The authors would like to thank MSRI for its hospitality and stimulating research environment.

We are grateful to the referee for a thorough reading
of our paper, and for many useful suggestions.

\section{Preliminaries}\label{S:prelim}

Let us fix some notation. We will normally denote (ind-,pro-) definable sets 
in a given theory by bold letters (such as \(\dX\)). As customary, we will 
often identify such objects with their set of realizations in a universal 
domain (a fixed sufficiently saturated model), which we keep unspecified.  In 
such cases, by a \emph{set} (of parameters) we will mean a small subset of 
this universal domain.

Whenever $\dX$ is a definable set (or a union of definable sets) and $A$ is a 
set of parameters, $\dX(A)$ will denote $\dX\cap A$.  Usually in this 
notation there is an implicit definable closure, but we want to avoid that 
here because more often than not there will be multiple languages around and 
hence multiple definable closures which could be implicit. Similarly, if 
$\mathbfcal{S}$ is a set of definable sets, we will write $\mathbfcal{S}(A)$ 
for $\bigcup_{\Sort{S}\in\mathbfcal{S}}\Sort{S}(A)$.  When the language is 
clear, we will write \(A\substr{}K\) when \(A\) is a substructure of \(K\) 
(i.e., closed under function symbols).

We will be working with a fixed prime \(p\), and will write \(\ppow[n]{K}\) 
for the set \(\Set{x^{p^n}}{x\in{}K}\) of \(p^n\)-powers in a field \(K\).  
Likewise, if \(\dX\) is a definable field, \(\ppow[n]{\dX}\) is the definable 
set of \(p^n\) powers. We write \(\ppowi{K}\) 
(resp.~\(\ppowi{\dX}\)) for the intersection of \(\ppow[n]{K}\) 
(\(\ppow[n]{\dX}\)) over all \(n\).

\subsection{Imaginaries}

Recall that, in model theory, an imaginary point is a class of a definable 
equivalence relation. To every theory $T$, we can associate a theory $\eq{T}$ 
obtained by adding all the imaginary points. Every model \(M\) of \(T\) 
expands uniquely to a model \(\eq{M}\) of $\eq{T}$.
We write $\eq{\dcl}$ and $\eq{\acl}$ to denote the definable and algebraic 
closure in $\eq{M}$.

Let \(\dX\) be a set definable with parameters in a sufficiently saturated 
and homogeneous structure $M$. We denote by $\code{\dX}\subseteq \eq{M}$ the 
set of points fixed by the group \(G_{\code{\dX}}\) of automorphisms 
stabilizing $\dX$ globally. We say that  \(\dX\) has a \emph{canonical 
parameter} (or \emph{is coded}) if it can be defined over $\code{\dX}\cap M$.  
Likewise, \(\dX\) has an \emph{almost canonical parameter} (or is 
\emph{weakly coded}) if it can be defined over \(\eq{\acl}(\code{\dX})\cap 
M\), i.e., over some tuple with a finite orbit under the action of the same 
group. Note that if this finite orbit (viewed as a definable set) itself has 
a canonical parameter, then it is a canonical parameter for \(\dX\) as well.

A theory admits elimination of imaginaries precisely if every definable set 
has a canonical parameter.

\subsection{Valued fields}
\subsubsection{Notation and conventions}
Whenever $(K,\val)$ is a valued field, we will denote by 
$\vg(K):=\val(K^\star)$ its value group, by $\bO(K)$ its valuation ring, by 
$\boldmax(K)$ its maximal ideal, by $\rf(K) := \bO(K)/\boldmax(K)$ its residue ring and by 
$\res_K:\bO(K)\rightarrow \rf(K)$ the canonical projection. When the field $K$ 
is clear from context, we will write $\Gamma$, $\O$, $\m$, $k$ and $\res$. We 
usually identify  \(\Gamma\) and  \(\Gamma_\infty := \val(K) = 
\Gamma\cup\{\infty\}\). We will also denote by \(\VF(K)\) the points of the 
valued field itself (this will make more sense once we consider multi-sorted 
structures).

Let us now briefly recall the \emph{geometric sorts} from \cite{HaHrMa06}.  
For \(n\geq 1\), let \(\latt_n(K)\) be the set of \emph{\(\O\)-lattices} in 
\(K^n\), i.e.,  \(\latt_n(K)\simeq\Gln(K)/\Gln(\O)\). Note that, for 
any \(s\in \latt_n(K)\), the quotient \(s/\m s\) is an \(n\)-dimensional 
\(k\)-vector space. One puts \(\tor_n(K):=\dot{\bigcup}_{s\in \latt_n(K)}s/\m 
s\). Note that \(\tor_n(K)\) can similarly be identified with 
\((\Gln/\Sort{G})(K)\), where \(\Sort{G}\) is the inverse image of the stabilizer of a 
non-zero vector under the residue map on \(\Gln(\bO)\).  Hence it is indeed an 
imaginary sort.  The map associating to an element of \(\tor_n\) the 
corresponding lattice in \(\latt_n\) is denoted by  \(\tau_n\).

When \(n=1\), the map \(\val:\Gl_1(K)\ra\Gamma\) is a surjective homomorphism 
with kernel \(\Gl_1(\O)\), and therefore \(\latt_1\cong\vg\) canonically.  
Similarly, \(\rf^\star=(\bO/\boldmax)^\star\subseteq \tor_1\) canonically. In fact 
$\tor_1(K)$ is canonically isomorphic, as above, to the quotient of 
multiplicative groups $K^\star/(1+\m)$ which is often denoted $\RV(K)$.

\subsubsection{The valuation topology}

Let \(\tT\) be a theory of valued fields, with valued field sort \(\VF\).  We 
consider \(\VF^n\) with the definable \Def{valuation topology}, where a 
definable subset \(\dX\) is open if each of its points belongs to a product 
of open balls contained in \(\dX\). Here, \(\dX\), the balls and the points 
are over parameters, but if \(\dX\) is open and over \(K\), and 
\(a\in\dX(K)\) for some valued field \(K\), then the ball can also be chosen 
over \(K\). Continuous definable functions and other topological notions are 
defined similarly.

Given a valued field \(K\), the collection \(\dX(K)\), where \(\dX\) ranges 
over \(K\)-definable open subsets of \(\VF^n\), forms a basis for the usual 
valuative topology on \(K^n\). Note that an inclusion of valued fields 
$K\subseteq L$ is continuous if and only if $\vg(K)$ is a cofinal subset of 
$\vg(L)$.

\begin{lemma}\label{lem:dense geom}
  Assume that \(K\) is dense in \(L\) in the valuation topology. Then we have 
  \(\latt_n(K)=\latt_n(L)\) and \(\tor_n(K)=\tor_n(L)\) for every \(n\geq1\).  
  In particular, the extension \(L/K\) is immediate.
\end{lemma}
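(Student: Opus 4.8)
The plan is to unwind the definitions and reduce everything to one approximation argument inside $L$. By the preliminaries, $\latt_n(L)\simeq\Gl_n(L)/\Gl_n(\bO(L))$, so every element of $\latt_n(L)$ is represented as $M\bO(L)^n$ with $M\in\Gl_n(L)$, and under this description the inclusion $\latt_n(K)\subseteq\latt_n(L)$ identifies $s_0=M_0\bO(K)^n$ (with $M_0\in\Gl_n(K)$) with its $\bO(L)$-span $M_0\bO(L)^n$, while $\tor_n(K)\subseteq\tor_n(L)$ sends the class of $v\in s_0$ in $s_0/\boldmax(K)s_0$ to its class in $(\bO(L)s_0)/(\boldmax(L)s_0)$. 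Two elementary facts will be used repeatedly: since $K$ is a valued subfield of $L$ we have $\bO(L)\cap K=\bO(K)$ and $\boldmax(L)\cap K=\boldmax(K)$, hence also $M\bO(L)^n\cap K^n=M\bO(K)^n$ and $M\boldmax(L)^n\cap K^n=M\boldmax(K)^n$ for any $M\in\Gl_n(K)$. The first of these handles the easy direction: an $\bO(L)$-lattice $M_0\bO(L)^n$ with $M_0\in\Gl_n(K)$ recovers $s_0=M_0\bO(K)^n$ as its intersection with $K^n$, so the whole content is surjectivity.

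For $\latt_n$, let $s=M\bO(L)^n$ with $M\in\Gl_n(L)$ be arbitrary; I want to find $M'\in\Gl_n(K)$ with $M^{-1}M'\in\Gl_n(\bO(L))$, for then $M'\bO(L)^n=M\bO(L)^n=s$ and $M'\bO(K)^n\in\latt_n(K)$ has $\bO(L)$-span $s$. Let $\gamma_0\in\vg(L)$ be the minimum of the valuations of the (finitely many) entries of $M^{-1}$. Each set $\{x\in L:\val(x-M_{ij})>-\gamma_0\}$ is a nonempty open ball, so by density of $K$ in $L$ we may choose entries $a_{ij}\in K$ inside them; set $M'=(a_{ij})\in\mathrm{Mat}_n(K)$. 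Then $N:=M^{-1}(M'-M)$ has all entries in $\boldmax(L)$, so $\det(I+N)\in1+\boldmax(L)$ is a unit of $\bO(L)$ and, by the adjugate formula $(I+N)^{-1}=\det(I+N)^{-1}\,\mathrm{adj}(I+N)$, its inverse also lies in $\mathrm{Mat}_n(\bO(L))$; thus $M^{-1}M'=I+N\in\Gl_n(\bO(L))$. In particular $M'=M(I+N)$ is invertible, so $M'\in\Gl_n(K)$, as required.

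For $\tor_n$, fix $s_0=M\bO(K)^n\in\latt_n(K)$ with $M\in\Gl_n(K)$ and put $s=\bO(L)s_0=M\bO(L)^n$; by the previous paragraph every element of $\latt_n(L)$ arises in this way, so it suffices to show the natural map $s_0/\boldmax(K)s_0\to s/\boldmax(L)s$ is a bijection. A class in $s/\boldmax(L)s$ is $Mw+\boldmax(L)s$ with $w\in\bO(L)^n$; approximating each coordinate of $w$ within positive valuation by an element of $K$ (density again) produces $w'\in\bO(K)^n$ with $w-w'\in\boldmax(L)^n$, so $Mw'\in s_0$ while $Mw-Mw'\in M\boldmax(L)^n=\boldmax(L)s$, i.e. $Mw+\boldmax(L)s$ is the image of $Mw'+\boldmax(K)s_0$; this gives surjectivity, and injectivity is immediate from $\boldmax(L)s\cap K^n=\boldmax(K)s_0$. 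Summing over $s_0$ yields $\tor_n(K)=\tor_n(L)$. Finally, the ``in particular'' clause: immediacy of $L/K$ means $\vg(K)=\vg(L)$ and $\rf(K)=\rf(L)$, which one reads off from the case $n=1$ via $\latt_1\cong\vg$ and $\rf^{\star}\subseteq\tor_1$ (or proves directly by the same one-line approximation).

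All the verifications are routine; the only point requiring a little care is calibrating the required quality of approximation — it suffices to beat the finitely many valuations occurring in $M^{-1}$, respectively to achieve positive valuation coordinatewise in the torsor case — together with the observation that $I+N$ with $N$ over $\boldmax(L)$ is invertible \emph{over $\bO(L)$}, not merely over $L$. I do not foresee a genuine obstacle.
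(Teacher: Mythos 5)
Your proof is correct and follows essentially the same route as the paper: the paper proves surjectivity onto $\latt_n(L)$ by noting that $M\cdot\Gln(\bO(L))$ is open in $\Gln(L)$ and that $\Gln(K)$ is dense there, which is exactly what your explicit entry-wise approximation plus the adjugate/unit-determinant computation unwinds, and its $\tor_n$ step is the same approximation in $\bO(L)^n$ (after a $K$-linear change of variables to $s=\bO^n$, which you avoid by carrying $M$ along). The extra well-definedness/injectivity checks you include are harmless and the argument is sound.
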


\begin{proof}
  Since \(K\) is dense in \(L\), the set \(K^N\) is dense in 
  \(L^N\) for every \(N\), and so \(\Gln(K)\) is dense in 
  \(\Gln(L)\), as \(\Gln(L)\) is an open subset of \(L^{n^2}\). For 
  any \(M\in\Gln(L)\), the set \(M\cdot\Gln(\bO(L))\) is open in \(\Gln(L)\), 
  so it contains some \(M_0\in\Gln(K)\), showing that 
  \(\latt_n(K)=\Gln(K)/\Gln(\bO(K))=\Gln(L)/\Gln(\bO(L))=\latt_n(L)\).

  Now let \(t\in \tor_n(L)\). By the previous paragraph, we know that 
  \(\tau_n(t)=s\in \latt_n(K)\), so applying a \(K\)-linear change of 
  variables we may assume that \(s=\bO^n\) and \(t\in \rf(L)^n=\bO(L)^n/\boldmax 
  \bO(L)^n\). But then \(\pi^{-1}(t)\) is an open subset of \(\bO(L)^n\), so by 
  the density assumption there is a tuple \(\a\in\bO(K)^n\) such that 
  \(\pi(\a)=t\). It follows that \(t\in{}\tor_n(K)\).
\end{proof}

Note that there are immediate extensions \(L/K\) such that \(K\) is not dense 
in \(L\), e.g., the Puiseux series field \(K=\bigcup_{n\in\N}\CC((t^{1/n}))\) 
inside the Hahn series field \(L=\CC((t^{\Q}))\).

Our goal now is to show that every smooth subvariety of affine space (viewed 
as a definable subset of \(\VF^n\)) is a topological manifold, i.e., 
definably locally homeomorphic to an open subset of \(\VF^m\) for some $m$. If \(\dX\) is 
a variety over a field \(K_0\), and \(a\in\dX(K_0)\), by a \Def{local 
coordinate system} around \(a\) we mean an \'etale map from a Zariski 
neighborhood of \(a\) to \(\VF^d\), all over \(K_0\), taking \(a\) to \(0\) 
(here \(d\) is the dimension of \(\dX\) at \(a\)).  The following observation 
is well known, see, e.g.,~\cite[Proposition~4.9]{MilneLEC} 
or~\cite[Proposition~I.3.24]{MilneEt}.
\begin{fact}\label{F:coords}
  If \(a\) is a smooth point of a variety \(X\), it admits a system of local 
  coordinates.
\end{fact}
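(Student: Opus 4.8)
The plan is to build the local coordinate system by choosing regular functions near $a$ that cut out a basis of the Zariski cotangent space at $a$, and then to check that the resulting map to affine space is \'etale at $a$ via an infinitesimal criterion.

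First I would pass to an affine open neighborhood of $a$: a smooth point lies on a unique irreducible component, so the local dimension is a well-defined number $d$, and since $a\in X(K_0)$ the local ring $\mathcal{O}_{X,a}$ is a regular local ring of Krull dimension $d$ with residue field $K_0$. Using that $a$ is a $K_0$-rational point, the canonical map $\mathfrak{m}_a/\mathfrak{m}_a^2\to\Omega_{X/K_0,a}\otimes_{\mathcal{O}_{X,a}}K_0$ is an isomorphism of $d$-dimensional $K_0$-vector spaces (here smoothness of $X$ at $a$, rather than mere regularity, is what guarantees the rank equals $d$ when $K_0$ is imperfect). I would then choose $f_1,\dots,f_d$ in $\mathfrak{m}_a$, regular on some affine neighborhood $U$ of $a$ on which $X$ is smooth of dimension $d$, whose classes form a $K_0$-basis of $\mathfrak{m}_a/\mathfrak{m}_a^2$. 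These define a morphism $\varphi=(f_1,\dots,f_d)\colon U\to\Aff{d}$ over $K_0$ with $\varphi(a)=0$.

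The remaining point is that $\varphi$ is \'etale near $a$. By construction the comorphism on Zariski cotangent spaces at $a$ is an isomorphism, equivalently $d\varphi_a\colon T_aU\to T_0\Aff{d}$ is an isomorphism of $K_0$-vector spaces. Hence $\varphi$ is unramified at $a$: by Nakayama the $f_i$ generate $\mathfrak{m}_a$, so $\mathfrak{m}_0\mathcal{O}_{X,a}=\mathfrak{m}_a$ and the residue extension is trivial. Flatness at $a$ then follows from miracle flatness applied to the local homomorphism $\mathcal{O}_{\Aff{d},0}\to\mathcal{O}_{X,a}$ of regular local rings of equal dimension $d$, whose fiber is $\mathcal{O}_{X,a}/\mathfrak{m}_a=K_0$. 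Thus $\varphi$ is \'etale at $a$, and since being \'etale is an open condition on the source, $\varphi$ is \'etale on a Zariski neighborhood of $a$, which serves as the desired local coordinate system.

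Since the statement is classical (see the cited references), there is no serious obstacle. The only subtleties to handle with care are that the residue field at $a$ must be exactly $K_0$ --- which is precisely where the hypothesis $a\in X(K_0)$ enters --- and the correct form of the infinitesimal criterion for \'etaleness over a possibly imperfect base field, where smoothness of $X$ at $a$ is the relevant input rather than mere regularity of the local ring $\mathcal{O}_{X,a}$.
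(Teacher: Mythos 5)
Your argument is correct, but note that the paper itself offers no proof of this Fact: it is recorded as a well-known statement with a pointer to Milne (\emph{Lectures on \'Etale Cohomology}, Prop.~4.9, or \emph{\'Etale Cohomology}, Prop.~I.3.24), and the proof underlying those references (and the way the Fact is used later, in Proposition~\ref{P:manifold}) is the Jacobian-criterion one: realize \(X\) near \(a\) as a closed subvariety of \(\Aff{n}\) cut out by \(n-d\) polynomials whose Jacobian has an invertible \((n-d)\times(n-d)\) minor at \(a\), and project to the complementary coordinate \(d\)-plane, which is \'etale near \(a\). Your route is intrinsic instead: choose \(f_1,\dots,f_d\in\mathfrak{m}_a\) inducing a basis of \(\mathfrak{m}_a/\mathfrak{m}_a^2\) (of dimension \(d\), the conormal map to \(\Omega_{X/K_0}\otimes K_0\) being an isomorphism at a rational point), and verify \'etaleness of \((f_1,\dots,f_d)\colon U\to\Aff{d}\) at \(a\) as unramified (Nakayama) plus flat (miracle flatness for \(\mathcal{O}_{\Aff{d},0}\to\mathcal{O}_{X,a}\), both regular of dimension \(d\) with zero-dimensional fibre), then invoke openness of the \'etale locus. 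This is a perfectly sound alternative, and it avoids choosing an embedding. One small quibble: at a \(K_0\)-rational point, regularity of \(\mathcal{O}_{X,a}\) is actually \emph{equivalent} to smoothness of \(X\) at \(a\), even over an imperfect field (the divergence between regular and smooth only occurs at non-rational points), so your parenthetical ``smoothness rather than mere regularity'' is not quite the right dichotomy; this does not affect the proof, since smoothness is the stated hypothesis and yields exactly what you use.
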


The following statement is essentially the implicit function theorem in the 
valuative setting.

\begin{proposition}\label{P:manifold}
  Let \(\dX\) be a smooth subvariety of affine space, viewed as a definable 
  subset of \(\VF^n\) in a theory \(\tT\) of Henselian valued fields. Then 
  the induced definable topology on \(\dX\) is the unique topology for which 
  every local coordinate system around every point \(a\) of \(\dX\) is a 
  homeomorphism in a neighborhood of \(a\).
  
  In particular, \(\dX\) admits a unique definable topology making \(\dX(K)\) 
  locally homeomorphic to an open ball in \(K^d\) around each point.
\end{proposition}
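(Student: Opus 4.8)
The plan is to reduce everything to the classical implicit function theorem for Henselian valued fields via an étale map, and then check that two competing topologies—the ambient valuation topology restricted to $\dX$, and the topology transported from $\VF^d$ through a local coordinate system—agree locally. First I would fix a smooth point $a\in\dX(K_0)$ and, invoking Fact~\ref{F:coords}, choose a system of local coordinates: an étale map $f\colon \dU\to\VF^d$ from a Zariski-open neighborhood $\dU$ of $a$ with $f(a)=0$. Étaleness means $f$ is (locally around $a$) a standard étale map, so $\dU$ embeds, after shrinking, as the zero set in $\VF^{d}\times\VF$ of a single polynomial $g(y,z)$ with $\partial g/\partial z$ a unit at the point, and $f$ becomes the projection to the $y$-coordinates. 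The Henselian implicit function theorem (a consequence of Hensel's lemma applied fiberwise, which holds in any Henselian valued field and is expressible/definable) then produces, for $y$ in a small enough polydisc around $0$, a unique $z=h(y)$ near $h(0)$ with $g(y,h(y))=0$; moreover $h$ is continuous, indeed definable, for the valuation topology. This exhibits $f$ as a definable homeomorphism between a valuation-open neighborhood of $a$ in $\dX$ and a valuation-open neighborhood of $0$ in $\VF^d$.

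Next I would address uniqueness. Suppose $\dT$ is any definable topology on $\dX$ for which every local coordinate system around every point is a homeomorphism near that point. Given any point $a$, pick one local coordinate system $f$ around $a$; then $f$ is simultaneously a homeomorphism from a $\dT$-neighborhood of $a$ onto a valuation-neighborhood of $0$ (by hypothesis on $\dT$) and, by the previous paragraph, a homeomorphism from a valuation-neighborhood of $a$ onto a valuation-neighborhood of $0$. Comparing these, $\dT$ and the induced valuation topology have, around every point, a common neighborhood basis pulled back from $\VF^d$, hence they coincide. The existence half of the proposition is just the statement that the induced valuation topology itself has this property, which is exactly what the implicit function theorem argument above established, together with the fact that composing two local coordinate systems around the same point gives an étale—hence, by the same argument, bi-continuous—transition map between opens in $\VF^d$.

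For the final "in particular" clause, one still has to pass from "$a\in\dX(K_0)$ for the field of definition $K_0$" to an arbitrary point $a\in\dX(K)$ in an arbitrary model $K$: since $\dX$ is defined over a small base and the relevant étale charts can be taken over the same base after a translation bringing $a$ to $0$, the construction is uniform, and one gets that $\dX(K)$ is locally homeomorphic to a ball in $K^d$ around each of its points. The only subtlety here is definability and uniformity in parameters: the witness $h$ from Hensel's lemma and the radius of the polydisc on which it is defined must be chosen definably in the coordinates of $a$, which is standard since Hensel's lemma in a Henselian valued field is first-order and the implied bounds on the radius depend only on the valuations of the coefficients of $g$ and of $\partial g/\partial z$ at $a$.

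The main obstacle I expect is bookkeeping around the definable implicit function theorem: making precise that the solution $z=h(y)$ exists on a valuation-open polydisc whose radius is controlled definably, and that $h$ is definable and continuous there, rather than merely asserting the set-theoretic Henselian property. Concretely, one reduces the general étale map to a standard étale presentation $g(y,z)=0$ with $\partial_z g$ a unit at the base point; then an explicit estimate—if $\vv(g(y,z_0))$ is large compared to $\vv(\partial_z g(y,z_0))^2$ then Newton approximation converges—gives the radius and the continuity of $h$, and this estimate is what has to be spelled out. Once that is in hand, the topological comparison and the uniqueness argument are formal.
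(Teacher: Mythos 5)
Your argument is correct in substance, but it reaches Hensel's lemma by a genuinely different reduction than the paper. The paper never leaves the given embedding: Zariski-locally around \(a\) it writes \(\dX\) as the zero set of a polynomial map \(P:\VF^n\ra\VF^{n-d}\) whose differential cuts out the tangent space, lifts the \'etale coordinate map \(\bar F\) to a polynomial map \(F\) on \(\VF^n\), notes that \((F,P):\VF^n\ra\VF^n\) has invertible Jacobian at \(a\), and applies the multivariate Hensel's lemma: with \(r\) the valuation of the Jacobian determinant at \(a\), every \(\x\) with \(\val(\x)>2r\) has a unique preimage \(\y\in\dX\) with \(\val(\y-a)\ge\val(\x)-r\), which gives bijectivity and continuity of the inverse in one stroke. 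You instead invoke the structure theorem for \'etale morphisms to replace the chart by a standard \'etale model \(\{g(y,z)=0\}\) in \(\VF^{d+1}\) and run one-variable Newton approximation; this buys a very explicit radius estimate (\(\val(g(y,z_0))\) large compared with \(2\val(\partial g/\partial z(y,z_0))\)) at the price of an extra change of model. The uniqueness half is the same in both proofs and is immediate from Fact~\ref{F:coords}; your closing remark about transition maps between two charts is superfluous for this (and the transition map is only a continuous map on valuation-opens, not an algebraic one), but nothing rests on it.

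One step in your route should be made explicit, because it is exactly the kind of embedding-independence the proposition is meant to deliver. The statement concerns the topology induced on \(\dX\) by the \emph{given} embedding into \(\VF^n\), whereas your Hensel argument shows that the projection is a local homeomorphism for the topology the standard \'etale model inherits from \(\VF^{d+1}\). To conclude, you need that the isomorphism between the Zariski-open \(\dU\subseteq\dX\) and its standard \'etale presentation is a homeomorphism for the two induced valuation topologies. This is true---regular maps between quasi-affine varieties are continuous for the valuation topology, being locally ratios of polynomials with non-vanishing denominator---but it should be said rather than absorbed into ``embeds, after shrinking''; the paper's proof sidesteps the issue entirely by working in the ambient \(\VF^n\) throughout. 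With that sentence added (and the definable choice of radius you already sketch), your proof is complete.
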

\begin{proof}
  Uniqueness is clear by Fact~\ref{F:coords}. To show that every local 
  coordinate system is a local homeomorphism, we first note that the problem 
  is local for the Zariski topology on \(\dX\), and 
  by~\cite[Prop.~I.3.24]{MilneEt} we may therefore assume that \(\dX\) is the 
  zero set of a polynomial map 
  \(P(y_1,\dots,y_n)=(f_{d+1}(\y),\dots,f_n(\y)):\VF^n\ra\VF^{n-d}\), where 
  the tangent space of \(\dX\) at \(a\) is given as the kernel of \(dP(a)\), 
  and has dimension \(d\).

  Also, we are given a coordinate system 
  \(\bar{F}=(\bar{f_1},\dots,\bar{f_d}):\dX\ra\VF^d\), which we may assume to 
  be globally \'etale. Let \(F=(f_1,\dots,f_d)\) be any lift of \(\bar{F}\) 
  to a polynomial function on \(\VF^n\). Then \(dF\) restricts to a bijection 
  from the tangent space of \(\dX\) at \(a\) to \(\VF^d\). It follows that 
  the Jacobian matrix
  \((\dd{}f_i/\dd{}y_j)\) of the combined map \((F,P)\) is invertible.
  
  By rescaling, we may assume that both \(a\) and the coefficient of \(F,P\) 
  lie in \(\bO\). Replacing \(\dX\) by the graph of \(F\), we are in the 
  situation of Hensel's Lemma (as in, e.g., \cite[Thm.~9.14]{fvk} 
  or~\cite[Thm.~7.4]{PrZi}).  Thus, there is a value \(r\in\vg(K_0)\) 
  (namely, the valuation of the Jacobian determinant at \(a\)), such that for 
  any \(\x\in\VF^d\) with \(\val(\x)>2r\), there is a unique \(\y\in\dX\) 
  with \(F(\y)=\x\) and \(\val(\y-a)\ge\val(\x)-r\). This inequality also 
  gives the continuity.
\end{proof}

Note that this shows, in particular, that there is a valuation topology on 
any smooth affine variety, independent of an embedding into affine space, and 
therefore that the same definition determines a topology on any (not 
necessarily affine) smooth variety.

\subsection{Model theory of algebraically closed valued fields}
Recall the following languages for valued fields:
\begin{itemize}
  \item \(\LDIV=\Lrg\cup\{\div\}\) is the language with one sort \(\VF\), 
    where \(x\,\div\, y\Leftrightarrow\val(x)\leq\val(y)\).
  \item \(\LGamma\) is the two-sorted language with sorts \(\VF\) and 
    \(\vg\), given by \(\Lrg\) on \(\VF\), \(\LOAG=\{0,+,-,<,\infty\}\) on 
    \(\vg\) and \(\val:\VF\rightarrow\vg\).
  \item \(\LkGamma\) is the three-sorted language with sorts \(\VF\), \(\rf\) 
    and \(\vg\), given by \(\Lrg\) on \(\VF\), (another copy of) \(\Lrg\) on 
    \(\rf\), \(\LOAG\) on \(\vg\) and the functions 
    \(\val:\VF\rightarrow\vg\) and \(\Res:\VF^2\rightarrow \rf\) between the 
    sorts. Here,
    \[
    \Res(x,y):=\begin{cases}
      \res(\frac{x}{y}), \text{ if } \infty\neq\val(y)\leq\val(x);\\
      0, \text{otherwise.}
    \end{cases}
    \]
  \item \(\LRV\) is the $\RV$-language (also called language with leading 
    terms) with two sorts $\VF$ and $\RV$ where that last sort is interpreted 
    as $\VF^\star/(1+\boldmax)$. We add an element $0$ to $\RV$ for the image of 
    $0$. The language consists of the ring language on $\VF$, a map $\rv : 
    \VF\to\RV$ and the language $\LDIV$ on $\RV$. The symbol $\cdot$ 
    interprets the (multiplicative) group structure on $\RV$ and the symbol 
    $+$ the trace of the addition when it is well defined. To be precise, if 
    $\val(x) < \val(y)$, then $\rv(x) + \rv(y) = \rv(y) + \rv(x) = \rv(x)$, 
    if $\val(x) = \val(y) = \val(x+y)$, then $\rv(x) + \rv(y) = \rv(x + y)$ 
    and otherwise $\rv(x) + \rv(y) = 0$. Finally, $\rv(x)\div\rv(y)$ is 
    interpreted as $\val(x)\leq\val(y)$.

    Note that we will write $\sum_i x_i$ where $x_i \in\RV$. This notation is 
    slightly abusive as $+$ as defined above is not associative. What we mean 
    by $\sum_i x_i$ is in fact $\sum_{i\in I_0} x_i$ where $I_0$ is the set 
    of indices $i$ such that $x_i$ is of minimal valuation.
  \item \(\LG\) is the language in the geometric sorts (or geometric 
    language) from \cite[Section 3.1.]{HaHrMa06}, with set of sorts \(\bG := 
    \{\VF,\rf,\vg\}\cup
    \Set{\latt_n}{n\geq1}\cup\Set{\tor_n}{n\geq1}\). It is an extension of 
    \(\LkGamma\).  We use the notation from \cite{HaHrMa06}, although we 
    write the value group additively and not multiplicatively.
\end{itemize}

\begin{fact}\label{F:ACVF-QE}
  The theory \(\ACVF\) of algebraically closed non-trivially valued fields 
  eliminates quantifiers in either of the languages \(\LDIV\), \(\LGamma\), 
  \(\LkGamma\), \(\LRV\) and \(\LG\).
\end{fact}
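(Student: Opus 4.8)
The plan is to establish each quantifier elimination via the standard embedding criterion: a theory $T$ has quantifier elimination in a language $\L$ precisely if, for any two models $M,N\models T$ with $N$ sufficiently saturated, every $\L$-embedding of a substructure $A\substr M$ into $N$ extends to an $\L$-embedding $M\to N$. I would arrange the five cases so that each essentially reduces to the ones before it, the new work in each step being confined to the newly added sorts.

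For $\LDIV$ this is classical, going back to A.\ Robinson. A substructure is a subring $A$ carrying the induced divisibility relation, i.e.\ a domain with a valuation; its fraction field carries a unique compatible valuation, which in turn extends uniquely along any algebraic extension embeddable in $M$. One is then reduced to a single transcendental extension $K(t)/K$ of valued fields, handled by the classical trichotomy: the extension is residue-transcendental, value-group-transcendental, or immediate (realized by a pseudo-Cauchy sequence over $K$ with no pseudo-limit in $K$). Since $M\models\ACVF$ its residue field is algebraically closed and its value group divisible, and since $N$ is saturated there is room to realize the corresponding type over $f(K)$; this yields the extension of $f$. The cases $\LGamma$ and $\LkGamma$ then follow by relative quantifier elimination over the new sorts: a substructure consists of an $\LDIV$-substructure of $\VF$ together with a subgroup of $\vg$ containing $\val(\VF^\star)$ (and, for $\LkGamma$, a subring of $\rf$ containing the residue data), the induced structure on $\vg$ is a divisible ordered abelian group and on $\rf$ an algebraically closed field, each having quantifier elimination in $\LOAG$, resp.\ $\Lrg$. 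Combining the $\LDIV$ analysis with a back-and-forth on the $\vg$- and $\rf$-parts, using surjectivity of $\val$ and $\Res$ onto these sorts, extends embeddings as needed. For $\LRV$ one argues the same way via the exact sequence $1\to\rf^\star\to\RV^\star\to\vg\to 0$, which presents $\RV$ as an extension of the value group by the residue field, so that the $\RV$-part of a substructure is controlled by $\rv$ of its $\VF$-part together with this extension datum; the extension problem again splits along residue-transcendental, value-transcendental, and immediate extensions.

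The genuinely new content is the geometric language $\LG$, which is the theorem of Haskell, Hrushovski and Macpherson. Here a substructure $A$ additionally carries elements of the lattice sorts $\latt_n$ and of the sorts $\tor_n$, and the point is to extend an embedding $f\colon A\to N$ compatibly with this data. The key structural input is that a lattice $s\in\latt_n(M)$ is determined over the valued-field part together with finitely many residue-field and value-group coordinates — via a triangular-basis description and a chain-of-lattices / elementary-divisor analysis, or equivalently via the affine charts of $\latt_n$ indexed by residue-field data — and likewise for $\tor_n$. Granting this, realizing the appropriate $\VF$-, $\rf$-, and $\vg$-type over $f(A)$ as in the earlier cases automatically pins down the images of the lattice and torsor elements, and the embedding extends.

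I expect the $\LG$ case to be the main obstacle: all of the difficulty is in showing that $\latt_n$ and $\tor_n$ hide no quotient information beyond what $\rf$ and $\vg$ already record, which is exactly the structure theory of $\O$-lattices over a valuation ring that underlies \cite[\S 2--3]{HaHrMa06}. I would quote that part rather than reprove it, and assemble the five statements from the common embedding-criterion scheme above.
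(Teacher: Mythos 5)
Your overall architecture (embedding criterion, classical trichotomy for \(\LDIV\), relative quantifier elimination for \(\LGamma\) and \(\LkGamma\), quoting Haskell--Hrushovski--Macpherson for \(\LG\)) is sound and matches what the paper does for those cases, which it in fact simply cites. The one case for which the paper supplies an actual argument is \(\LRV\), and that is exactly where your sketch has a genuine gap. An \(\LRV\)-substructure \(A\) is \emph{not} controlled by \(\rv\) of its \(\VF\)-part: \(\RV(A)\) may contain elements \(a\) whose value \(\val_{\RV}(a)\) is not attained on \(\VF(A)\) at all, so there is no ``extension datum over \(\rv(\VF(A))\)'' to appeal to, and your proposed trichotomy (residue-transcendental / value-transcendental / immediate) does not apply, since no valued-field extension of \(\VF(A)\) is given a priori --- the problem is to \emph{create} one compatibly on both sides.

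The missing step is the lifting argument: given such an \(a\in\RV(A)\), if \(\val_{\RV}(a)\in\Q\tensor\val(\VF(A))\), take \(n\) minimal with \(n\val_{\RV}(a)=\val(e)\) for some \(e\in\VF(A)\); one must then find \(c\in\VF(M)\) with \(c^{n}=e\) \emph{and} \(\rv(c)=a\), and a matching \(d\) on the other side. This is where the real content lies: in residue characteristic \(p>0\) with \(p\mid n\), producing such a root with prescribed \(\rv\) uses Hensel's lemma together with surjectivity of Frobenius on the (algebraically closed) residue field; the case \(\val_{\RV}(a)\notin\Q\tensor\val(\VF(A))\) is handled by choosing arbitrary preimages. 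Only after iterating this, so that \(\val(\VF(A))=\val_{\RV}(\RV(A))\), does the exact sequence \(1\to\rf^{\star}\to\RV^{\star}\to\vg\to0\) kick in: every \(r\in\RV(A)\) is then \(c\cdot\rv(a)\) with \(c\in\rf(A)\), \(a\in\VF(A)\), so the embedding is determined by its \(\LkGamma\)-reduct and one concludes by quantifier elimination in \(\LkGamma\). Without the lifting-with-roots step your reduction does not go through; with it, your argument becomes essentially the paper's.
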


For the first three languages, we refer to \cite[Theorem 2.1.1]{HaHrMa06}.  
(In the case of \(\LDIV\), the result is more or less due to Robinson.) The 
quantifier elimination result in the language \(\LG\) is \cite[Theorem 
3.1.2]{HaHrMa06}.  Quantifier elimination in \(\LRV\) for \(\ACVF\) is 
folklore and we are not aware that any proof exists in the literature. Let 
us give a sketch of the proof.
\begin{proof}[{Proof of Fact\,\ref{F:ACVF-QE}, \(\LRV\) case}]
  We have to show that given two models $M$ and $N$ of $\ACVF$ such that 
  $N$ is $|M|^+$-saturated and given any isomorphism \(f : A \to B\) between 
  $\LRV$-substructures of $M$ and $N$, respectively, we can extend $f$ to 
  $M$.  First, one can check that $f$ can be extended to the closure of $A$ 
  under inverses (both in $\VF$ and $\RV$). 

  Let $a\in\RV(A)$ be such that there is no $c\in\VF(A)$ with $\val(c) = 
  \val_{\RV}(a)$ where $\val_{\RV}$ is induced by $\val$ on $\RV$. If 
  $\val_{\RV}(a) \in\Q\tensor\val(A)$, let $n$ be minimal positive such that 
  $n\val_{\RV}(a) = \val(e)$ for some $e\in\VF(A)$. There exists $c\in\VF(M)$ 
  such that $c^n = e$ and $\rv(c) = a$. To show that this holds, it suffices 
  to prove it for $e = 1$ and that can be done easily by applying Hensel's 
  lemma and the Frobenius on the residue field (if the residue characteristic 
  is positive). Similarly there exists $d\in\VF(N)$ such that $d^n = f(e)$ 
  and $\rv(d) = f(\rv(a))$. If $\val_{\RV}(a) \not\in\Q\tensor\val(A)$, take 
  any $c$ such that $\rv(c) = a$ and any $d\in\VF(N)$ such that $\rv(d) = 
  f(\rv(a))$. Then one can extend $f$ by sending $c$ to $d$.

  Repeating this last step, we may assume that $\val(\VF(A)) = 
  \val_{\RV}(\RV(A))$ (and that $\RV(A)$ and $\VF(A)$ are closed under 
  inverses). Given \(r\in\RV(A)\), let \(a\in\VF(A)\) be an element with 
  \(\val(a)=\val_\RV(r)\). Then \(\frac{r}{\rv(a)}\) is a well defined 
  element \(c\) of \(\rf(A)\), so \(r=c\rv(a)\), and \(f(r)\) is uniquely 
  determined. Hence such an $f$ is completely determined by its reduct to 
  \(\LkGamma\) (actually the sort $\vg$ is useless here, but the two sorted 
  language with $\VF$ and $\k$ is not usually considered) and so $f$ extends 
  to $M$ by quantifier elimination in $\LkGamma$.
\end{proof}

We note that in the multi-sorted languages \(\LGamma\), \(\LkGamma\), \(\LRV\) 
and \(\LG\), all added sorts are interpretable in \(\LDIV\), and the 
structure is just the one induced by the corresponding interpretations in 
\(\LDIV\).

The completions of \(\ACVF\) are given by specifying the pair of 
characteristics \((\car(\VF),\car(\k))\in\Set{(0,0),(0,p),(p,p)}{p\,\,\text{a 
prime number}}\). The completion corresponding to \((p,q)\) is denoted by 
\(\ACVF_{p,q}\).

\medskip

The following is the main result of \cite{HaHrMa06}.

\begin{fact}[{\cite[Theorem 3.4.10]{HaHrMa06}}]\label{F:ACVF-EI}
  The theory \(\ACVF\) eliminates imaginaries in \(\LG\).
\end{fact}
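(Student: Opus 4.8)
The plan is to apply the standard criterion that reduces elimination of imaginaries relative to a family of sorts $\bG$ to two statements: \textup{(i)} \emph{coding of finite sets}, i.e.\ every finite subset of $\bG^{k}$ (any $k$) has a canonical parameter in $\bG$; and \textup{(ii)} \emph{weak elimination of imaginaries in} $\bG$, i.e.\ every definable set $\dX$ is defined over a finite set $E\subseteq\bG(\M)$ fixed setwise by every automorphism fixing $\code{\dX}$. Granting both, the finite set $E$ of \textup{(ii)} has, by \textup{(i)}, a canonical parameter $\code{E}\in\bG$, which is then a canonical parameter for $\dX$; so it suffices to prove \textup{(i)} and \textup{(ii)} (and \textup{(i)} does not use \textup{(ii)}). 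Throughout one works in a monster model $\M$, exploiting quantifier elimination in $\LG$ (Fact\,\ref{F:ACVF-QE}) to control definable sets, elimination of imaginaries in the residue field (which $\ACVF$ renders a pure algebraically closed field, and $\ACF$ eliminates imaginaries), and elimination of imaginaries in the value group (whose induced structure is that of a divisible ordered abelian group).

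For \textup{(i)}, let $S=\{g_{1},\dots,g_{r}\}\subseteq\bG^{k}$. Splitting coordinates by sort and coding the $\VF$-, $\rf$- and $\vg$-coordinates in the respective sorts (elementary symmetric functions; $\ACF$-elimination; $\mathrm{DOAG}$-elimination), one reduces to the case where $S$ is a finite set of $\bO$-lattices, or of torsor elements, of a fixed rank $m$. The key step is to amalgamate such a finite family into a single object whose code is already available: a finite set of lattices in $K^{m}$ can be encoded by a single lattice of larger rank, built from the direct sum together with the data recording the permutation action on the summands, and similarly for torsors; a single lattice or torsor element is a point of $\bG$. The residual Galois action of $\Aut(\M/\code{S})$ on $S$ is absorbed using that $\eq{\acl}$ of a tuple of geometric points is controlled by the residue field, where $\ACF$ eliminates imaginaries; this is the descent argument of \cite[Section 2]{HaHrMa06}.

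For \textup{(ii)}, I would argue by induction on the number of $\VF$-coordinates, after first disposing of unary sets. By quantifier elimination in $\LDIV$, a definable subset of $\VF$ is a finite Boolean combination of balls and points, and admits a canonical decomposition into Swiss cheeses (Holly's theorem); so, using \textup{(i)}, it suffices to code a single point or ball. A point is coded in $\VF$; a closed ball $B$ is coded by the $\bO$-lattice $L$ spanned by $(c,0)$ and $(a,1)$ with $a\in B$ and $\val(c)$ the radius of $B$ — an element of $\latt_{2}$, from which $B=\Set{x}{(x,1)\in L}$ is recovered — and an open ball is coded analogously by an element of $\tor_{2}$. For the inductive step, given $\dX\subseteq\VF^{n}$ over $A:=\code{\dX}$, push forward along the projection $\pi\colon\VF^{n}\to\VF^{n-1}$: each fibre $\dX_{a}\subseteq\VF$ has a canonical parameter $\rho(a)$ in finitely many geometric sorts by the unary case, $\rho$ is an $A$-definable function, and $\dX$ is recovered over $A$ from the graph of $\rho$ together with $\pi(\dX)$. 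Coding thus reduces to coding a definable subset of $\VF^{n-1}$ (handled by the inductive hypothesis) and of $\VF^{n-1}\times\bG$; fibering the latter over the $\bG$-coordinate — and coding the functions into $\vg$ and $\rf$ that appear, via their eliminations of imaginaries as in the resolution lemmas of \cite{HaHrMa06} — brings us back to subsets of $\VF^{n-1}$ and to definable subsets of $\bG^{k}$; these last are handled by pulling back along the presentations of $\latt_{m}$ and $\tor_{m}$ as $\Gl_{m}$ modulo a definable subgroup to obtain definable subsets of $\Gl_{m}\subseteq\VF^{m^{2}}$, whose codes, being invariant under the relevant group action, are weak codes for the original sets. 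This closes the induction.

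The main obstacle is step \textup{(i)} together with the uniformity required in the fibering argument of \textup{(ii)}: the fibrewise codes must be chosen in a genuinely canonical way so that the graph of $\rho$ is $A$-definable and not merely $\eq{\acl}(A)$-definable, which is exactly what forces the detour through coding of finite sets; and the coding of finite sets of lattices and torsors is where the real content lies — it is not formal, relying on the structure theory of finitely generated $\bO$-modules together with elimination of imaginaries in the algebraically closed residue field via Galois descent. Everything else is bookkeeping organized by quantifier elimination in $\LG$.
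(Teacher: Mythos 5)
The paper itself gives no proof of this statement: it is imported verbatim as \cite[Theorem 3.4.10]{HaHrMa06}, so the only meaningful comparison is with the proof in that reference. Your opening reduction (elimination of imaginaries $=$ weak elimination $+$ coding of finite sets) and your unary case (Swiss cheese decomposition; a point coded in $\VF$, a closed ball coded by a rank-$2$ lattice via $\Set{x}{(x,1)\in L}$, an open ball via $\tor_2$) do match the standard opening moves of Haskell--Hrushovski--Macpherson. The genuine gap is in your inductive step for weak elimination: the fibering scheme is circular. After one fibering, the function $\rho$ sending $a\in\VF^{n-1}$ to the canonical code of the unary fibre $\dX_a$ takes values in (finite products of) $\VF$, $\vg$, $\latt_2$, $\tor_2$, and you must code its graph, a definable subset of $\VF^{n-1}\times\bG^{k}$. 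Your recipe for definable subsets of $\bG^{k}$ is to pull back along $\Gl_m\to\latt_m$ (resp.\ the presentation of $\tor_m$), but this converts each lattice or torsor coordinate back into $m^2$ (or more) $\VF$-coordinates, so you are asked to code a subset of $\VF^{N}$ with $N\geq (n-1)+4k$, which is in general larger than $n$. The induction on the number of $\VF$-coordinates therefore never terminates. This is precisely the difficulty that forces the machinery you omit in \cite{HaHrMa06}: germs of definable functions on the generic types of unary sets (balls, $\O$), the theorem that such germs are \emph{strongly} coded, invariant/definable types, and resolution results controlling $\eq{\acl}$ over geometric parameters. Some such ingredient (or an alternative route, e.g.\ the density-of-definable-types criterion of Proposition\,\ref{prop:EICrit} combined with an $\ACVF$-analogue of Theorem\,\ref{thm:SCVH dense def}, which is how the present paper proceeds for $\SCVH_{p,e}$) is indispensable; without it the fibering argument does not close.

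A secondary but real problem is your step (i). The proposed encoding of a finite set of lattices by ``the direct sum together with data recording the permutation action'' is not canonical: the sublattice $s_1\oplus\cdots\oplus s_r\subseteq K^{rm}$ depends on the chosen ordering of the $s_i$, and the canonical object attached to the unordered set is its orbit under coordinate permutations --- again a finite set of lattices, so nothing has been gained. The actual coding of finite sets of geometric points in \cite[Section 3.4]{HaHrMa06} is a delicate analysis (it is essentially the content of Theorem 3.4.10's proof, not a preliminary), and it does not follow formally from the structure theory of finitely generated $\O$-modules plus elimination of imaginaries in the residue field. So both pillars of your plan need substantial additional input before the argument can be considered a proof.
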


\subsection{Separably closed fields}
\subsubsection{Notation and conventions}

Let $K$ be a field of characteristic $p > 0$. Let $b = (b_j)_{j\in J}$ be a 
(possibly infinite) tuple from $K$ and let $I : J \to p=\{0,\ldots,p-1\}$ be 
a function with finite support (that is a function that has value $0$ outside 
of a finite set). We denote by $b^I$ the monomial $\prod_{j\in J} 
b_j^{I(j)}$. The tuple $b$ is said to be a \emph{$p$-basis} of $K$ if the  
monomials $b^I$ form a linear basis of $K$ as a vector space over 
\(\ppow{K}\).  Then every $x\in K$ can be uniquely written as 
$x=\sum_{I}x_I^p b^{I}$. The $x_I$ are called the \emph{\(p\)-components of 
\(x\)} (with respect to $b$) and the functions $f_I : K\to K$ sending $x$ to 
$x_I$ are called the \emph{$p$-coordinate functions} or \emph{$\lambda$-functions}. Any characteristic $p$ 
field admits a $p$-basis and all $p$-bases of $K$ have the same cardinality 
$e$, usually called the \emph{imperfection degree} or the \emph{Ershov 
invariant} of $K$. Obviously, we have $[K:\ppow{K}] = p^e$ when \(e\) is 
finite.

A field $K$ is said to be \emph{separably closed} if it has no proper 
separable algebraic extension. For a prime $p$ and $e < \infty$, let 
$\L^{\lambda}_{p,e}:=\Lrg\cup\{b_1,\ldots,b_e\}\cup\Set{f_I}{I\in p^{e}}$ be 
the language with one sort \(\K\) and let  \(\SCF_{p,e,}\) be the theory of  
characteristic \(p\) separably closed fields with imperfection degree \(e\), 
where the $b_j$ form a $p$-basis with corresponding \(p\)-coordinate 
functions given by the \(f_I\). We will denote by 
\(\lambda^n:\K\to\K^{p^{ne}}\) the definable function whose coordinates are 
the $f_{I_n}\circ\cdots\circ f_{I_1}$ for all tuples $(I_n,\ldots,I_1)$. Note 
that $\lambda^n(x)^{p^n}$ is the tuple of \(\ppow[n]{\K}\)-coordinates of $x$ 
in the basis $b^{(I_n,\ldots,I_1)} = \prod_j (b^{I_j})^{p^j}$.

\begin{fact}
  The theory \(\SCF_{p,e,}\) eliminates quantifiers and imaginaries and is 
  complete. In case $e>0$, it is stable not superstable.
\end{fact}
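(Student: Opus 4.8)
The approach is the substructure-embedding test: given $M,N\models\SCF_{p,e}$ with $N$ sufficiently saturated and an isomorphism $f\colon A\to B$ between $\L^{\lambda}_{p,e}$-substructures $A\substr M$, $B\substr N$, I extend $f$ to an embedding $M\hookrightarrow N$. Passing to fraction fields, along which the $\lambda$-functions extend uniquely, I may assume $A$ and $B$ are $\lambda$-closed subfields containing the named $p$-basis $b$. The algebraic engine is the characterisation of separability of a field extension $L/F$ by linear disjointness of $\ppow{L}$ and $F$ over $\ppow{F}$: it implies that if $b$ is a $p$-basis of $F$, then $b$ remains a $p$-basis of every separable extension of $F$, so the $\lambda$-structure extends uniquely along separable extensions. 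Using this I (i) replace $A$, $B$ by their relative separable algebraic closures in $M$, $N$; these are again substructures, in fact \emph{separably closed} fields of imperfection degree $e$ with $p$-basis $b$, and they are matched up by the isomorphism extension theorem for separable algebraic extensions together with the saturation of $N$. Then (ii) any $c\in M\setminus A$ is either purely inseparable or transcendental over $A$, and in each case the $\lambda$-structure of the one-step extension $A\langle c\rangle$ is pinned down over $A$ by the quantifier-free data of $c$ --- for $c$ purely inseparable, the least $n$ with $\ppow[n]{c}\in A$ together with the $p$-components of $\ppow[n]{c}$; for $c$ transcendental, the whole $\lambda$-tower $\bigl(\lambda^{m}(c)\bigr)_{m}$, which is generic over $A$. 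By saturation of $N$ one finds a matching element of $N$, extends $f$, and iterates (transfinitely) to exhaust $M$.

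\textbf{Completeness, stability, non-superstability.} Completeness follows at once, since $\F_p(b)$ --- with $b$ named and the induced $\lambda$-functions --- embeds as a common substructure of any two models, so QE gives elementary equivalence. Stability follows by counting types: by QE a complete $n$-type over a set $A$ is determined by the isomorphism type, over the $\lambda$-closure of the field generated by $A$, of the $\lambda$-field extension it generates, and the analysis in (ii) shows there are at most $|A|^{\aleph_0}+2^{\aleph_0}$ of these; hence $|S_n(A)|\le|A|^{\aleph_0}+2^{\aleph_0}$, so $T$ is $\lambda$-stable whenever $\lambda^{\aleph_0}=\lambda$, and in particular stable. For $e\ge1$ one has $\ppow{\K}\subsetneq\K$, so the $\lambda$-functions are non-trivial, and $T$ is not superstable: this is the classical Wood example, whose witness is a type of a finite tuple admitting an infinite forking chain built from the successive $\lambda$-levels --- I would cite this rather than reprove it.

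\textbf{Elimination of imaginaries.} I would factor EI as (a) finite sets are coded and (b) weak elimination of imaginaries, which together yield full EI. For (a): a finite subset of $\K$ is coded by the coefficients of its monic polynomial, a finite subset of $\K^{n}$ by the values on it of a finite generating set of the ring of multisymmetric functions (which separates $S_{N}$-orbits of $N$-tuples), and one iterates for finite sets of imaginaries. For (b): $T$ is stable, and by QE, on the home sort $\dcl$ is $\lambda$-closure of the generated subfield while $\acl$ is the relative field-theoretic algebraic closure of it; hence the canonical base of a definable set, which exists by stability, can be taken --- up to interalgebraicity --- to be a finite tuple from the home sort, giving weak EI.

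\textbf{Main obstacle.} The substantial step is (ii) in the quantifier elimination: keeping track of $p$-bases through the tower of purely inseparable and transcendental extensions, and checking that the quantifier-free type of the new generator is realised in the saturated model compatibly with its entire $\lambda$-structure. The characteristic-$p$ coding of finite sets of tuples by multisymmetric functions in part (a) of the imaginary elimination is the other point that needs genuine care.
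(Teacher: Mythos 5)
The paper does not actually prove this statement: it is quoted as a Fact, with quantifier elimination and elimination of imaginaries attributed to Delon, completeness to Ershov, and stability/non-superstability to Wood. So your sketch has to be measured against the standard proofs, and there it has two genuine gaps, both at the places you yourself flag as the "substantial steps". In the quantifier elimination, once $A$ is $\lambda$-closed and contains the named $p$-basis, the extension $\K(M)/A$ is automatically separable, so after passing to the relative separable algebraic closure there are no purely inseparable algebraic elements at all -- that half of your case split is vacuous. More seriously, for a transcendental $c$ the tower $\bigl(\lambda^m(c)\bigr)_m$ is \emph{not} in general generic over $A$: it can satisfy arbitrary algebraic relations, and the whole content of Delon's theorem is that the quantifier-free type of this infinite tuple -- equivalently the (separable) prime ideal of relations of the tower -- can be realized in $N$ over $B$. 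Saturation of $N$ only helps after one knows that every finite fragment of that type is realized in $N$, and this requires the algebraic input that a separably closed field of imperfection degree $e$ is existentially closed in the separable extensions preserving the $p$-basis (or an equivalent embedding lemma). As written, the matching step is asserted, not proved, and the "generic" claim on which it seems to rest is false.

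For elimination of imaginaries, the decomposition into coding of finite sets plus weak elimination is the right one, and your coding of finite sets is fine (in characteristic $p$ it is cleaner to code $\{a_1,\dots,a_N\}\subseteq\K^n$ by the coefficients of $\prod_i(T_0+a_{i,1}T_1+\cdots+a_{i,n}T_n)$, using unique factorization, than to invoke generators of the multisymmetric invariant ring, which is delicate in positive characteristic; and one does not need to iterate over finite sets of imaginaries -- weak EI plus coding of finite real sets already gives full EI). The gap is in the weak elimination itself: the description of $\dcl$ and $\acl$ on the home sort says nothing about where \emph{imaginary} canonical bases live, so "the canonical base can be taken, up to interalgebraicity, in the home sort" is exactly the statement to be proved, not a consequence of what precedes it. Delon's argument supplies the missing idea: by quantifier elimination a type over a model is determined by the prime separable ideal of relations of the $\lambda$-tower of a realization, and it is definable over the field of definition of that ideal, a set of points of the home sort which is (interdefinable with) the canonical base; this is what yields weak elimination of imaginaries. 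Completeness via the common substructure generated by $\F_p$ and the $p$-basis, the type-counting argument for stability, and citing Wood for non-superstability are all in order.
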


The quantifier elimination result in that particular language is due to Delon 
\cite{DelSCF,DelSCF_MLBook}. The completeness result goes back to work of 
Ershov \cite{ErsSCF}. Stability and non superstability are proved in 
\cite{WooSCF}. Elimination of imaginaries is due to Delon and a proof can be 
found in \cite[Proposition\,3.9]{DelSCF_MLBook}.

The (type-definable) subfield \(\ppowi{\K}\) is the largest perfect 
subfield of \(\K\), and it is algebraically closed. The induced structure on 
\(\ppowi{\K}\) (by the ambient structure) is that of a pure 
algebraically closed field.

\subsection{Hasse derivations}

The $\lambda$-functions already give a ``field with operator'' flavor to 
separably closed fields. Separably closed fields can also be naturally 
equipped with more classical operators: Hasse derivations. As explained by 
Hoffmann \cite{HofSCH}, there are two natural ways in which to endow a 
separably closed field with Hasse derivations. Here, we follow Ziegler 
\cite{ZieSCH}.

A \Def{Hasse derivation} on a ring $R$ is a sequence $D = (D_n)_{n\in\N}$ of 
additive functions $D_n:R\to R$ such that for all $x$, $y\in R$, $D_0(x) = x$ 
and $D_n(x y) = \sum_{k+l = n} D_k(x)D_l(y)$. We say that $D$ is 
\Def{iterative} if $D_m\circ D_n = \binom{m+n}{n}D_{m+n}$ also holds. We will assume all Hasse derivations to be iterative.

Let $e\in\N$. Let $K$ be a field of characteristic $p > 0$ and 
$(D_1,\ldots,D_{e})$ be a tuple of commuting Hasse derivations on $K$ (i.e.,  
$D_{i,n}\circ D_{j,m} = D_{j,m}\circ D_{i,n}$ for all $i,j \leq e$ and 
$n,m\in\N$). It is easy to check that \(\ppowi{K}\) is contained in 
the field
\[
C_\infty:=\Set{x\in K}{D_{i,n}(x)=0\text{ for all \(i\leq e\) and \(n>0\)}}
\]
of  \emph{(absolute) constants}. 

The field $K$ is said to be \Def{strict} if
\[\ppow{K}=C_1:=\Set{x\in{}K}{D_{i,1}(x)=0\text{ for all \(i\leq{}e\)}}.\]
(We then have $\ppowi{K}=C_\infty$). Let
\[\L_{p,e}^{D}:=\Lrg\cup\Set{D_{i,n}}{0<i\leq e\text{ and }n\in\N}\]
and let  \(\SCH_{p,e,}\) be the theory of characteristic \(p\) separably 
closed strict fields of imperfection degree $e$ with $e$ commuting Hasse 
derivations $D_i = (D_{i,n})_{n\in\N}$.

For $N\in\N^e$, we will denote $D_{N}(x) = D_{1,n_1}\circ\ldots\circ 
D_{e,n_{e}}(x)$ and $D_\omega(x) = (D_{N}(x))_{N\in\N^e}$.

Note that any separably closed field of imperfection degree $e$ (and 
$p$-basis $b$) can be made into a strict field with $e$ commuting Hasse 
derivations by setting
\[D_{i,n}(b^I) = \binom{I(i)}{n}b_i^{I(i)n}\prod_{j\neq i}b_j^{I(j)}\]
and
\[D_{i,n}(x) = \sum_I\lambda^m_I(x)^{p^m}D_{i,n}(b^I)\]
for any $m$ such that $n < p^m$. For all $n>0$, we then have $D_{i,n}(b_j) = 
1$ if $i=j$ and $n=1$ and $D_{i,n}(b_j) = 0$ otherwise. Such a $p$-basis is 
said to be \Def{canonical}. Conversely, if $b$ is a canonical $p$-basis, the 
$D_{i,n}$ can be expressed as above using $b$ and $\lambda$.

\begin{fact}[\cite{ZieSCH}]
  The theory $\SCH_{p,e}$ eliminates quantifiers and imaginaries and is 
  complete.
\end{fact}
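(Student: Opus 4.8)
The plan is to follow Ziegler \cite{ZieSCH}, using the dictionary recorded above between a canonical $p$-basis together with its $\lambda$-functions and a system of $e$ commuting iterative Hasse derivations to reduce, as far as possible, to the already-cited facts about $\SCF_{p,e}$.

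\textbf{Quantifier elimination and completeness.} I would establish QE in $\L_{p,e}^{D}$ by a back-and-forth between $\aleph_{1}$-saturated $M_{1},M_{2}\models\SCH_{p,e}$: given an $\L_{p,e}^{D}$-isomorphism $\sigma\colon A\to A'$ between small substructures (that is, $D$-subfields), one extends it by one element. The algebraic inputs are: (i) a commuting system of iterative Hasse derivations extends \emph{uniquely} along separable algebraic extensions and along purely inseparable extensions realised inside $M_{1}$ — uniqueness in the inseparable case because $D_{i,n}(x)$ is forced by $D_{i,p^{k}n}(x^{p^{k}})=D_{i,n}(x)^{p^{k}}$, and the potential obstruction does not arise for elements that are genuinely $p$-power in a model — so $\sigma$ extends canonically to the relative algebraic closure of $A$ in $M_{1}$, and we may assume $A$ and $A'$ relatively algebraically closed; and (ii) models of $\SCH_{p,e}$ are existentially closed among $D$-fields, essentially $\SCH_{p,e}$ being model complete with amalgamation of $D$-subfields, so at the transcendental step any way of extending the $D$-structure of $A$ to $A(t)$ is realised in both $M_{1}$ and $M_{2}$, the relevant genericity being pinned down by finitely many quantifier-free $D$-conditions. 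Strictness is the glue that makes this work: it renders ``$x\in\ppow{\K}$'' the quantifier-free condition $\bigwedge_{i}D_{i,1}(x)=0$, so the $p$-basis apparatus, in particular the imperfection degree, becomes visible quantifier-free. Completeness is then immediate: $\F_{p}$, with all $D_{i,n}$ $(n>0)$ equal to $0$, is a common $\L_{p,e}^{D}$-substructure of every model, so any two models agree on quantifier-free sentences and hence, by QE, are elementarily equivalent.

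\textbf{Elimination of imaginaries.} Granting QE, I would deduce EI from EI for $\SCF_{p,e}$ via the standard reduction ``coding of finite sets plus weak EI implies EI''. Coding of finite sets of $\K$-tuples transfers from $\ACF$ by symmetric functions. For weak EI: given a definable set $\dX$, by QE only finitely many $D_{i,n}$ (say with $i\le e$ and $n<p^{m}$) occur in a defining formula; after naming parameters in a saturated model one may fix a canonical $p$-basis $b$, under which these finitely many $D_{i,n}$ become $\L^{\lambda}_{p,e}(b)$-definable, so $\dX$ is $\L^{\lambda}_{p,e}(b)$-definable and EI for $\SCF_{p,e}$ produces a canonical parameter of $\dX$ in the $\K$-sort of that structure. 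It then remains to descend this code to $\SCH_{p,e}$, for which one uses that $\Aut(\M)$ permutes the canonical $p$-bases and hence permutes the resulting family of $\SCF_{p,e}$-codes of $\dX$, a family defined over $\code{\dX}$, and extracts from it a $\K$-tuple that is fixed up to finitely many choices by the stabiliser of $\dX$.

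\textbf{Expected main obstacle.} The genuinely hard part is the algebra behind (ii): extending $e$ commuting iterative Hasse derivations to an arbitrary field extension — across a transcendence basis and across inseparable parts — compatibly with amalgamation, i.e.\ showing that $\SCH_{p,e}$ really is the model companion of the relevant universal theory. The interplay of the operators with $p$-th powers and with a $p$-basis is the technical heart of \cite{ZieSCH}. Within the EI argument the delicate step is exactly the descent of the $\SCF_{p,e}$-code independently of the auxiliary canonical $p$-basis, which is why I would route it through weak EI together with coding of finite sets rather than attempting to write down a canonical parameter directly.
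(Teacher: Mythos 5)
You are proposing a proof for a statement that the paper itself does not prove: it is quoted from Ziegler \cite{ZieSCH}, and the only indication of method the paper gives is that quantifier elimination "can be deduced from quantifier elimination in $\SCF_{p,e}$", a trick the paper actually carries out in the valued analogue (Proposition~\ref{prop:SCVH QE}): express the $D_{i,n}$ in terms of a canonical $p$-basis $b$ and the $\lambda$-functions, quote Delon's quantifier elimination to replace a given formula by a quantifier-free $\L^{\lambda}_{p,e}$-formula, and then remove every occurrence of $b$ by evaluating on a suitably chosen canonical $p$-basis, the rewriting being independent of that choice. Your quantifier-elimination argument takes a genuinely different and heavier route: a direct back-and-forth whose engine is the assertion that models of $\SCH_{p,e}$ are existentially closed among strict $D$-fields, with amalgamation of $D$-subfields, so that any quantifier-free extension type at the transcendental step is realized. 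As you yourself note, this is the technical heart of \cite{ZieSCH}; but as written it is not an available input, it is (up to routine compactness arguments) equivalent to the model completeness you are in the course of proving, and nothing in your sketch derives it from the axioms (separably closed, strict, imperfection degree $e$). So the QE part is a reduction to the main content of the cited paper rather than a proof, whereas the route the paper points to bypasses the model-companion machinery entirely via the $D$/$\lambda$ interdefinability over a canonical $p$-basis. (The inseparable-uniqueness identity $D_{i,p^{k}n}(x^{p^{k}})=D_{i,n}(x)^{p^{k}}$ and the completeness argument via the prime substructure $\F_p$ are fine, granted QE.)

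The elimination-of-imaginaries part has a genuine gap at exactly the point you flag. Fixing a canonical $p$-basis $b$ makes $\dX$ definable in $\L^{\lambda}_{p,e}(b)$, and Delon's elimination of imaginaries then yields a field-sorted code $c_b$ of $\dX$, canonical only with respect to automorphisms fixing $b$. To obtain weak elimination of imaginaries for $\SCH_{p,e}$ you must produce a tuple with finite orbit under the full stabiliser of $\dX$ in $\Aut(\M)$. The stabiliser permutes the family $\{c_b\}$ indexed by the infinite, invariant set of canonical $p$-bases, but there is no general mechanism for extracting from an infinite invariant family of real tuples one with finite orbit over $\code{\dX}$ — that extraction is essentially what elimination of imaginaries asserts, so the argument is circular at the decisive step. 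Making the descent work requires an actual idea (for instance an independence/stationarity argument showing that for $b$ chosen suitably generically the code can be taken inside $\acl(\code{\dX})$ in the field sort, or Ziegler's own argument), and supplying it is the content of the theorem rather than a routine final step; coding of finite sets, by contrast, is indeed unproblematic.
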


As noted in \cite{ZieSCH}, the quantifier elimination result can be deduced 
from quantifier elimination in $\SCF_{p,e}$. This remains true in the valued 
setting, as will be seen below.

\subsection{Separably closed valued fields}

We now consider a separably closed field $K$ of positive characteristic $p$ 
and finite imperfection degree $e$ endowed with a non-trivial valuation 
$\val$. As is the case for algebraically closed valued fields, there is a 
number of natural languages in which to consider these structures:
\begin{itemize}
  \item The one sorted language 
    $\LDIVpe^{\lambda}:=\LDIV\cup\L_{p,e}^{\lambda}$;
  \item The two sorted language 
    $\LGammape^{\lambda}:=\LGamma\cup\L_{p,e}^{\lambda}$;
  \item The three sorted language 
    $\LkGammape^{\lambda}:=\LkGamma\cup\L_{p,e}^{\lambda}$;
  \item The leading term language $\LRVpe^{\lambda} := 
    \LRV\cup\L_{p,e}^{\lambda}$
  \item The geometric language $\LGpe^{\lambda}:=\LG\cup\L_{p,e}^{\lambda}$.
\end{itemize}

Let $\SCVF_{p,e}$ denote the theory of separably closed non-trivially valued fields of 
characteristic $p$ and imperfection degree $e$ (with $\lambda$-functions) in 
either of these languages. Similarly we define $\LDIVpe^{D}$, 
$\LGammape^{D}$, $\LkGammape^{D}$, $\LRVpe^{D}$ and $\LGpe^{D}$ to be the 
languages with $e$ Hasse derivations and we denote by $\SCVH_{p,e}$ the 
theory (in any of these languages) of separably closed strict non-trivially valued fields 
of imperfection degree $e$ with $e$ commuting Hasse derivations. We will also 
denote by $\SCVF$ the theory of separably closed non-trivially valued fields in the 
language $\LDIV$.

\begin{proposition}\label{prop:SCVF-dense}
  Let $K$ be a separably closed non-trivially valued field. For all $n\in\N$, $\ppow[n]{K}$ 
  is dense in $\alg{K}$. Moreover, if $K$ is $\omega$-saturated, then 
  $\ppowi{K}$ is dense in $\alg{K}$. In particular 
  $\latt_n(\ppowi{K}) = \latt_n(\alg{K})$ and 
  $\tor_n(\ppowi{K}) = \tor_n(\alg{K})$ in this case.
\end{proposition}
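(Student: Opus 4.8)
The plan is to prove density of $\ppow[n]{K}$ in $\alg{K}$ by a direct approximation argument, then to bootstrap to $\ppowi{K}$ under $\omega$-saturation, and finally to invoke Lemma~\ref{lem:dense geom}. For the first part, fix $\alpha\in\alg{K}$ and $\gamma\in\vg(K)$; I want $c\in\ppow[n]{K}$ with $\val(\alpha-c)>\gamma$. Write $\alpha$ as a root of a separable polynomial over $K$ if $\alpha$ is separable over $K$; but in general $\alpha$ need not be separable, so instead I would argue as follows. Since $K$ is separably closed, $\alg{K}$ is a purely inseparable extension, so $\alpha^{p^m}\in K$ for some $m$. Now $K$ is separably closed, hence $\ppow{K}=K$ would force $K$ perfect, which is false; the right statement is that $K^{1/p^m}$, the purely inseparable closure, is what matters. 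The cleanest route: it suffices to show $K$ is dense in $K^{1/p}$ (inside $\alg{K}$), because then by iterating, $K$ is dense in $K^{1/p^m}$ for all $m$, hence in $\alg{K}=\bigcup_m K^{1/p^m}$; and applying the (valuation-continuous, since Frobenius scales the valuation by $p$) isomorphism $x\mapsto x^{p^n}$ transports ``$K$ dense in $\alg{K}$'' to ``$\ppow[n]{K}$ dense in $\ppow[n]{\alg{K}}=\alg{K}$''.

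So the heart of the matter is: \emph{$K$ is dense in $K^{1/p}$}. Here is where separable closedness enters essentially. Take $\beta\in K^{1/p}$, so $\beta^p=a\in K$, and fix $\gamma\in\vg(K)$. Pick any $t\in K$ with $\val(t)>0$ large (to be calibrated). Consider the polynomial $g(X)=X^p-p t^{?}X - a$ over $K$ — no, in characteristic $p$ the term $X^p$ has vanishing derivative, so I need an Artin–Schreier-type perturbation. The standard trick: the polynomial $X^p - sX - a$ (for $s\neq 0$ in $K$) is separable, hence has a root $c\in K$ since $K$ is separably closed. Then $c^p - a = sc$, so $\val(c^p-a)=\val(s)+\val(c)$. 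One checks $\val(c)$ is controlled (either $\val(c)\geq 0$ bounded below, or if $\val(c)$ were very negative then $\val(c^p)=p\val(c)<\val(sc)$ forces $\val(c^p)=\val(a)$, again bounding $\val(c)$); in all cases $\val(c)\geq \min(0,\val(a)/p) - O(1)$. Hence $\val(c^p-a)=\val(s)+\val(c)\to\infty$ as $\val(s)\to\infty$. Finally $\val(\beta-c)$: from $\beta^p=a$ and $(\beta-c)^p=\beta^p-c^p=a-c^p$ (Frobenius is additive!) we get $\val(\beta-c)=\frac1p\val(a-c^p)$, which tends to $\infty$ with $\val(s)$. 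Choosing $s$ with $\val(s)$ large enough gives $\val(\beta-c)>\gamma$. This establishes density of $K$ in $K^{1/p}$.

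For the $\omega$-saturated case, I want each $\alpha\in\alg{K}$ to be approximated by elements of $\ppowi{K}=\bigcap_m\ppow[m]{K}$, i.e.\ by \emph{$p$-infinitely-divisible} elements. The idea is a compactness/saturation argument: given $\alpha\in\alg{K}$ and $\gamma\in\vg(K)$, the already-proven fact gives, for each $m$, some $c_m\in\ppow[m]{K}$ with $\val(\alpha-c_m)>\gamma$; but I need a single element lying in \emph{all} $\ppow[m]{K}$. Consider, over $\alg{K}$, the type in a variable $x$ asserting $x\in\ppow[m]{\K}$ for every $m$ together with $\val(\alpha-x)>\gamma$. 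By the first part each finite subset is realized in $K$ (pick $m$ the max index, use $c_m$, noting $\ppow[m]{K}\subseteq\ppow[m']{K}$ for $m'\leq m$), so by $\omega$-saturation — after checking the type uses only countably many parameters, namely $\alpha,\gamma$ and the sorts involved, which is fine — it is realized by some $c\in\ppowi{K}$ with $\val(\alpha-c)>\gamma$. Hence $\ppowi{K}$ is dense in $\alg{K}$. The ``in particular'' clause is then immediate from Lemma~\ref{lem:dense geom} applied to $\ppowi{K}\subseteq\alg{K}$.

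The main obstacle I anticipate is the bookkeeping in the $K$-dense-in-$K^{1/p}$ step: one must verify that the valuation of the Artin–Schreier root $c$ is bounded below uniformly (independently of $s$) so that $\val(s)+\val(c)$ genuinely goes to infinity; this is a short but slightly fiddly case analysis on whether $\val(c)<0$ or not, using that $c^p - sc = a$. Everything else — the Frobenius-scales-valuation isomorphism, the iteration over $m$, and the saturation argument — is routine.
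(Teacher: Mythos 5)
Your proposal is correct, and it is more self-contained than the paper's own argument: the paper simply cites Hong's thesis (Lemma 5.2.5 and Remark 5.2.6 of his PhD) for the density of $K^{p^n}$ in $\alg{K}$, and then, exactly as you do, gets the $\ppowi{K}$ statement by compactness/saturation and the statement about $\latt_n$ and $\tor_n$ from Lemma~\ref{lem:dense geom}. Your Artin--Schreier perturbation $X^p-sX-a$, with the lower bound $\val(c)\geq\min(0,\val(a)/p)$ for $\val(s)\geq 0$ and the identity $(\beta-c)^p=\beta^p-c^p$, is the standard way to prove the cited fact (and non-trivial valuation is used precisely to make $\val(s)$ arbitrarily large), so in substance you are reconstructing the omitted ingredient rather than taking a different route; the reduction of the general case to ``$K$ dense in $K^{1/p}$'' via transitivity of density and the Frobenius homeomorphism is also fine, since $\alg{K}=\bigcup_m K^{1/p^m}$ by separable closedness and $\alg{K}^{p^n}=\alg{K}$. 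One point to tighten in the saturation step: you describe the type as being ``over $\alg{K}$'' with parameters $\alpha,\gamma$, but $\alpha\notin K$, so as written these are not formulas in the structure $K$ to which $\omega$-saturation applies. This is harmless but should be said: choosing $m_0$ with $a:=\alpha^{p^{m_0}}\in K$, the condition $\val(\alpha-x)>\gamma$ is equivalent to $\val(a-x^{p^{m_0}})>p^{m_0}\gamma$, so the type $\{\exists y\, (x=y^{p^m})\mid m\in\N\}\cup\{\val(a-x^{p^{m_0}})>p^{m_0}\gamma\}$ is a countable type over a finite subset of $K$ (replacing $\gamma\in\vg(\alg{K})$ by a larger element of $\vg(K)$, which is cofinal), finitely satisfiable by the first part, hence realized in $K$ by $\omega$-saturation; the realization lies in $\ppowi{K}$ and approximates $\alpha$ as required.
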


\begin{proof}
  Cf. \cite[Lemma\,5.2.5 and Remark\,5.2.6]{HonPhD}. The statement about 
  $\ppowi{K}$ follows by compactness and the one about the $\latt_n$ 
  and $\tor_n$ is a consequence of Lemma\,\ref{lem:dense geom}
\end{proof}

In \cite{HonPhD}, Hong proved that $\SCVF_{p,e}$ eliminates quantifiers in 
the language with two sorts. We now show that his proof generalizes to any of 
the five languages we have been considering. Let us also mention that Hong 
later proved, in \cite{Hon-QE}, a stronger quantifier elimination result 
using "parametrized $\lambda$-functions" which also covers the case of 
separably closed valued fields of infinite Ershov invariant.

\begin{proposition}\label{prop:SCVF EQ}
  $\SCVF_{p,e}$ eliminates quantifiers in the one, two and three sorted 
  languages, the leading term language as well as in the geometric language.
\end{proposition}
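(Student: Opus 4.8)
The plan is to reduce quantifier elimination in all five languages to Hong's original result in the two-sorted language $\LGammape^{\lambda}$ (i.e. $\LGamma\cup\L_{p,e}^{\lambda}$), using the fact that $\RV$, $\rf$, the geometric sorts, and the $\div$-relation are all interpretable in, and induced by, the two-sorted (indeed even the one-sorted $\LDIV$-)structure. The abstract mechanism is standard: if $T$ eliminates quantifiers in a language $\L_1$, and $\L_2$ is another language for (reducts or definitional expansions of) the same structures such that every $\L_2$-function and $\L_2$-relation is quantifier-free $\L_1$-definable \emph{and vice versa}, then $T$ eliminates quantifiers in $\L_2$ as well. The content is therefore in verifying the ``vice versa'' direction in each case, i.e. that each $\L_i^{\lambda}$ recovers the two-sorted $\lambda$-structure quantifier-freely, together with the corresponding back-and-forth bookkeeping when one passes through languages (like $\LkGammape^{\lambda}$ or $\LGpe^{\lambda}$) that have genuinely new sorts.

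Concretely, I would argue as follows. First, the passage between $\LDIVpe^{\lambda}$ and $\LGammape^{\lambda}$ is the usual observation that $\vg$ is interpretable in the $\VF$-sort via $\div$, with the induced structure, so these two cases are interdefinable and QE in one yields QE in the other. For the three-sorted language $\LkGammape^{\lambda}$, the residue sort $\rf$ and the map $\Res$ are again interpretable in $\LDIV$, and by Proposition~\ref{prop:SCVF-dense} (density of $\ppow[n]{K}$, hence of the valuation ring's image) the residue field of a model of $\SCVF_{p,e}$ coincides with that of its algebraic closure, so no new residue-field phenomena appear beyond what is already captured; the standard $\ACVF$-style argument (Fact~\ref{F:ACVF-QE}) showing how to eliminate quantifiers ranging over $\rf$ using elements of $\VF$ goes through verbatim, since $\SCVF_{p,e}\models$ Henselian and residue field algebraically closed. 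For $\LRVpe^{\lambda}$, I would mimic the $\LRV$-case of Fact~\ref{F:ACVF-QE}: given an isomorphism of $\LRVpe^{\lambda}$-substructures, first close under inverses and $\lambda$-functions, then handle $\RV$-elements whose valuation is (or is not) in $\Q\otimes\val(\VF(A))$ exactly as in that proof — the only point to check is that the auxiliary roots one adjoins can be taken inside a model of $\SCVF_{p,e}$, which holds because the relevant extensions are of the form needed for Hensel's lemma plus, in residue characteristic $p$, Frobenius on the (algebraically closed) residue field, and these do not disturb separable closedness or the imperfection degree. Finally, for the geometric language $\LGpe^{\lambda}$: the sorts $\latt_n$, $\tor_n$ are $\LDIV$-interpretable with induced structure, and since all the new geometric sorts already have QE over $\ACVF$ in $\LG$ (Fact~\ref{F:ACVF-QE}), one reduces QE over $\SCVF_{p,e}$ in $\LGpe^{\lambda}$ to QE in $\LGammape^{\lambda}$ by the same substructure-extension scheme used in \cite{HaHrMa06} to pass from $\LkGamma$ to $\LG$, carrying the $\lambda$-functions along.

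The main obstacle, as I see it, is not any single one of these reductions in isolation but making the back-and-forth uniform across the $\lambda$-functions. In each case one takes an isomorphism $f\colon A\to B$ of $\L_i^{\lambda}$-substructures of models $M\models N$ with $N$ sufficiently saturated and must extend it to all of $M$; the subtlety is that when one adjoins a new $\VF$-element (a root, or a transcendental realizing a cut) one must simultaneously specify the values of all the $f_I$ on it and on everything it generates, compatibly. The clean way to handle this is to observe that a substructure closed under the $f_I$ corresponds to a subfield $K_0\le K$ such that $b$ remains a $p$-basis relative to $K_0$ with the $\lambda$-functions restricting correctly — equivalently $K_0$ is ``$\lambda$-closed'' in the sense used in Delon's QE for $\SCF_{p,e}$ — and then Hong's argument already organizes the extension of valued-field embeddings so as to respect this $p$-basis structure; one simply checks that passing to $\ACVF_{p,p}$-coordinates via the $\lambda^n$ intertwines the valued-field back-and-forth with the separable one. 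I would expect the write-up to spend most of its length on this compatibility check and on spelling out the $\RV$- and geometric-sort steps, with everything else being a citation to Hong \cite{HonPhD}, Delon \cite{DelSCF_MLBook}, and Fact~\ref{F:ACVF-QE}.
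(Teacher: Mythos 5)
You are proposing a genuinely different route from the paper, and it has gaps that I do not think can be repaired as stated. The transfer mechanism you start from (quantifier-free bi-definability of two languages implies QE transfers) is only valid when the languages live on the same sorts; the sorts $\rf$, $\RV$, $\latt_n$, $\tor_n$ are interpretable quotients, and eliminating quantifiers over variables of these sorts is genuine content --- this is precisely why QE for \(\ACVF\) in \(\LG\) is a theorem of \cite{HaHrMa06} and not a corollary of QE in \(\LDIV\). Your two patches for the new-sort cases then do not hold up. For \(\LRV\): the \(\ACVF\) back-and-forth adjoins $c$ with $c^n=e$ and $\rv(c)=a$; when $p\mid n$ such a $c$ is purely inseparable over the field and in general does not exist in a (non-perfect) model of $\SCVF_{p,e}$, so your claim that ``the auxiliary roots one adjoins can be taken inside a model of $\SCVF_{p,e}$'' is false, and Hensel's lemma together with Frobenius on the residue field (which in the \(\ACVF\) proof only solves a residue equation) does not produce it. For \(\LG\): ``the same substructure-extension scheme used in \cite{HaHrMa06} to pass from \(\LkGamma\) to \(\LG\), carrying the $\lambda$-functions along'' is not an argument but a proposal to redo that long proof in the separably closed setting, and you give no indication of how the $\lambda$-structure interacts with it.

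The paper's proof is structured differently and avoids all of this: it does not reduce the five languages to the two-sorted one, but runs one uniform back-and-forth in each language $\L\in\{\LDIV,\LGamma,\LkGamma,\LRV,\LG\}$, quoting QE of \(\ACVF\) in that same language (Fact~\ref{F:ACVF-QE}) as a black box. Given an $\L^\lambda$-embedding $f\colon A\to N$ and $a\in M$, $\lambda$-resolutions (Lemma~\ref{lem:lambda res}) replace any quantifier-free $\L^\lambda(A)$-definable set containing $a$ by a quantifier-free $\L(A)$-definable one, at the cost of replacing $a$ by a tuple from $\VF(M)$; since $A$ is closed under the $\lambda$-functions, $\VF(M)/\VF(A)$ is separable; and the key lemma (Lemma~\ref{lem:ext transc}, Corollary~\ref{cor:ext separable}) extends $\L$-embeddings along separable extensions, because by QE in \(\ACVF\) a quantifier-free $\L(A)$-definable subset of $\VF$ containing a transcendental either is finite or has nonempty interior, and in the latter case it meets $\VF(N)$ by density of $\VF(N)$ in $\alg{\VF(N)}$ (Proposition~\ref{prop:SCVF-dense}). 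Applying Corollary~\ref{cor:ext separable} with $K_0=\VF(M)$ extends $f$ to an $\L$-embedding of all of $M$, which settles every sort at once, including $\RV$ and the geometric sorts. Note that the density statement, which you invoke only incidentally for the residue field, is the engine of the whole proof, and the compatibility of the back-and-forth with the $\lambda$-functions, which you expected to occupy most of the write-up, is dispatched in one line by $\lambda$-resolution plus separability over $\lambda$-closed subfields.
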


In the following pages, let $\L$ denote any of the five languages $\LDIV$, 
$\LGamma$, $\LkGamma$, $\LRV$ and $\LG$ and 
$\L^\lambda=\L\cup\L_{p,e}^\lambda$ the corresponding enrichment with 
$\lambda$-functions (for some fixed \(p\) and \(e\)).  The proof relies on 
one main technical tool, $\lambda$-resolutions:

\begin{lemma}[{\cite[Definition-Proposition 5.2.2]{HonPhD}}]\label{lem:lambda 
  res}
  Let $M\models\SCVF_{p,e}$, $A\leq M$ be a substructure and $\dX\subset 
  \VF^m$ be quantifier free $\L^\lambda(A)$-definable. There exists $n\in\N$ 
  such that $\lambda^n(\dX) = \dY$ for some quantifier free $\L(A)$-definable 
  set $\dY\subseteq \VF^{mp^{en}}$. Such a set $\dY$ is called a 
  \emph{$\lambda$-resolution }of $\dX$.
\end{lemma}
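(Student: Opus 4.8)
The plan is to build the resolution by a double induction: an outer induction on the number of $\lambda$-function symbols occurring (nested) in the formula defining $\dX$, and, for the base step of removing the innermost occurrences, an observation about how $\lambda^1$ interacts with the atomic formulas of $\L$. First I would reduce to the case where $\dX$ is defined by a single atomic $\L^\lambda$-formula, using the fact that $\lambda^n$ commutes with Boolean combinations in the sense that $\lambda^n(\dX_1 \cap \dX_2) = \lambda^n(\dX_1) \cap \lambda^n(\dX_2)$ after re-indexing coordinates appropriately, and that taking a larger common $n$ works for a finite Boolean combination. So it suffices to treat each atomic formula, take the maximum of the resulting exponents $n$, and pull everything back.

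For an atomic formula, the key point is that every term $t(\bar{x})$ built from the ring operations and the $f_I$'s can, after applying a sufficiently high power $\lambda^m$ to the variables, be rewritten as a $\ppow[m]{\K}$-linear combination of the basis monomials $b^{(I_m,\dots,I_1)}$ with coefficients that are \emph{polynomial} (i.e., $\L_{\mathrm{ring}}$-)terms in the coordinates of $\lambda^m(\bar{x})$; this is exactly the content of the remark in the excerpt that $\lambda^m(x)^{p^m}$ is the tuple of $\ppow[m]{\K}$-coordinates of $x$ in the canonical basis. Since the basis monomials $b^{(I_m,\dots,I_1)}$ are fixed elements of $A$ (they involve only the named constants $b_j$), an equation $t(\bar{x}) = 0$ becomes, after substituting $x_j = \sum_I \lambda^m_I(x_j)^{p^m} b^{(\dots)}$ and collecting, the assertion that a tuple of $\L_{\mathrm{ring}}(A)$-polynomials in $\lambda^m(\bar{x})$ all vanish — because the $b^{(\dots)}$ are $\ppow[m]{\K}$-linearly independent. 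For the $\LDIV$-atomic formula $\val(t_1(\bar{x})) \le \val(t_2(\bar x))$, the same substitution expresses $\val(t_i(\bar{x}))$ in terms of the $\lambda^m$-coordinates: write $t_i(\bar x) = \sum_{(\dots)} c_{(\dots)}^{p^m} b^{(\dots)}$ and note that, since the residue characteristic is $p$, the valuations of the distinct terms $c_{(\dots)}^{p^m} b^{(\dots)}$ cannot be compared freely — but one can split into the finitely many cases according to which term realizes the minimum, and on each case $\val(t_i)$ is $p^m \val(c_{(\dots)})$ plus a constant in $\vg(A)$, which is $\L(A)$-expressible in $\lambda^m(\bar x)$. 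The sorts $\rf$, $\RV$, $\vg$ and the geometric sorts only add atomic formulas that reduce, via the interpreting maps ($\res$, $\rv$, $\val$, the lattice/torsor coordinates), to $\VF$-level conditions of the types just handled, so the same $m$ works after enlarging it finitely.

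Putting this together: given $\dX$ quantifier-free $\L^\lambda(A)$-definable, write its defining formula in disjunctive normal form, pick $m$ large enough to resolve every atomic formula appearing, and set $\dY := \lambda^m(\dX)$. The argument above shows $\dY$ is cut out inside $\VF^{mp^{em}}$ by a quantifier-free $\L(A)$-formula (a Boolean combination of the resolved atomic conditions), and since $\lambda^m$ is a bijection from $\K$ onto the $\ppow[m]{\K}$-coordinate space of its image — more precisely $x \mapsto \lambda^m(x)$ is injective, its image being the set of coordinate tuples — we recover $\dX$ as the preimage, so $\lambda^m(\dX) = \dY$ as required. The main obstacle is the valuation atomic case: one must be careful that expressing $\val(t_i(\bar x))$ through the $\lambda^m$-coordinates genuinely lands in a quantifier-free $\L(A)$-formula rather than requiring a fresh minimum-selection quantifier, and this is precisely why one passes to a disjunction over which monomial term is valuation-minimal and works case by case — with the residue characteristic $p$ forcing us to keep track of the $p^m$-th powers throughout. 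This is exactly the content of Hong's Definition-Proposition 5.2.2, to which we refer for the remaining bookkeeping.
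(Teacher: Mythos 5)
Your overall strategy is the same as the paper's: the proof there is a one-liner resting on the facts that $\lambda(x+y)$ and $\lambda(xy)$ are polynomials in $\lambda(x)$ and $\lambda(y)$ (so that, after applying $\lambda^m$ to the variables, every $\L^\lambda$-term becomes a polynomial over $A$ in the coordinates) and that $\lambda^n$ is onto, which is what makes the image of a Boolean combination again a Boolean combination of the translated atomic conditions. Your reduction to atomic formulas and your treatment of equations $t(\bar x)=0$ are fine.

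However, your handling of the valuative atomic formulas contains a step that fails. You propose to express $\val(t_i(\bar x))$ by splitting into cases according to which monomial summand $c_{(\dots)}^{p^m} b^{(\dots)}$ has minimal valuation and then declaring $\val(t_i)=p^m\val(c_{(\dots)})+\mathrm{const}$ on that case. This is false in general: the valuation of a sum can strictly exceed the minimum of the valuations of its summands, even when the summands are linearly independent over $\ppow[m]{K}$. For instance, with $e=1$ and $\val(b)=0$, the residue field is algebraically closed, so there is $c\in\bO$ with $\res(c)^p=\res(b)$; then $t=b-c^p=(-c)^p\cdot 1+1^p\cdot b$ has $\val(t)>0=\min\bigl(\val(c^p),\val(b)\bigr)$. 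So the quantifier-free formula you build on each case would define the wrong set, and no case-split on ``which term is minimal'' repairs this, since cancellation happens precisely when the minimum is attained more than once. The fix is to drop the attempt to compute $\val(t_i)$ altogether: after the substitution, $t_i(\bar x)$ is \emph{equal} to a polynomial $P_i$ over $A$ evaluated at $\lambda^m(\bar x)$, so the atomic formula $t_1\div t_2$ translates verbatim into the atomic $\LDIV(A)$-formula $P_1(\bar y)\div P_2(\bar y)$ in the new variables $\bar y=\lambda^m(\bar x)$ (and likewise for atomic formulas involving $\rv$, $\res$, etc.). Combined with the surjectivity of $\lambda^n$, which you essentially invoke, this gives the lemma exactly as in the paper.
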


Note that once we know quantifier elimination, \emph{all} definable subsets 
of $\VF^{n}$ will have a $\lambda$-resolution.

\begin{proof}
  This is an immediate consequence of the fact that $\lambda(x+y)$ and 
  $\lambda(x y)$ can be written as polynomials in $\lambda(x)$ and 
  $\lambda(y)$, and that $\lambda^n$ is onto.
\end{proof}

\begin{lemma}\label{lem:ext transc}
  Let $M$ and $N$ be two non-trivially valued fields (considered as $\L$-structures), 
  $A\leq M$, $f : A \to N$ be an $\L$-embedding and $a\in\VF(M)$ be 
  transcendental over $\VF(A)$. Assume that $N$ is $|A|^+$-saturated and 
  $\VF(N)$ is dense in $\alg{\VF(N)}$. Then $f$ can be extended to an 
  $\L$-embedding $A(a) \to N$.
\end{lemma}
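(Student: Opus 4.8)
The plan is to build the extension of $f$ to $A(a)$ in the language $\L$ (one of the five listed), proceeding in two conceptually separate steps: first extend the value group (or $\RV$, or $\rf$, depending on $\L$) appropriately, then extend on the valued field sort itself. First I would reduce to the one-sorted language $\LDIV$: since in each of the multi-sorted languages the extra sorts and their structure are interpretable over $\LDIV$ (as noted after Fact \ref{F:ACVF-QE}), once I have the $\LDIV$-embedding on the field, the induced structure on $\vg$, $\rf$, $\RV$ extends uniquely and compatibly; the only extra care needed is that for $\LGamma$, $\LkGamma$, $\LRV$, $\LG$ one must first make sure the new points of those sorts created by $a$ (e.g.\ $\val(q(a))$ for polynomials $q$, or new residues) land inside $N$, which is exactly where saturation and density of $\VF(N)$ in $\alg{\VF(N)}$ come in. So the heart of the matter is: given a valued field $(\VF(A),\val)$, an $\LDIV$-embedding $f$ into the non-trivially valued $N$ with $\VF(N)$ dense in its algebraic closure and $N$ being $|A|^+$-saturated, and $a$ transcendental over $\VF(A)$, extend $f$ to $\VF(A)(a)$.

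For that, I would split into the usual two cases for a transcendental extension of valued fields. \emph{Residual/ramified case (transcendental value or residue):} if there is a polynomial $q$ over $\VF(A)$ with $\val(q(a))\notin\val(\VF(A))\otimes\Q$ or with $\res(q(a)/c)$ transcendental over $\rf(\VF(A))$ for suitable $c$, then the extension $\VF(A)(a)/\VF(A)$ is "transcendental" in Abhyankar's sense; by saturation of $N$ I can find $b\in\VF(N)$ realizing the corresponding cut / transcendence condition over $f(\VF(A))$ (using that $N$ is non-trivially valued and has residue field of infinite transcendence degree over any small subfield, which follows from density in $\alg{\VF(N)}$ and saturation), and then $a\mapsto b$ induces an $\LDIV$-embedding because the valuation on a purely transcendental extension with a prescribed Gauss-type valuation is determined. \emph{Immediate case:} if no such $q$ exists, then $\VF(A)(a)/\VF(A)$ is an immediate-type extension and $a$ realizes a pseudo-Cauchy sequence over $\VF(A)$ without pseudo-limit in $\VF(A)$; here I would invoke that $\VF(N)$ is dense in $\alg{\VF(N)}$, hence (by Proposition \ref{prop:SCVF-dense}-style reasoning, or directly by Kaplansky's theory of pseudo-Cauchy sequences) henselian-like enough that, after using $|A|^+$-saturation to find a pseudo-limit $b\in\VF(N)$ of the image sequence, the assignment $a\mapsto b$ is again an $\LDIV$-embedding. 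Density rather than mere henselianity is what lets me handle the algebraic (transcendental-equation-free) pseudo-limit inside $\VF(N)$ itself rather than in $\alg{\VF(N)}$.

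The main obstacle, I expect, is the immediate case: ensuring that a pseudo-limit of the relevant pseudo-Cauchy sequence actually exists \emph{in $\VF(N)$} and not just in a completion or algebraic closure, and that the resulting map respects $\div$. This is precisely what the density hypothesis "$\VF(N)$ is dense in $\alg{\VF(N)}$" is designed to supply: any pseudo-Cauchy sequence of algebraic type has a pseudo-limit in $\alg{\VF(N)}$, hence — by density — arbitrarily good approximations in $\VF(N)$, and $|A|^+$-saturation upgrades "arbitrarily good approximations" to an actual pseudo-limit in $\VF(N)$. The transcendental case is comparatively soft, using only non-triviality of the valuation, largeness of the residue field, and saturation. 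Finally, once the $\LDIV$-embedding $\VF(A)(a)\to\VF(N)$ is in hand, I would note it extends canonically to an $\L$-embedding of the full $\L$-substructure generated by $A$ and $a$, because each additional sort is interpreted identically on both sides and the generated substructure adds, beyond $\VF(A)(a)$, only interpreted elements that $f$ already determines; this completes the proof.
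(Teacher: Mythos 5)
There is a genuine gap, and it sits exactly where you predicted the difficulty would be: the immediate case. If the pseudo-Cauchy sequence in $\VF(A)$ witnessing the immediate extension $\VF(A)(a)/\VF(A)$ is of \emph{algebraic} type, then being a pseudo-limit of the image sequence does not determine the quantifier-free $\LDIV$-type over $f(\VF(A))$: for the critical polynomial $q$ (of minimal degree whose values along the sequence do not stabilize), a pseudo-limit $b$ only satisfies $\val(q^f(b))\geq$ the values along the sequence, and different pseudo-limits can generate non-isomorphic valued field extensions (a root of $q^f$ in $\alg{\VF(N)}$, or an element of $\VF(N)$ extremely close to such a root, is also a pseudo-limit). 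So ``find a pseudo-limit $b\in\VF(N)$ by density and saturation and send $a\mapsto b$'' need not give an $\LDIV$-embedding; you must realize the full quantifier-free type, in particular force $\val(q^f(b))$ to equal $f(\val(q(a)))$ exactly, and your sketch gives no argument for this. (Kaplansky's uniqueness covers only the transcendental-type case, where saturation alone suffices and density is not even needed.) A second, separate gap is the reduction to $\LDIV$: $A$ is an $\L$-substructure and may contain points of $\vg$, $\rf$, $\RV$ or the geometric sorts that are not generated by, nor definable over, $\VF(A)$; the extension must preserve quantifier-free relations between terms in $a$ and these points (e.g.\ $\val(a-c)<\gamma$ with $c\in\VF(A)$ and $\gamma\in\vg(A)\setminus\Q\otimes\val(\VF(A)^{\star})$), and this constraint is invisible to a case analysis of the extension $\VF(A)(a)/\VF(A)$ alone; it is not resolved by remarking that the new sorts are interpreted.

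For comparison, the paper's proof sidesteps both issues: by $|A|^{+}$-saturation it suffices to show that every quantifier-free $\L(A)$-definable set $\dX\subseteq\VF$ containing $a$ satisfies $f_\star\dX\cap\VF(N)\neq\emptyset$ (parameters from all of $A$, in whichever of the five languages, so the extra-sort constraints are automatically included). Quantifier elimination for $\ACVF$ in $\L$ gives $f_\star\dX\cap\alg{\VF(N)}\neq\emptyset$; if this set has non-empty interior one concludes by density of $\VF(N)$ in $\alg{\VF(N)}$, and if it has empty interior it is finite, which would make $a$ algebraic over $A$ and hence over $\VF(A)$, contradicting transcendence. In other words, all the valuation-theoretic case analysis you propose to do by hand is outsourced to $\ACVF$ quantifier elimination plus a topological dichotomy; your route could in principle be completed, but only by supplying precisely the missing exact-value control in the algebraic-type immediate case and by redoing the analysis over the full multi-sorted substructure $A$, which amounts to reproving those ingredients.
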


Before giving the proof, we mention that in \(\ACVF\), any definable function 
from an imaginary geometric sort to \(\VF\) has finite image. This follows 
for example from the fact that there are uncountable models of \(\ACVF\) 
whose imaginary part is countable (along with quantifier elimination for 
\(\ACVF\)). See Lemma~\ref{lem:acl field} for a similar argument for 
\(\SCVF\) or \(\SCVH\).

\begin{proof}
  By compactness, it suffices to prove that for every quantifier free 
  $\L(A)$-definable set $\dX$ such that $a\in \dX(M)$, $f_\star \dX\cap 
  \VF(N)\neq\emptyset$. Since \(N\) is non-trivially valued, \(\alg{\VF(N)}\) 
  is a model of \(\ACVF\), and since $\ACVF$ eliminates quantifiers in $\L$, 
  $f_\star \dX\cap \alg{\VF(N)}\neq\emptyset$. If $f_\star \dX\cap 
  \alg{\VF(N)}$ has non empty interior, then we conclude by density of 
  $\VF(N)$ in $\alg{\VF(N)}$. If it has empty interior, then $f_\star \dX\cap 
  \alg{\VF(N)}$ is finite and so is $\dX\cap\alg{\VF(M)}$.  In particular it 
  follows that $a$ is algebraic (in $\ACVF$) over $A$ and hence (by the 
  remark preceding the proof) over $\VF(A)$. This contradicts the fact that 
  $a$ is transcendental over $\VF(A)$.
\end{proof}

In fact, $f(a)$ can be chosen in any dense subfield $K_0$ of $\VF(N)$.

\begin{corollary}\label{cor:ext separable}
  Let $M$ and $N$ be two non-trivially valued fields (considered as $\L$-structures), 
  $A\leq M$, $f : A \to N$ be an $\L$-embedding. Assume that $N$ is separably 
  closed and $|M|^+$-saturated. Let $\VF(A)\leq K_0\leq \VF(M)$ be such that 
  the extension $\VF(A)\leq K_0$ is separable. Then $f$ can be extended to an 
  $\L$-embedding $A(K_0) \to N$.
\end{corollary}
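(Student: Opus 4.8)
The plan is to build the extension in two stages, mirroring the decomposition of a separable extension into a purely transcendental part followed by a separable algebraic part: the first stage will be handled by iterating Lemma~\ref{lem:ext transc}, and the second by elementary valuation theory. First I would make a harmless reduction: we may assume that $\VF(A)$ is a subfield of $\VF(M)$ (otherwise first extend $f$ to the closure of $A$ under inverses, which does not change $A(K_0)$).

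\emph{Stage 1 (transcendental part).} Since the extension $\VF(A)\le K_0$ is separable, it is separably generated, so it admits a separating transcendence basis $B=(b_i)_{i<\kappa}$: the family $B$ is algebraically independent over $\VF(A)$ and $K_0$ is separable algebraic over $\VF(A)(B)$. I would extend $f$ along $B$ by transfinite recursion: given an $\L$-embedding $f_i\colon A_i:=A(b_j:j<i)\to N$ extending $f$, note that $b_i$ is transcendental over $\VF(A_i)$, so Lemma~\ref{lem:ext transc} applies and yields an extension $f_{i+1}\colon A_{i+1}\to N$; at limit stages take unions. The hypotheses of Lemma~\ref{lem:ext transc} hold throughout: $|A_i|\le|M|$, so $N$ is $|A_i|^+$-saturated, and $\VF(N)$ is dense in $\alg{\VF(N)}$ because $N$ is a separably closed non-trivially valued field (Proposition~\ref{prop:SCVF-dense}). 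After this stage — and, once more, closing off under inverses — we are reduced to the situation in which $K_0$ is separable algebraic over the valued subfield $F:=\VF(A)$ of $M$, and $f$ restricts to a valued field embedding of $F$ into $\VF(N)$.

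\emph{Stage 2 (separable algebraic part).} Here I would use Zorn's lemma: it suffices to show that whenever $F\subseteq E\subsetneq K_0$ and $f$ has been extended to an $\L$-embedding $A(E)\to N$, it extends further across some $\alpha\in K_0\setminus E$. Any such $\alpha$ is separable algebraic over $E$ (being separable algebraic over $F$), with minimal polynomial $q\in E[X]$. Since $f$ maps $E$ isomorphically onto a valued subfield of $\VF(N)$ and $\VF(N)$ is separably closed, the separable polynomial $f(q)$ splits into distinct linear factors over $\VF(N)$; I would then pick the root $\alpha'\in\VF(N)$ of $f(q)$ which corresponds, under the standard description of the extensions of a valuation to a finite extension (in terms of the roots of the defining polynomial), to the valuation that $M$ induces on $E(\alpha)$. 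Sending $\alpha$ to $\alpha'$ extends $f$ to a valued field embedding $E(\alpha)\to\VF(N)$. Finally, since in each of the five languages every sort other than $\VF$ is interpretable in $\VF$ with the structure induced from $\LDIV$, this valued field embedding extends uniquely to an $\L$-embedding, and we are done.

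The step I expect to require the most care is Stage 2, for two reasons. First, one has to pick the root $\alpha'$ of $f(q)$ so that the prescribed valuation — not merely the field structure — is preserved; this is where a little valuation theory (conjugacy of the extensions of a valuation to an algebraic extension, equivalently the root/valuation correspondence) enters. Second, and more subtly, for Stage 2 to be available at all one must have arranged at the interface of the two stages that $K_0$ is genuinely \emph{separable} algebraic over $F$; an arbitrary transcendence basis would be useless here, since a sub-extension of a separable extension need not be separable (for instance $\F_p(t)/\F_p(t^p)$ sits inside the separable extension $\F_p(t)/\F_p$), and for an inseparable generator the target $\VF(N)$, being separably but perhaps not perfectly closed, may simply fail to contain the needed root.
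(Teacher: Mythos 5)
Your two-stage plan (transcendental part via Lemma~\ref{lem:ext transc}, separable algebraic part using separable closedness of $N$) is the right architecture, but as written it has two genuine gaps. The first is at the very start of Stage 1: a separable extension that is not finitely generated need not admit a separating transcendence basis. Separably generated is automatic only for finitely generated separable extensions; for instance $\bigcup_n\F_p(t^{1/p^n})$ is separable over the perfect field $\F_p$, yet it is perfect of transcendence degree one, so it contains $s^{1/p}$ for any candidate separating element $s$ and hence is never separable algebraic over $\F_p(s)$. In the corollary $K_0$ is typically huge (in the proof of Proposition~\ref{prop:SCVF EQ} one applies it with $K_0=\VF(M)$), so your transfinite recursion has no basis to recurse along; nor can you adjoin the elements of $K_0$ one at a time, since an element may be inseparable over the intermediate field already built ($t^{1/p}$ over $\F_p(t)$ in the example). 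The missing step is the paper's first reduction: by compactness and $|M|^+$-saturation of $N$, extending $f$ to $A(K_0)$ amounts to realizing the quantifier-free $\L$-type over $A$ of a tuple enumerating $K_0$, and each finite part of that type only involves a finitely generated subextension of $K_0/\VF(A)$, which is still separable over the \emph{same} base $\VF(A)$ and therefore does have a separating transcendence basis. After that reduction, your two stages become the paper's induction on $\trdeg(K_0/\VF(A))$.

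The second gap is the last sentence of Stage 2. Choosing the root $\alpha'$ so that the induced valuation on $E(\alpha)$ is correct is a statement purely about the valued field $\VF$, and the ``other sorts are interpretable'' remark does not bridge it to an $\L$-embedding: a substructure $A$ in $\LkGamma$, $\LRV$ or $\LG$ may contain points of $\rf$, $\RV$, $\vg$, $\latt_n$, $\tor_n$ that are \emph{not} generated from $\VF(A)$, and compatibility of the new root with $f$ on those points is an extra constraint your criterion does not see. Concretely, take $n$ prime to $p$, a unit $a\in\VF(A)$, $\alpha\in K_0$ with $\alpha^n=a$, and suppose $c:=\res(\alpha)$ lies in $\rf(A)$ but not in the residue ring of $\VF(A)$ (such configurations exist, e.g.\ $A$ generated by $u^n\in\VF$ and $\res(u)\in\rf$ inside $\F_p(u)((s))$). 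All roots of $X^n-f(a)$ in $\VF(N)$ induce the same valuation on $E(\alpha)$, but their residues differ by nontrivial $n$-th roots of unity, and only the root with residue $f(c)$ can give an $\L$-embedding of $A(E(\alpha))$ extending $f$; the wrong root gives a map that is not an embedding at all. What is needed is that $\alpha'$ realize the full quantifier-free $\L$-type of $\alpha$ over $A$ (residues, leading terms, cuts in $\vg(A)$, relations to lattices and torsors in $A$), and this is precisely what the paper gets in one stroke from quantifier elimination for $\ACVF$ in $\L$: $f$ is partial elementary between $\alg{M}$ and $\alg{N}$, hence extends over separably algebraic elements, and separable closedness of $N$ together with $\bGim(N)=\bGim(\alg{N})$ (Proposition~\ref{prop:SCVF-dense} and Lemma~\ref{lem:dense geom}) places the image inside $N$. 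Your conjugation-of-valuations argument is the $\VF$-sort shadow of that quantifier elimination; by itself it does not control the remaining sorts.
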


\begin{proof}
  First, if $K_0$ is an algebraic extension of $\VF(A)$, then, by quantifier 
  elimination in $\ACVF$, $f$ extends to an $\L$-embedding of $A(K_0)$ into 
  $N$ (a priori, the image of $f$ is in $\alg{N}$, but because $N$ is 
  separably closed, it is, in fact, in $N$). So we may always assume that 
  $\VF(A)$ is separably closed.

  By Proposition \ref{prop:SCVF-dense}, $N$ is dense in $\alg{N}$. By 
  compactness and saturation of $N$, it is enough to show the result for 
  $K_0$ which is a finitely generated separable extension of $\VF(A)$. Note 
  that such an extension admits a separating transcendence basis. Using Lemma 
  \ref{lem:ext transc}, we may thus conclude by induction on 
  $\trdeg(K_0/\VF(A))$.
\end{proof} 

\begin{proof}[{Proof of Proposition\,\ref{prop:SCVF EQ} (following 
  {\cite[Proof of 5.2.1, p.59]{HonPhD}})}]
  Let $M$, $N$ be two models of
 $\SCVF_{p,e}$, $A\leq M$ and $f : A\to N$ an 
  $\L^\lambda$-embedding. We have to show, given $a\in M$ and provided $N$ is 
  $|M|^+$-saturated, that $f$ extends to $A(a)$. By compactness, it suffices 
  to show that for every quantifier free $\L^\lambda(A)$-definable set $\dX$ 
  such that $a\in \dX(M)$, $f_\star \dX \cap N\neq\emptyset$. By 
  Lemma\,\ref{lem:lambda res}, we may assume that $\dX$ is $\L(A)$-definable, 
  at the cost of turning $a$ into a finite tuple of elements of $\VF(M)$.  
  Note that, because $A$ is closed under $\lambda$-functions, that 
  $\VF(M)/\VF(A)$ is a separable field extension, so we may extend $f$ to an 
  $\L$-embedding $M\to N$ by Corollary \ref{cor:ext separable}. We then have 
  $f(a) \in f_\star \dX \cap N$ and that concludes the proof.
\end{proof}

\begin{corollary}\label{cor:SCVF compl}
  Let $M\models\SCVF_{p,e}$. The theory of $M$ is completely determined by 
  the $\LDIV$-isomorphism type of $\F_p[b_1,\ldots,b_e]$.
\end{corollary}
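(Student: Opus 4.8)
The plan is to deduce completeness of $\Th(M)$ from the quantifier elimination already established in Proposition~\ref{prop:SCVF EQ}. Recall that a theory with QE in a language $\L^\lambda$ is complete as soon as all its models share a common $\L^\lambda$-substructure, and more precisely, two models $M, N$ satisfy the same $\L^\lambda$-sentences as soon as there is an $\L^\lambda$-embedding of some structure into both. So the first step is to identify a canonical minimal $\L^\lambda$-substructure of any $M\models\SCVF_{p,e}$: it is the substructure generated by the empty set, i.e.\ by the $p$-basis constants $b_1,\dots,b_e$, closed under the ring operations, the $\lambda$-functions, and (in the multi-sorted or geometric versions) whatever is induced on the auxiliary sorts. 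Since the $\lambda$-functions and the field operations generate, starting from $b_1,\dots,b_e$, exactly the perfect(-ish) closure data sitting over $\F_p[b_1,\dots,b_e]$, and since in fact $\lambda$ applied to a polynomial in the $b_i$ with $\F_p$-coefficients lands again in the fraction field of $\F_p[b_1,\dots,b_e]$ (indeed in $\F_p[b_1^{1/p^\infty},\dots]$ — but the point is it is determined by the ring $\F_p[b_1,\dots,b_e]$ together with its embedding into $M$), the prime $\L^\lambda$-substructure $A_0(M)$ is canonically determined, as an abstract $\L^\lambda$-structure, by the $\LDIV$-isomorphism type of $\F_p[b_1,\dots,b_e]$ — that is, by the valuation data $\val(b^I - b^J)$ etc.\ on that ring.

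Concretely, I would argue as follows. Let $M, N\models\SCVF_{p,e}$ and suppose $\F_p[b_1,\dots,b_e]\subseteq \VF(M)$ and $\F_p[b_1,\dots,b_e]\subseteq \VF(N)$ are $\LDIV$-isomorphic via an isomorphism fixing the $b_i$ (or, more generally, matching the $p$-bases). First extend this to the fraction fields, which is automatic since $\LDIV$-structure on a domain determines it on the fraction field. Then close under $\lambda$-functions: using Lemma~\ref{lem:lambda res} (or just the explicit formulas expressing $\lambda(x+y)$, $\lambda(xy)$ as polynomials in $\lambda(x),\lambda(y)$, together with surjectivity of $\lambda^n$), one checks that the $\lambda$-closure of $\F_p(b_1,\dots,b_e)$ inside $\VF(M)$ is, as an $\L^\lambda$-structure, canonically built from the $\LDIV$-data of $\F_p[b_1,\dots,b_e]$; the images of the $b^I$ under iterated $\lambda$ are $p^n$-th roots of elements of $\F_p[b_1,\dots,b_e]$, uniquely determined in any valued field extension because $x\mapsto x^{p^n}$ is injective, and their valuations are determined by dividing valuations in $\Gamma$. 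Hence the isomorphism extends to an $\L^\lambda$-isomorphism between the prime substructures $A_0(M)$ and $A_0(N)$. On the auxiliary sorts ($\vg$, $\rf$, $\RV$, or the geometric sorts) there is nothing to add: the prime substructure has on these sorts exactly the elements definable over $\emptyset$ from the $\VF$-part, and these are determined by the $\VF$-part. Now apply QE: since $A_0(M)\cong A_0(N)$ as $\L^\lambda$-structures and both embed into $M$, $N$ respectively, $M\equiv N$.

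Finally, for the converse direction of "completely determined" — that $\Th(M)$ does determine the $\LDIV$-isomorphism type of $\F_p[b_1,\dots,b_e]$ — one just notes that this isomorphism type is encoded by the collection of truth values of the atomic $\LDIV$-formulas $\val(P(b)) \le \val(Q(b))$ for $P, Q\in\F_p[X_1,\dots,X_e]$, each of which is a sentence of $\L^\lambda$ (the $b_i$ being constants) and hence decided by $\Th(M)$. So the two data mutually determine each other, which is the assertion of the corollary.

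I do not expect a serious obstacle here; the only mildly delicate point is to be careful about what the prime $\L^\lambda$-substructure actually is — in particular that closing $\F_p(b_1,\dots,b_e)$ under the $\lambda$-functions does not introduce any new "choices" beyond $p$-power roots, whose valuations and algebraic relations are forced — and to handle the several languages uniformly by invoking the fact (noted after Fact~\ref{F:ACVF-QE}, and implicit in Proposition~\ref{prop:SCVF EQ}) that the auxiliary sorts carry only the induced structure. Everything else is a routine back-and-forth packaged inside the already-proven quantifier elimination.
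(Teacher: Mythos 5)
Your proposal is correct and follows essentially the same (and only natural) route as the paper, which states this corollary as an immediate consequence of quantifier elimination (Proposition~\ref{prop:SCVF EQ}): the complete theory is determined by the quantifier-free type of the prime substructure generated by the constants $b_1,\ldots,b_e$, which is in turn determined by the $\LDIV$-isomorphism type of $\F_p[b_1,\ldots,b_e]$. One small correction that only simplifies your argument: closing $\F_p(b_1,\ldots,b_e)$ under the $\lambda$-functions adds no new elements and in particular no $p$-power roots (which, e.g.\ $b_i^{1/p}$, do not even lie in $K$), since Frobenius is the identity on $\F_p$ and hence the $p$-components of any $P(b)\in\F_p[b_1,\ldots,b_e]$ with respect to the $p$-basis $b$ again lie in $\F_p[b_1,\ldots,b_e]$, so your "uniqueness of $p^n$-th roots" step is not needed.
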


From quantifier elimination (Proposition \ref{prop:SCVF EQ}), the existence 
of $\lambda$-resolutions and the fact that $\ACVF_{p,p}$ is $\NIP$, we obtain 
the following.

\begin{corollary}[{Delon, see \cite[Corollary 
  5.2.13]{HonPhD}}]\label{C:SCVF-NIP}
  Any completion of $\SCVF_{p,e}$ is $\NIP$.
\end{corollary}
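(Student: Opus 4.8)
The plan is to deduce NIP of $\SCVF_{p,e}$ from NIP of $\ACVF_{p,p}$ by transporting a hypothetical IP formula across the $\lambda$-resolution machinery. More precisely, I would argue by contraposition: suppose $\SCVF_{p,e}$ has the independence property, witnessed by some formula $\phi(x;y)$ in one of the languages $\L^\lambda$. By quantifier elimination (Proposition~\ref{prop:SCVF EQ}) we may take $\phi$ to be quantifier free in $\L^\lambda$. The key observation is that a quantifier free $\L^\lambda$-formula is, after applying $\lambda^n$ to the relevant tuples for $n$ large enough, equivalent to the pullback of a quantifier free $\L$-formula: this is exactly the content of Lemma~\ref{lem:lambda res} (the existence of $\lambda$-resolutions), together with the fact that $\lambda^n$ is a definable function and $\lambda(x+y)$, $\lambda(xy)$ are polynomial in $\lambda(x),\lambda(y)$.

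First I would make precise the reduction. Given $\phi(x;y)$ quantifier free in $\L^\lambda$, apply Lemma~\ref{lem:lambda res} to the definable set it defines (in the combined variables $x,y$) to obtain $n$ and a quantifier free $\L$-formula $\psi(x';y')$ such that $\phi(x;y)\leftrightarrow\psi(\lambda^n(x);\lambda^n(y))$ holds in every model of $\SCVF_{p,e}$ (one has to be slightly careful that $\lambda^n$ is applied separately to the $x$-block and the $y$-block; this is fine since the monomial basis factors through the product). Now if $(a_i)_{i\in\omega}$ and $(b_S)_{S\subseteq\omega}$ witness the independence property for $\phi$ in a model $M\models\SCVF_{p,e}$, then setting $a_i'=\lambda^n(a_i)$ and $b_S'=\lambda^n(b_S)$, the sequences $(a_i')$, $(b_S')$ witness the independence property for $\psi$ in the underlying valued field $\VF(M)$, viewed as an $\L$-structure. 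Since $\VF(M)$ is separably closed and non-trivially valued, $\alg{\VF(M)}$ is a model of $\ACVF_{p,p}$, and $\VF(M)$ is an $\L$-substructure of it; as $\psi$ is quantifier free in $\L$, its truth value on tuples from $\VF(M)$ agrees with its truth value computed in $\alg{\VF(M)}$. Hence $\psi$ has the independence property in $\ACVF_{p,p}$, contradicting the fact (cited just above) that $\ACVF$ is NIP.

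The main obstacle, such as it is, is purely bookkeeping: one must check that the $\lambda$-resolution can be taken uniformly in the parameter variable $y$, i.e.\ that a single $n$ works to resolve the whole family, and that $\lambda^n$ commutes appropriately with the separation of variables into an $x$-block and a $y$-block. Both are immediate from the explicit description of $\lambda$ in terms of the $p$-basis: $\lambda^n$ of a tuple is computed coordinatewise, and the $p^n$-coordinates of a sum or product lie in the subring generated by the $p^n$-coordinates of the summands or factors, so Lemma~\ref{lem:lambda res} applied to the graph of $\phi$ in $\VF^{|x|+|y|}$ already gives what is needed. Everything else is a soft transfer argument. Since all other completions of $\SCVF_{p,e}$ are covered by the same argument (the residue characteristic is always $p$ and the valued-field characteristic is $p$, so the relevant completion of $\ACVF$ is always $\ACVF_{p,p}$, which is NIP), this establishes that every completion of $\SCVF_{p,e}$ is NIP.
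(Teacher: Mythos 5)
Your argument is correct and is exactly the route the paper intends: the paper derives Corollary~\ref{C:SCVF-NIP} from quantifier elimination (Proposition~\ref{prop:SCVF EQ}), the existence of $\lambda$-resolutions (Lemma~\ref{lem:lambda res}), and NIP of $\ACVF_{p,p}$, which are precisely the ingredients you assemble, with the transfer of an IP pattern along the injective map $\lambda^n$ and the quantifier-free preservation into $\alg{\VF(M)}\models\ACVF_{p,p}$ spelled out. The only detail left implicit (harmless, since NIP passes to imaginary sorts and it suffices to treat formulas in the field sort) is the reduction to the one-sorted language before invoking the $\lambda$-resolution lemma.
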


Similar results hold for $\SCVH_{p,e}$:

\begin{proposition}\label{prop:SCVH QE}
  $\SCVH_{p,e}$ is complete and eliminates quantifiers in the one, two and 
  three sorted languages, the leading term language as well as in the 
  geometric language.
\end{proposition}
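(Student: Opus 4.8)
The plan is to deduce both statements from the corresponding results for $\SCVF_{p,e}$ (Proposition~\ref{prop:SCVF EQ}), following the strategy by which, in \cite{ZieSCH}, quantifier elimination for $\SCH_{p,e}$ is obtained from quantifier elimination for $\SCF_{p,e}$: a model of $\SCVH_{p,e}$ equipped with a canonical $p$-basis $b$ is the same thing as a model of $\SCVF_{p,e}$ in the $\L^\lambda$-language (for the $\lambda$-functions $\lambda_b$ associated with $b$), the operators $D_{i,n}$ being recovered from $b$ and $\lambda_b$ as quantifier-free $\L^\lambda$-definable functions by the construction recalled above, and, conversely, $\lambda_{b,I}(x)^{p}$ being a ring-polynomial in $b$ and finitely many of the $D_N(x)$. \emph{Completeness} will then be easy once quantifier elimination is known: in each of the five languages, every model of $\SCVH_{p,e}$ contains $\F_p$ as its prime $\L^D$-substructure, with the trivial valuation and with $D_{i,n}$ vanishing on $\F_p$ for $n>0$ (by induction on $n$), and any two such substructures are $\L^D$-isomorphic.

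For quantifier elimination I would use the usual embedding criterion: given $M,N\models\SCVH_{p,e}$ with $N$ $\card{M}^{+}$-saturated, an $\L^D$-substructure $A\leq M$ and an $\L^D$-embedding $f\colon A\to N$, one must extend $f$ to an $\L^D$-embedding $M\to N$. After replacing $\VF(A)$ by its fraction field (Hasse derivations extend uniquely along fraction fields), the crux is to first enlarge $f$ to an $\L^D$-embedding $f'\colon A'\to N$ whose domain $A'$ contains a canonical $p$-basis $b$ of $M$ and is closed under $\lambda_b$. Once this is done, one checks, using that $f'$ preserves the defining identities of $b$ together with strictness, that $f'(b)$ is again a canonical $p$-basis of $N$; then $(M,b,\lambda_b)$ and $(N,f'(b),\lambda_{f'(b)})$ are models of $\SCVF_{p,e}$, $A'$ is an $\L^\lambda$-substructure, and $f'$ is an $\L^\lambda$-embedding — the compatibility with the $\lambda$-functions being automatic, since $f'$ preserves the ring-polynomial expression for $\lambda_{b,I}(x)^{p}$ and Frobenius is injective. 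Quantifier elimination for $\SCVF_{p,e}$ (Proposition~\ref{prop:SCVF EQ}) then extends $f'$ to an $\L^\lambda$-embedding $M\to N$, which is in particular the desired $\L^D$-embedding. The argument is uniform over the five languages, since in each case the additional sorts are those induced from $\LDIV$ and are already respected by $f$.

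It remains to carry out the enlargement, and this is where I expect the real work to lie. I would first pass, inside $M$, to the purely inseparable closure $\VF(A)^{\mathrm{ins}}=\{x\in\VF(M):x^{p^{n}}\in\VF(A)\ \text{for some }n\}$: it is again a $D$-subfield, because if $x^{p^{n}}\in\VF(A)$ then $D_{i,m}(x)^{p^{n}}=D_{i,p^{n}m}(x^{p^{n}})\in\VF(A)$ for every $m$; and $f$ extends to it, because being a $p^{n}$-th power is, for elements of a strict $D$-field, detected by the vanishing of the $D_{i,p^{k}}$ with $k<n$, conditions which $f$ preserves, so that the required $p^{n}$-th roots exist (uniquely) in $N$, the values of the valuation and of the $D_{i,n}$ on the new elements then being forced. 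Now $\VF(M)/\VF(A)^{\mathrm{ins}}$ is separable; I would choose a canonical $p$-basis $b$ of $M$ (these exist since the underlying Hasse field is a model of $\SCH_{p,e}$), handle the algebraic part of $b$ over $\VF(A)^{\mathrm{ins}}$ as just described (purely inseparable part) together with Corollary~\ref{cor:ext separable}-style extensions (separable algebraic part), and then $\lambda_b$-close, again via the strictness argument. The \textbf{main obstacle} is the transcendental part of $b$ over $\VF(A)^{\mathrm{ins}}$: one must place it inside $N$ so that it realizes the appropriate valued-field type over $f(\VF(A)^{\mathrm{ins}})$ \emph{and} retains the Hasse-derivation values of a canonical $p$-basis. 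Here the density of the $p^{n}$-th powers in the algebraic closure (Proposition~\ref{prop:SCVF-dense}), an extension-of-embeddings argument in the spirit of Lemma~\ref{lem:ext transc}, and the saturation of $N$ should combine to build $f'(b)$ coordinate by coordinate as a sufficiently generic canonical $p$-basis of $N$; the delicate bookkeeping will be the interaction between the separating transcendence basis, the iterativity relations among the $D_N$, and the valuation topology.
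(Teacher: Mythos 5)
Your overall plan --- adjoin to the domain of the embedding a canonical $p$-basis $b$ of $M$, mapped to a canonical $p$-basis of $N$, so that Proposition~\ref{prop:SCVF EQ} applies, with completeness then following from the common prime substructure --- is coherent, and it is a genuinely different route from the paper's proof: there, the leading-term case is handled syntactically (an $\LRV^{D}$-formula is rewritten as a quantifier-free $\LRV^{\lambda}$-formula and a \emph{very canonical} $p$-basis over $\langle x\rangle$ is used to strip $b$ out of the resulting formula), and the geometric case is a back-and-forth that adjoins $\ppowi{\VF}(M)$ --- absolute constants, on which all derivations vanish, so no $D$-compatibility constraints arise --- before invoking the already-established one-sorted quantifier elimination. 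But as written your argument has a genuine gap exactly where you flag it: you never prove that one can find in $N$ a tuple which simultaneously realizes $f_{*}$ of the quantifier-free type of (the transcendental part of) $b$ over the enlarged domain \emph{and} has the derivation values $D_{i,1}(c_i)=1$, $D_{j,n}(c_i)=0$ of a canonical $p$-basis. Saying that density, Lemma~\ref{lem:ext transc} and saturation ``should combine'' is not an argument; this joint constraint is precisely where the content of the proposition sits, and the steps you do carry out (fraction fields, the purely inseparable closure, $\lambda_b$-closure via Frobenius injectivity) are routine by comparison. The paper's devices (very canonical $p$-bases; adjoining $\ppowi{\VF}(M)$ rather than a $p$-basis) exist exactly to avoid having to realize a canonical $p$-basis with a prescribed type.

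The gap does look fillable, but it needs an actual proof, roughly as follows. The tuples satisfying the canonical-basis identities form exactly a coset $c^{0}+\ppowi{\VF}(N)^{e}$: if $b,b'$ both satisfy them, then $D_{j,n}(b_i-b'_i)=0$ for all $n>0$, so $b_i-b'_i\in\ppowi{\VF}(N)$ by strictness; and such a tuple is automatically a $p$-basis since the matrix $(D_{j,1}(c_i))$ is the identity. This coset is dense in $\alg{\VF(N)}$ for saturated $N$ (Proposition~\ref{prop:SCVF-dense}). Working coordinate by coordinate, a finite conjunction of quantifier-free conditions satisfied by a coordinate of $b$ transcendental over the current domain defines, in $\alg{\VF(N)}$, a one-variable set which is either finite (forcing algebraicity, as in Lemma~\ref{lem:ext transc}, using that definable maps from the imaginary geometric sorts to $\VF$ have finite image) or has nonempty interior, hence meets the dense coset; saturation then realizes the whole type inside the coset. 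Separably algebraic coordinates have forced images, and their $D$-values come for free because Hasse derivations extend uniquely to separable algebraic extensions. You should also justify the assertion that $\VF(M)/\VF(A)^{\mathrm{ins}}$ is separable (note $A^{\mathrm{ins}}$ is strict, and then quote Ziegler's result that strict $D$-subfields of strict $D$-fields give separable extensions). Until the canonical-basis realization step is written out along these lines, the proposal remains a plan rather than a proof of the proposition.
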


The proof uses the following notion:

\begin{definition}
  Let $M\models\SCVH_{p,e}$ and $A\subseteq M$. A $p$-basis $b$ of $M$ is 
  said to be \emph{very canonical} over $A$ if it is a canonical $p$-basis 
  such that $b_i\in\bO$ for all $i$ and the $\res(b_i)$ are algebraically 
  independent over $\k(A)$.
\end{definition}

\begin{lemma}
  Let $M\models\SCVH_{p,e}$ and $A\subseteq M$. If $M$ is $|A|^+$-saturated, 
  then $M$ admits a $p$-basis which is very canonical over $A$.
\end{lemma}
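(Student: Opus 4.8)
The plan is to start from an arbitrary canonical \(p\)-basis of \(M\) and to perturb it so that its members lie in \(\bO\) with residues algebraically independent over \(\k(A)\), while staying a canonical \(p\)-basis. By \cite{ZieSCH} (see the discussion above), \(M\) admits a canonical \(p\)-basis \(c = (c_1,\dots,c_e)\); recall that this means that \(c\) is a \(p\)-basis with \(D_{i,n}(c_j) = \delta_{i,j}\) for \(n = 1\) and \(D_{i,n}(c_j) = 0\) for \(n > 1\), and, conversely, that any \(p\)-basis satisfying these relations is canonical. The core of the argument is the following stability property: for every tuple \(z = (z_1,\dots,z_e)\) with \(z_i \in \ppowi{M}\), the tuple \(c + z := (c_1 + z_1,\dots,c_e + z_e)\) is again a canonical \(p\)-basis of \(M\). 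Granting this, it remains only to choose \(z\) suitably.

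First I would establish the stability property. Since \(\ppowi{M} \subseteq \ppow{M}\), expanding \((c + z)^I = \prod_i (c_i + z_i)^{I(i)}\) by the multinomial theorem gives \((c+z)^I = c^I + \sum_{J} \mu_{I,J}\, c^J\), where \(J\) ranges over the tuples in \(p^e\) with \(J \le I\) and \(J \neq I\) coordinatewise and each \(\mu_{I,J}\) lies in \(\ppow{M}\) (every factor \(z_i^{I(i) - J(i)}\) with \(I(i) > J(i)\) is a \(p\)-th power). Hence the change-of-basis matrix from \((c^I)_{I \in p^e}\) to \(((c+z)^I)_{I \in p^e}\) is, with respect to any linear order refining the coordinatewise order on \(p^e\), unitriangular with entries in \(\ppow{M}\), hence invertible over \(\ppow{M}\); as \((c^I)_I\) is a basis of \(M\) over \(\ppow{M}\), so is \(((c+z)^I)_I\), i.e.\ \(c + z\) is a \(p\)-basis. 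For canonicity, note that strictness gives \(\ppowi{M} = C_\infty = \bigcap_{i \le e,\, n \ge 1}\ker D_{i,n}\), so by additivity of the \(D_{i,n}\) one has \(D_{i,n}(c_j + z_j) = D_{i,n}(c_j)\) for all \(i,j \le e\) and \(n \ge 1\); thus \(c + z\) is a \(p\)-basis satisfying the relations defining a canonical \(p\)-basis, and is therefore canonical.

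It then suffices to choose the perturbation. By Proposition~\ref{prop:SCVF-dense} (applicable since \(M\) is \(\omega\)-saturated), \(\ppowi{M}\) is dense in \(\alg{M}\) and hence in \(M\); moreover \(\k(M)\) is infinite, since by that proposition and Lemma~\ref{lem:dense geom} the extension \(\alg{M}/\ppowi{M}\) is immediate, so that \(\k(M) = \k(\alg{M})\), the residue field of a model of \(\ACVF_{p,p}\). As \(M\) is \(|A|^+\)-saturated we have \(\trdeg(\k(M)/\k(A)) \ge e\), so I may pick \(t_1,\dots,t_e \in \k(M)\) algebraically independent over \(\k(A)\) together with lifts \(\hat t_i \in \bO(M)\). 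Using density of \(\ppowi{M}\) in \(M\), I then choose \(z_i \in \ppowi{M}\) with \(\val\bigl(z_i - (\hat t_i - c_i)\bigr) > 0\) and set \(b_i := c_i + z_i\). Then \(b_i \equiv \hat t_i \pmod{\m}\), so \(b_i \in \bO\) and \(\res(b_i) = t_i\); in particular \(\res(b_1),\dots,\res(b_e)\) are algebraically independent over \(\k(A)\). By the stability property \(b = (b_1,\dots,b_e)\) is a canonical \(p\)-basis, and hence it is very canonical over \(A\).

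The step I expect to be the main obstacle is the stability property: the key realisation is that a perturbation living in \(\ppowi{M}\) is invisible both to the \(p\)-basis structure, because \(\ppowi{M}\subseteq\ppow{M}\), and to the Hasse derivations, because \(\ppowi{M} = C_\infty\) by strictness. Once this is in hand, the residues can be prescribed essentially for free by combining density of \(\ppowi{M}\) with saturation. The only external inputs are that the Kronecker-type relations characterise canonical \(p\)-bases and that a canonical \(p\)-basis exists to begin with, both of which come from Ziegler's analysis in \cite{ZieSCH}.
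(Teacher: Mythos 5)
Your proof is correct and follows essentially the same route as the paper's: start from a canonical $p$-basis (Ziegler), perturb it by a tuple from $\ppowi{\VF}(M)$ --- which affects neither the $p$-basis property nor the values of the $D_{i,n}$, hence preserves canonicity --- and use density of $\ppowi{\VF}(M)$ together with saturation to prescribe residues algebraically independent over $\k(A)$. You merely spell out details the paper leaves implicit (the unitriangular change of basis over $\ppow{M}$ and the Kronecker-relation characterisation of canonical $p$-bases), and you share with the paper the same implicit use of $\omega$-saturation for the density of $\ppowi{\VF}(M)$.
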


\begin{proof}
  By \cite[Corollary\,4.2]{ZieSCH}, $M$ has a canonical $p$-basis. As 
  $D_{i,n}(\ppowi{\VF})=0$ for any $n>0$, if $b$ is a canonical 
  $p$-basis and $a$ is an $e$-tuple from $\ppowi{\VF}(M)$, then $b+a$ 
  is also a canonical $p$-basis. Since $\ppowi{\VF}(M)$ is dense in 
  $\VF(M)$, for any $c\in\VF(M)$, $\res(c+\ppowi{\VF}(M)) = \k(M)$.  
  Using $|A|^+$-saturation, one can find a tuple $c\in\k(M)$ algebraically 
  independent over $\k(A)$ and a tuple $a\in\ppowi{\VF}$ such that $\res(b+a) 
  = c$.
\end{proof}

\begin{proof}[Proof of Proposition\,\ref{prop:SCVH QE}]
  Let us first prove quantifier elimination in the leading term language.  
  Quantifier elimination in the one, two and three sorted languages follow 
  formally. To be exact, in the case of the three sorted language, a little 
  work is still necessary as there is no sort for $\vg$ in $\LRV$. But, when 
  doing a back and forth, that can be easily taken care of by lifting points 
  in $\vg$ to $\RV$ using quantifier elimination in $\ACVF$. We will be using 
  Ziegler's trick from \cite{ZieSCH}.

  Let $\phi(x)$ be an $\LRV^{D}$-formula. As noted above, the $D_{i,n}$ can 
  be expressed in terms of $b$ and $\lambda$. It follows that $\phi(x)$ is 
  equivalent to an $\LRV^{\lambda}$-formula $\psi(x)$ and that by 
  Proposition\,\ref{prop:SCVF EQ}, we may assume that $\psi$ is quantifier 
  free. Note that, by Corollary\,\ref{cor:SCVF compl}, the formula $\psi$ 
  does not depend on the actual choice of very canonical $p$-basis.

  The only occurrences of $b$ and $\lambda$ in $\psi$ are in terms of the 
  form $\rv(\sum_I P_I(x)b^I)$ where the $P_I$ and $Q_I$ are polynomial in 
  $\lambda^m(x)$ for some $m\in\N$  (provided we rewrite $\sum_I P_I(x)b^I = 
  0$ into $\rv(\sum_I P_I(x)b^I) = 0$). Applying the Frobenius automorphism 
  $m$-times, we may assume that the $P_I$ and $Q_I$ are polynomials in 
  $\lambda^m(x)^{p^m}$. But by \cite[Lemma\,4.3]{ZieSCH}, the 
  $\lambda^m(x)^{p^m}$ can be expressed as polynomials in the $D_{i,n}(x)$ 
  and $b$. We may therefore assume that the $P_I$ and $Q_I$ are polynomials 
  in the $D_{i,n}(x)$.

  Taking $b$ to be a very canonical basis over $\langle x \rangle$, the 
  structure generated by $x$ (which we can do because the choice of very 
  canonical $p$-basis did not matter so far), we get that \[\rv\left(\sum_I 
  P_I(x)b^I\right) = \sum_I\rv(P_I(x))\res(b)^I.\]

  Moreover, for all $t_I\in\rv(\langle x \rangle)$, $\sum_I t_I\res(b^I) = 0$ 
  if and only if $\bigwedge_It_I=0$. Therefore, $\psi$ can be rewritten so 
  that $b$ does not appear in $\psi$ (and the rewriting does not depend on 
  the point $x$ we are considering). This concludes the proof in the leading 
  term language.

\smallskip

  Let us now prove quantifier elimination in the geometric language. Let $M$ 
  and $N$ be models of $\SCVH_{p,e}$ in the geometric language, $A\leq M$ and 
  $f:A\to N$ an $\LG^D$-embedding. Assume that $M$ is $\omega$-saturated and 
  $N$ is $|M|^+$ saturated. 
  We may assume that $\VF(A)$ is strict. Indeed, by \cite[Lemma\,2.4]{ZieSCH}, there is a smallest strict 
  extension of $\VF(A)$ which is uniquely determined up to isomorphism -- as a field extension, it is algebraic and purely inseparable.
  
  It now follows from 
  \cite[Corollary\,2.2]{ZieSCH} that the field extension 
  $\VF(A)\subseteq\VF(M)$ is separable. By Corollary \ref{cor:ext separable}, 
  $f$ extends to an $\LG$-embedding of $M$ into $N$, so there exists in 
  particular an $\LG$-embedding $g: B := A(\ppowi{\VF}(M))\to N$ extending 
  $f$.  Clearly $g(\ppowi{\VF}(M))\subseteq\ppowi{\VF}(N)$.  Since we have 
  added only absolute constants, $g$ is automatically an $\LG^D$-embedding.  
  As $M$ is $\omega$-saturated, $\ppowi{\VF}(M)$ is dense in $\VF(M)$ by 
  Proposition \ref{prop:SCVF-dense}, and so \(B\) is generated by \(\VF(B)\).  
  We may now use quantifier elimination in the one sorted language to extend 
  $g$ to $M$.  This concludes the proof in the geometric language.
\end{proof}

We will sometimes consider the set of all geometric sorts except the valued 
field sort $\VF$. We will call these sorts the \emph{imaginary} geometric 
sorts and write $\bGim:=\bG\setminus\{\VF\}=\{\vg,\rf\}\cup\{\latt_n | 
n\geq1\}\cup\{\tor_n | n\geq1\}$.

\begin{corollary}\label{cor:purely-geometric}
  Let $K\models T$ and $A\substr K$, where $T=\SCVH^{\G}_{p,e}$ (or 
  $T=\SCVF^{\G}_{p,e}$, respectively). Let $L=\alg{K}$. Then 
  $\Gim(K)=\Gim(L)$, and restriction to \(K\)-points determines an 
  equivalence between the $\LG^D(A)$-definable sets (the 
  $\LG^\lambda(A)$-definable sets, respectively) in the multi-sorted 
  structure $\Gim(K)$ and the $\LG(A)$-definable sets in $\Gim(L)$.
\end{corollary}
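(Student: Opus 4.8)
The plan is to treat the two halves of the statement in turn. First I would establish that $\Gim(K) = \Gim(L)$, i.e. that the imaginary geometric sorts do not grow when passing from $K$ to its algebraic closure $L = \alg{K}$. For this I would invoke Proposition~\ref{prop:SCVF-dense}: after passing to an $\omega$-saturated elementary extension, $\ppowi{\VF}(K)$ is dense in $\alg{\VF(K)} = \VF(L)$, and in particular $\VF(K)$ is dense in $\VF(L)$; then Lemma~\ref{lem:dense geom} gives $\latt_n(K) = \latt_n(L)$ and $\tor_n(K) = \tor_n(L)$ for all $n$, while the equalities $\vg(K) = \vg(L)$ and $\rf(K) = \rf(L)$ are immediate from density (or just immediacy of the extension). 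Since the statement $\Gim(K) = \Gim(L)$ is elementary, it descends from the saturated extension to $K$ itself. Note one must be slightly careful in the strict (Hasse) case to first pass to a strict substructure as in the proof of Proposition~\ref{prop:SCVH QE}, but for the conclusion about $\Gim$ this changes nothing since $\VF(L)$ is unaffected.

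Next, for the equivalence of categories of definable sets: the functor sends an $\LG^D(A)$-definable (resp. $\LG^\lambda(A)$-definable) set $\dX \subseteq \Gim(K)^m$ to $\dX(K)$, viewed as a subset of $\Gim(L)^m = \Gim(K)^m$, and on the $L$-side one uses that $\ACVF_{p,p}$ eliminates quantifiers in $\LG$ (Fact~\ref{F:ACVF-QE}), so $\LG(A)$-definable sets in $\Gim(L)$ are exactly the quantifier-free-$\LG(A)$-definable ones. The key point is that quantifier elimination for $\SCVH_{p,e}$ (resp. $\SCVF_{p,e}$) in the geometric language (Proposition~\ref{prop:SCVH QE}, resp. Proposition~\ref{prop:SCVF EQ}) together with the fact that all the extra structure ($\lambda$-functions, Hasse derivations, $p$-basis) lives on $\VF$ and is trivial on the imaginary sorts, shows that an $\LG^D(A)$-definable (resp. $\LG^\lambda(A)$-definable) subset of $\Gim^m$ is already quantifier-free-$\LG(A)$-definable: quantifier-free $\LG^D$- or $\LG^\lambda$-formulas in variables ranging over $\bGim$ can only use the $\LG$-symbols, since no function symbol from $\L_{p,e}^D$ or $\L_{p,e}^\lambda$ takes values in, or accepts arguments from, an imaginary sort, and constants $b_i$ lie in $\VF$. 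Hence the map $\dX \mapsto \dX(K)$ is well-defined into $\LG(A)$-definable subsets of $\Gim(L)^m$, is clearly injective (a quantifier-free $\LG(A)$-formula that holds on $\Gim(K)^m$ holds on $\Gim(L)^m$ and vice versa, again by the density/immediacy equalities identifying the two), and surjective (a quantifier-free $\LG(A)$-formula defines the same set on both sides). Functoriality and compatibility with definable maps is the same argument applied to graphs.

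The main obstacle I anticipate is the bookkeeping needed to check rigorously that quantifier-free $\LG^D(A)$- or $\LG^\lambda(A)$-formulas in \emph{imaginary-sorted} variables reduce to quantifier-free $\LG(A)$-formulas — in particular making sure no parameter in $A \cap \VF$ can be fed through $\lambda$-functions or $D_{i,n}$ to produce a genuinely new $A$-definable relation between imaginary points. The cleanest way around this is to argue at the level of automorphisms: an $\LG(A)$-automorphism of $\Gim(L)$ extends (using quantifier elimination for $\ACVF_{p,p}$ and for $\SCVH_{p,e}$/$\SCVF_{p,e}$, plus the fact that automorphisms of $L$ over $A$ restrict to automorphisms of $K$ over $A$ in the enriched language because $K = \alg{K} \cap (\text{separable closure})$ is $\Aut(L/A)$-invariant and the operators are canonically determined) to an $\LG^D(A)$- (resp. $\LG^\lambda(A)$-) automorphism of $K$, and conversely every such automorphism of $K$ restricts to one of $\Gim(K) = \Gim(L)$; since both theories eliminate imaginaries down to $\bG$ and the relevant structures are (taken inside) a monster model, a subset of $\Gim^m$ is definable over $A$ in one language iff it is invariant under the corresponding automorphism group, and the two groups have the same orbits on $\Gim$. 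This reduces the whole corollary to the $\Gim(K) = \Gim(L)$ statement plus the observation that the two automorphism groups induce the same permutations of $\Gim$.
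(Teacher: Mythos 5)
Your first two paragraphs do follow the paper's route: $\Gim(K)=\Gim(L)$ via density, and the correspondence of definable sets via quantifier elimination in the geometric language (Propositions~\ref{prop:SCVH QE} and~\ref{prop:SCVF EQ}, plus Fact~\ref{F:ACVF-QE} on the $\ACVF$ side). Two small remarks on the first half: the detour through an $\omega$-saturated extension is unnecessary, since the first part of Proposition~\ref{prop:SCVF-dense} already gives that $\ppow[n]{K}$, hence $K$ itself, is dense in $\alg{K}$ for \emph{every} model, so Lemma~\ref{lem:dense geom} applies directly; and the descent step you propose instead ("the statement $\Gim(K)=\Gim(L)$ is elementary") would itself require an argument (elementary in which structure?), so it is better avoided.

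The genuine gap is in your third paragraph, i.e.\ in how you resolve the one real subtlety you correctly identify (closed $\VF$-terms built from parameters by $\lambda$-functions or the $D_{i,n}$ appearing inside quantifier-free formulas whose variables range over $\bGim$). The intended fix is much simpler than an automorphism argument: the hypothesis $A\substr K$ means $A$ is closed under the new function symbols, so every closed $\VF$-term over $A$ evaluates to an element of $A$; hence a quantifier-free $\LG^D(A)$- (or $\LG^\lambda(A)$-) formula in imaginary variables is equivalent to a quantifier-free $\LG(A)$-formula, and quantifier-free formulas are absolute between $K$ and $L\supseteq K$, which together with $\Gim(K)=\Gim(L)$ and quantifier elimination for $\ACVF$ in $\LG$ gives both directions. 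Your automorphism route, as written, does not work: (i) an $\LG(A)$-automorphism of $L$ need \emph{not} restrict to an automorphism of $K$; the set $K$ is not $\Aut_{\LG}(L/A)$-invariant in general (for instance, in a sufficiently saturated $L$ there is a valued-field automorphism over $\F_p$ sending some $a\in K\setminus\ppow{K}$ of positive valuation to $a^{1/p}$, and any such automorphism moves $K$), and the parenthetical justification "$K=\alg{K}\cap(\text{separable closure})$ is $\Aut(L/A)$-invariant" is not a meaningful characterization; (ii) invoking elimination of imaginaries down to $\bG$ for $\SCVH_{p,e}$ is circular at this point of the paper, since Corollary~\ref{C:SCVH-EI} is obtained later via Theorem~\ref{thm:EQ valued}, whose Hypothesis~1 is verified using precisely the present corollary (and the step "invariant under $\Aut(\cdot/A)$ implies $A$-definable" needs saturation and homogeneity rather than elimination of imaginaries anyway); (iii) the substantive content your automorphism argument would still need — that imaginary tuples with the same $\LG(A)$-type in $L$ have the same $\LG^D(A)$-type in $K$ — is exactly what quantifier elimination already provides, so the detour adds nothing while its only new ingredient, the restriction claim, is false.
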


\begin{proof}
  We may assume that $T=\SCVH^{\G}_{p,e}$, as the result for 
  $\SCVF^{\G}_{p,e}$ is a consequence of the one for $\SCVH^{\G}_{p,e}$. By 
  Proposition \ref{prop:SCVF-dense}, we have $\Gim(K)=\Gim(L)$. The statement 
  about definable sets follows from quantifier elimination for $\SCVH_{p,e}$ 
  in the language $\LG^D$ (Proposition \ref{prop:SCVH QE}).
\end{proof} 

\begin{remark}\mbox{}
  \begin{enumerate}
    \item
      Note that in $\SCVF_{p,e}$ or $\SCVH_{p,e}$, the field of absolute 
      constants is not stably embedded (although, since $\VF\subseteq 
      \dcl(\ppow[n]{\VF})$, each of the $\ppow[n]{\VF}$ is).  
      
      Indeed, let $M$ be an $\omega$-saturated model, 
      $a\in\VF(M)\setminus\ppowi{\VF}(M)$, and let $B$ be the set of 
      balls in $M$ that contain $a$. If $\ppowi{\VF}(M)$ were stably 
      embedded, then, by quantifier elimination, $B$ would be 
      \(\LG(\ppowi{\VF}(M))\)-definable and by definable spherical 
      completeness of \(\ACVF\), there would exist 
      \(c\in\bigcap_{b\in{}B}b(\ppowi{\VF}(M))\). But 
      \(\bigcap_{b\in{}B}b(M)=\{a\}\) and $a\not\in\ppowi{\VF}(M)$.

    \item
      Nevertheless, by quantifier elimination, $\ppowi{\VF}$ is a pure 
      algebraically closed valued field in the following (weak) sense: any 
      definable set in $\ppowi{\VF}(M)$ (including the geometric sorts) is the 
      intersection of a quantifier free $\LG(M)$-definable set with (some 
      Cartesian power of) $\ppowi{\VF}$.
  \end{enumerate}
\end{remark}


\section{Imaginaries and density}\label{S:density}

\subsection{The density theorem}

\begin{lemma}\label{L:Subvar-SCF}
  Let \(\dV\) be an irreducible variety defined over \(K=\sep{K}\). Then any 
  \(\alg{K}\)-definable open subvariety \(\Sort{U}\subseteq \dV\) is defined over 
  \(K\).
\end{lemma}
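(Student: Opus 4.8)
The plan is to reduce the statement to the single identity $\alg{K}=\perf{K}=\bigcup_{n}K^{1/p^{n}}$, which holds precisely because $K=\sep{K}$, and then to ``clear'' purely inseparable parameters by raising coefficients to suitable $p^{n}$-th powers, exploiting that $x\mapsto x^{p^{n}}$ is a bijection on the points of an algebraically closed field (more generally, that Frobenius is injective on any field of characteristic $p$).

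First I would reduce to the case where $\dV\subseteq\Aff{m}$ is affine and defined over $K$. Indeed, $\dV$ has a finite cover by affine open subvarieties $\dV_{j}$ defined over $K$ (take a $K$-model of $\dV$, cover it, and base change); since $\Sort{U}=\bigcup_{j}(\Sort{U}\cap\dV_{j})$, it suffices to show each $\Sort{U}\cap\dV_{j}$ is defined over $K$, and $\Sort{U}\cap\dV_{j}$ is an $\alg{K}$-definable open subvariety of $\dV_{j}$; finally, embed $\dV_{j}$ as a $K$-closed subvariety of an affine space. So assume $\dV\subseteq\Aff{m}$ is closed and defined over $K$.

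Now the complement $Z:=\dV\setminus\Sort{U}$ is a closed subvariety of $\dV$ defined over $\alg{K}$, hence cut out inside $\dV$ by finitely many polynomials $g_{1},\dots,g_{r}\in\alg{K}[x_{1},\dots,x_{m}]$; since $\alg{K}=\bigcup_{n}K^{1/p^{n}}$, we may take all the $g_{i}$ in $K^{1/p^{n}}[x]$ for a single $n\in\N$. Write $g_{i}=\sum_{\alpha}c_{i,\alpha}x^{\alpha}$ and set $g_{i}^{\star}:=\sum_{\alpha}c_{i,\alpha}^{p^{n}}x^{\alpha}$; then $g_{i}^{\star}\in K[x]$ because $c_{i,\alpha}^{p^{n}}\in(K^{1/p^{n}})^{p^{n}}=K$, and, using additivity and multiplicativity of $u\mapsto u^{p^{n}}$ in characteristic $p$, for every tuple $a=(a_{1},\dots,a_{m})$ one has
\[
g_{i}(a)^{p^{n}}=\Big(\sum_{\alpha}c_{i,\alpha}a^{\alpha}\Big)^{p^{n}}=\sum_{\alpha}c_{i,\alpha}^{p^{n}}(a^{p^{n}})^{\alpha}=g_{i}^{\star}(a_{1}^{p^{n}},\dots,a_{m}^{p^{n}}).
\]
Since Frobenius is injective, $g_{i}(a)=0$ if and only if $g_{i}^{\star}(a_{1}^{p^{n}},\dots,a_{m}^{p^{n}})=0$. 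Therefore
\[
Z=\Set{a\in\dV}{g_{i}^{\star}(a_{1}^{p^{n}},\dots,a_{m}^{p^{n}})=0\text{ for }i=1,\dots,r},
\]
and as $\dV$ is defined over $K$ and $a\mapsto(a_{j}^{p^{n}})_{j}$ is given by polynomials over $\F_{p}$, this exhibits $Z$, hence also $\Sort{U}=\dV\setminus Z$, as a set defined over $K$.

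I do not expect a genuine obstacle here; the only point needing a little care is the reduction to an affine $\dV$ defined over $K$ together with the bookkeeping of parameters (namely, that the finitely many polynomials cutting out $Z$ can be taken over a single finite purely inseparable extension $K^{1/p^{n}}$). Note that openness of $\Sort{U}$ is used only to guarantee that $Z$ is cut out by polynomials; in fact the same argument shows that \emph{any} $\alg{K}$-definable subset of a $K$-variety is defined over $K$ when $K=\sep{K}$. One can also argue abstractly that $\code{\Sort{U}}\in\dcl^{\mathrm{eq}}(\alg{K})\subseteq\alg{K}=\perf{K}=\dcl(K)$, using elimination of imaginaries for $\ACF$ and the definability of inverse Frobenius; but the explicit computation above has the advantage of producing an explicit $K$-formula for $\Sort{U}$, which is convenient for later use.
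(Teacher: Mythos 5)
Your proof is correct and is essentially the paper's argument: both rest on the fact that \(K=\sep{K}\) gives \(\alg{K}=\bigcup_{n}K^{1/p^{n}}\), so raising the defining data to a \(p^{n}\)-th power (which does not change zero sets, Frobenius being injective) pushes the parameters into \(K\). The only cosmetic difference is that the paper replaces a basic open \(\dV_{f}\) by \(\dV_{f^{p^{k}}}\) with \(f^{p^{k}}\in K[\dV]\), whereas you do the equivalent manipulation on the equations of the closed complement (your \(g_{i}^{\star}(x^{p^{n}})\) is just \(g_{i}(x)^{p^{n}}\)); note also that in characteristic \(0\) the statement is trivial, which the paper disposes of in its first line.
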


\begin{proof}
  We may suppose that \(\mathrm{char}(K)=p>0\), and we may assume that 
  \(\dV\) is affine and \(\Sort{U}=\dV_f\), where 
  \(f\in{}\alg{K}[\dV]=K[\dV]\otimes_{K}\alg{K}\). So there is some \(N=p^k\) 
  such that \(f^N\in K[\dV]\). Thus, \(\dV_f=\dV_{f^N}\) is defined over 
  \(K\).
\end{proof}

The valuation topology on powers of a model \(\mL\) of \(\ACVF\) determines, 
in the terminology of~\cite[2.11]{vdd}, a \emph{topological system} (using 
the language \(\LDIV\)): All polynomials are continuous, and punctured balls 
are open. Further, it satisfies the assumptions of~\cite[2.15]{vdd}, and 
therefore we have:

\begin{fact}[{\cite[2.18]{vdd}}]\label{F:dim}
  In \(\ACVF\), a set \(\dX\subseteq\VF^n(L)\) definable with parameters is 
  Zariski dense if and only if it contains a non-empty open ball.
\end{fact}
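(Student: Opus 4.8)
The plan is to prove the two implications separately; both rest only on quantifier elimination for $\ACVF$ in $\LDIV$ (Fact~\ref{F:ACVF-QE}) and on the elementary observation that a polynomial has constant valuation on a small enough polydisc around any point at which it is nonzero.

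For the implication ``$\dX$ contains a non-empty open ball $\Rightarrow$ $\dX$ is Zariski dense'', I would note that a non-empty open ball in $\VF^n$ is a product $B_1\times\cdots\times B_n$ of open balls of $\VF$, that each $B_i$ is infinite (the residue field of a model of $\ACVF$ is infinite and the value group is divisible), and that by an immediate induction on $n$ a polynomial over $L$ vanishing on a product $B_1\times\cdots\times B_n$ of infinite sets vanishes identically. Hence $\Zar{B_1\times\cdots\times B_n}=\Aff{n}_L$, and a fortiori $\dX$ is Zariski dense.

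For the converse I would first pass to a $|L|^{+}$-saturated $\mL\succeq L$ and produce a point $a=(a_1,\dots,a_n)\in\dX(\mL)$ with $a_1,\dots,a_n$ algebraically independent over $L$. This is a compactness argument: the partial type $\{x\in\dX\}\cup\Set{P(x)\neq 0}{P\in L[x_1,\dots,x_n]\setminus\{0\}}$ is consistent, since any finite fragment only requires $\dX$ to avoid a proper Zariski-closed subset of $\Aff{n}_L$ (the zero set of the product of the finitely many polynomials involved), which the Zariski-dense set $\dX$ cannot be contained in, as $\Aff{n}_L$ is irreducible. Next, by Fact~\ref{F:ACVF-QE}, $\dX$ is defined by a Boolean combination $\phi(x)$ of conditions of the form $\val(f(x))\leq\val(g(x))$ with $f,g\in L[x_1,\dots,x_n]$. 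Since $a$ is algebraically independent over $L$, every nonzero polynomial occurring in $\phi$ takes a nonzero value at $a$; expanding such a polynomial $h$ in powers of $x-a$, one sees that $\val\circ h$ is constantly equal to $\val(h(a))$ on the polydisc $\{x : \val(x_i-a_i)>\gamma_h \text{ for all } i\}$ for a suitable $\gamma_h$. Intersecting these finitely many polydiscs gives an open polydisc $U\ni a$ on which every atomic subformula of $\phi$, and hence $\phi$ itself, has constant truth value; as $a\in\dX$, this forces $U\subseteq\dX$. Finally, ``$\dX$ contains a non-empty open ball'' is expressible by a first-order sentence over the parameters of $\phi$, all of which lie in $L$, so its truth in $\mL$ transfers back to $L$.

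The only step carrying real content is the compactness argument producing the algebraically independent point; everything afterwards is routine manipulation of quantifier-free formulas. This is precisely the bookkeeping that is carried out once and for all in the theory of topological systems of \cite{vdd}: the argument above is just an unwinding of \cite[2.18]{vdd} in the case of $\ACVF$ with the valuation topology in the language $\LDIV$.
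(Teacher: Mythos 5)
Your argument is correct, but note that the paper does not actually prove this statement: it is quoted as a Fact, obtained by checking that the valuation topology on models of \(\ACVF\) is a \emph{topological system} in the sense of \cite[2.11]{vdd} satisfying the hypotheses of \cite[2.15]{vdd}, and then citing \cite[2.18]{vdd}. What you have done is unwind that citation into a self-contained proof: quantifier elimination in \(\LDIV\) (Fact~\ref{F:ACVF-QE}), a compactness argument producing, in a saturated elementary extension, a point of \(\dX\) whose coordinates are algebraically independent over \(L\), the ultrametric observation that every nonzero polynomial over \(L\) has constant valuation on a small polydisc around such a point (so each atomic condition \(\val(f)\le\val(g)\), and hence the whole quantifier-free formula, is constant there), and finally transfer of ``\(\dX\) contains a non-empty open ball'' back to \(L\) by elementarity, since this is a first-order condition over the parameters. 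The easy direction (a ball is a product of infinite sets, so no nonzero polynomial vanishes on it) is also fine. The trade-off is the usual one: the paper's route buys the whole dimension theory of \cite{vdd} at once (of which this equivalence is a special case), whereas your direct argument is more elementary and makes visible exactly which ingredients are used --- essentially only quantifier elimination and the genericity of a transcendental point. One small bookkeeping point you should add for completeness: atomic subformulas in \(\LDIV\) may involve the zero polynomial (in particular polynomial equations \(h(x)=0\)); these too have constant truth value on your polydisc, since the nonzero polynomials occurring in the formula are nonvanishing there, so this is immediate, but it deserves a sentence.
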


Using Proposition\,\ref{P:manifold}, we obtain the same result for any smooth 
variety:

\begin{corollary}\label{C:dim}
  A definable subset $\dX$ of a (connected) smooth algebraic variety \(\dV\) 
  in \(\ACVF\) is Zariski dense if and only if it contains a non-empty 
  subset which is open in \(\dV\) for the valuation topology.
\end{corollary}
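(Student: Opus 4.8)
The plan is to deduce both implications from Fact~\ref{F:dim} by working in an \'etale local coordinate system (Fact~\ref{F:coords}) and exploiting the fact, contained in Proposition~\ref{P:manifold}, that an \'etale map is a local homeomorphism for the valuation topology. Since $\dV$ is connected and smooth it is irreducible; because the valuation topology on a smooth variety is intrinsic (the remark following Proposition~\ref{P:manifold}) and since $\dX$ is Zariski dense in $\dV$ exactly when its trace on each affine Zariski-open subset of $\dV$ is Zariski dense there, one may from the start assume $\dV\subseteq\Aff{n}$ is affine, irreducible, of dimension $d$. Throughout, ``open'' without qualification will mean open for the valuation topology.

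For the direction $(\Leftarrow)$, suppose $\dU\subseteq\dX$ is non-empty and open in $\dV$. Pick $a\in\dU$ and, by Fact~\ref{F:coords}, a local coordinate system $\phi\colon W\to\Aff{d}$ around $a$, which we may take globally \'etale, with $W$ a Zariski neighbourhood of $a$. By Proposition~\ref{P:manifold}, after shrinking $W$ the map $\phi$ restricts to a homeomorphism of $W\cap\dU$ onto an open subset of $\Aff{d}$; that open set contains an open ball, hence is Zariski dense in $\Aff{d}$ by Fact~\ref{F:dim}. A dimension count then forces the Zariski closure of $W\cap\dU$ in $W$ to have dimension $d=\dim W$, so, $W$ being irreducible, $W\cap\dU$ — and a fortiori $\dX$ — is Zariski dense in $\dV$.

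For the converse $(\Rightarrow)$, assume $\dX$ is Zariski dense in $\dV$, fix any point $a\in\dV$ and an \'etale local coordinate system $\phi\colon W\to\Aff{d}$ as above. Shrinking $W$ to a smaller Zariski-open subset, I would arrange (by generic finiteness of the dominant morphism $\phi$, e.g.\ via Zariski's main theorem) that $\phi\colon W\to\Omega:=\phi(W)$ is finite \'etale of some degree $m$ onto a Zariski-open $\Omega\subseteq\Aff{d}$. Then $\phi(\dX\cap W)$ is Zariski dense in $\Omega$, hence in $\Aff{d}$, so by Fact~\ref{F:dim} it contains a non-empty open ball $B_0\subseteq\Omega$. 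Picking $y_0\in B_0$ and writing $\phi^{-1}(y_0)=\{z_1,\dots,z_m\}$, Proposition~\ref{P:manifold} gives, after shrinking, pairwise disjoint open neighbourhoods $U_1,\dots,U_m$ of the $z_j$ on each of which $\phi$ is a homeomorphism onto an open set; since $\phi$ is finite, $\phi\bigl(W\setminus\bigcup_jU_j\bigr)$ is Zariski-closed and misses $y_0$, so shrinking $B_0$ to a smaller ball $B\ni y_0$ inside $\bigcap_j\phi(U_j)$ and disjoint from it yields $\phi^{-1}(B)=\bigsqcup_{j=1}^{m}V_j$ with $V_j:=U_j\cap\phi^{-1}(B)$ and $\phi\colon V_j\to B$ a homeomorphism. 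As $B\subseteq B_0\subseteq\phi(\dX\cap W)$, we get $B=\bigcup_{j=1}^{m}\phi(\dX\cap V_j)$; since $\Aff{d}$ is irreducible and $B$ is Zariski dense in it, some $\phi(\dX\cap V_{j_0})$ is Zariski dense in $\Aff{d}$, hence by Fact~\ref{F:dim} contains a non-empty open ball $B'$. Pulling $B'$ back through the homeomorphism $\phi\colon V_{j_0}\to B$ and using its injectivity, $(\phi|_{V_{j_0}})^{-1}(B')$ is a non-empty open subset of $\dV$ contained in $\dX\cap V_{j_0}$, which proves the claim.

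The main obstacle is the direction $(\Rightarrow)$. In $\VF^{n}$ one just intersects $\dX$ with a small ball, but on a general variety this fails — a Zariski-dense definable set and its complement may both be Zariski dense — so the ball produced by Fact~\ref{F:dim} lives only on the coordinate side and must be transported back through the \'etale chart. The finiteness of the fibres, together with the local-homeomorphism property of Proposition~\ref{P:manifold}, is precisely what makes the pigeonhole step legitimate and lets one recover a genuine ball inside $\dX$; the bookkeeping of shrinking $W$ and $B$ so that $\phi$ becomes a trivial finite covering over $B$ is routine but is the technical heart of the argument.
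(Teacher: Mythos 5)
Your direction $(\Leftarrow)$ is fine, and your overall strategy --- pass to an \'etale chart, push into $\Aff{d}$, and quote Fact~\ref{F:dim} --- is the same reduction the paper carries out (very tersely) via Proposition~\ref{P:manifold}. The genuine problem is in the direction $(\Rightarrow)$, at exactly the step you call the technical heart: the claim that, $\phi$ being finite, $\phi\bigl(W\setminus\bigcup_jU_j\bigr)$ is Zariski-closed. This is false. The set $W\setminus\bigcup_jU_j$ is closed only for the valuation topology and is in general Zariski dense in $W$ (the $U_j$ are small valuative neighbourhoods), so its image under the finite, hence dominant, map $\phi$ is Zariski dense in $\Omega$; were it Zariski closed it would be all of $\Omega$ and in particular would contain $y_0$, contradicting the very property you invoke. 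Finiteness gives Zariski-closedness of images of \emph{Zariski}-closed sets, not of valuatively closed ones.

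What you actually need is that $y_0$ does not lie in the \emph{valuative} closure of $\phi\bigl(W\setminus\bigcup_jU_j\bigr)$, i.e.\ that some ball $B\ni y_0$ satisfies $\phi^{-1}(B)\subseteq\bigcup_jU_j$. This is true, but it is a properness statement for finite morphisms in the valuation topology over a Henselian field, and it does not follow from Proposition~\ref{P:manifold} alone: being a local homeomorphism at every point of the fibre is not enough (a Zariski-open immersion is \'etale, hence a local homeomorphism, yet fails exactly this property), so finiteness has to enter through integrality --- for instance, shrink $\Omega$ and $W$ further so that $W$ is cut out by a single monic separable polynomial $P(T)$ of degree $m$ over the coordinate ring of $\Omega$ (a generic primitive element), and then use Hensel's lemma/continuity of roots to see that for $y$ sufficiently close to $y_0$ every root of $P_y$ is close to exactly one root of $P_{y_0}$. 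With such an argument supplied, your pigeonhole step and the conclusion go through; as written, the trivialization $\phi^{-1}(B)=\bigsqcup_jV_j$ is unjustified, and this is precisely the nontrivial content hidden in the paper's phrase ``the claim is local, hence we may \dots reduce to the case $\dV=\VF^n$''.
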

\begin{proof}
  The claim is local, hence we may choose local coordinates as in 
  Proposition\,\ref{P:manifold} and reduce to the case \(\dV=\VF^n\). Now the 
  claim follows from Fact~\ref{F:dim}.
\end{proof}

\begin{lemma}\label{L:strat}
If \(\mK\models\SCVF\), and \(\dX\subseteq\VF^n\) is a semi-algebraic subset defined over \(K\), then there are absolutely irreducible affine subvarieties \(\dY_i\) of \(\VF^n\) defined over \(K\) and \(\dX_i\subseteq \dY_i\) given by a Boolean combination of conditions of the form \(\val(h)>0\), where \(h\) is an invertible regular function on \(\dY_i\), such that \(\dX=\bigcup_i\dX_i\).
\end{lemma}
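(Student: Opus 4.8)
The plan is to stratify $\VF^n$ by the vanishing locus of the finitely many polynomials defining $\dX$ and then refine into absolutely irreducible pieces, the separable closedness of $K$ being exactly what makes this refinement work over $K$. Since $\dX$ is semi-algebraic (equivalently, by Fact~\ref{F:ACVF-QE} applied to $\alg K$, quantifier-free $\LDIV(K)$-definable), I would first write it as a Boolean combination of conditions $h(x)=0$ and $\val(f(x))\le\val(g(x))$ with $f,g,h\in K[x_1,\dots,x_n]$, and collect all polynomials so occurring into a finite list $h_1,\dots,h_m$. For $S\subseteq\{1,\dots,m\}$ set $\AA_S:=\Set{x\in\VF^n}{h_i(x)=0\text{ iff }i\in S}$; these partition $\VF^n$, so $\dX=\bigsqcup_S(\dX\cap\AA_S)$.

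Next, fix $S$ and let $\dV_S\subseteq\VF^n$ be the closed subvariety $\{h_i=0:i\in S\}$ (with $\dV_S=\VF^n$ if $S=\varnothing$). Here I would invoke the fact that, because $K$ is separably closed, $K$ is separably algebraically closed in every extension field, hence every irreducible $K$-variety is absolutely irreducible and the irreducible components of any $K$-variety are defined over $K$ (the same phenomenon underlying Lemma~\ref{L:Subvar-SCF}). So $\dV_S=\bigcup_a\dZ_{S,a}$ with each $\dZ_{S,a}$ an absolutely irreducible closed $K$-subvariety of $\VF^n$. On $\dZ_{S,a}$ the polynomials $h_i$ with $i\in S$ vanish identically; if some $h_i$ with $i\notin S$ also vanishes on $\dZ_{S,a}$, then $\AA_S\cap\dZ_{S,a}=\varnothing$ and this component is discarded, and otherwise $g:=\prod_{i\notin S}h_i|_{\dZ_{S,a}}$ is a nonzero element of the domain $K[\dZ_{S,a}]$. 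I then take $\dY_{S,a}:=(\dZ_{S,a})_g$, the basic open subvariety where $g\ne0$; this is an absolutely irreducible affine subvariety of $\VF^n$ defined over $K$ (locally closed in $\VF^n$, with coordinate ring $K[\dZ_{S,a}][1/g]$), on which every $h_i$ with $i\notin S$ is a unit, with inverse $g^{-1}\prod_{i'\notin S,\,i'\ne i}h_{i'}$. Since the equations $h_i=0$, $i\in S$, hold automatically on $\dZ_{S,a}$, one gets $\AA_S\cap\dZ_{S,a}=\dY_{S,a}$ as subsets of $\VF^n$, hence $\dX=\bigcup_{S,a}\dX_{S,a}$ with $\dX_{S,a}:=\dX\cap\dY_{S,a}$.

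Finally, I would check that each $\dX_{S,a}$ has the asserted form inside $\dY:=\dY_{S,a}$. The defining Boolean combination of $\dX$ is built from atoms $h_i(x)=0$ and $\val(h_i(x))\le\val(h_{i'}(x))$; restricted to $\dY$, the atom $h_i=0$ is identically true if $i\in S$ and identically false if $i\notin S$ (then $h_i\in K[\dY]^\times$), while the atom $\val(h_i)\le\val(h_{i'})$ is identically true if $i'\in S$, identically false if $i\in S$ and $i'\notin S$, and equivalent to $\neg\bigl(\val(h_ih_{i'}^{-1})>0\bigr)$, with $h_ih_{i'}^{-1}\in K[\dY]^\times$, if $i,i'\notin S$. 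Substituting, the defining formula of $\dX$ restricted to $\dY$ becomes a Boolean combination of conditions $\val(h)>0$ with $h\in K[\dY]^\times$ (realising the truth values as $\neg(\val(1)>0)$ and $\val(1)>0$), so $\dX_{S,a}\subseteq\dY_{S,a}$ is exactly of the required kind.

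The only step that is more than bookkeeping is the passage from ``irreducible over $K$'' to ``absolutely irreducible and defined over $K$'' for the components of $\dV_S$, which is where the hypothesis $K=\sep K$ is used; the rest --- tracking vanishing patterns and rewriting valuation inequalities between units of a coordinate ring --- is routine.
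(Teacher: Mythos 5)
Your proof is correct, and its core is the same as the paper's: stratify \(\VF^n\) by the vanishing pattern of the finitely many polynomials occurring in a quantifier-free \(\LDIV(K)\)-definition of \(\dX\), use that over a separably closed field the \(K\)-irreducible components of a \(K\)-variety are absolutely irreducible (the phenomenon behind Lemma~\ref{L:Subvar-SCF}), and note that on each piece every non-vanishing polynomial becomes a unit of the coordinate ring, so the divisibility atoms turn into a Boolean combination of conditions \(\val(h)>0\) with \(h\) invertible; your write-up is in fact more explicit than the paper's about this final rewriting. The one genuine divergence is that the paper does not decompose the full vanishing locus \(\dV_S\): after passing to disjunctive normal form it takes \(\dY_i\) to be the Zariski closure of the \(K\)-point set \(\dX_i(K)\) inside the locally closed stratum, so that by construction \(\dX_i(K)\) is Zariski dense in \(\dY_i\). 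That density is not part of the statement, so your argument does prove the lemma as stated; but it is exactly what the proof of Proposition~\ref{P:vdense} uses when it says ``Since \(\dX\) is Zariski-dense in \(\dV\)''. With your decomposition one would have to insert that reduction there (replace each \(\dY_{S,a}\) by the Zariski closure of \(\dX_{S,a}(K)\) and re-decompose into components, which your separable-closedness argument again handles). So the two routes buy slightly different things: yours is a clean proof of the literal statement, while the paper's construction additionally builds in the Zariski density of \(K\)-points needed downstream.
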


\begin{proof}
  Let \(J\) be the set of polynomials \(F\) over \(K\) such that for some 
  polynomial \(G\), \(F\div{}G\) or \(G\div{}F\) occurs in a semi-algebraic 
  definition of \(\dX\), and for every subset \(I\) of \(J\), let \(\dW_I\) 
  be the locally closed subset of \(\VF^n\)  given by
  \begin{equation*}
    \bigwedge_{F\in I}F(\x)=0\wedge\prod_{F\not\in I}F(\x)\neq0
  \end{equation*}
  For each \(I\), \(\dW_I\) is an affine subvariety of \(\VF^n\), and 
  \(\dX\cap{}\dW_I\) is given by a Boolean combination of polynomial 
  equations and valuative inequalities as in the definition.  Hence, by 
  restricting to \(\dW_I\), we may assume that \(\dX\) itself was given by 
  such a Boolean combination to begin with.

  Writing \(\dX\) in disjunctive normal form, we present \(\dX\) as a finite 
  union of definable sets \(\dX_i\), each the intersection of a (Zariski) 
  locally closed subset \(\dZ_i\) with valuative inequalities. Let \(\dY_i\) 
  be the Zariski closure of \(\dX_i(K)\) in \(\dZ_i\) (over \(K\)). The 
  irreducible components of \(\dY_i\) are defined over \(K\), since \(K\)-points are dense. Hence we may assume that each \(\dY_i\) is (absolutely) 
  irreducible.  Then they satisfy the requirements of the claim.
\end{proof}

\begin{proposition}[Hong {\cite[Theorem 5.3.1]{HonPhD}}]\label{P:vdense}
  Let \(\mK\models\SCVF\) and \(\dX\subseteq \VF^n(K)\) be a semi-algebraic  
  subset of \(\VF^n(K)\). Let \(L=\alg{K}\).  Then there is a quantifier-free 
  \(\LDIV(K)\)-formula \(\psi(\x)\) with \(\psi(K)=\dX\) and such that 
  \(\psi(K)\) is dense in \(\psi(L)\).
\end{proposition}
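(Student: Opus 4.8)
The plan is to combine the structural decomposition of semi-algebraic sets from Lemma~\ref{L:strat} with the density result for algebraically closed valued fields (Fact~\ref{F:dim}, Corollary~\ref{C:dim}) and the density of $K$ in $\alg K$ from Proposition~\ref{prop:SCVF-dense}. First I would apply Lemma~\ref{L:strat} to write $\dX=\bigcup_i\dX_i$ with each $\dX_i\subseteq\dY_i$ an absolutely irreducible affine variety over $K$, where $\dX_i$ is cut out inside $\dY_i$ by a Boolean combination of conditions $\val(h)>0$ for $h$ invertible regular on $\dY_i$. Since a finite union of dense sets is dense (in the appropriate ambient union), it suffices to treat each $\dX_i$ separately, so I may assume $\dX\subseteq\dY$ with $\dY$ absolutely irreducible over $K$ and $\dX$ given by such a Boolean combination of valuative inequalities in invertible regular functions.

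Next I would pass to the smooth locus: let $\dY^{\mathrm{sm}}$ be the smooth locus of $\dY$, which is a nonempty Zariski-open subvariety defined over $K$ (as $\dY$ is a variety over the separably closed field $K$, its singular locus is $K$-definable, hence so is $\dY^{\mathrm{sm}}$; alternatively one can invoke Lemma~\ref{L:Subvar-SCF}). The complement $\dY\setminus\dY^{\mathrm{sm}}$ has strictly smaller dimension, and intersecting $\dX$ with it we may, by induction on $\dim\dY$, handle that part separately; so the remaining case is $\dX\subseteq\dY^{\mathrm{sm}}$, i.e.\ $\dY$ smooth. Now I would work in $L=\alg K\models\ACVF_{p,p}$: by quantifier elimination in $\ACVF$ (Fact~\ref{F:ACVF-QE}) there is a quantifier-free $\LDIV(K)$-formula $\psi$ with $\psi(L)=\dX(L)$, the set defined in $L$ by the same Boolean combination of valuative inequalities on $\dY(L)$. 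By Fact~\ref{F:ACVF-QE} applied in $L$ and the decomposition, $\psi(L)$ is a finite Boolean combination of Zariski-closed sets and valuation-open sets in the smooth variety $\dY$, hence its nonempty "pieces" either have empty interior for the valuation topology — in which case, by Corollary~\ref{C:dim}, they are not Zariski dense in $\dY$, so lie in a proper subvariety and can be absorbed into the lower-dimensional inductive case — or they are open in $\dY(L)$ for the valuation topology. The key point is then: $\dX(K)=\dX\cap K^n=\psi(K)$, because membership in $\dX$ is checked by a quantifier-free valued-field formula that holds of a $K$-tuple in $M\models\SCVF$ iff it holds in $L$; and by Proposition~\ref{prop:SCVF-dense}, $K$ (indeed already $\ppow[n]{K}$) is dense in $\dY^{\mathrm{sm}}(L)$ for the valuation topology, since density in $\alg K$ transfers to density in any smooth $K$-variety by Proposition~\ref{P:manifold}. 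Therefore a valuation-open subset of $\dY(L)$ defined over $K$ meets $K^n$ densely, which gives $\psi(K)$ dense in $\psi(L)$.

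The main obstacle I anticipate is bookkeeping the interface between the Zariski and valuation topologies cleanly: one must argue that after writing $\psi(L)$ in disjunctive normal form, each disjunct is, up to a lower-dimensional correction, the intersection of $\dY^{\mathrm{sm}}(L)$ with a valuation-open set; this uses that on a smooth variety a Boolean combination of conditions $\val(h)>0$ with $h$ invertible defines a set whose non-interior part is contained in the boundary of these open conditions, which is a proper subvariety — precisely the content of Fact~\ref{F:dim}/Corollary~\ref{C:dim} that Zariski density is equivalent to containing a valuation-open ball. One also has to be slightly careful that the inductive hypothesis applies to $\dX$ restricted to a proper subvariety of $\dY$ over $K$: such a restriction is again semi-algebraic over $K$, so Lemma~\ref{L:strat} reapplies and the dimension drops, so the induction is well-founded. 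Finally I would assemble the pieces: $\psi$ is the disjunction over $i$ and over the DNF disjuncts of the formulas obtained, and density of $\psi(K)$ in $\psi(L)$ follows from density on each open piece plus the inductive density on the lower-dimensional pieces, using that a finite union of sets each dense in its closure is dense in the union of the closures.
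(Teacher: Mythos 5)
Your route is the same as the paper's: decompose via Lemma~\ref{L:strat}, split off the singular locus by induction on the dimension of the Zariski closure, and deduce density on the smooth part from Proposition~\ref{P:manifold} together with Proposition~\ref{prop:SCVF-dense}. But there is a genuine gap at the crux, namely the assertion that ``density in $\alg{K}$ transfers to density in any smooth $K$-variety by Proposition~\ref{P:manifold}''. Proposition~\ref{P:manifold} is a purely topological statement, valid over any Henselian valued field: it says that a local coordinate system $f\colon\dV_i\to\VF^d$ is a local homeomorphism. Combined with density of $K^d$ in $L^d$ it only yields points $a'\in\dV_i(L)$ close to a given $a\in\dV_i(L)$ with $f(a')\in\VF^d(K)$; nothing in what you cite says $a'\in\dV_i(K)$, which is what density of $\dY^{\mathrm{sm}}(K)$ in $\dY^{\mathrm{sm}}(L)$ requires. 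The missing ingredient --- and the actual content of the paper's proof --- is that $\dY^{\mathrm{sm}}$ can be covered by \emph{$K$-defined} \'etale charts $f_i\colon\dV_i\to\VF^d$, and that for an \'etale $K$-morphism the fibre over a $K$-rational point consists of points separably algebraic over $K$; since $K=\sep{K}$, an $L$-point of $\dV_i$ mapping into $\VF^d(K)$ already lies in $\dV_i(K)$. Without this rationality argument the transfer does not follow. (It also undercuts your parenthetical claim that $\ppow[n]{K}$ is dense in $\dY^{\mathrm{sm}}(L)$: the fibre argument places the approximating points in $K$, not in $\ppow[n]{K}$, as the charts are only defined over $K$.)

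Two smaller points. The case analysis about pieces of $\psi(L)$ with empty interior is superfluous: after Lemma~\ref{L:strat} the conditions are $\val(h)>0$ with $h$ invertible regular, and $\val\circ h$ is locally constant on $\dY(L)$, so together with the (valuation-open) Zariski-open condition of lying in $\dY^{\mathrm{sm}}$ they cut out a set that is open in $\dY^{\mathrm{sm}}(L)$; no empty-interior pieces occur. That is fortunate, because ``absorbing'' such pieces into the induction is not harmless as stated: discarding part of $\psi(L)$ changes the formula, so one would have to re-verify $\psi(K)=\dX$, and the Zariski closure of such a piece is a priori only defined over $L$. Finally, invoking quantifier elimination in $\ACVF$ to produce $\psi$ is unnecessary: the formula supplied by Lemma~\ref{L:strat} is already quantifier-free.
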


\begin{remark}
  Actually, Hong states his result only for \(\aleph_1\)-saturated \(K\), but 
  it is easy to see that the result for general \(K\) follows from this. We 
  add a full proof for convenience.
\end{remark}

\begin{proof}
By Lemma\,\ref{L:strat}, we may assume that \(\dX\) is a (non-empty) valuation-open subset of a  variety \(\dV\) over \(K\). Since \(\dX\) is Zariski-dense in \(\dV\), we 
  may pass to the smooth locus and assume that \(\dV\) is smooth.  By 
  Proposition~\ref{P:manifold}, we may now reduce to the case of 
  \(\dV=\VF^d\). Indeed, as $K$ is separably closed, there exists an open 
  cover of $\dV$ by subvarieties $\dV_i$ and \'etale maps 
  $f_i:\dV_i\rightarrow\VF^d$ defined over $K$ which are local 
  homeomorphisms. As $K=\sep{K}$ and $f_i$ is \'etale, for every 
  $a\in\dV_i(L)$ one has $f_i(a)\in\VF^d(K)$ if and only if $a\in\dV_i(K)$.  
  Now $\VF^d(K)$ is dense in $\VF^d(L)$ by Proposition \ref{prop:SCVF-dense}, 
  and so $\dV_i(K)$ is dense in $\dV_i(L)$.
\end{proof}

\begin{corollary}\label{cor:red to open in affine space}
  Let \(\mK\models\SCVF\) and \(\dX\) be a semi-algebraic  subset of \(\VF^n(K)\).  
  Let \(\dV\) be the Zariski closure of \(\dX\), and let $d=\dim(\dV)$. Then 
  there is a semi-algebraic subset $\dX'$ of $\dX$ and a polynomial map 
  $f:\dX'\rightarrow\VF^d$ defined over $K$ which induces a homeomorphism 
  between $\dX'$ and $\bO^d(K)$.
\end{corollary}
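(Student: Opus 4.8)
The statement refines Proposition~\ref{P:vdense} by extracting not just a dense open subset of $\dX$ but a single explicit chart: a semi-algebraic $\dX'\subseteq\dX$ with a polynomial homeomorphism onto $\bO^d(K)$. The plan is to run the reduction of Proposition~\ref{P:vdense} one step further, keeping track of the \'etale map that appears there. First I would reduce, exactly as in the proof of Lemma~\ref{L:strat} and Proposition~\ref{P:vdense}, to the case where $\dX$ is a nonempty valuation-open subset of a smooth affine variety $\dV$ over $K$ of dimension $d$, with $\dV$ the Zariski closure of $\dX$; here I use that $K$-points are Zariski dense (as $K=\sep{K}$) so that $\dV$ and its irreducible components are over $K$, and I pass to the smooth locus.

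Next, fix a point $a\in\dX(K)$ (which exists by Fact~\ref{F:dim}/Corollary~\ref{C:dim}, since $\dX$ is Zariski dense in $\dV$ and hence contains a nonempty valuation-open subset, whose $K$-points are dense by Proposition~\ref{prop:SCVF-dense}, applied after \'etale-pulling-back as in Proposition~\ref{P:vdense}). By Fact~\ref{F:coords} the smooth point $a$ admits a local coordinate system, i.e.\ an \'etale map $f:\dV_0\to\VF^d$ defined over $K$ on a Zariski-open neighborhood $\dV_0$ of $a$, with $f(a)=0$. By Proposition~\ref{P:manifold}, $f$ is a homeomorphism from a valuation-neighborhood of $a$ onto a valuation-neighborhood of $0$ in $\VF^d$; more precisely the Hensel's Lemma argument in that proof produces, after rescaling the coordinates so that $a$ and the coefficients of the defining data lie in $\bO$, a value $r\in\vg(K)$ such that $f$ restricts to a homeomorphism from $\{\y\in\dV_0:\val(\y-a)\ge s\}$ onto $\{\x:\val(\x)>2r\}$ for suitable $s$. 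Shrinking further, I may arrange the target to be exactly $\boldmax^{N}(K)$ for some $N\in\vg(K)$, and then by a further $K$-definable rescaling $\x\mapsto c\x$ (with $\val(c)$ chosen appropriately, using that the value group is a model of divisible/dense ordered abelian group and contains such a $c$ since $K$ is nontrivially valued) I turn the target into $\bO^d(K)$. The composite of $f$ with this rescaling is still a polynomial map over $K$ (rescaling by a scalar preserves polynomiality), call it $f'$, and I set $\dX'$ to be the preimage under $f'$ of $\bO^d(K)$ intersected with $\dX$; since $f$ is \'etale and $K=\sep{K}$, a point of $\dV_0(L)$ lies in $\dV_0(K)$ iff its image under $f$ does, exactly as in Proposition~\ref{P:vdense}, so $f'$ is a bijection $\dX'\to\bO^d(K)$, and it is a homeomorphism by Proposition~\ref{P:manifold}. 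Finally $\dX'\subseteq\dX$ is semi-algebraic because it is cut out inside the semi-algebraic set $\dX$ by the polynomial conditions defining $\dV_0$ and the valuative conditions $\val(f'_i(\y))\ge 0$.

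\textbf{Main obstacle.} The one genuinely delicate point is getting the target of the chart to be \emph{exactly} $\bO^d(K)$ rather than merely some valuation-open polydisc, while keeping the map polynomial over $K$: Proposition~\ref{P:manifold} only gives a homeomorphism onto a small ball $\{\val(\x)>2r\}$, and one must absorb the radius by an affine $K$-rescaling and check this does not destroy the "polynomial map defined over $K$" requirement nor the \'etale-descent property $f'^{-1}(\bO^d(K))\cap \dV_0(L)\subseteq \dV_0(K)$. This is handled by the observations that (i) scaling each coordinate by a fixed element of $K^\star$ is a polynomial (indeed linear) isomorphism, (ii) the value group of $K$ is nontrivial so an element of the required valuation exists in $K$, and (iii) descent along an \'etale map over a separably closed field is insensitive to such a coordinate change on the target. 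Everything else is a bookkeeping reprise of the proofs of Lemma~\ref{L:strat}, Proposition~\ref{P:manifold} and Proposition~\ref{P:vdense}.
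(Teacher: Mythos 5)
Your proposal is correct and takes essentially the same route as the paper, whose proof is simply to extract the statement from the proof of Proposition~\ref{P:vdense}: reduce via Lemma~\ref{L:strat} to a nonempty valuation-open subset of a smooth variety of dimension \(d\), take an \'etale chart as in Fact~\ref{F:coords} and Proposition~\ref{P:manifold}, and use that an \'etale map over \(K=\sep{K}\) detects \(K\)-points. The only wording to be careful about is your final normalization: since \(\vg\) is dense, the open Hensel ball \(\{\val(\x)>2r\}\) is never literally a closed polydisc, so the correct move (which your ``shrink then rescale'' in effect is) is to restrict the chart to the preimage of a closed polydisc \(c\,\bO^d\) with \(c\in K^\star\), \(\val(c)>2r\), chosen small enough that this preimage lies in \(\dX\), and then compose with the \(K\)-linear map \(\x\mapsto c^{-1}\x\).
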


\begin{proof}
  The result follows from the proof of Proposition \ref{P:vdense}.
\end{proof}

\subsection{Elimination of imaginaries in 
\texorpdfstring{\(\SCVF_{p,e}\)}{SCVFpe}}
In this section, \(K\) denotes a sufficiently saturated and homogeneous model 
of \(\SCVF_{p,e}\), and \(L:=\alg{K}\) its algebraic closure, so a model of 
\(\ACVF_{p,p}\). We consider \(L\) in the language \(\LG\), and \(K\) in the 
language \(\LGpe^\lambda\).  In \(K\), \(\eq{\acl}\) etc.  refers to 
\(\SCVF_{p,e}^{\mathrm{eq}}\).

\begin{lemma}\label{L:SCVF-ACVF}
  \begin{enumerate}
    \item Any automorphism of \(K\) (uniquely) lifts to an automorphism of 
      \(L\). In particular, if \(\a,\b\in\bG(K)\) and 
      \(\b\in\dcl_{\ACVF_{p,p}}(\a)\), then \(\b\in\dcl_{\SCVF_{p,e}}(\a)\).
    \item For every tuple \(\a\in\bG(L)\) there is a tuple \(\a'\in\bG(K)\) such that 
      \(\dcl_{\ACVF_{p,p}}(\a)=\dcl_{\ACVF_{p,p}}(\a')\).
    \item In the structure \(K\), finite sets are coded, i.e., for every 
      \(\{\a_1,\ldots,\a_n\}\subseteq\bG(K)\) there is \(\b\in\bG(K)\) which is 
      interdefinable in \(\eq{K}\) with \(\code{\{\a_1,\ldots,\a_n\}}\).
  \end{enumerate}
\end{lemma}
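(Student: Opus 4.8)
The plan is to prove the three items in turn; throughout, the key point is that \(L=\alg{K}\) is a \emph{purely inseparable} algebraic extension of \(K\) --- every \(x\in L\) has \(x^{p^m}\in K\) for some \(m\) --- so that \(p^m\)-th roots in \(L\) are unique and the valuation on \(L\) is the unique extension of the one on \(K\). For (1): given an \(\LGpe^{\lambda}\)-automorphism \(\sigma\) of \(K\), its restriction to \(\VF(K)\) is a valuation-preserving field automorphism, which I extend to \(L\) by letting \(\tilde\sigma(x)\) be the unique \(p^m\)-th root of \(\sigma(x^{p^m})\) (for \(m\) with \(x^{p^m}\in K\)). This is the only possible lift; it is a field automorphism, and it preserves the valuation of \(L\) because it does so on \(K\), hence it induces an \(\LG\)-automorphism of \(L\models\ACVF_{p,p}\). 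One then checks that \(\tilde\sigma\) restricts to \(\sigma\) on all of \(\bG(K)\): on \(\VF(K)\) by construction, and on \(\bGim(K)\) because \(\bGim(K)=\bGim(L)\) (Lemma~\ref{lem:dense geom}, Proposition~\ref{prop:SCVF-dense}, Corollary~\ref{cor:purely-geometric}) and each element of \(\latt_n\) or \(\tor_n\) has a representative over \(K\) on which \(\tilde\sigma\) and \(\sigma\) visibly agree. For the ``in particular'' clause: if \(\b\in\dcl_{\ACVF_{p,p}}(\a)\) with \(\a,\b\in\bG(K)\), fix an \(\LG\)-formula \(\theta\) with \(\theta(L,\a)=\{\b\}\); every \(\sigma\in\Aut(K/\a)\) lifts to \(\tilde\sigma\in\Aut(L/\a)\), which fixes \(\b\), so \(\sigma(\b)=\b\), and since \(K\) is saturated and homogeneous this gives \(\b\in\dcl_{\SCVF_{p,e}}(\a)\).

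For (2): write \(\a=(c_1,\dots,c_m,\a_0)\) with \(c_i\in\VF(L)\) and \(\a_0\) a tuple from \(\bGim(L)\). Since \(\bGim(L)=\bGim(K)\), already \(\a_0\in\bG(K)\). The Frobenius is an \(\emptyset\)-definable automorphism of \(\ACVF_{p,p}\), so each \(c_i\) is interdefinable in \(\ACVF_{p,p}\) with \(c_i^{p^{n_i}}\) for any \(n_i\); choosing \(n_i\) with \(c_i^{p^{n_i}}\in\VF(K)\), the tuple \(\a'=(c_1^{p^{n_1}},\dots,c_m^{p^{n_m}},\a_0)\in\bG(K)\) has \(\dcl_{\ACVF_{p,p}}(\a')=\dcl_{\ACVF_{p,p}}(\a)\).

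For (3): let \(S=\{\a_1,\dots,\a_n\}\), a finite (hence \(\LG\)-definable) subset of a product of geometric sorts. Since \(\ACVF_{p,p}\) eliminates imaginaries in \(\LG\) (Fact~\ref{F:ACVF-EI}), \(S\) has a canonical parameter in \(\bG\), which by (2) may be taken to be a tuple \(\b\in\bG(K)\). Using quantifier elimination for \(\ACVF_{p,p}\) (Fact~\ref{F:ACVF-QE}), fix a \emph{quantifier-free} \(\LG\)-formula \(\phi(x;z)\) witnessing this, so that \(\phi(\cdot\,;\b)\) defines \(S\) in every model of \(\ACVF_{p,p}\) and \(\ACVF_{p,p}\models\forall z\,\forall z'\,[(\forall x\,(\phi(x;z)\leftrightarrow\phi(x;z')))\to z=z']\). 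As \(K\) is an \(\LG\)-substructure of \(L\) (again by the density results) and \(\phi\) is quantifier-free, \(\phi(\bG(K);\b)=\phi(\bG(L);\b)\cap\bG(K)=S\), so \(\code{S}\in\eq{\dcl}(\b)\) in \(\SCVF_{p,e}^{\mathrm{eq}}\). Conversely, let \(\sigma\in\Aut(K)\) fix \(S\) setwise and lift it to \(\tilde\sigma\in\Aut(L)\) via (1); since \(\tilde\sigma\) agrees with \(\sigma\) on \(\bG(K)\supseteq S\), it fixes \(S\) setwise, so
\[\phi(\bG(L);\tilde\sigma(\b))=\tilde\sigma(\phi(\bG(L);\b))=\tilde\sigma(S)=S=\phi(\bG(L);\b).\]
The uniqueness clause is a first-order sentence true in \(L\models\ACVF_{p,p}\), so \(\tilde\sigma(\b)=\b\), i.e.\ \(\sigma(\b)=\b\); saturation and homogeneity of \(K\) then give \(\b\in\eq{\dcl}(\code{S})\). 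Hence \(\b\) and \(\code{S}\) are interdefinable in \(\SCVF_{p,e}^{\mathrm{eq}}\).

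The one place requiring genuine care is (3): \(L\) is a model of \(\ACVF_{p,p}\) but not visibly a saturated one, so the usual automorphism characterization of canonical parameters cannot be run inside \(L\). The remedy is to keep all automorphism-theoretic arguments in the saturated, homogeneous model \(K\), import from \(\ACVF_{p,p}\) to \(L\) only the \emph{first-order} uniqueness property of the code, and bridge the two via the lift from (1). The subsidiary point one must not overlook is that this lift really does restrict to \(\sigma\) on the imaginary geometric sorts --- which is precisely where \(\bGim(K)=\bGim(L)\) and the availability of \(K\)-rational representatives of elements of \(\latt_n\) and \(\tor_n\) enter.
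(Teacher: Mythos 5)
Your proof is correct and follows essentially the same route as the paper's (whose proof is just: (1) is clear; (2) via \(\bGim(K)=\bGim(L)\) and the interdefinability of \(a\) with \(a^{p^m}\); (3) by coding finite subsets of \(\bG(L)\) in \(\bG(L)\) using elimination of imaginaries in \(\ACVF_{p,p}\) and then combining (2) and (1)) --- you merely fill in the details, including the sensible precaution of phrasing the code of \(S\) by a first-order (quantifier-free) formula so that no saturation of \(L\) is needed. One micro-quibble: the sentence \(\forall z\forall z'\,[(\forall x\,(\phi(x;z)\leftrightarrow\phi(x;z')))\to z=z']\) cannot be arranged literally (distinct parameters may both define the empty set); the standard canonical-parameter formula only gives uniqueness of the parameter among those defining a fixed nonempty set, which is all your argument uses since \(S\neq\emptyset\).
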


\begin{proof}
  (1) is clear. To prove (2), note that \(\latt_n(K)=\latt_n(L)\) and 
  \(\tor_n(K)=\tor_n(L)\) for every \(n\geq1\). So it is enough to show (2) 
  for elements of the field sort. But for any \(a\in L\) there is \(m\) such 
  that \(a^{p^m}\in K\), and \(a\) and \(a^{p^m}\) are interdefinable.

  (3) By elimination of imaginaries in \(\ACVF_{p,p}\) down to the geometric 
  sorts, it follows that finite subsets of \(\bG(L)\) are coded in \(\bG(L)\).  
  We finish combining (2) and (1).
\end{proof}

\begin{theorem}\label{T:SCVF-EI}
  The theory \(\SCVF_{p,e}\) eliminates imaginaries down to the geometric 
  sorts.
\end{theorem}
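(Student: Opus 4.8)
I would prove that $\SCVF_{p,e}$ has elimination of imaginaries down to $\bG$ by a standard criterion: it suffices to code every definable set, and by compactness it suffices to code every definable \emph{subset of $\VF^n$}. Moreover, using the $\lambda$-functions, every such subset is a $\lambda$-resolution away from a semi-algebraic set; more precisely, if $\dX \subseteq \VF^n$ is $\L^{\lambda}(K)$-definable, then $\lambda^m(\dX)$ is $\L(K')$-definable (semi-algebraic) for suitable $m$, where $K'$ is a $\lambda$-closed base, and $\dX$ is recovered from $\lambda^m(\dX)$ via the (injective!) map $\lambda^m$ together with the polynomial identity $\lambda^m(x)^{p^m} = $ the $\ppow[m]{\VF}$-coordinates of $x$ in the canonical basis. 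Since $\lambda^m$ is a $\varnothing$-definable injection, a canonical parameter for $\lambda^m(\dX)$ is a canonical parameter for $\dX$ (one has to be slightly careful: $\lambda^m(\dX)$ should be taken with its definition over $\acl^{\mathrm{eq}}(\code{\dX})$, or one works with the image directly). So the problem reduces to coding \emph{semi-algebraic} subsets of $\VF^n$.

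For a semi-algebraic $\dX \subseteq \VF^n(K)$, the plan is to transfer to $L = \alg{K} \models \ACVF_{p,p}$, where imaginaries are eliminated down to $\bG(L) = \bG(K)$ (Fact~\ref{F:ACVF-EI} together with Lemma~\ref{L:SCVF-ACVF}(2), which says $\bG(L)=\bG(K)$ for the geometric sorts and field elements differ only by a $p$-power, and (1), which lifts automorphisms). The key is Hong's density theorem (Proposition~\ref{P:vdense}): there is a quantifier-free $\LDIV(K)$-formula $\psi(\x)$ with $\psi(K) = \dX$ and $\psi(K)$ dense in $\psi(L)$. The point of the density is that $\psi(L)$ is then \emph{canonically determined} by $\dX = \psi(K)$: an automorphism $\sigma$ of $L$ (equivalently, of $K$, by Lemma~\ref{L:SCVF-ACVF}(1)) fixes $\dX$ setwise if and only if it fixes $\psi(L)$ setwise, since $\psi(L)$ is the closure of $\dX$ in the valuation topology (density, plus the fact that $\sigma$ is a homeomorphism; one needs that the topological closure of $\psi(K)$ inside $L^n$ is exactly $\psi(L)$, which follows from $\psi$ being semi-algebraic and $\psi(K)$ dense in $\psi(L)$). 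Hence $\code{\dX}$ (computed in $\eq{K}$) and $\code{\psi(L)}$ (computed in $\eq{L}$) have the same stabilizer, so they are interdefinable; the latter lies in $\bG(L) = \bG(K)$ by elimination of imaginaries in $\ACVF_{p,p}$. Finally, finite sets over $\bG(K)$ are coded in $\bG(K)$ by Lemma~\ref{L:SCVF-ACVF}(3), so weak coding upgrades to coding, and we are done.

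**The main obstacle.** The delicate point is the claim that $\psi(L)$ is recovered from $\dX$ in an automorphism-invariant way — i.e., that $\code{\dX}$ and $\code{\psi(L)}$ really do have the same stabilizer in $\Aut(L/\varnothing) \cong \Aut(K/\varnothing)$. One direction (a stabilizer of $\psi(L)$ stabilizes $\dX = \psi(K)$, since $K$ is $\Aut$-invariant) is immediate. For the converse I must argue that $\psi(L)$ is \emph{definably} the valuation-topological closure of $\psi(K)$ within $L^n$; this uses that $\psi$ is quantifier-free in $\LDIV$ and that $\psi(K)$ is dense in $\psi(L)$, so that any automorphism of $L$ fixing $\psi(K)$ setwise must fix its closure, hence $\psi(L)$. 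A subtlety: I should check that $\psi(L)$ does not depend on the choice of $\psi$ among the formulas Hong's theorem might produce — but this follows, since any two such $\psi$ have $\psi(K) = \dX$ dense in both $\psi(L)$ and in each other's $L$-locus, forcing the $L$-loci to have the same closure, and a semi-algebraic set equals the closure of its own $K$-points only after passing to the relevant "frontier"; to be safe one restricts attention to the canonical closure $\Zar{\dX}{}$-type construction used in the proof of Proposition~\ref{P:vdense}. Apart from this topological-invariance bookkeeping, the rest is routine: the reduction via $\lambda$-resolutions is formal once one notes $\lambda^m$ is a $\varnothing$-definable injection, and the passage through $\ACVF_{p,p}$ is immediate from the cited facts.
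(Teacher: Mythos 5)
Your overall strategy (reduce via $\lambda$-resolutions to semi-algebraic sets, use Hong's density theorem, transfer the coding to $\ACVF_{p,p}$ via $\bG(K)=\bG(\alg K)$ and lifting of automorphisms, then upgrade weak coding using coding of finite sets) is exactly the paper's strategy, but the step you yourself flag as the ``main obstacle'' contains a genuine gap, and it is precisely the point where the paper has to work. Your claim that $\psi(L)$ is recovered from $\dX=\psi(K)$ as the valuation-topological closure of $\dX$ in $L^n$ is false in general: density of $\psi(K)$ in $\psi(L)$ does not make $\psi(L)$ valuatively closed, and the closure of $\dX$ in $L^n$ is $\clvL{\psi(L)}$, which may properly contain $\psi(L)$. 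Concretely, with $b\in K\setminus\ppow{K}$ and $a=b^{1/p}\in\alg K\setminus K$, the quantifier-free $\LDIV(K)$-formulas $\val(x)>0$ and $\val(x)>0\wedge x^p\neq b$ have the same $K$-points (dense in both $L$-loci) but different $L$-loci; so the $L$-locus is \emph{not} canonically attached to $\dX$, and an automorphism stabilizing $\dX$ is only guaranteed to stabilize the closure $\clvL{\dX}$, not $\psi(L)$. Your fallback remark (``one restricts attention to the $\Zar{\dX}$-type construction used in the proof of Proposition~\ref{P:vdense}'') is not an argument: it is exactly the missing content.

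The paper closes this gap by an induction on $\dim\Zar{\dX}$: after arranging (over an algebraically closed base $A$ with $K_0=\VF(A)$, using regularity of $K/K_0$) that $\dV=\Zar{\dX}$ is absolutely irreducible and defined over $K_0$, it codes separately the valuative closure $\clvK{\dX}$ and the frontier $\clvK{\dX}\setminus\dX$. The closure is handled essentially by your argument, since $\clvL{\dX}$ \emph{is} canonically determined by $\dX$ and is again quantifier-free $\LDIV(K)$-definable with $K$-points equal to $\clvK{\dX}$; the frontier is contained in $\clvL{\dY}\setminus\dY$, which has empty interior in $\dV(L)$, so by Corollary~\ref{C:dim} its Zariski closure is a proper subvariety of $\dV$ and the induction hypothesis applies. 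Without this decomposition your argument only treats the case where $\dX$ is valuatively closed in $\dV(K)$, which is not the general case; so as written the proposal does not yield the theorem, although all the remaining reductions (the $\lambda$-resolution step, the transfer of codes and of finite sets via Lemma~\ref{L:SCVF-ACVF}) are correct and match the paper.
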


\begin{proof}
  Let \(\dX\) be a definable set and  \(A\) contain \(\G(\code{\dX})\).  
  We have to prove that \(\dX\) can be defined over \(A\).  Since, by 
  Lemma~\ref{L:SCVF-ACVF}, finite sets are coded, it is enough to show that 
  \(\dX\) is weakly coded, so we may assume that \(A\) is algebraically 
  closed. Also, because $\VF$ is dominant, we may assume that $\dX$ is a 
  subset of some Cartesian power of $\VF$. Let $K_0:=\VF(A)$. Since \(K_0\) is closed under $\lambda$-functions and relatively algebraically closed in \(K\), the extension $K/K_0$ is regular.

  By Lemma\,\ref{lem:lambda res}, there exists an $n$ such that 
  $\lambda^n(\dX) = \psi(K)$ where $\psi$ is a quantifier free 
  $\LDIV(K)$-formula. As $\lambda$ is injective and $\emptyset$-definable it 
  follows that finding a (weak) code for $\lambda^n(\dX)$ is equivalent to 
  finding one for $\dX$, so we may assume that $\dX=\psi(K)$ for some 
  quantifier free $\LDIV(K)$-formula $\psi(x)$.  Moreover, by 
  Proposition\,\ref{P:vdense}, we may assume that $\dX$ is dense in 
  $\dY:=\psi(L)$. 

  Let $\dV = \Zar{\dX}$. Then $\dV$ is $K_0$-definable, by the existence of a smallest field of definition of $\dV$. Moreover, $\dY\subseteq\dV(L)$. We proceed by induction 
  on $\dim(\dV)$. Since $\dV(K)$ is Zariski-dense in $\dV(L)$ and the extension $K/K_0$ is 
  regular, the (absolute) irreducible components 
  $\dV_1,\ldots,\dV_l$ of $\dV$ are defined over $K_0$. Hence, encoding 
  $\dV_i(K)\cap\dX$ one by one, we may assume that $\dV$ is absolutely 
  irreducible. It obviously suffices to encode $\clvK{\dX}$ and 
  $\clvK{\dX}\setminus\dX$, where $\clvK{\dX}$ denotes the valuative closure 
  of $\dX$ in \(\VF^n(K)\). But $\clvK{\dX}\setminus\dX \subseteq 
  \clvL{\dY}\setminus\dY$, a subset of $\dV(L)$ which has empty interior (in 
  $\dV(L)$), so, by Corollary\,\ref{C:dim}, $\Zar{\clvK{\dX}\setminus\dX} 
  \subseteq \Zar{\clvL{\dY}\setminus\dY}$ is a strict subvariety of $\dV$. By 
  induction $\clvK{\dX}\setminus\dX$ is $A$-definable. It follows that we may 
  assume $\dX$ valuatively closed in $\dV(K)$.

  The set $\widetilde{\dY} = \clvL{\dY} = \clvL{\dX}$ is also definable by a 
  quantifier free $\LDIV(K)$-formula, say by $\widetilde{\psi}$, and one has 
  $\widetilde{\psi}(K) = \clvK{\dX} = \dX$. By elimination of imaginaries in 
  \(\ACVF\) (Fact\,\ref{F:ACVF-EI}), $\widetilde{\dY}$ is definable over some 
  $e\in\bG(\code{\widetilde{\dY}})$ and, by Lemma\,\ref{L:SCVF-ACVF}, we may 
  assume that $e\in \bG(K)$. Clearly $\dX$ is $\LG$-definable over $e$ (in 
  $K$) so there only remains to show that $e \in \code{\dX}$. Let $\sigma$ be 
  an automorphism of $K$ that stabilizes $\dX$ globally, then the (unique) 
  extension of $\sigma$ to $L$ must stabilize $\clvL{\dX} = \widetilde{\dY}$ 
  and hence fixes $e$. So $e\in\code{\dX}$.
\end{proof}

\section{Imaginaries, definable types and dense pairs}\label{S:pairs}

\subsection{Quantifier elimination in dense pairs of valued fields}

Much of the following is inspired by work of Delon \cite{DelDens}.

\subsubsection{The pure field case}

Let $\Lp$ denote the language $\Lrg\cup\{\FF,\lin{n},\flin{n,i}\mid 
n\in\N_{>0}\text{ and }0<i\leq n\}$ where $\FF$ is a new unary predicate, the 
$\lin{n}$ are new $n$-ary predicate symbols and the $\flin{n,i}$ are new 
$n+1$-ary function symbols. Note that, for this section, the field sort will 
be denoted by $\K$ and not by $\VF$, as it is not valued. Let $\Tp$ be the 
$\Lp$-theory of pairs of fields, with $\FF$ defining the smaller field, and 
where $\lin{n}(y_{1},\ldots,y_n)$ holds if and only if the $y_i$ are linearly 
independent over $\FF$ and if $\lin{n}(y)\wedge \neg\lin{n+1}(x,y)$ holds 
then $x =\sum_i \flin{n,i}(x,y)y_i$ where $\flin{n,i}(x,y)\in\FF$ (otherwise, 
set $\flin{n,i}=0$).

When $A$ is a field, we will denote linear disjointness over $A$ by 
$\ldis[A]$.

One can easily check the following two facts.

\begin{lemma}\label{lem:crit substr}
  Let $M\models\Tp$ and $A\subseteq M$. Then $A\substr M$ if and only if $A$ 
  is a subring, $\FF(A)$ is a field and $A\ldis[\FF(A)]\FF(M)$.
\end{lemma}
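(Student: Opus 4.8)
The plan is to observe first that, among the symbols of $\Lp$, only the ring operations and the functions $\flin{n,i}$ carry closure conditions (the $\FF$ and $\lin{n}$ are relation symbols). Hence $A\substr M$ is equivalent to: $A$ is a subring of $M$ and $A$ is closed under all the $\flin{n,i}$. So in both directions ``$A$ is a subring'' is free, and the content is the interplay between closure under the $\flin{n,i}$ on the one hand and ``$\FF(A)$ is a field together with $A\ldis[\FF(A)]\FF(M)$'' on the other.

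For the implication $A\substr M\Rightarrow$ the two conditions, I would argue as follows. That $\FF(A)=\FF(M)\cap A$ is closed under $+,-,\cdot$ is immediate since $A$ is a subring and $\FF(M)$ a field; for inverses, given $0\neq a\in\FF(A)$ note that $\lin{1}(a)$ holds (as $a\neq 0$) while $\neg\lin{2}(1,a)$ holds (as $1,a$ are $\FF(M)$-linearly dependent, $a\in\FF(M)$), so by the axioms of $\Tp$ we get $1=\flin{1,1}(1,a)\cdot a$, whence $a^{-1}=\flin{1,1}(1,a)\in A\cap\FF(M)=\FF(A)$. For linear disjointness, suppose $a_1,\dots,a_n\in A$ are linearly independent over $\FF(A)$ but not over $\FF(M)$; choosing a minimal $\FF(M)$-dependent sub-tuple and reindexing, we get $a_1,\dots,a_m$ with $a_1,\dots,a_{m-1}$ $\FF(M)$-independent and $m\geq 2$ (no $a_i$ is $0$). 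Then $\lin{m-1}(a_1,\dots,a_{m-1})$ and $\neg\lin{m}(a_m,a_1,\dots,a_{m-1})$ both hold, so $a_m=\sum_{i<m}\flin{m-1,i}(a_m,a_1,\dots,a_{m-1})a_i$ with all coefficients in $A\cap\FF(M)=\FF(A)$ by closure, contradicting $\FF(A)$-independence of $a_1,\dots,a_m$.

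For the converse, assume $A$ is a subring, $\FF(A)$ a field, and $A\ldis[\FF(A)]\FF(M)$; I must check closure under each $\flin{n,i}$. Given $a_0,a_1,\dots,a_n\in A$: if $\lin{n}(a_1,\dots,a_n)\wedge\neg\lin{n+1}(a_0,a_1,\dots,a_n)$ fails then $\flin{n,i}(a_0,a_1,\dots,a_n)=0\in A$ by definition; otherwise $a_1,\dots,a_n$ are $\FF(M)$-linearly independent (hence $\FF(A)$-linearly independent) and $a_0$ lies in their $\FF(M)$-span. If $a_0$ were \emph{not} in their $\FF(A)$-span, then $a_0,a_1,\dots,a_n$ would be $\FF(A)$-linearly independent elements of $A$, hence $\FF(M)$-linearly independent by the disjointness hypothesis, contradicting $a_0$ being in the $\FF(M)$-span of $a_1,\dots,a_n$. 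So $a_0=\sum_i\gamma_i a_i$ with $\gamma_i\in\FF(A)$, and by uniqueness of coordinates over $\FF(M)$ these $\gamma_i$ are exactly the values $\flin{n,i}(a_0,a_1,\dots,a_n)$, which therefore lie in $\FF(A)\subseteq A$.

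There is no serious obstacle here: the argument is essentially bookkeeping with the definitions of $\lin{n}$, $\flin{n,i}$ and of linear disjointness (used in its one-sided form, ``$\FF(A)$-independent tuples in $A$ stay $\FF(M)$-independent''). The only mildly delicate points are recognizing the identity $\flin{1,1}(1,a)=a^{-1}$ to get that $\FF(A)$ is closed under inverses, and the minimality argument in the first direction; both are routine, which is why the statement is flagged as easily checked.
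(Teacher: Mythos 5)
Your proof is correct and follows essentially the same route as the paper's: in the forward direction you use $\flin{1,1}(1,a)=a^{-1}$ to get that $\FF(A)$ is a field and closure under the $\flin{n,i}$ to turn an $\FF(M)$-dependence of a tuple from $A$ into an $\FF(A)$-dependence, and in the converse you use linear disjointness plus the field hypothesis to see that $a_0$ lies in the $\FF(A)$-span and then uniqueness of coordinates to identify the $\flin{n,i}$-values with coefficients in $\FF(A)$. The only differences are cosmetic (your explicit minimal-subtuple reduction and the trivial case where the $\flin{n,i}$ are set to $0$, which the paper leaves implicit).
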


\begin{proof}
  First of all, $A$ is an $\Lrg$-substructure of $M$ if and only if it is a 
  subring.

  Let us now assume $A$ is closed under the functions $\flin{n,i}$. Note 
  that, if $x\in\FF$, $\flin{1,1}(1,x) = x^{-1}$ and hence $\FF(A)$ is a 
  field. Now, let $a\in A$ be a tuple. If $a$ is not linearly independent 
  over $\FF(M)$, we may assume that $a_{0} = \sum_{i>0} c_i a_i$ where the 
  $(a_i)_{i>0}$ are linearly independent over $\FF(M)$. Then $c_i = 
  \flin{n,i}(a)\in \FF(A)$ and hence the tuple $a$ is not linearly 
  independent over $\FF(A)$. We have just proved that $A\ldis[\FF(A)]\FF(M)$.

  Conversely, assume that $\FF(A)$ is a field and $A\ldis[\FF(A)]\FF(M)$.  We 
  have to show that $A$ is closed under the $\flin{n,i}$ functions.  Let $a$ 
  be a tuple in $A$ such that $(a_i)_{i>0}$ is linearly independent over 
  $\FF(M)$ and $a_0 = \sum_{i>0} \flin{n,i}(a)a_i$. By hypothesis, the tuple 
  $a$ is not linearly independent over $\FF(A)$ either and (because $\FF(A)$ 
  is a field) there exist elements $c_i\in\FF(A)$ such that $a_{0} = \sum_{i>0} c_i 
  a_i$. Since the $(a_i)_{i>0}$ are linearly independent, we have 
  $\flin{n,i}(a) = c_i\in\FF(A)$.
\end{proof}

\begin{lemma}\label{lem:crit iso}
  Let $M_i\models\Tp$, $A_i\substr M_i$ for $i=1,2$, $f : A_1\to A_2$ an 
  $\Lrg$-isomorphism such that $f(\FF(A_1))=\FF(A_2)$. Then $f$ is in fact an 
  $\Lp$-isomorphism.
\end{lemma}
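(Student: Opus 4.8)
The plan is to show that $f$ commutes with the predicates $\lin{n}$ and the functions $\flin{n,i}$, since it is already assumed to respect the ring structure and the predicate $\FF$. First I would observe that, because $f$ is an $\Lrg$-isomorphism with $f(\FF(A_1)) = \FF(A_2)$, it restricts to a field isomorphism $\FF(A_1) \to \FF(A_2)$.

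For the predicates $\lin{n}$: given a tuple $\bar{y}$ from $A_1$, I would argue that $\lin{n}(\bar{y})$ holds in $M_1$ if and only if $\lin{n}(f(\bar{y}))$ holds in $M_2$. The key point is Lemma~\ref{lem:crit substr}: since $A_i \substr M_i$, we have $A_i \ldis[\FF(A_i)] \FF(M_i)$, so a tuple from $A_i$ is linearly independent over $\FF(M_i)$ if and only if it is linearly independent over $\FF(A_i)$. Linear independence over $\FF(A_i)$ is a condition expressible purely in terms of the ring structure of $A_i$ together with the distinguished subfield $\FF(A_i)$, and all of this is preserved by $f$. Hence $f$ commutes with $\lin{n}$.

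For the functions $\flin{n,i}$: I would split into the two cases of the definition. If $\lin{n}(\bar{y}) \wedge \neg\lin{n+1}(x,\bar{y})$ fails, then $\flin{n,i}(x,\bar{y}) = 0$ on both sides, and we just showed $f$ commutes with the relevant predicates, so this is fine. If $\lin{n}(\bar{y}) \wedge \neg\lin{n+1}(x,\bar{y})$ holds, then in $M_1$ we have $x = \sum_i \flin{n,i}(x,\bar{y}) y_i$ with each coefficient $c_i := \flin{n,i}(x,\bar{y}) \in \FF(A_1)$ (the coefficients land in $A_1$ since $A_1$ is closed under the $\flin{n,i}$, and in $\FF(A_1) = \FF(A_1) \cap A_1$). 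Applying $f$, which is a ring homomorphism, gives $f(x) = \sum_i f(c_i) f(y_i)$ with $f(c_i) \in \FF(A_2)$, and by the previous paragraph $\lin{n}(f(\bar{y})) \wedge \neg\lin{n+1}(f(x),f(\bar{y}))$ holds in $M_2$; since the expansion of $f(x)$ over the $\FF(M_2)$-linearly-independent tuple $f(\bar{y})$ is unique, this forces $\flin{n,i}(f(x),f(\bar{y})) = f(c_i) = f(\flin{n,i}(x,\bar{y}))$. Running the same argument with $f^{-1}$ shows $f$ is an $\Lp$-isomorphism.

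I do not expect any serious obstacle here; the statement is essentially bookkeeping once Lemma~\ref{lem:crit substr} is in hand. The one mild subtlety worth stating carefully is the reduction, via linear disjointness, from linear (in)dependence over the big fields $\FF(M_i)$ to linear (in)dependence over the small fields $\FF(A_i)$ — this is exactly what makes the quantifier-free structure on the substructures $A_i$ rich enough to detect the $\Lp$-structure, and it is what the lemma is designed to deliver.
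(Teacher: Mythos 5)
Your proof is correct and follows essentially the same route as the paper: reduce linear (in)dependence over $\FF(M_i)$ to linear (in)dependence over $\FF(A_i)$ (which the paper does implicitly via the linear disjointness established in Lemma~\ref{lem:crit substr}), transfer that through $f$, and then pin down the $\flin{n,i}$ values by uniqueness of coefficients in a decomposition over a linearly independent tuple. No gaps.
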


\begin{proof}
  We have to check that $f$ respects the predicates $\lin{n}$ and the 
  functions $\flin{n,i}$. First let $a\in A_1$ be a tuple. The tuple $a$ is 
  linearly dependent over $\FF(M_1)$ if and only if it is linearly dependent 
  over $\FF(A_1)$, i.e., there exist $\lambda_i\in\FF(A_1)$ such that 
  $\sum_i\lambda_i a_i = 0$. Equivalently, $f(\sum_i\lambda_i a_i) = \sum_i 
  f(\lambda_i)f(a_i) = 0$ and the tuple $f(a)$ is linearly dependent over 
  $\FF(M_2)$. By symmetry, we also have that if $f(a)$ linearly dependent 
  over $\FF(M_2)$, then $a$ is linearly dependent over $\FF(M_1)$. Hence $f$ 
  respects $\lin{n}$.

  Let us now assume that $a$ is linearly independent over $\FF(M_1)$ and that 
  $c = \sum_i \flin{n,i}(c,a)a_i$. Then $f(c) = f(\sum_i \flin{n,i}(c,a)a_i) 
  = \sum_i f(\flin{n,i}(c,a))f(a_i)$. Moreover the tuple $(c,a)$ is linearly 
  dependent over $\FF(M_1)$ but $a$ is not and hence $(f(c),f(a))$ is 
  linearly dependent over $\FF(M_2)$ but $f(a)$ is not. Therefore $f(c) = 
  \sum_i\flin{n,i}\left(f(c),f(a)\right) f(a_i)$ and by uniqueness of the 
  coefficient in a decomposition of $f(c)$ in the basis $f(a)$, we obtain 
  that $f(\flin{n,i}(c,a)) = \flin{n,i}(f(c),f(a))$.
\end{proof}

\begin{lemma}\label{lem:frac pure}
  Let $M_i\models\Tp$, $A_i\substr M_i$ for $i=1,2$, $f : A_1\to A_2$ an 
  $\Lp$-isomorphism. Then $\Frac{A_i}\substr M_i$, $\FF(\Frac{A_i}) = 
  \FF(A_i)$ and $f$ extends to a unique $\Lp$-isomorphism between 
  $\Frac{A_1}$ and $\Frac{A_2}$.
\end{lemma}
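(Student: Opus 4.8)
The plan is to reduce everything to the two criteria already at hand: Lemma~\ref{lem:crit substr}, which characterizes the substructures of a model of $\Tp$, and Lemma~\ref{lem:crit iso}, which promotes an $\Lrg$-isomorphism respecting $\FF$ to an $\Lp$-isomorphism. Viewing $\Frac{A_i}$ inside $M_i$ (which makes sense since $A_i$ is a domain), I would check three things in turn: (i) $\FF(\Frac{A_i}) = \FF(A_i)$; (ii) $\Frac{A_i}\ldis[\FF(A_i)]\FF(M_i)$, which together with (i) and the fact that $\FF(A_i)$ is a field will yield $\Frac{A_i}\substr M_i$ by Lemma~\ref{lem:crit substr}; and (iii) that the ring-theoretic extension of $f$ to the fraction fields is $\Lp$-elementary and is the unique $\Lp$-isomorphism extending $f$.

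For (i), the inclusion $\FF(A_i)\subseteq\FF(\Frac{A_i})$ is immediate, and for the converse I would reuse the $\flin{1,1}$-trick already exploited in Lemma~\ref{lem:crit substr}: given $a,b\in A_i$ with $b\neq 0$ and $a/b\in\FF(M_i)$, the singleton $b$ is linearly independent over $\FF(M_i)$ while $(a,b)$ is not (as $a - (a/b)b = 0$ with coefficients in $\FF(M_i)$), so the axioms of $\Tp$ give $a = \flin{1,1}(a,b)\,b$ with $\flin{1,1}(a,b)\in\FF(M_i)$, whence $\flin{1,1}(a,b) = a/b$; since $A_i$ is closed under $\flin{1,1}$ this puts $a/b$ in $A_i\cap\FF(M_i) = \FF(A_i)$.

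For (ii), I would invoke the standard fact that linear disjointness from a field extension passes to fraction fields. Concretely, writing $k_i := \FF(A_i)$, the hypothesis $A_i\ldis[k_i]\FF(M_i)$ says the multiplication map $A_i\tensor[k_i]\FF(M_i)\to M_i$ is injective, so its source is a domain; and since $\Frac{A_i} = S_i^{-1}A_i$ for $S_i := A_i\setminus\{0\}$, one has $\Frac{A_i}\tensor[k_i]\FF(M_i) = S_i^{-1}\bigl(A_i\tensor[k_i]\FF(M_i)\bigr)$, which still maps injectively into $M_i$ because the images of the elements of $S_i$ are invertible there. Hence $\Frac{A_i}\ldis[k_i]\FF(M_i)$; by (i) this is disjointness over $\FF(\Frac{A_i})$, which is a field, so Lemma~\ref{lem:crit substr} delivers $\Frac{A_i}\substr M_i$.

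Finally, for (iii): $f$ is a ring isomorphism carrying $S_1$ onto $S_2$, so by the universal property of localization it extends uniquely to a ring isomorphism $\tilde f:\Frac{A_1}\to\Frac{A_2}$, and $\tilde f(\FF(\Frac{A_1})) = \tilde f(\FF(A_1)) = f(\FF(A_1)) = \FF(A_2) = \FF(\Frac{A_2})$ by (i); Lemma~\ref{lem:crit iso} then makes $\tilde f$ an $\Lp$-isomorphism, and any $\Lp$-isomorphism extending $f$ is a fortiori a ring isomorphism extending $f$, hence equals $\tilde f$. The only step carrying any real content is (ii), and even there the work is just the localization argument above; the rest is unwinding the definitions.
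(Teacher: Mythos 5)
Your proposal is correct and follows essentially the same route as the paper: establish that $\Frac{A_i}$ is still linearly disjoint from $\FF(M_i)$ over $\FF(A_i)$ and that $\FF(\Frac{A_i})=\FF(A_i)$, then conclude via Lemmas~\ref{lem:crit substr} and \ref{lem:crit iso}. Your tensor/localization argument for step (ii) is just a repackaging of the paper's ``clearing denominators'', and your direct $\flin{1,1}$ computation for (i) is a harmless variant of deducing it from the linear disjointness.
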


\begin{proof}
  One checks, by clearing the denominators, that 
  $\Frac{A_i}\ldis[\FF(A_i)]\FF(M_i)$ and thus that $\FF(\Frac{A_i})$ = 
  $\FF(A_i)$. Lemmas\,\ref{lem:crit substr} and \ref{lem:crit iso} now allow 
  us to conclude.
\end{proof}

\begin{lemma}\label{lem:small field pure}
  Let $M_i\models\Tp$, $A_i\substr M_i$, $\FF(A_i)\substr[\Lrg] 
  C_i\substr[\Lrg]\FF(M_i)$ for $i=1,2$, $f : A_1\to A_2$ an 
  $\Lp$-isomorphism and $g: C_1\to C_2$ an $\Lrg$-isomorphism such that 
  $\restr{g}{\FF(A_1)} = \restr{f}{\FF(A_1)}$. Assume that $A_i$ and $C_i$ 
  are fields then $A_i C_i \substr M_i$, $\FF(A_i C_i) = C_i$ and there is a 
  unique $\Lp$-isomorphism $h:A_1 C_1\to A_2 C_2$ extending $f$ and $g$.
\end{lemma}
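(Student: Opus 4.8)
The strategy is to reduce everything to the criteria for substructures (Lemma~\ref{lem:crit substr}) and for isomorphisms (Lemma~\ref{lem:crit iso}), so that the only real content is a linear-disjointness computation inside the ambient field. First I would record the key disjointness fact: since $A_i \ldis[\FF(A_i)] \FF(M_i)$ by Lemma~\ref{lem:crit substr} and $\FF(A_i) \substr[\Lrg] C_i \substr[\Lrg] \FF(M_i)$, a standard transitivity/base-change argument for linear disjointness gives $A_i C_i \ldis[C_i] \FF(M_i)$, where $A_i C_i$ denotes the compositum inside $M_i$ (this makes sense as $A_i$ and $C_i$ are fields). Here one uses that $A_i$ is linearly disjoint from $\FF(M_i)$ over $\FF(A_i)$, hence remains so after the finite-dimensional base change to $C_i$; concretely, a $C_i$-linear dependence among elements of $A_i C_i$ with coefficients in $\FF(M_i)$ can be rewritten, after clearing $C_i$-denominators into $A_i$, as an $\FF(A_i)$-bad dependence, contradiction. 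From $A_i C_i \ldis[C_i] \FF(M_i)$ one reads off both $\FF(A_i C_i) = C_i$ (any element of $A_i C_i$ lying in $\FF(M_i)$ must, by disjointness, lie in $C_i$) and, via Lemma~\ref{lem:crit substr}, that $A_i C_i \substr M_i$.

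Next I would build the ring isomorphism $h : A_1 C_1 \to A_2 C_2$. Since $\restr{g}{\FF(A_1)} = \restr{f}{\FF(A_1)}$, the maps $f$ and $g$ agree on the common subring $\FF(A_1) = A_1 \cap C_1$ (note this intersection is exactly $\FF(A_1)$ precisely because $A_1 \ldis[\FF(A_1)] \FF(M_1) \supseteq C_1$ forces $A_1 \cap C_1 \subseteq \FF(A_1)$, and the reverse inclusion is clear). The compositum $A_1 C_1$ is a quotient of $A_1 \otimes_{\FF(A_1)} C_1$; linear disjointness of $A_1$ and $C_1$ over $\FF(A_1)$ — which follows from $A_1 \ldis[\FF(A_1)] \FF(M_1)$ since $C_1 \subseteq \FF(M_1)$ — means $A_1 C_1 \cong A_1 \otimes_{\FF(A_1)} C_1$. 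Hence $f \otimes g$ induces a well-defined ring isomorphism $h : A_1 C_1 \to A_2 C_2$ extending both, and it is the unique ring homomorphism doing so because $A_1 C_1$ is generated as a ring by $A_1 \cup C_1$. Finally, $h(\FF(A_1 C_1)) = h(C_1) = C_2 = \FF(A_2 C_2)$, so by Lemma~\ref{lem:crit iso} the $\Lrg$-isomorphism $h$ is automatically an $\Lp$-isomorphism. Uniqueness as an $\Lp$-isomorphism is immediate from uniqueness as a ring map.

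I expect the main obstacle to be bookkeeping the linear disjointness transitivity cleanly — in particular being careful that $A_i$ and $C_i$ need not be contained in any common finitely generated piece, so one should phrase the disjointness of $A_i$ and $C_i$ over $\FF(A_i)$ as: every finite $\FF(A_i)$-independent tuple from $A_i$ stays $C_i$-independent, which is exactly what $A_i \ldis[\FF(A_i)] \FF(M_i)$ delivers since $C_i \subseteq \FF(M_i)$. Everything else — identifying $\FF$ of the compositum, invoking Lemmas~\ref{lem:crit substr} and~\ref{lem:crit iso}, and assembling $h$ from $f \otimes g$ — is then formal. One mild subtlety worth a sentence in the write-up: we should note that $A_i C_i$ as a subset of $M_i$ really is the image of $A_i \otimes_{\FF(A_i)} C_i$, which requires $A_i$ and $C_i$ to be subfields (given) and their intersection to be $\FF(A_i)$ (established above).
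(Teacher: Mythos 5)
Your proposal is correct and follows essentially the same route as the paper: establish $A_iC_i\ldis[C_i]\FF(M_i)$ by base change of linear disjointness, read off $\FF(A_iC_i)=C_i$ and $A_iC_i\substr M_i$ via Lemma~\ref{lem:crit substr}, identify $A_iC_i$ with $A_i\tensor[\FF(A_i)]C_i$ to get the unique ring isomorphism $h$ extending $f$ and $g$, and upgrade it to an $\Lp$-isomorphism by Lemma~\ref{lem:crit iso}. The only cosmetic slips are the phrase ``finite-dimensional base change'' (the extension $C_i/\FF(A_i)$ need not be finite) and the closing remark suggesting that $A_i\cap C_i=\FF(A_i)$ is what licenses the tensor-product identification, whereas the real justification is the linear disjointness of $A_i$ and $C_i$ over $\FF(A_i)$, which you do state and prove earlier.
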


\begin{proof}
  As $A_i\ldis[\FF(A_i)]\FF(M_i)$ and $C_i\substr\FF(M_i)$, we have $A_i C_i 
  \ldis[C_i]\FF(M_i)$. It follows that $\FF(A_i C_i) = C_i$ and, by 
  Lemma\,\ref{lem:crit substr}, that $A_i C_i \substr M_i$.  As 
  $A_i\ldis[\FF(A_i)]\FF(M_i)$, it follows that $A_i C_i$ is isomorphic as a 
  ring to $A_i \tensor[\FF(A_i)] C_i$ and hence there exists a unique ring 
  isomorphism $h:A_1 C_1\to A_2 C_2$ extending $f$ and $g$.  By 
  Lemma\,\ref{lem:crit iso}, $h$ is in fact an $\Lp$-isomorphism.
\end{proof}

\begin{lemma}\label{lem:alg pure}
  Let $M_i\models\Tp$, $A_i\substr M_i$ for $i=1,2$ and $f : A_1\to A_2$ an 
  $\Lp$-isomorphism. Assume $A_i$ is a field and the extension 
  $\FF(A_i)\subseteq \FF(M_i)$ is regular then $\alg{A_i}\cap M_i\substr 
  M_i$, $\FF(\alg{A_i}\cap M_i) = \FF(A_i)$ and any field isomorphism 
  $g:\alg{A_1}\cap M_1\to\alg{A_2}\cap M_2$ extending $f$ is an 
  $\Lp$-isomorphism.
\end{lemma}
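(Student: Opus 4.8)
The strategy mirrors the pattern already established in Lemmas\,\ref{lem:crit substr}--\ref{lem:small field pure}: reduce everything to a statement about linear disjointness over $\FF$, then invoke Lemma\,\ref{lem:crit substr} to get substructures and Lemma\,\ref{lem:crit iso} to upgrade a ring isomorphism to an $\Lp$-isomorphism. Concretely, write $A = A_i$, $F = \FF(A_i)$, $E = \FF(M_i)$, and $A' = \alg{A}\cap M_i$. The heart of the matter is to show $A'\ldis[F]E$; then Lemma\,\ref{lem:crit substr} gives $A'\substr M_i$, the disjointness forces $\FF(A') = F$ (any element of $A'\cap E$ is algebraic over $F$ inside $E$, hence in $F$ by regularity of $E/F$... actually more simply, $A'\cap E$ is linearly disjoint from $E$ over $F$ only if it equals $F$), and Lemma\,\ref{lem:crit iso} applied to the field isomorphism $g$ (which satisfies $g(\FF(A_1)) = g(F_1) = F_2 = \FF(A_2)$ since $g$ extends $f$ and $f$ is an $\Lp$-isomorphism) shows $g$ is an $\Lp$-isomorphism.

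So the whole proof reduces to the algebraic claim: \emph{if $A$ is a field with $A\ldis[F]E$ (where $F = \FF(A)$) and $E/F$ is regular, then $\alg{A}\ldis[F]E$, equivalently $\alg{A}\cdot E$ is a domain isomorphic to $\alg{A}\tensor[F]E$.} Here one should be slightly careful about what $\alg{A}$ means relative to the ambient $M_i$; since we intersect with $M_i$ afterwards it is cleanest to take $\alg{A}$ to be the algebraic closure computed inside a fixed algebraic closure of $M_i$ and then note $A' = \alg A\cap M_i$ inherits the disjointness from $\alg A$. To prove the claim, I would use the tensor-product characterization of linear disjointness together with the standard fact that $\alg{A}$ is the filtered union of its finite subextensions $A\subseteq A_0$, so it suffices to show $A_0\tensor[F]E$ is a field (or at least a domain) for each finite $A_0/A$. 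Since $A\ldis[F]E$ means $A\tensor[F]E\hookrightarrow M_i$ is a domain with fraction field inside $M_i$, and $A_0/A$ is finite, $A_0\tensor[F]E = A_0\tensor[A](A\tensor[F]E)$ is finite over the domain $A\tensor[F]E$; one then argues that it is reduced and connected. Regularity of $E/F$ is exactly what guarantees this: $E/F$ regular is equivalent to $E\tensor[F]\alg{F}$ being a domain (indeed $E$ stays a field over any algebraic — in particular any finite — extension of $F$ when $E/F$ is separable and $F$ algebraically closed in $E$). So $E\tensor[F]A_0$ is a domain for every finite $A_0/F$; intersecting such $A_0$ with $\alg A$ and passing to the colimit gives $E\tensor[F]\alg A$ a domain, which is the linear disjointness we want.

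The \textbf{main obstacle} is handling the regularity hypothesis correctly at the level of tensor products: one needs the precise statement that $E/F$ regular implies $E\tensor[F]L$ is a domain for \emph{every} field extension $L/F$ (not just $L = \alg F$), which is the standard characterization of regular extensions; the subtlety is that $\alg{A}/F$ need not be algebraic over $F$ — it is algebraic over $A$, which itself may be transcendental over $F$ — so one cannot directly say "$E$ stays a field over the algebraic extension $\alg A$." The clean way around this is to factor: first $A\tensor[F]E$ is a domain by hypothesis with field of fractions $L_0\subseteq\Frac(M_i)$, and then $\alg A\tensor[F]E = \alg A\tensor[A](A\tensor[F]E)$, and one checks $\alg A\tensor[A]L_0$ is a domain because $L_0/F$ is (by regularity of $E/F$, transporting along $A\hookrightarrow L_0$) a separable extension inside which $F$ — hence, one checks, $A$ — is algebraically closed, so $\alg A\tensor[A]L_0$ is a domain by the same regularity criterion applied to $L_0/A$. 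Once this algebraic point is pinned down, the rest is the routine bookkeeping with Lemmas\,\ref{lem:crit substr} and \ref{lem:crit iso} already illustrated above.
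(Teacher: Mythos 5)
Your proposal is correct and takes essentially the same route as the paper: the paper also reduces everything to showing $\alg{A_i}\ldis[\FF(A_i)]\FF(M_i)$, obtained by base-changing the regular extension $\FF(A_i)\subseteq\FF(M_i)$ along $A_i$ (so that $A_i\FF(M_i)/A_i$ is regular, i.e.\ $\alg{A_i}\ldis[A_i]A_i\FF(M_i)$) and then applying transitivity of linear disjointness — your tensor-product factorization $\alg{A}\tensor[F]E\cong\alg{A}\tensor[A](A\tensor[F]E)$ is exactly this step in different notation — before concluding with Lemmas~\ref{lem:crit substr} and \ref{lem:crit iso}. (Your first parenthetical justification of $\FF(\alg{A}\cap M)=\FF(A)$ is off, since elements of $\alg{A}\cap M$ are only algebraic over $A$, not over $F$, but your immediate correction via linear disjointness is the right argument and is the one the paper uses.)
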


\begin{proof}
  We have $A_i\ldis[\FF(A_i)]\FF(M_i)$ and $\FF(A_i)\subseteq \FF(M_i)$ 
  regular hence $A_i\subseteq A_i\FF(M_i)$ is also regular, that is, 
  $\alg{A_i}\ldis[A_i]A_i\FF(M_i)$. By transitivity of linear disjointness, 
  we obtain that $\alg{A_i}\ldis[\FF(A_i)]\FF(M_i)$ and hence $\alg{A_i}\cap 
  M_i\ldis[\FF(A_i)]\FF(M_i)$. In particular, we have $\FF(\alg{A_i}\cap M_i) 
  = \FF(A_i)$ and we may conclude using Lemmas \ref{lem:crit substr} and 
  \ref{lem:crit iso}.
\end{proof}

\begin{lemma}\label{lem:trans pure}
  Let $M_i\models\Tp$, $A_i\substr M_i$, $c_i\in M_i$ for $i=1,2$ and $f : 
  A_1\to A_2$ an $\Lp$-isomorphism. Assume $A_i$ is a field and $c_i$ 
  transcendental over $A_i\FF(M_i)$ then $A_i(c_i)\substr M_i$, 
  $\FF(A_i(c_i)) = \FF(A_i)$ and there exists an $\Lp$-isomorphism 
  $g:A_1(c_1)\to A_2(c_2)$ extending $f$ and sending $c_1$ to $c_2$.
\end{lemma}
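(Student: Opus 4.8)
The plan is to obtain the first two assertions from the substructure criterion (Lemma~\ref{lem:crit substr}), once the key linear disjointness statement $A_i(c_i)\ldis[\FF(A_i)]\FF(M_i)$ is in hand, and then to deduce the isomorphism from the isomorphism criterion (Lemma~\ref{lem:crit iso}).

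First I would record the elementary fact that a one-variable purely transcendental extension is linearly disjoint, over its base, from any field over which the transcendental element remains transcendental. Concretely, since $c_i$ is transcendental over the compositum $A_i\FF(M_i)$, the $A_i$-algebra $A_i(c_i)\otimes_{A_i}A_i\FF(M_i)$ is a localization of the polynomial ring $A_i\FF(M_i)[c_i]$, hence a domain that embeds into $M_i$ via $c_i\mapsto c_i$; this says precisely that $A_i(c_i)\ldis[A_i]A_i\FF(M_i)$. Now $A_i\substr M_i$ gives $A_i\ldis[\FF(A_i)]\FF(M_i)$ by Lemma~\ref{lem:crit substr}, and transitivity of linear disjointness, applied to the tower $\FF(A_i)\subseteq A_i\subseteq A_i(c_i)$ against $\FF(M_i)$, yields $A_i(c_i)\ldis[\FF(A_i)]\FF(M_i)$. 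In particular $A_i(c_i)\cap\FF(M_i)=\FF(A_i)$, so $\FF(A_i(c_i))=\FF(A_i)$, which is a field; a second application of Lemma~\ref{lem:crit substr} then gives $A_i(c_i)\substr M_i$. This settles the first two claims.

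For the third assertion, $f$ is in particular a field isomorphism $A_1\to A_2$ with $f(\FF(A_1))=\FF(A_2)$, and each $c_i$ is transcendental over $A_i$ (a fortiori, over $A_i\FF(M_i)\supseteq A_i$), so there is a unique field isomorphism $g\colon A_1(c_1)\to A_2(c_2)$ extending $f$ with $g(c_1)=c_2$. Since $g(\FF(A_1(c_1)))=g(\FF(A_1))=f(\FF(A_1))=\FF(A_2)=\FF(A_2(c_2))$ and each $A_i(c_i)\substr M_i$, Lemma~\ref{lem:crit iso} shows that $g$ is an $\Lp$-isomorphism, as required. The whole argument is routine bookkeeping with the two criteria; the only step that deserves care is invoking transitivity of linear disjointness correctly, feeding in the substructure hypothesis on $A_i$ for the lower part of the tower and the transcendence of $c_i$ for the upper part. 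I do not anticipate any genuine obstacle.
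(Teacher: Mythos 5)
Your proof is correct and follows essentially the same route as the paper: establish $A_i(c_i)\ldis[A_i]A_i\FF(M_i)$, use transitivity of linear disjointness together with $A_i\ldis[\FF(A_i)]\FF(M_i)$ to get $A_i(c_i)\ldis[\FF(A_i)]\FF(M_i)$, and then conclude via Lemmas~\ref{lem:crit substr} and \ref{lem:crit iso}. The only (inessential) difference is that you justify the first linear disjointness by a direct tensor-product/localization computation, whereas the paper deduces it from algebraic independence together with the regularity of the purely transcendental extension $A_i\subseteq A_i(c_i)$.
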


\begin{proof}
  We have that $f$ extends to a ring isomorphism $g$ on $A_1(c_1)$ sending 
  $c_1$ to $c_2$. Moreover, $A_i(c_i)$ is algebraically independent from 
  $A_i\FF(M_i)$ over $A_i$. As $A_i\subseteq A_i(c_i)$ is regular, $A_i(c_i) 
  \ldis[A_i] A_i\FF(M_i)$. Since $A_i\ldis[\FF(A_i)]\FF(M_i)$, by 
  transitivity, it follows that $A_i(c_i)\ldis[\FF(A_i)]\FF(M_i)$. In 
  particular $\FF(A_i(c_i)) = \FF(A_i)$ and, by Lemma\,\ref{lem:crit iso}, 
  $g$ is in fact an $\Lp$-isomorphism.
\end{proof}

We can now prove a slightly improved version of \cite[Theorem\,1]{DelDens}.  
Let $\alg{\Tp} := \Tp\cup\ACF\cup\{[\K:\FF] \geq n\mid n\in\N\}$.

\begin{theorem}\label{thm:EQ pure}
  The theory $\alg{\Tp}$ resplendently eliminates quantifiers relative to 
  $\FF$; that is, for every language $\tL\supseteq\Lrg$ and every 
  $\tL$-theory $\tT$ which eliminates quantifiers, the $\Lp\cup\tL$-theory 
  $\Tpp:=\alg{\Tp}\cup \tT^{\FF}$ eliminates quantifiers where $\tT^{\FF}$ is 
  the relativization of $\tT$ to $\FF$.
\end{theorem}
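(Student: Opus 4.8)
The plan is to prove quantifier elimination for $\Tpp$ via the standard back-and-forth criterion: given two sufficiently saturated models $M_1,M_2\models\Tpp$ and a partial $\Lp\cup\tL$-isomorphism $f\colon A_1\to A_2$ between small substructures, I must show that $f$ extends to a larger substructure containing any prescribed element $c\in M_1$. The key observation is that, by Lemma~\ref{lem:crit iso} (and its $\tL$-enriched analogue, which follows from quantifier elimination in $\tT$ applied on $\FF$), a partial isomorphism in the rich language $\Lp\cup\tL$ between substructures is the same data as a field isomorphism between the $A_i$ together with an $\tL$-isomorphism between the fields $\FF(A_i)$ compatible on the overlap, provided one knows $A_i\ldis[\FF(A_i)]\FF(M_i)$, which is exactly the content of Lemma~\ref{lem:crit substr}. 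So the whole proof is an exercise in bookkeeping: push $A_i$ through a sequence of elementary extension steps, at each stage verifying that linear disjointness over $\FF(A_i)$ is preserved and that the new piece of $\FF$ can be matched using saturation and quantifier elimination in $\tT$.

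\textbf{The extension steps.} Concretely, I would run the back-and-forth in the following order. First, using Lemma~\ref{lem:frac pure}, pass to fraction fields so that we may assume each $A_i$ is a field. Second, use the $\tL$-structure: since $\tT$ eliminates quantifiers, given the element $c\in M_1$, if $c$ lies in $\FF(M_1)$ we can directly extend the $\tL$-isomorphism $\restr{f}{\FF(A_1)}$ to include $\tp_{\tL}(c/\FF(A_1))$ by saturation of $M_2$; then Lemma~\ref{lem:small field pure} (with $C_i$ an enlargement of $\FF(A_i)$ inside $\FF(M_i)$) upgrades this to an $\Lp\cup\tL$-isomorphism of $A_iC_i$. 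Third, handle the case where $c\notin\FF(M_1)$: enlarge $\FF(A_i)$ enough that the extension $\FF(A_i)\subseteq\FF(M_i)$ becomes regular (this is harmless, again by Lemmas~\ref{lem:small field pure} and quantifier elimination in $\tT$; note $\FF(M_i)\models\ACF$ so one can always arrange $\FF(A_i)$ relatively algebraically closed and hence the extension regular). Then apply Lemma~\ref{lem:alg pure} to close $A_i$ off under algebraic elements inside $M_i$, and apply Lemma~\ref{lem:trans pure} to adjoin a transcendental $c_i$ over $A_i\FF(M_i)$. Iterating these two steps (algebraic closure in $M_i$, then transcendence extension) exhausts any field extension of $A_1$ inside $M_1$: indeed since $\card{\K:\FF}\geq n$ for all $n$ is part of $\alg{\Tp}$, the field $M_i$ has infinite transcendence degree over $\FF(M_i)$, so there is always room in $M_2$ to find a matching transcendental, and saturation supplies matching algebraic elements.

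\textbf{The main obstacle.} The delicate point, and the one I expect to absorb the real work, is coordinating the $\FF$-side with the field side: when we want to adjoin $c\in M_1$ we may be forced first to grow $\FF(A_1)$ inside $\FF(M_1)$ (to make the extension regular, or because $c$ is algebraic over $A_1\FF(M_1)$ but its minimal polynomial has coefficients in a larger subfield of $\FF(M_1)$), and each such enlargement of $\FF$ must be realized on the $M_2$-side by an $\tL$-isomorphism — this is where ``resplendent relative to $\FF$'' really bites, since we are only allowed to invoke quantifier elimination for $\tT$ on the predicate $\FF$, not any structure theory of $M$. The resolution is that Lemma~\ref{lem:small field pure} is precisely designed to let us enlarge $\FF(A_i)$ to an arbitrary $\tL$-substructure $C_i$ of $\FF(M_i)$ while keeping $A_iC_i\substr M_i$ and $\FF(A_iC_i)=C_i$, so every such enlargement is legitimate; one just has to be careful to perform the $\FF$-enlargements \emph{before} the corresponding field-side extension, so that at the moment we invoke Lemma~\ref{lem:alg pure} or Lemma~\ref{lem:trans pure} the hypothesis ``$\FF(A_i)\subseteq\FF(M_i)$ regular'' (resp.\ ``$c_i$ transcendental over $A_i\FF(M_i)$'') is already in force. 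Once the steps are sequenced correctly, completeness of $\Tpp$ (needed to know the two models agree on $\FF_p[\ ]$-type data, or more precisely that the back-and-forth can be started) follows from quantifier elimination together with the fact that $\ACF$ has, for each characteristic, a prime model embedding into everything.
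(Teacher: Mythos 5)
Your overall plan---a back-and-forth through Lemmas \ref{lem:frac pure}, \ref{lem:small field pure}, \ref{lem:alg pure} and \ref{lem:trans pure}, with quantifier elimination in $\tT$ plus saturation supplying the matching on the $\FF$-side---is the right one and is essentially the paper's. But the step you yourself flag as carrying the real work is broken as written. To apply Lemma \ref{lem:alg pure} you must know that the extensions $\FF(A_i)\subseteq\FF(M_i)$ are regular, and your justification is that ``$\FF(M_i)\models\ACF$, so one can arrange $\FF(A_i)$ relatively algebraically closed, hence the extension regular.'' Both halves fail. In $\alg{\Tp}$ the $\ACF$ axioms concern the big field $\K$, not the predicate $\FF$: the $\FF$-part only satisfies $\tT$, and in the intended application ($\tT=\SCVH_{p,e}$ or $\SCVF_{p,e}$) $\FF(M_i)$ is separably closed and non-perfect, so certainly not algebraically closed. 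Moreover, even with $\FF(A_i)$ relatively algebraically closed in $\FF(M_i)$, in characteristic $p$ relative algebraic closedness does not imply separability, hence does not imply regularity, so the hypothesis of Lemma \ref{lem:alg pure} would still not be verified. (A smaller slip of the same kind: the axioms $[\K:\FF]\geq n$ alone do not give $M_i$ infinite transcendence degree over $\FF(M_i)$---for $K\models\SCF_{p,e}$ the pair $(\alg{K},K)$ has transcendence degree $0$---this needs saturation, which is exactly Claim \ref{claim:tr deg pure} in the paper's proof.)

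The correct source of regularity, and the way the paper organizes things so that your ``coordination'' problem never arises, is elementarity coming from quantifier elimination in $\tT$: right after reducing to the case where $A$ is a field, extend $\restr{f}{\FF(A)}$ to an $\tL$-embedding $\FF(M)\to\FF(N)$ (possible by QE in $\tT$ and saturation of $N$) and absorb all of $\FF(M)$ into $A$ by Lemma \ref{lem:small field pure}. Then $f(\FF(M))\subseteq\FF(N)$ is an $\tL$-embedding between models of $\tT$, hence elementary, hence a regular field extension, so Lemma \ref{lem:alg pure} applies; and since Lemmas \ref{lem:alg pure} and \ref{lem:trans pure} do not enlarge the $\FF$-part, every subsequent extension restricts on $\FF$ to the already-fixed map and is automatically an $\tL$-embedding, so no further matching on $\FF$ is ever needed. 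Your piecemeal alternative (growing $\FF(A_1)$ only as needed) could be repaired along the same lines---choose each intermediate $C_1$ to be an elementary $\tL$-substructure of $\FF(M_1)$, so that both $C_1\subseteq\FF(M_1)$ and its matched image in $\FF(M_2)$ are elementary, hence regular---but as written the regularity step has no valid justification, and this is precisely the point where the relative quantifier elimination has to do its work.
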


\begin{proof}
  Let us denote by $\Lpp$ the language $\Lp\cup\tL$. Note that an 
  $\Lpp$-isomorphism is an $\Lp$-isomorphism that restricts to an 
  $\tL$-isomorphism on $\FF$.

  Let $M$ and $N$ be models of $\Tpp$, $A\substr M$ and $f:A\to N$ an 
  $\Lpp$-embedding. Assume that $N$ is $\card{M}^+$-saturated. We have to 
  show that $f$ extends to $M$. By Lemma\,\ref{lem:frac pure}, we may assume 
  that $A$ is a field. Since $\FF(\Frac{A}) = \FF(A)$, this new embedding is 
  an $\tL$-embedding. By quantifier elimination in $\tT$, we can extend 
  $\restr{f}{\FF(A)}$ to $g:\FF(M)\to\FF(N)$ and by Lemma\,\ref{lem:small field 
  pure}, we may assume that $\FF(M)\subseteq A$. As $\tT$ eliminates 
  quantifiers, $f(\FF(M))\subsel_{\tL} \FF(N)$ and this extension is regular.  
  Applying Lemma\,\ref{lem:alg pure}, we may assume that $A$ is algebraically 
  closed.

  \begin{claim}\label{claim:tr deg pure}
    The transcendence degree of $N$ over $\FF(N)$ is larger than $\card{M}$.
  \end{claim}

  \begin{proof}
    By compactness and saturation this follows from the fact that $N$ is an 
    infinite extension of $\FF(N)$.
  \end{proof}

  Let $c\in M$ be transcendental over $A$ and $d\in N$ be transcendental over 
  $f(A)\FF(N)$. Then, by Lemma\,\ref{lem:trans pure}, $f$ extends to an 
  $\Lp$-embedding $g:A(c)\to N$ sending $c$ to $d$. Moreover, $\restr{g}{\FF} 
  = \restr{f}{\FF}$ and hence $g$ is also an $\tL$-embedding. Now, by 
  Lemma\,\ref{lem:alg pure}, $g$ extends to an $\Lpp$-embedding on 
  $\alg{A(c)}$. Repeating this last step sufficiently many times, we obtain 
  an $\Lpp$-embedding of $M$ into $N$.
\end{proof}

\subsubsection{The valued case}

We now want to extend the previous results to the setting of dense pairs of 
valued fields.

Let $\Lpv := \Lp\cup\LRV$. We will consider the $\Lpv$-theory \[\Tpv := 
\Tp\cup\{\text{$\VF$ is a valued field and $\FF\subseteq\VF$ is dense}\}.\]

In any model $M\models \Tpv$, by density of the pair $\FF(M)\subseteq\VF(M)$, 
we have $\rv(\VF(M)) = \rv(\FF(M))$. We define $\FFRV := \FF\cup\RV$.

\begin{lemma}\label{lem:frac valued}
  Let $M_i\models\Tpv$, $A_i\substr M_i$ for $i=1,2$ and $f : A_1\to A_2$ an 
  $\Lpv$-isomorphism. Then $f$ extends to an $\Lpv$-isomorphism between 
  $A_1(\Frac{\VF(A_1)})$ and $A_2(\Frac{\VF(A_2}))$.
\end{lemma}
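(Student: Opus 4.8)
The plan is to imitate the proof of Lemma~\ref{lem:frac pure}, the only genuinely new point being that inverting the elements of $\VF(A_i)$ produces no new elements in the sort $\RV$. Write $B_i := A_i(\Frac{\VF(A_i)})$ for the $\Lpv$-substructure of $M_i$ generated by $A_i$ and the fraction field of the integral domain $\VF(A_i)$. Since $\VF(M_i)$ is a field containing $\VF(A_i)$, this fraction field sits canonically inside $\VF(M_i)$, and as no function symbol maps into $\VF$ from another sort we get $\VF(B_i)=\Frac{\VF(A_i)}$. Moreover $\RV(B_i)=\RV(A_i)$: any element of $\RV(B_i)$ has the form $\rv(a/b)$ with $a,b\in\VF(A_i)$, $b\neq0$, and $\rv(a/b)=\rv(a)\rv(b)^{-1}\in\RV(A_i)$ since $\RV(A_i)$ (together with $0$) is a subgroup of the multiplicative group $\RV(M_i)$.

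Next I would show $\FF(B_i)=\FF(A_i)$ by clearing denominators, exactly as in Lemma~\ref{lem:frac pure}. By Lemma~\ref{lem:crit substr} applied to the $\Lp$-reduct, $\FF(A_i)$ is a field and $\VF(A_i)\ldis[\FF(A_i)]\FF(M_i)$. Given $c_1,\dots,c_n\in\Frac{\VF(A_i)}$ linearly independent over $\FF(A_i)$, put them over a common denominator $b\in\VF(A_i)\setminus\{0\}$, say $c_j=a_j/b$; multiplication by $b$ is an injective $\FF(A_i)$-linear map, so $a_1,\dots,a_n\in\VF(A_i)$ are still linearly independent over $\FF(A_i)$, hence over $\FF(M_i)$ by linear disjointness, hence so are the $c_j$. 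Thus $\Frac{\VF(A_i)}\ldis[\FF(A_i)]\FF(M_i)$; applying this to pairs $\{1,c\}$ gives $\Frac{\VF(A_i)}\cap\FF(M_i)=\FF(A_i)$, i.e. $\FF(B_i)=\FF(A_i)$. Now Lemma~\ref{lem:crit substr} (for the $\Lp$-reduct) shows $B_i$ is an $\Lp$-substructure of $M_i$, and since $\RV(B_i)=\RV(A_i)$ is closed under the $\RV$-operations and $\rv(\VF(B_i))\subseteq\RV(B_i)$, we conclude $B_i\substr M_i$.

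Finally, the universal property of fraction fields extends $f$ uniquely to a ring isomorphism $\Frac{\VF(A_1)}\to\Frac{\VF(A_2)}$; since $\RV(B_i)=\RV(A_i)$, combining this with $f$ yields a map $g:B_1\to B_2$ extending $f$, unique as such. It is a ring isomorphism on $\VF$ with $g(\FF(B_1))=f(\FF(A_1))=\FF(A_2)=\FF(B_2)$, it agrees with $f$ on $\RV$, and it commutes with $\rv$ because $g(\rv(a/b))=f(\rv(a))f(\rv(b))^{-1}=\rv(g(a))\rv(g(b))^{-1}=\rv(g(a/b))$ for $a,b\in\VF(A_1)$, $b\neq0$. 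Hence Lemma~\ref{lem:crit iso} (for the $\Lp$-reduct) shows $g$ respects $\lin{n}$ and $\flin{n,i}$, and the remaining $\LRV$-structure is respected since $g|_{\RV}=f$ and $g$ commutes with $\rv$. So $g$ is the required $\Lpv$-isomorphism.

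I do not expect a real obstacle here: this is a routine ``clearing denominators'' adaptation of Lemma~\ref{lem:frac pure}, and the one place where the valued setting could cause trouble --- new elements appearing in $\RV$ --- does not arise, since $\rv$ of a fraction is a ratio of $\rv$-values already present and $\RV(A_i)$ is closed under inverses.
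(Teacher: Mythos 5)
Your overall route is the intended one (pass to fraction fields, control the pair structure via the clearing-denominators argument, control the valued structure via multiplicativity of $\rv$), but the step on which you rest the whole $\RV$ side is false as stated. You claim that $\RV(B_i)=\RV(A_i)$ because ``$\RV(A_i)$ (together with $0$) is a subgroup of the multiplicative group $\RV(M_i)$''. An $\Lpv$-substructure is only closed under the function symbols, and on $\RV$ the language is $\LDIV$: there is no inverse symbol, so $\RV(A_i)$ is closed under $\cdot$, $-$ and the truncated $+$, but not under multiplicative inverses. (This is exactly why the paper's proof of quantifier elimination for $\ACVF$ in $\LRV$, Fact~\ref{F:ACVF-QE}, begins by extending the map ``to the closure of $A$ under inverses (both in $\VF$ and $\RV$)''.) For a concrete failure, take $A$ generated by a single element $t$ of positive value: every element of $\RV(A)$ has non-negative value, so $\rv(1/t)=\rv(t)^{-1}\notin\RV(A)$, while it does lie in $\RV(A(\Frac{\VF(A)}))$. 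Also, elements of $\RV(A_i)$ need not even be of the form $\rv(a)$ with $a\in\VF(A_i)$, so your description of $\RV(B_i)$ as consisting of the $\rv(a/b)$ is inaccurate on that count too. Consequently your verification that $g$ is an $\LRV$-isomorphism is incomplete: you only check commutation with $\rv$ on $\VF(B_1)$, but you never define the map on the genuinely new $\RV$-elements (products of elements of $\RV(A_1)$ with quotients $\rv(a)\rv(b)^{-1}$), nor check that it is well defined and preserves $\div$ and the truncated addition there.

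The gap is repairable by routine computations: set $g\bigl(u\cdot\rv(a)\rv(b)^{-1}\bigr)=f(u)f(\rv(a))f(\rv(b))^{-1}$, and verify atomic formulas by clearing denominators (e.g.\ $\rv(a)\rv(b)^{-1}\,\div\,\rv(c)\rv(d)^{-1}$ iff $\rv(ad)\,\div\,\rv(cb)$, and similarly for $+$, using that multiplication distributes over the truncated addition). This is precisely the content that the paper compresses into a single appeal to quantifier elimination in $\LRV$ to extend $f$ to an $\LRV$-isomorphism $A_1(\Frac{\VF(A_1)})\to A_2(\Frac{\VF(A_2)})$, after which it simply quotes Lemma~\ref{lem:frac pure} for the $\Lp$-part; note also that your second paragraph re-proves Lemma~\ref{lem:frac pure} rather than citing it. So: same strategy as the paper, but the one point you yourself single out as the possible obstacle is dismissed on a false premise and needs the above repair.
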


\begin{proof}
  By quantifier elimination in $\LRV$, $f$ extends to an 
  $\LRV$-isomorphism $g:A_1(\Frac{\VF(A_1)}) \to A_2(\Frac{\VF(A_2)})$.  By 
  Lemma\,\ref{lem:frac pure}, $\restr{g}{\VF}$ is an $\Lp$-isomorphism.
\end{proof}

\begin{lemma}\label{lem:small field valued}
  Let $M_i\models\Tpv$, $A_i\substr M_i$, $\FFRV(A_i)\substr[\LRV] 
  C_i\substr[\LRV]\FFRV(M_i)$ for $i=1,2$, $f : A_1\to A_2$ an 
  $\Lpv$-isomorphism and $g: C_1\to C_2$ an $\LRV$-isomorphism such that 
  $\restr{g}{\FFRV(A_1)} = \restr{f}{\FFRV(A_1)}$. Moreover, assume that for 
  all $a\in \VF(A_i)\setminus\VF(C_i)$ there is a Cauchy sequence 
  $(c_{\alpha}^a)$ in $\VF(C_i)$ converging to $a$ (in $\VF(A_i(C_i))$) and 
  such that $g(c_{\alpha}^a) = c_{\alpha}^{f(a)}$. Assume that $\VF(A_i)$ and 
  $\VF(C_i)$ are fields. Then there exists a unique $\Lpv$-isomorphism 
  $h:A_1(C_1)\to A_2(C_2)$ extending $f$ and $g$.
\end{lemma}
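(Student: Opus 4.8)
The plan is to reduce the $\VF$-sort to the pure field statement (Lemma~\ref{lem:small field pure}) and then upgrade the resulting field isomorphism to one respecting $\rv$, the density needed for this being exactly what the Cauchy sequence hypothesis provides once one passes to valuative completions.

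First I would apply Lemma~\ref{lem:small field pure} to the $\Lp$-reducts living on the $\VF$-sort: the role of ``$M_i$'' is played by the $\Lp$-reduct of $M_i$, that of ``$A_i$'' by the field $\VF(A_i)$ (with $\FF$-part $\FF(A_i)$), and that of ``$C_i$'' by the field $\VF(C_i)$, which satisfies $\FF(A_i)\subseteq\VF(C_i)\subseteq\FF(M_i)$; moreover $f|_{\VF}$ and $g|_{\VF}$ agree on $\FF(A_1)$ since $\FF(A_1)\subseteq\FFRV(A_1)\subseteq C_1$. This gives that $\VF(A_i)\cdot\VF(C_i)$ is an $\Lp$-substructure of $\VF(M_i)$ with $\FF$-part $\VF(C_i)$, together with a unique $\Lp$-isomorphism $h':\VF(A_1)\cdot\VF(C_1)\to\VF(A_2)\cdot\VF(C_2)$ extending $f|_{\VF}$ and $g|_{\VF}$.

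Next I would pin down $A_i(C_i)$ and reduce everything to the claim that $h'$ preserves $\rv$. The Cauchy sequence hypothesis says precisely that $\VF(A_i)$ is contained in the topological closure of $\VF(C_i)$ inside $M_i$; this closure is a valued subfield (the closure of a subring of a topological ring is a subring, and the inverse of a nonzero element of the closure stays in the closure, since inversion is continuous away from $0$), so it contains $\VF(A_i)\cdot\VF(C_i)$. Hence $\VF(C_i)$ is dense in $\VF(A_i)\cdot\VF(C_i)$, the extension is immediate by Lemma~\ref{lem:dense geom}, and therefore $\rv(\VF(A_i)\cdot\VF(C_i))=\rv(\VF(C_i))\subseteq\RV(C_i)$. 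Since also $\RV(A_i)\subseteq\FFRV(A_i)\subseteq\RV(C_i)$, one checks that the $\Lpv$-substructure of $M_i$ generated by $A_i$ and $C_i$ is indeed a substructure and has $\VF$-part $\VF(A_i)\cdot\VF(C_i)$ and $\RV$-part $\RV(C_i)$. The only possible $h$ is then the map that is $h'$ on $\VF$ and $g$ on $\RV$; it is automatically an $\Lp$-isomorphism on $\VF$, an $\LRV$-isomorphism on $\RV$, and compatible with $\FF$, so all that remains is to show that $h'$ preserves $\rv$ (equivalently, the valuation).

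This last point is where the work lies. Let $\widehat{D_i}$ be the valuative completion of $\VF(C_i)$, which by the density just established is also the completion of $\VF(A_i)\cdot\VF(C_i)$; fix the induced isometric embeddings $\VF(A_i)\cdot\VF(C_i)\hookrightarrow\widehat{D_i}$ and let $\widehat{g}:\widehat{D_1}\to\widehat{D_2}$ be the unique isometric extension of $g|_{\VF}$. I would show that the restriction of $\widehat{g}$ to $\VF(A_1)\cdot\VF(C_1)$ equals $h'$ (viewed inside $\widehat{D_2}$). Both are field homomorphisms $\VF(A_1)\cdot\VF(C_1)\to\widehat{D_2}$ that agree with $g|_{\VF}$ on $\VF(C_1)$; and they agree on $\VF(A_1)$, since for $a\in\VF(A_1)\setminus\VF(C_1)$ one computes $\widehat{g}(a)=\widehat{g}(\lim_{\alpha}c_{\alpha}^{a})=\lim_{\alpha}g(c_{\alpha}^{a})=\lim_{\alpha}c_{\alpha}^{f(a)}$ by the hypothesis, while $h'(a)=f(a)=\lim_{\alpha}c_{\alpha}^{f(a)}$ in $\widehat{D_2}$ --- here one uses that $f$ carries $\FF(A_1)=\VF(A_1)\cap\VF(C_1)$ onto $\FF(A_2)=\VF(A_2)\cap\VF(C_2)$, so $f(a)\notin\VF(C_2)$ and the Cauchy sequence $c_{\alpha}^{f(a)}$ (supplied by the hypothesis applied to $i=2$) does converge to $f(a)$. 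As $\VF(A_1)\cup\VF(C_1)$ generates $\VF(A_1)\cdot\VF(C_1)$ as a field, the two homomorphisms coincide; and since $\widehat{g}$ and the embeddings into the $\widehat{D_i}$ are isometric, so is $h'$, which is what we wanted. Uniqueness of $h$ was already observed. The passage through the completions is the only genuine obstacle; everything else is bookkeeping on top of Lemma~\ref{lem:small field pure}.
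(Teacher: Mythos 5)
Your proposal is correct and shares the paper's skeleton --- apply Lemma~\ref{lem:small field pure} on the $\VF$-sort, declare $h$ to be $g$ on $\RV$ (noting $\RV(A_i(C_i))=\RV(C_i)$), and then verify compatibility with $\rv$ using the Cauchy-sequence hypothesis --- but you carry out that verification by a genuinely different route. The paper argues element by element: for $P(\ol{X})$ over $\VF(C_1)$ and $a$ from $\VF(A_1)$ it expands $P(c^a_\alpha)=P(a)+\sum_{I\neq 0}(c^a_\alpha-a)^IP_I(a)$ and uses cofinality of $\val(a-c^a_\alpha)$ to get $\rv(P(a))=\rv(P(c^a_\alpha))$ for large $\alpha$, then transports this through the $\LRV$-isomorphism $g$; this is completely explicit and never leaves the given structures. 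You instead make the approximation structural: the elements of $\VF(A_1(C_1))$ approximable from $\VF(C_1)$ form a subfield, so $\VF(C_1)$ is dense in the compositum, and you then identify $h'$ with the restriction of the canonical extension $\widehat{g}$ of $\restr{g}{\VF}$ to the valuative completion, since two field homomorphisms into $\widehat{D_2}$ agreeing on $\VF(A_1)\cup\VF(C_1)$ agree on the field they generate. This buys a cleaner, computation-free argument (uniqueness of continuous extensions does the work, and it makes transparent where each hypothesis, including $f(a)\notin\VF(C_2)$, is used), at the cost of introducing completions, which here must be understood in the transfinite/net sense since the sequences are indexed by the cofinality of $\val(\VF(C_i)^\star)$.

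Two small points to tighten. First, the hypothesis gives convergence of $(c^a_\alpha)$ to $a$ relative to the value group of $\VF(A_i(C_i))$, which need not be cofinal in $\vg(M_i)$; so ``topological closure inside $M_i$'' should be replaced by the set of elements of $\VF(A_i(C_i))$ approximable from $\VF(C_i)$ cofinally in $\val(\VF(A_i(C_i))^\star)$ --- your subfield argument then goes through verbatim. Second, what is needed at the end is not merely that $h'$ is an isometry but that $\rv(h'(x))=g(\rv(x))$ for all $x$; this does follow, because by the density every $x$ in the compositum satisfies $\rv(x)=\rv(c)$ for some $c\in\VF(C_1)$, whence $\val(x-c)>\val(x)$, preservation of the valuation gives $\val(h'(x)-g(c))>\val(h'(x))$, and so $\rv(h'(x))=\rv(g(c))=g(\rv(x))$; this one-line argument is exactly the content of your parenthetical ``(equivalently, the valuation)'' and deserves to be written out.
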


\begin{proof}
  Our assumptions imply that $\VF(C_i)$ is dense in $\VF(A_i(C_i))$.  By 
  Lemma\,\ref{lem:small field pure}, there exists a unique $\Lp$-isomorphism 
  $h : \VF(A_1(C_1))\to \VF(A_2(C_2))$ extending $\restr{f}{\VF}$ and 
  $\restr{g}{\VF}$. We extend it to \(\RV\) by setting $\restr{h}{\RV} := 
  \restr{g}{\RV}$. Note that, by density, $\RV(A_i(C_i))=\RV(C_i)$.

  We have to check that $h$ preserves $\rv$. Let us assume that all 
  $(c_\alpha^a)$ are indexed by the same ordinal (the cofinality of 
  \(\val(\VF(C_i)^\star)\)). Let $P(\ol{X})\in C_1[\ol{X}]$ and $a\in 
  \VF(A_1)^{\card{\ol{X}}}$. If $P(a) = 0$ then $h(P(a)) = 0$ and 
  $\rv(h(P(a)))=\infty=h(\rv(P(a)))$. Thus, we may assume that $P(a)\neq 0$ 
  and $P^g(f(a)) \neq 0$. Since $\val(a-c_{\alpha}^a)$ is cofinal in 
  $\val(\VF(A_1(C_1)))$ and $P(c_{\alpha}^{a}) = P(a) + \sum_{I\neq 0} 
  (c_{\alpha}^{a}-a)^I P_I(a)$, for large enough $\alpha$, one has $\rv(P(a)) 
  = \rv(P(c_{\alpha}^{a}))$. As $g$ is an $\LRV$-isomorphism, 
  $g(\rv(P(c_{\alpha}^{a}))) = \rv(P^g(c_{\alpha}^{f(a)}))$. Similarly, 
  $\rv(P^g(c_{\alpha}^{f(a)})) = \rv(P^g(f(a))) = \rv(h(P(a)))$.  Hence 
  $h(\rv(P(a))) = g(\rv(P(c_{\alpha}^{a}))) = \rv(h(P(a)))$.
\end{proof}

\begin{corollary}\label{cor:small field valued}
  Let $M_i\models\Tpv$, $A_i\substr M_i$, $\FFRV(A_i)\substr[\LRV] 
  C_i\substr[\LRV]\FFRV(M_i)$ for $i=1,2$, $f : A_1\to A_2$ an 
  $\Lpv$-isomorphism and $g: C_1\to C_2$ an $\LRV$-isomorphism such that 
  $\restr{g}{\FFRV(A_1)} = \restr{f}{\FFRV(A_1)}$. 

  Assume that $\FF(A_i)$ is dense in $\VF(A_i)$, $\val(\VF(A_i)^{\star})$ is a 
  cofinal subset of $\val(\VF(A_i(C_i))^{\star})$ and that $\VF(A_i)$ and $\VF(C_i)$ 
  are fields. Then there exists a unique $\Lpv$-isomorphism $h:A_1(C_1)\to 
  A_2(C_2)$ extending $f$ and $g$.
\end{corollary}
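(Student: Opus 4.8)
The plan is to deduce the corollary from Lemma~\ref{lem:small field valued}, by checking that the cofinality assumptions made here yield the Cauchy-sequence hypothesis of that lemma. First I would sort out the elementary containments. Since $\FFRV(A_i)\substr[\LRV]C_i\substr[\LRV]\FFRV(M_i)$ and the $\VF$-part of $\FFRV$ is the predicate $\FF$, we get $\FF(A_i)\subseteq\VF(C_i)\subseteq\FF(M_i)$; as $\FF(A_i)=\FF(M_i)\cap\VF(A_i)$, this forces $\VF(A_i)\cap\VF(C_i)=\FF(A_i)$, i.e. $\VF(A_i)\setminus\VF(C_i)=\VF(A_i)\setminus\FF(A_i)$. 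Being an $\Lpv$-isomorphism, $f$ is in particular an $\LRV$-isomorphism, so its restriction to $\VF$ is a valued-field isomorphism $\VF(A_1)\to\VF(A_2)$; being also an $\Lp$-isomorphism, it respects the predicate $\FF$ and hence takes $\FF(A_1)$ onto $\FF(A_2)$. Consequently $f$ carries $\VF(A_1)\setminus\VF(C_1)$ bijectively onto $\VF(A_2)\setminus\VF(C_2)$.

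Next I would produce the Cauchy sequences. Given $a\in\VF(A_1)\setminus\VF(C_1)=\VF(A_1)\setminus\FF(A_1)$, use density of $\FF(A_1)$ in $\VF(A_1)$ to pick a sequence $(c_\alpha^a)$ in $\FF(A_1)\subseteq\VF(C_1)$ with $\val(c_\alpha^a-a)$ cofinal in $\val(\VF(A_1)^\star)$, so that $(c_\alpha^a)\to a$ in $\VF(A_1)$. Since $c_\alpha^a-a\in\VF(A_1)$ and, by hypothesis, $\val(\VF(A_1)^\star)$ is cofinal in $\val(\VF(A_1(C_1))^\star)$, the same sequence converges to $a$ in $\VF(A_1(C_1))$. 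Now set $c_\alpha^{f(a)}:=f(c_\alpha^a)$; as $g$ and $f$ agree on $\FFRV(A_1)\supseteq\FF(A_1)$, we have $c_\alpha^{f(a)}=g(c_\alpha^a)$, which is exactly the compatibility the lemma demands. Because $f$ restricts to a valued-field isomorphism taking $\FF(A_1)$ onto $\FF(A_2)$, the sequence $(c_\alpha^{f(a)})$ lies in $\FF(A_2)\subseteq\VF(C_2)$ and converges to $f(a)$ in $\VF(A_2)$, hence in $\VF(A_2(C_2))$ using the cofinality hypothesis for $i=2$. Since each element of $\VF(A_2)\setminus\VF(C_2)$ is $f(a)$ for a unique such $a$, this assigns, for each $i$ and each $a'\in\VF(A_i)\setminus\VF(C_i)$, a Cauchy sequence in $\VF(C_i)$ converging to $a'$, matched up by $g$ as required.

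With these data all hypotheses of Lemma~\ref{lem:small field valued} hold, and it supplies the unique $\Lpv$-isomorphism $h:A_1(C_1)\to A_2(C_2)$ extending $f$ and $g$; uniqueness transfers verbatim. The only point I expect to need care is the cofinality bootstrap in the second paragraph: a sequence witnessing density of $\FF(A_i)$ in $\VF(A_i)$ a priori only converges inside $\VF(A_i)$, and one must make sure it still converges after passing to $\VF(A_i(C_i))$, whose value group may be strictly larger — this is precisely what the assumption that $\val(\VF(A_i)^\star)$ be cofinal in $\val(\VF(A_i(C_i))^\star)$ secures. Everything else is bookkeeping about which elements of $\VF(A_i)$ already lie in $\VF(C_i)$.
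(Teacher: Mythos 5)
Your proposal is correct and follows exactly the paper's route: deduce the corollary from Lemma~\ref{lem:small field valued} by using density of $\FF(A_1)$ in $\VF(A_1)$ to build Cauchy sequences in $\FF(A_1)\subseteq\VF(C_1)$, note that convergence persists in $\VF(A_1(C_1))$ by the cofinality hypothesis, and set $c_\alpha^{f(a)}:=f(c_\alpha^a)=g(c_\alpha^a)$. Your extra bookkeeping (that $\VF(A_i)\cap\VF(C_i)=\FF(A_i)$ and that $f$ matches the two sides) is left implicit in the paper but is accurate.
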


\begin{proof}
  This is an immediate consequence of Lemma\,\ref{lem:small field valued}.  
  Indeed, for all $a\in A_1\setminus C_1$, let $c_\alpha^a$ be Cauchy 
  sequence in $\FF(A_1)$ converging to $a$ in $\VF(A_1)$ (and hence in 
  $\VF(A_1(C_1))$ by cofinality) and define $c_\alpha^{f(a)}$ to be equal to 
  $f(c_\alpha^{a})$. Now, Lemma\,\ref{lem:small field valued} applies.
\end{proof}

Let $\L$ be an $\RV$-enrichment of $\LRV$ and $T$ be an $\L$-theory of valued 
fields that eliminates quantifiers. The main two examples are $\ACVF$ and 
theories of equicharacteristic zero Henselian fields Morleyized on $\RV$.  As 
always, the present results remain true in mixed characteristic provided 
$\LRV$ is understood as the language of higher order leading terms, i.e., with 
sorts for $\RV_n := \VF/(1+n\boldmax)$ for all $n\in\N$. Let $\tL\supseteq\L$ be a 
language and $\tT$ be an $\tL$-theory which eliminates quantifiers. We will 
now consider the theory \[\Tppv := \Tpv\cup T\cup\tT^{\FFRV}\cup\{[\VF:\FF] 
\geq n\mid n\in\N\}.\]

We will be denoting by $\Balls$ the (interpretable) set of open balls.

\begin{theorem}\label{thm:EQ valued}
  Assume that:
  \begin{enumerate}
    \item For all $M\models\Tppv$, $A\leq\FF(M)$ and tuples $b_1$, 
      $b_2\in\Balls(M)$: If $b_1\equiv_{\L(A)}^{M}b_2$ then 
      $b_1\equiv_{\tL(A)}^{\FF(M)}b_2$.
    \item For every $A\leq \FF(M)\models\tT$ with $M$ sufficiently saturated 
      with respect to $|A|$, $C_A:= \{x\in \VF(M)\mid$ the \(\tL\)-structure 
      and the \(\L\)-structure generated by \(Ax\) are equal\(\}\) is dense 
      in $\VF(M)$.
  \end{enumerate}
  Then $\Tppv$ eliminates quantifiers. In particular, if $M\models\Tppv$, then $\FF(M)$ is stably embedded in 
  $M$ with induced structure given by $\tL$.
\end{theorem}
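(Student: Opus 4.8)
The plan is to prove quantifier elimination for $\Tppv$ by a back-and-forth argument, closely following the proof of Theorem~\ref{thm:EQ pure} but carrying the valuation along at every step. Write $\Lppv:=\Lpv\cup\tL$, so that an $\Lppv$-embedding is an $\Lpv$-embedding restricting to an $\tL$-embedding on $\FF$. Fix $M,N\models\Tppv$ with $N$ sufficiently saturated, a substructure $A\substr M$ and an $\Lppv$-embedding $f\colon A\to N$; the goal is to extend $f$ to $M$.

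First I would carry out the reductions on $A$. By Lemma~\ref{lem:frac valued} we may assume $\VF(A)$ is a field. The first genuinely new point, absent from the pure-field case, is to arrange that $\FF(A)$ is dense in $\VF(A)$; this is forced because the gluing result used to absorb the small field, Corollary~\ref{cor:small field valued}, requires it. To get it I would adjoin to $A$, for each $a\in\VF(A)$, a Cauchy sequence from $\FF(M)$ converging to $a$ (available since $\FF(M)$ is dense in $\VF(M)$ by definition of $\Tpv$), extending $f$ using quantifier elimination in $\tT$ and checking compatibility with the $\Lpv$-structure via the linear-disjointness criteria of Lemmas~\ref{lem:crit substr} and~\ref{lem:crit iso}; the resulting structure has $\FF$ dense in its $\VF$-part. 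Once this holds, quantifier elimination in $\tT$ extends $\restr{f}{\FFRV(A)}$ to $\FFRV(M)$ and Corollary~\ref{cor:small field valued} glues, so we may assume $\FFRV(M)\subseteq A$. Now $\FF(A)=\FF(M)$ is dense in $\VF(M)$ (hence in every intermediate field), $\RV(A)=\RV(M)$, and $f(\FF(M))\preccurlyeq_{\tL}\FF(N)$ with regular underlying field extension; an $\LRV$-valued analogue of Lemma~\ref{lem:alg pure} then lets us assume $\VF(A)$ is relatively algebraically closed in $\VF(M)$, so that the inclusion $\VF(A)\subseteq\VF(M)$ is $\L$-elementary. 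In particular $\VF(A)$ is dense in $\VF(M)$, and every $c\in\VF(M)\setminus\VF(A)$ is transcendental over $\VF(A)$ and a Cauchy limit of elements of $\VF(A)$.

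The transcendence step is then the heart of the matter. Given such a $c$, pick a Cauchy sequence $(a_\alpha)$ in $\VF(A)$ with $a_\alpha\to c$, equivalently a decreasing chain of balls over $\VF(A)$ --- indeed over $\FF(M)$ --- with $c$ in the intersection. Using saturation of $N$, choose $d\in\VF(N)$ in the intersection of the images of these balls; since the chain has no center in $\VF(A)$ and $c$ is transcendental over $\VF(A)$, the element $d$ is transcendental over $f(\VF(A))$ and has the same $\L$-type over $f(\VF(A))$ as $c$ does over $\VF(A)$, by quantifier elimination in $T$. By Lemma~\ref{lem:trans pure}, $f$ extends to an $\Lp$-isomorphism $A\langle c\rangle\to A'\langle d\rangle$ sending $c$ to $d$, and a Cauchy argument as in the proof of Lemma~\ref{lem:small field valued} upgrades this to an $\Lpv$-isomorphism. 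To see it is an $\Lppv$-isomorphism --- that it respects $\tL$ --- is exactly where the two hypotheses come in: Hypothesis~(1) ensures that the $\tL$-type over $\FF(M)$ of the ball-chain pinning down $c$ is determined by its $\L$-type, which $f$ already transports; Hypothesis~(2) ensures that $c$ contributes no $\tL$-structure beyond the ambient $\L$-structure once its ball-chain is brought into the dense ``good'' set furnished by~(2). Hence the $\tL$-structure of $A\langle c\rangle$ coincides with its $\L$-structure and is $\L$-isomorphic to that of $A'\langle d\rangle$. One then extends to the relative algebraic closure of $A\langle c\rangle$ (again by the valued analogue of Lemma~\ref{lem:alg pure}) and iterates through $\VF(M)$, and hence through all of $M$.

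The hard part --- and the reason Hypotheses~(1) and~(2) are imposed --- is the tension between the density of $\FF(A)$ in $\VF(A)$ needed to absorb the small field and the need to control the $\tL$-structure on the ensuing immediate extension $\VF(M)/\VF(A)$: after the reductions all new elements are pseudo-limits over $\VF(A)$, and one must exclude that such a limit drags in unforeseen $\tL$-structure. Hypothesis~(2) excludes this for a dense set of elements and Hypothesis~(1) transports the relevant ball data from $\L$ to $\tL$; the bookkeeping one must do carefully is that after the reductions $\VF(A)$ properly contains $\FF(M)$, so one has to argue that $\VF(A)$ is itself built from $\FF(M)$ by such good, immediate adjunctions. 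Finally, for the last assertion: given $M\models\Tppv$, any definable $\dX\subseteq\FF(M)^n$ is, by the quantifier elimination just proved, cut out by a quantifier-free $\Lppv$-formula; restricted to $\FF$, its $\Lpv$-atoms are either trivial (the predicate $\FF$, the predicates $\lin{n}$ over $\FF$) or reduce to $\LRV\subseteq\tL$-atoms, so $\dX$ is $\tL(\FF(M))$-definable, i.e.\ $\FF(M)$ is stably embedded in $M$ with induced structure exactly $\tL$.
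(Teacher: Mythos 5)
Your overall architecture does match the paper's proof: reduce to $\VF(A)$ a field (Lemma~\ref{lem:frac valued}), arrange $\FF(A)$ dense in $\VF(A)$, absorb $\FF(M)$ via Corollary~\ref{cor:small field valued}, pass to the relative algebraic closure, and then treat each remaining element of $\VF(M)$ as a pseudo-limit over $A$, extending by Lemma~\ref{lem:trans pure} plus an $\rv$-computation along the Cauchy sequence. But there is a genuine gap at exactly the step you compress into one sentence: making $\FF(A)$ dense in $\VF(A)$. Adjoining Cauchy sequences from $\FF(M)$ and ``extending $f$ using quantifier elimination in $\tT$, checking compatibility via Lemmas~\ref{lem:crit substr} and~\ref{lem:crit iso}'' cannot work as stated: quantifier elimination in $\tT$ only controls the $\tL$-type of a new element $e\in\FF(M)$ over $\FF(A)$, and the linear-disjointness lemmas only govern the pair-language $\Lp$; neither says anything about the valuative relations $\rv(e-a)$ between $e$ and the points $a\in\VF(A)\setminus\FF(A)$, i.e.\ about the $\L$-type of $e$ over all of $A$. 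Producing a single image $e'\in\FF(N)$ with the correct $\tL$-type over $f(\FF(A))$ \emph{and} the correct $\L$-type over $f(A)$ is the real difficulty, and it is precisely where Hypotheses (1) and (2) must be used. The paper does this by passing to an elementary extension $M'\succ M$ and choosing, by Hypothesis (2) and saturation, $e\in\FF(M')$ with $\val(e-a)$ strictly above the value group of everything built so far and generating the same structure in $\L$ and in $\tL$ over $\FF(A)$; its $\L$-type over the previous structure is then pinned down by a chain of balls, an $\L$-embedding comes from quantifier elimination in $T$, and Hypothesis (1) is what allows one to find an $\tL$-embedding of $\FF$ agreeing with it on those balls, so that the combined map is simultaneously an $\L$-, $\tL$- and (by Lemma~\ref{lem:small field pure}) $\Lp$-embedding. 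None of this mechanism appears in your proposal, so the density reduction is left unproved.

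Relatedly, you locate the use of Hypotheses (1) and (2) in the transcendental step, where in fact they are not needed: at that point $\FF(M)\subseteq A$, and provided the image $d$ is chosen transcendental over $f(\VF(A))\FF(N)$, Lemma~\ref{lem:trans pure} gives $\FF(A(c))=\FF(A)$, so no new $\tL$-structure arises and only the $\rv$-compatibility remains. Note also that merely taking $d$ in the intersection of the transported balls does not give the transcendence you need: elements of that intersection may be algebraic over $f(\VF(A))$ (its image need not be relatively algebraically closed in $N$), and in any case Lemma~\ref{lem:trans pure} requires transcendence over $f(\VF(A))\FF(N)$, which the paper obtains from the claim that every ball of $N$ has transcendence degree greater than $\card{M}$ over $\FF(N)$. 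So the hypotheses are mislocated in your argument, and the step where they are indispensable is the one that is missing; your final deduction of stable embeddedness of $\FF$ from quantifier elimination is fine.
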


\begin{proof}
  Let $M$ and $N$ be models of $\Tppv$, $A\substr M$ and $f:A\to N$ an 
  $\Lppv$-embedding, where $\Lppv:=\Lpv\cup\tL$. Assume that $N$ is 
  $\card{M}^+$-saturated. We have to show that $f$ extends to $M$. By 
  Lemma\,\ref{lem:frac valued}, we may assume that $\VF(A)$ is a field.

  \begin{claim}
    At the cost of enlarging $M$ (without changing its cardinality) and 
    $A\leq M$, we may assume that $\FF(A)$ is dense in $\VF(A)$ and 
    $\val(\VF(A)^\star)$ is cofinal in $\val(M^\star)$.
  \end{claim}

  \begin{proof}
    Let $M'$ be some sufficiently saturated extension of $M$.  Let $\kappa := 
    \card{\VF(A)}$ and $(a_i)_{i\in\kappa}$ be an enumeration of $\VF(A)$.

    By induction on $(n,i)\in\omega\times\kappa$ (ordered lexicographically), 
    we will construct
    \begin{itemize}
      \item a sequence $(e_{n,i})_{n\in\omega,i\in\kappa}$ in $\FF(M')$;
      \item an increasing chain $(M_{n,i})_{n\in\omega,i\in\kappa}$ of 
        elementary substructures of $M'$ containing $M$;
      \item an increasing chain of $\Lppv$-embeddings $f_{n,i}:A_{n,i}\to N$ 
        extending $f$, where $A_{n,i}$ is the structure generated by 
        $A\cup\{e_{m,j}\,|\,(m,j)\leq(n,i)\}$,
    \end{itemize}

    satisfying the following conditions for all $(n,i)\in\omega\times\kappa$:
    \begin{enumerate}
      \item[(i)] $|M_{n,i}|=|M|$;
      \item[(ii)] $e_{n,i}\in M_{n,i}$;
      \item[(iii)] $\val(e_{n,i} - a_{i}) > \val(M_{<(n,i)}^{\star})$, where 
        $M_{<(n,i)}:=M\cup \bigcup_{(n',i')<(n,i)}M_{n',i'}$.
    \end{enumerate}

    We will denote the image of $f$ by $C$, and that of $f_{n,i}$ by 
    $C_{n,i}$.

    Let us first construct $e_{0,0}$, $M_{0,0}$ and $f_{0,0}$. By saturation 
    of $M'$, and Hypothesis 2, we find $e_{0,0}\in\FF(M')$ such that 
    $\val(e_{0,0}-a_0)>\vg(M)\setminus\{\infty\}$ and the \(\tL\)-structure 
    and the \(\L\)-structure generated by \(\FF(A)e_{0,0}\) are equal. We 
    also find an elementary submodel $M_{0,0}$ of $M'$ of cardinality $|M|$ 
    containing $M(e_{0,0})$. Let us now show that we may extend $f$ to 
    $e_{0,0}$.
 
    Note that Hypothesis 1 implies that if \(\eta:A\to N\) is an \(\tL\)-isomorphism between subsets of \(\FF\) and \(\rho\) is an extension of \(\eta\) to some tuple of balls \(b\), which is an $\L$-embedding, then there exists an \(\tL\)-embedding extending \(\eta\) and sending \(b\) to \(\rho(b)\). Indeed, by quantifier elimination in \(\tL\), \(\eta^{-1}\) can be extended to an \(\tL\)-embedding \(i\) defined at \(\rho(b)\). Then \(i\circ \rho\) fixes \(A\). So applying Hypothesis 1 to \(b\) and \(i(\rho(b))\), we get that there exists an \(\tL\)-embedding \(j\) fixing \(A\) and sending \(b\) to \(i(\rho(b))\). Then \(h := i^{-1}\circ j\) is an \(\tL\)-embedding extending \(\eta\) and sending \(b\) to \(\rho(b)\).

    Let us now come back to our previous notations. By quantifier elimination in $T$, $f$ extends to an elementary 
    $\L$-embedding $g:M_{0,0}\to N$. By the previous paragraph and  quantifier elimination in \(\tT\), there exists an 
    $\tL$-embedding $h : \FF(M_{0,0})\to \FF(N)$ extending $\restr{f}{\FF}$ 
    such that $\restr{h}{\Balls(M_{0,0})} = \restr{g}{\Balls(M_{0,0})}$.  Let 
    $c_{0,0} = h(e_{0,0}) \in\FF(N)$. By construction, we have that 
    $\tp_{\tL}(c_{0,0}/\FF(C)) = f_{\star}\tp_{\tL}(e_{0,0}/\FF(A))$.

    Let $d\in M$, $\gamma := \rv(e_{0,0} - d)$ and $b := \{x\mid \rv(x - d) = 
    \gamma\}$. We have $e_{0,0}\in b \in\Balls(M_{0,0})$ and hence 
    $c_{0,0}\in h(b) = g(b) = \{x\mid \rv(x-g(d)) = g(\gamma)\}$. It follows 
    that $\rv(c_{0,0} - g(d)) = g(\rv(e_{0,0} - d))$. In particular 
    $\val(c_{0,0}-f(a_{0})) > \vg(M)\setminus\{\infty\}$ and hence for all non-zero polynomials $P 
    = \sum_i g(d_i) X^i\in\VF(g(M))[X]$, letting $i_0=\min\{i\mid g(d_i)\neq0\}$, we have 
    \begin{multline*}
      \rv(P(c_{0,0}-f(a_0)))=\rv(g(d_{i_0}))\cdot \rv(c_{0,0}-f(a_0))^{i_0} \\
=g(\rv(d_{i_0}(e_{0,0}-a_0)^{i_0})   = g(\rv(P(e_{0,0} - a_0)))
    \end{multline*}
    It follows (since $g$ is an $\L$-isomorphism) that 
    $\tp_{\L}(c_{0,0}/g(M)) = g_{\star}\tp_{\L}(e_{0,0}/M)$. In particular, 
    $\tp_{\L}(c_{0,0}/C) = f_{\star}\tp_{\L}(e_{0,0}/A)$.

    Let $f_{0,0}$ be an $\L$-embedding extending $f$ to \(A(e_{0,0})\)and 
    sending $e_{0,0}$ to $c_{0,0}$. Then $\restr{f_{0,0}}{\FF}$ is an 
    $\tL$-embedding (remember that the \(\tL\)-structure and the 
    \(\L\)-structure generated by \(\FF(A)e_{0,0}\) are equal). Finally, by 
    Lemma\,\ref{lem:small field pure}, $f_{0,0}$ is also an $\Lp$-embedding.

    Now let $(n,i)>(0,0)$ be given, and assume that $e_{m,j}$, $M_{m,j}$ as 
    well as $f_{m,j}$ have been constructed for all $(m,j)<(n,i)$ satisfying 
    the above.  We may then construct $e_{n,i}$, $M_{n,i}$ and $f_{n,i}$ in 
    the exact same way. In the argument above, it is enough to replace $M$ by 
    $M_{<(n,i)}$, $A$ by $A_{<(n,i)}=\bigcup_{(m,j)<(n,i)}A_{m,j}$ and $f$ by 
    $f_{<(n,i)}=\bigcup_{(m,j)<(n,i)}f_{m,j}$. 

    We define $f_{<(\omega,\kappa)}$, $M_{<(\omega,\kappa)}$ and 
    $A_{<(\omega,\kappa)}\leq M_{<(\omega,\kappa)}$ similarly. It is easy to 
    check that $\val(\FF(A_{<(\omega,\kappa)}^\star))$ is cofinal in 
    $\val(M_{<(\omega,\kappa)}^{\star})$ and, as the 
    $(e_{n,i})_{n\in\omega}$ are Cauchy sequences whose limit is $a_i$ in 
    $\VF(A_{<(\omega,\kappa)})$, that $\FF(A_{<(\omega,\kappa)})$ is dense in 
    $\VF(A_{<(\omega,\kappa)})$.
  \end{proof}

  We can now apply Corollary\,\ref{cor:small field valued} to extend $f$ to 
  all of $\FF(M)$ and we may assume that $\FF(A) = \FF(M)$. We may now extend 
  $f$ to the relative algebraic closure of $A$ in $M$ using 
  Lemma\,\ref{lem:alg pure} and quantifier elimination in $T$.

  \begin{claim}\label{claim:tr ball}
    Any ball from $N$ has transcendence degree larger or equal to 
    $\card{M}^+$ over $\FF(N)$.
  \end{claim}

  \begin{proof}
    It suffices to prove the claim for $\O$ and, in that case, it is an easy 
    consequence of Claim\,\ref{claim:tr deg pure}.
  \end{proof}

  Now let $a\in M\setminus A$, then $a\in\VF(M)$, and $a$ is transcendental 
  over $\VF(A)$. Let $(a_\alpha)$ be a Cauchy-sequence in $A$ converging to 
  $a$. Note that $\val(a - a_\alpha)$ is cofinal in $\vg(M)$. Let $b$ be a 
  ball in $N$ that only contains pseudo-limits of the sequence 
  $(f(a_\alpha))$. By Claim\,\ref{claim:tr ball}, we can find a point $c\in 
  b$ which is transcendental over $\VF(f(A))\FF(N)$.  Let $P\in\VF(A)[X]$, 
  then $v(P(a)) < \infty$ and for large enough $\alpha$, $\rv(P(a)) = 
  \rv(P(a_\alpha))$ (cf. the proof of Lemma\,\ref{lem:small field valued}).  
  Similarly, $\rv(P^f(c)) = \rv(P^f(f(a_\alpha))) = f(\rv(P(a)))$. It follows 
  that $f$ can be extended to an $\L$-embedding sending $a$ to $c$. By 
  Lemma\,\ref{lem:trans pure}, this extension is an $\Lppv$-embedding.  
  Repeating these last two steps we can extend $f$ to $M$.
\end{proof}

\begin{remark}
  The hypotheses of Theorem\,\ref{thm:EQ valued} are verified in the two 
  following cases:
  \begin{itemize}
    \item If $\tT = T$, Hypothesis 1 follows immediately from the fact that 
      $\tL = \L$ and the fact that $\FF(M)\subsel_{\L} M$.  
      Hypothesis 2 is trivial in this case as $\tL = \L$. The previous result therefore applies to dense elementary extensions of characteristic zero Henselian fields.
    \item Let $T = \ACVF_{p,p}$ and $\tT = \SCVH_{p,e}$ (or $\SCVF_{p,e}$, 
      respectively). Hypothesis 1 follows from the fact that if 
      $b_1\equiv_{\LRV(A)}^M b_2$  for $b_1,b_2\in\Balls(M)=\Balls(\FF(M))$ and $A\leq\FF(M)$ then $b_1\equiv_{\LG(A)}^M b_2$ (since the additional sorts in \(\LG\) are interpretable) and thus $b_1\equiv_{\LGpe^D(A)}^{\FF(M)} b_2$ (similarly for $\LGpe^\lambda$) by Corollary \ref{cor:purely-geometric}.

      Hypothesis 2 follows from the fact that for all $A$, 
      $\ppowi{\FF(M)}\subseteq C_A$ and $\ppowi{\FF(M)}$ is dense in 
      $\FF(M)$, a field which is dense in $\VF(M)$ by assumption.
  \end{itemize}
\end{remark}

\subsection{Dense pairs \texorpdfstring{\((\ACVF,\SCVF)\)}{(ACVF,SCVF)}}

We will now focus on the case $T = \ACVF_{p,p}^{\G}$ and $\tT = 
\SCVH_{p,e}^{\G}$ (there are similar corollaries in the case $\tT = 
\SCVF_{p,e}^{\G}$). Note that the geometric language does not exactly fit in the 
setting of the previous section (since there are additional geometric sorts), 
but for the results that we are giving here the precise language in which we 
are working is completely immaterial.

\begin{corollary}\label{cor:st emb pair}
  In models of $\Tppv$, $\vg$ is stably embedded and a pure divisible ordered 
  Abelian group and $\k$ is stably embedded and a pure algebraically closed 
  field.
\end{corollary}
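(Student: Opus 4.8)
The plan is to read this off from Theorem\,\ref{thm:EQ valued} together with the reduction of the geometric sorts of $\SCVH_{p,e}$ (resp.\ $\SCVF_{p,e}$) to $\ACVF_{p,p}$ recorded in Corollary\,\ref{cor:purely-geometric}. First I would invoke the Remark following Theorem\,\ref{thm:EQ valued} to see that its hypotheses hold for $T=\ACVF_{p,p}^{\G}$ and $\tT=\SCVH^{\G}_{p,e}$ (resp.\ $\SCVF^{\G}_{p,e}$); hence $\Tppv$ eliminates quantifiers in $\Lppv$ and, in any $M\models\Tppv$, the subfield $\FF(M)$ is stably embedded with full induced structure that of $\tL=\LG^D$ (resp.\ $\LG^\lambda$) --- that is, the structure $M$ induces on $\FF(M)$ and on each of its geometric sorts is exactly the $\SCVH^{\G}_{p,e}$-structure (resp.\ $\SCVF^{\G}_{p,e}$-structure). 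Since $\FF(M)$ is dense in $\VF(M)$, Lemma\,\ref{lem:dense geom} gives $\vg(M)=\vg(\FF(M))$ and $\k(M)=\k(\FF(M))$, so $\vg$ and $\k$ are among the geometric sorts of $\FF(M)$; thus $\vg$ and $\k$ are stably embedded in $M$ with induced structure that of $\SCVH^{\G}_{p,e}$ (resp.\ $\SCVF^{\G}_{p,e}$).

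It then remains to check the corresponding purity inside $\SCVH^{\G}_{p,e}$ (resp.\ $\SCVF^{\G}_{p,e}$), and here I would use Corollary\,\ref{cor:purely-geometric}: for $K$ a model and $L=\alg{K}\models\ACVF_{p,p}$, restriction to $K$-points is an equivalence between the $\LG^D(A)$-definable subsets of $\Gim(K)$ and the $\LG(A)$-definable subsets of $\Gim(L)$; since $\vg(K)=\vg(L)$ and $\k(K)=\k(L)$, every $\SCVH^{\G}_{p,e}$-definable subset of a Cartesian power of $\vg$ (resp.\ $\k$) is an $\ACVF_{p,p}$-definable subset of the same power of $\vg$ in $L$ (resp.\ of $\k$ in $L$). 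By quantifier elimination for $\ACVF$ in $\LG$ (Fact\,\ref{F:ACVF-QE}), such a set is cut out by an $\LOAG$-formula over parameters from $\vg$ (resp.\ an $\Lrg$-formula over parameters from $\k$), so $\vg$ is stably embedded and a pure ordered Abelian group and $\k$ is stably embedded and a pure field; divisibility of $\vg$ and algebraic closedness of $\k$ follow either from $\ACVF_{p,p}$ or directly from $\vg(M)=\vg(\VF(M))$, $\k(M)=\k(\VF(M))$ and $\VF(M)\models\ACVF_{p,p}$. Putting the two steps together yields the statement.

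The point that I expect to require the most care is the claim that ``$\FF(M)$ is stably embedded with induced structure $\tL$'' genuinely transfers to the imaginary sorts $\vg$ and $\k$, and not merely to the home sort $\FF$: one must be sure that a subset of $\vg^{n}$ (or $\k^{n}$) definable in $M$ with \emph{arbitrary} parameters --- from $\VF$, from the predicate $\FF$, or from other geometric sorts --- is already defined in the $\LG^D$-structure over $\FF(M)$. Since the quantifier elimination of $\Tppv$ is in a language containing $\LG^D$, and $\vg$, $\k$ are $\LG^D$-sorts that coincide with those of $\FF$ by density, this holds; the cautious alternative, which avoids any reliance on how ``induced structure'' is parsed, is to argue directly from quantifier elimination in $\Lppv$, using that the only symbols of $\Lppv$ interacting with the sort $\vg$ (resp.\ $\k$) are the maps into it from the other sorts (such as $\val$, $\res$, $\Res$ and the maps relating $\latt_n$ and $\tor_n$) together with $\LOAG$ on $\vg$ (resp.\ $\Lrg$ on $\k$) --- in particular the Hasse derivations $D_{i,n}$, the $\lambda$-functions and the pair predicate $\FF$ all live on $\VF$ and do not --- so that a quantifier-free $\Lppv$-formula, restricted to tuples in $\vg^{n}$ (resp.\ $\k^{n}$), becomes an $\LOAG$-formula (resp.\ $\Lrg$-formula) over parameters in $\vg(M)$ (resp.\ $\k(M)$). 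Everything else is a routine chain of citations.
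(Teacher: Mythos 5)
Your proof is correct and takes essentially the same route as the paper: the paper's own proof deduces from Theorem~\ref{thm:EQ valued} that $\RV$ is stably embedded in models of $\Tppv$ with its $\ACVF$-induced structure and then quotes the corresponding stable embeddedness and purity of $\vg$ and $\k$ in $\ACVF$, which is exactly your reduction, merely phrased through $\RV$ rather than through $\FF$ together with Lemma~\ref{lem:dense geom} and Corollary~\ref{cor:purely-geometric}. The closing syntactic discussion is more detail than is needed (and its feasibility depends on the exact presentation of $\LG$); citing the known purity and stable embeddedness of $\vg$ and $\k$ in $\ACVF$, as the paper does, suffices.
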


\begin{proof}
  It follows from Theorem\,\ref{thm:EQ valued}, that $\RV$ is stably embedded 
  in models of $\Tppv$ and that its structure is the structure induced by 
  $\ACVF$. Stable embeddedness and purity of $\vg$ and $\k$ now follow from 
  the equivalent result in $\ACVF$.
\end{proof}

\begin{corollary}\label{cor:complete th pair}
  The theory $\Tppv$ is complete.
\end{corollary}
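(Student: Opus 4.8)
The plan is to deduce completeness of $\Tppv$ from the quantifier elimination just proved. Concretely, I would first invoke Theorem~\ref{thm:EQ valued} for the pair $T = \ACVF_{p,p}$, $\tT = \SCVH_{p,e}$, whose two hypotheses are verified for exactly this pair in the Remark following that theorem (via Corollary~\ref{cor:purely-geometric} and Proposition~\ref{prop:SCVF-dense}); so $\Tppv$ eliminates quantifiers. I would then use the standard principle that a quantifier-eliminating theory is complete as soon as all of its models share a common $\Lppv$-substructure: given a sentence $\sigma$, quantifier elimination yields a quantifier-free sentence $\tau$ with $\Tppv \vdash \sigma \leftrightarrow \tau$, and the truth value of a quantifier-free sentence is preserved in both directions between a structure and its substructures, hence depends only on the isomorphism type of the prime $\Lppv$-substructure.

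It then remains to exhibit such a common substructure, and the natural candidate is $\F_p$ with all its induced structure. Every $M \models \Tppv$ has characteristic $p$, so $\VF(M)$ contains a canonical copy of the prime field $\F_p$, and $\F_p \subseteq \FF(M)$ because $\FF(M)$ is a subfield of $\VF(M)$. I would check that the $\Lppv$-substructure $A_0$ of $M$ generated by $\emptyset$ is supported on this copy of $\F_p$, carrying: the restriction of the valuation, which is trivial since a finite field admits no nontrivial valuation, so that $\vg(A_0)=\{0,\infty\}$, $\k(A_0)=\F_p$ and $\RV(A_0)\cong\F_p^\star\cup\{0\}$; the Hasse derivations, which satisfy $D_{i,0}=\mathrm{id}$ and $D_{i,n}=0$ for $n>0$ on $A_0$, forced because $\F_p\subseteq\ppowi{\VF}(M)=C_\infty$; and the $\Lp$-structure with $\FF(A_0)=\F_p$, which is a genuine substructure by Lemma~\ref{lem:crit substr} since $\F_p \ldis[\F_p] \FF(M)$ trivially. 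Closure of $\F_p$ under the remaining function symbols of $\Lppv$ (ring operations, the $\flin{n,i}$, $\rv$, $\val$, $\Res$, the $D_{i,n}$) is routine: the $\flin{n,i}$ return coefficients of linear dependences over $\FF(A_0)=\F_p$, hence land in $\F_p$, and the other symbols take values in the sorts listed above.

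Finally I would note that every one of these structural ingredients is determined by the field $\F_p$ alone, so $A_0$ is independent of $M$ up to a canonical isomorphism; in particular the prime $\Lppv$-substructure embeds into every model of $\Tppv$, and completeness follows by the first paragraph. I do not expect a genuine obstacle here; the only point requiring a little care is that for $\tT = \SCVH_{p,e}$ the base language contains \emph{no} constants beyond the ring and Hasse-derivation symbols --- unlike the $\lambda$-function presentation $\SCVF_{p,e}$, where a $p$-basis is named and one would instead have to fix the $\LDIV$-isomorphism type of $\F_p[b_1,\dots,b_e]$, cf.\ Corollary~\ref{cor:SCVF compl} --- so that the $\Lppv$-substructure generated by $\emptyset$ really does collapse onto $\F_p$.
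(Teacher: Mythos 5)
Your argument is correct and is essentially the paper's proof: the paper also deduces completeness from the quantifier elimination of Theorem~\ref{thm:EQ valued} together with the observation that $\F_p$, sitting inside $\FF$, is a common substructure of all models of $\Tppv$. You merely spell out the routine verifications (trivial valuation, vanishing Hasse derivations, closure under the $\flin{n,i}$) that the paper leaves implicit.
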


\begin{proof}
  The field $\F_p$ can be embedded, as a subset of $\FF$, in every model of 
  $\Tppv$. Completeness follows from Theorem\,\ref{thm:EQ valued}.
\end{proof}

\begin{lemma}\label{lem:max complete SCVF}
  Let \(K\models \SCVF\). The following are equivalent:
  \begin{enumerate}[(i)]
    \item \(K\) does not admit any separable immediate valued field 
      extension, i.e., \(K\) is separably maximally complete;
    \item \(\alg{K}\) does not admit any immediate valued field extension, 
      i.e., \(\alg{K}\) is maximally complete.
  \end{enumerate}
\end{lemma}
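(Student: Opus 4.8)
The plan is to reduce both implications to the single fact that for $K\models\SCVF$ the extension $\alg{K}/K$ is \emph{immediate}. This is immediate (pun intended) from what is already available: Proposition~\ref{prop:SCVF-dense} gives that $\ppow[n]{K}$, hence also the larger set $K$, is dense in $\alg{K}$, so by Lemma~\ref{lem:dense geom} the extension $\alg{K}/K$ is immediate. In particular $\vg(K)=\vg(\alg{K})$ is divisible and $\rf(K)=\rf(\alg{K})$ is algebraically closed. (Directly: $\vg(K)$ is divisible because $n$-th roots are separable for $p\nmid n$ and Artin--Schreier extensions produce the $p$-divisible part of values, while $\rf(K)$ is separably closed and, using polynomials $X^p-\pi X-c$ with $\pi\in\boldmax\setminus\{0\}$, also perfect.) Once this is granted, no theory of pseudo-Cauchy sequences is needed; everything else is bookkeeping with value groups, residue fields, and the fact that a separable algebraic extension of the separably closed field $K$ is trivial. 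Note nothing about finiteness of the imperfection degree is used.

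\textbf{The implication $(ii)\Rightarrow(i)$.} Assume $\alg{K}$ is maximally complete and let $L/K$ be a separable immediate extension; I want $L=K$. Fix a model of $\ACVF$ containing $L$ and let $\alg{L}$ be the algebraic closure of $L$ inside it, so $\alg{L}\supseteq\alg{K}$. I would then check that $\alg{L}/\alg{K}$ is immediate: its value group is the divisible hull of $\vg(L)=\vg(K)$, which is already divisible, hence equals $\vg(\alg{K})$; and its residue field is algebraically closed and algebraic over $\rf(L)=\rf(K)$, which is already algebraically closed, hence equals $\rf(\alg{K})$. Maximal completeness of $\alg{K}$ then forces $\alg{L}=\alg{K}$, so $L\subseteq\alg{K}$; thus $L/K$ is algebraic and separable, whence $L=K$ since $K$ is separably closed.

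\textbf{The implication $(i)\Rightarrow(ii)$.} Assume $K$ is separably maximally complete and let $M/\alg{K}$ be an immediate extension; I want $M=\alg{K}$. If $M\supsetneq\alg{K}$ then, $\alg{K}$ being algebraically closed, $M$ is not algebraic over $\alg{K}$, so there is $c\in M$ transcendental over $\alg{K}$, hence over $K$. Now consider $K(c)\subseteq M$: since $\vg(K(c))\subseteq\vg(M)=\vg(\alg{K})=\vg(K)$ and $\rf(K(c))\subseteq\rf(M)=\rf(\alg{K})=\rf(K)$, the extension $K(c)/K$ is immediate, and it is separable because $\{c\}$ is a separating transcendence basis. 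This contradicts that $K$ admits no proper separable immediate extension, so $M=\alg{K}$, i.e.\ $\alg{K}$ is maximally complete.

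\textbf{Main obstacle.} There is no serious obstacle; the only step I would verify carefully is the input that $\alg{K}/K$ is immediate (equivalently: $\vg(K)$ divisible and $\rf(K)$ algebraically closed), since the whole argument collapses to triviality once that is in place, and the rest is entirely formal.
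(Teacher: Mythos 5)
Your proposal is correct and follows essentially the same route as the paper: both directions come down to the fact that $\alg{K}/K$ is immediate (divisible value group, algebraically closed residue field) together with the standard behaviour of value groups and residue fields under algebraic extensions, plus picking a transcendental element and noting that purely transcendental extensions are separable. The only differences are cosmetic: you argue $(ii)\Rightarrow(i)$ directly by passing to $\alg{L}$ and invoking maximal completeness, where the paper argues the contrapositive via a simple transcendental subextension $K(t)\subseteq\alg{K}(t)$.
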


\begin{proof}
  Let us first prove that (i) implies (ii). Let $\alg{K} \subseteq L$ be a 
  proper immediate extension. We may assume that $L = \alg{K}(t)$, where $t$ 
  is a transcendental element over $\alg{K}$. But then \(K(t)/K\) is 
  immediate (and separable of course).

  Conversely,  assume that \(K\) has a proper immediate separable extension.  
  As \(K\) is separably closed, this cannot be an algebraic extension and we 
  may thus assume it is of the form \(K(t)\) with $t$ transcendental. As 
  $\vg(K)=\vg(K(t))$ is divisible and $\rf(K)=\rf(K(t))$ is algebraically 
  closed, the algebraic extension $\alg{K}(t)/K(t)$ is immediate, and so 
  $\alg{K}(t)$ is a proper immediate extension of $\alg{K}$.
\end{proof}

\begin{proposition}\label{prop:complete model pair}
  The theory $\Tppv$ admits a model $M$ such that $\vg(M)=\R$ and $\VF(M)$ is 
  maximally complete.
\end{proposition}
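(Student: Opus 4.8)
The plan is to produce a field $\FF$ which on the one hand is dense in the Hahn series field $\VF:=\alg{\F_p}((t^{\R}))$ and on the other hand becomes a model of $\SCVH_{p,e}$ once equipped with suitable Hasse derivations; I then set $M:=(\VF,\FF)$. Indeed $\VF$, being algebraically closed with divisible value group $\R$ and algebraically closed residue field, is a model of $\ACVF_{p,p}$, and it is maximally complete since Hahn series fields are spherically complete. As $\FF$ is dense in $\VF$ one has $\vg(M)=\vg(\VF)=\R$, and $[\VF:\FF]$ is infinite because $\FF$ is separably closed and non‑perfect (here $e\ge 1$, since $\Tppv$ is inconsistent for $e=0$). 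Thus $M\models\Tppv$ with $\vg(M)=\R$ and $\VF(M)$ maximally complete. Everything therefore reduces to the construction of $\FF$.

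I would first arrange the imperfection. Fix a $\Q$‑basis $(\epsilon_i)_{i\in I}$ of $\R$, single out $e$ of its members $\epsilon_1,\dots,\epsilon_e$, and set
\[
\Lambda:=\Big(\bigoplus_{i\notin\{1,\dots,e\}}\Q\,\epsilon_i\Big)\oplus\Big(\bigoplus_{j=1}^{e}\Z[\ell^{-1}:\ell\text{ prime},\ \ell\ne p]\,\epsilon_j\Big)\ \le\ \R .
\]
Then $\Lambda$ is prime‑to‑$p$ divisible, $[\Lambda:p\Lambda]=p^{e}$, and $\bigcup_{n}p^{-n}\Lambda=\R$. Hence the Hahn field $\FF_0:=\alg{\F_p}((t^{\Lambda}))$ is maximally complete with residue field $\alg{\F_p}$, value group $\Lambda$ and $[\FF_0:\FF_0^{p}]=[\Lambda:p\Lambda]=p^{e}$, so it has imperfection degree $e$ with $p$‑basis $b:=(t^{\epsilon_1},\dots,t^{\epsilon_e})$. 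Its separable closure $\FF_1:=\sep{\FF_0}$ (with its unique valuation) is separably closed, still of imperfection degree $e$ with the same $p$‑basis $b$, still has residue field $\alg{\F_p}$, and has value group $\bigcup_{n}p^{-n}\Lambda=\R$: the tame part adds nothing since $\Lambda$ is already prime‑to‑$p$ divisible, while the Artin–Schreier extensions $x^{p}-x=t^{-\gamma}$ with $\gamma\in\Lambda\setminus p\Lambda$ supply exactly the missing divisions by $p$. Finally $\FF_1$ embeds over $\FF_0$ into $\VF$ because $\VF$ is algebraically closed, and I henceforth regard $\FF_1\subseteq\VF$.

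It remains to enlarge $\FF_1$ to a subfield of $\VF$ dense in $\VF$ without changing the imperfection degree or losing separable closedness. This I would do by transfinite recursion, building an increasing chain $(\FF^{(\xi)})$ of subfields of $\VF$ starting from $\FF_1$, each separably closed and of imperfection degree $e$ with the \emph{fixed} $p$‑basis $b$, so that the $\xi$‑th element of a fixed enumeration of $\VF$ lies in the valuative closure of $\FF^{(\xi+1)}$. The two moves that preserve both separable closedness and ``imperfection degree $e$ with $p$‑basis $b$'' are: (i) adjoining a separable algebraic element of $\VF$; and (ii) for $c\in\VF$ transcendental over the current field $E$, passing from $E$ to the perfect hull of $E(c)$, i.e.\ to $\bigcup_{n}E(c^{1/p^{n}})$. (For (ii) one checks that $b$ stays a $p$‑basis: adjoining $c$ alone would raise the imperfection degree by one, but adjoining all $p$‑power roots of $c$ cancels this increment, and the monomials $b^{I}$ remain linearly independent over the $p$‑th powers of the new field because $c$ is transcendental over $E$, hence over its perfect hull.) Both moves are immediate extensions here, since $\R$ is divisible and $\alg{\F_p}$ perfect, so $\vg$ and the residue field never change. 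To force a target $a\in\VF$ into the closure: if $a$ is transcendental over $E:=\FF^{(\xi)}$ apply (ii) to $c=a$; if $a$ is separably algebraic over $E$ apply (i); if $a$ is already in the closure, do nothing. The delicate case is when $a$ lies outside the closure of $E$ and every best‑approximating pseudo‑Cauchy sequence from $E$ is of \emph{inseparable} algebraic type: then its algebraic pseudo‑limit is, modulo the breadth of the sequence, a $p$‑th root of an element of $E$ in the span of the $b^{I}$, which one may not adjoin. Instead one applies (ii) to a transcendental element of the same ball — available because one arranges $\alg{\FF^{(\xi)}}\subsetneq\VF$ throughout, e.g.\ by reserving in advance one element of $\VF$ kept transcendental over every $\FF^{(\xi)}$ — which strictly advances the approximation of $a$; iterating $\omega$ times places $a$ in the closure. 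Setting $\FF:=\bigcup_{\xi}\FF^{(\xi)}$ then yields a separably closed field of imperfection degree $e$ with $p$‑basis $b$ (any $p$‑dependence in the union already holds in some $\FF^{(\xi)}$), with value group $\R$ and residue field $\alg{\F_p}$, and dense in $\VF$; in particular $\VF$ is the completion of $\FF$.

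Finally, following the recipe recalled in Section~\ref{S:prelim} (after \cite{ZieSCH}) I choose a canonical $p$‑basis of $\FF$ and use it to equip $\FF$ with $e$ commuting Hasse derivations making it strict; with its inherited non‑trivial valuation this makes $\FF\models\SCVH_{p,e}$. Density together with Lemma~\ref{lem:dense geom} gives $\bGim(\FF)=\bGim(\VF)$, so the geometric structure on $\FF$ is unambiguous, and $M=(\VF,\FF)$ — with $\VF\models\ACVF_{p,p}$ in the geometric language, $\FF$ with its $\SCVH_{p,e}$‑structure, and $\FF\subseteq\VF$ dense — is the desired model of $\Tppv$, with $\vg(M)=\R$ and $\VF(M)$ maximally complete. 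The crux of the argument is the transfinite step: keeping the imperfection degree \emph{equal to} $e$ — a careless adjunction of $p$‑th roots of $p$‑basis elements would, over $\omega$ stages, drop it — while simultaneously achieving density; move (ii), adjoining a transcendental element together with its entire perfect hull, is exactly what reconciles the two demands.
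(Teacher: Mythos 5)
Your overall route is sound and genuinely different from the paper's. The paper constructs the separably closed side abstractly: starting from $\F_p(t_1,\dots,t_e)$ with the perfect hulls of a $\Q$-basis of $\R$ adjoined, it takes the separable closure and then a separably maximally complete extension $L$; density of the pair $(\alg{L},L)$ is then Proposition~\ref{prop:SCVF-dense}, $\vg=\R$ holds by construction, and maximal completeness of $\alg{L}$ comes for free from Lemma~\ref{lem:max complete SCVF}. You instead fix the maximally complete field $\alg{\F_p}((t^{\R}))$ in advance and build a dense separably closed subfield of imperfection degree $e$ by transfinite recursion; your choice of $\Lambda$ and the verification that passing from $E$ to $\bigcup_{n}E(c^{1/p^{n}})$ for $c$ transcendental preserves the $p$-basis $b$ are correct (note only that this field is not the ``perfect hull of $E(c)$'' --- that would contain $E^{1/p}$ and destroy the $p$-basis --- and that after each adjunction you must also take the relative separable closure inside $\VF$ to restore separable closedness). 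The paper's route buys brevity; yours buys an explicit ambient model, at the cost of the bookkeeping below.

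The one step that does not work as written is your ``delicate case''. First, it is vacuous: each $\FF^{(\xi)}$ is separably closed and non-trivially valued, hence dense in its algebraic closure by Proposition~\ref{prop:SCVF-dense}; since $\vg(\FF^{(\xi)})=\R=\vg(\VF)$, any $a\in\VF$ algebraic (necessarily purely inseparably) over $\FF^{(\xi)}$ already lies in its valuative closure, so only the transcendental and the trivial cases actually occur. Second, your substitute argument has a real gap: reserving a single element $u$ transcendental over every stage does not provide transcendentals in the relevant balls \emph{while keeping the reserve}. The natural witness in a ball around an algebraic center $a$ is $c=a+yu$ with $y\in E$ of large valuation, and then $u=(c-a)/y$ is algebraic over $E(c)$, so the reserve is consumed the first time it is used; avoiding this requires an element of the ball transcendental over $E(u)$, i.e.\ $\trdeg(\VF/\FF^{(\xi)})\geq 2$, and more generally an argument that the transcendence degree of $\VF$ over the stages never collapses along a recursion of length $2^{\aleph_0}$ --- which you do not give. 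Deleting the case and citing Proposition~\ref{prop:SCVF-dense} instead repairs the proof.
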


\begin{proof}
  Let $(x_{\alpha})_{\alpha\in2^{\aleph_{0}}}$ be a linear basis of $\R$ over 
  $\Q$.
  Let $L_{0} := \F_{p}(t_{i})_{1\leq i\leq e}$ be trivially valued and $L_{1} 
  := L_{0}(x_{\alpha}^{p^{-\infty}}:\alpha\in2^{\aleph_{0}})$ be equipped 
  with the unique valuation such that $\val(x_{\alpha}) = x_{\alpha}$. Then 
  $L_{2} := \sep{L_{1}}$ is separably closed of characteristic $p$ and Ershov invariant $e$ and so is any separably maximally complete extension 
  $L\supseteq L_1$.  The pair $(\alg{L},L)$ is dense and $\val(\alg{L}) = 
  \Q\otimes\langle x_{\alpha}\rangle_{\alpha\in2^{\aleph_{0}}} = \R$. Then 
  $L$ can be endowed with Hasse derivations so that 
  $(\alg{L},L)\models\Tppv$.

  Moreover, by Lemma\,\ref{lem:max complete SCVF}, $\alg{L}$ is maximally 
  complete.
\end{proof}

\subsection{Imaginaries in \texorpdfstring{$\SCVH_{p,e}$}{SCVHpe}}
We begin with a review of some results from~\cite{RidVDF,RidSimNIP}. Consider 
an arbitrary complete theory \(T\), and a fixed universal domain \(\mM\).  As 
usual, for a definable set \(\dX\) over parameters, we denote by 
\(\acl(\code{\dX})\) the set of elements of \(\mM\) whose orbit under the 
(set-wise) stabilizer of \(\dX(\mM)\) in \(\Aut(\mM)\) is finite.  We will 
use the following criterion for elimination of imaginaries:

\begin{proposition}[{\cite[Proposition\,10.1]{RidVDF}}]\label{prop:EICrit}
  Assume that every non-empty definable set \(\dX\) in a theory \(T\) 
  contains an \(\acl(\code{\dX})\)-invariant global type. Then \(T\) admits 
  weak elimination of imaginaries.
\end{proposition}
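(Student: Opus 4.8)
The plan is to prove weak elimination of imaginaries directly, via the standard reduction to coding equivalence classes. Every imaginary is of the form \(a/E\) for some \(\emptyset\)-definable equivalence relation \(E\) on a power of \(\mM\) and some real tuple \(a\); for such \(E\) and \(a\), the class \(\dX:=\{y\mid E(y,a)\}\) is a non-empty definable set whose canonical parameter \(\code{\dX}\) is interdefinable (in \(\eq{\mM}\)) with \(e:=a/E\), and the assertion ``\(\dX\) is weakly coded'' is exactly ``\(e\in\eq{\dcl}(\eq{\acl}(e)\cap\mM)\)''. Since the canonical parameter of \emph{any} definable set is interdefinable with some such \(e=a/E\), it suffices to treat these \(\dX\); so I would fix such \(E\), \(a\), \(\dX\), \(e\), write \(C:=\eq{\acl}(e)\cap\mM=\acl(\code{\dX})\), and aim to show that every \(\sigma\in\Aut(\mM/C)\) fixes \(e\), which is the desired weak coding.

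So let \(p\) be the global \(\acl(\code{\dX})\)-invariant type supplied by the hypothesis, with \(p(x)\vdash x\in\dX\), and let \(\sigma\in\Aut(\mM/C)\). Since \(\sigma\) fixes \(C=\acl(\code{\dX})\) pointwise, invariance of \(p\) gives \(\sigma(p)=p\); applying \(\sigma\) to \(p(x)\vdash x\in\dX\) then yields \(p(x)\vdash x\in\sigma(\dX)\) as well. As \(p\) is consistent, the conditions ``\(x\in\dX\)'' and ``\(x\in\sigma(\dX)\)'' are jointly satisfiable, so \(\dX\cap\sigma(\dX)\neq\emptyset\). But \(\dX\) and \(\sigma(\dX)=\{y\mid E(y,\sigma(a))\}\) are both \(E\)-classes, and distinct \(E\)-classes are disjoint; hence \(\dX=\sigma(\dX)\), i.e.\ \(\sigma(a)/E=a/E\), i.e.\ \(\sigma(e)=e\). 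As \(\sigma\) was arbitrary, \(e\in\eq{\dcl}(C)\), so by compactness \(e\in\eq{\dcl}(a_0)\) for a finite \(a_0\subseteq C\subseteq\eq{\acl}(e)\); thus \(\dX\) is definable over \(\eq{\acl}(\code{\dX})\cap\mM\), i.e.\ weakly coded, and \(T\) admits weak elimination of imaginaries.

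I do not expect a genuine obstacle: the one decisive point is feeding the hypothesis an equivalence class \([a]_E\) rather than an arbitrarily presented definable set, because it is precisely the pairwise disjointness of the classes of a \emph{single} relation \(E\) that upgrades ``\(p\) concentrates on \(\dX\) and on \(\sigma(\dX)\)'' from \(\dX\cap\sigma(\dX)\neq\emptyset\) to the equality \(\dX=\sigma(\dX)\) one actually needs. The rest is routine \(\eq{}\)-bookkeeping (every imaginary is such a class; \(\code{[a]_E}\) is interdefinable with \(a/E\); ``weakly coded'' unwinds to membership of \(\code{\dX}\) in \(\eq{\dcl}(\eq{\acl}(\code{\dX})\cap\mM)\)), together with the immediate remark that the chosen \(\sigma\), fixing \(C=\acl(\code{\dX})\) pointwise, lies in the automorphism group under which \(p\) is assumed invariant, so that \(\sigma(p)=p\).
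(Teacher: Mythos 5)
Your proof is correct, and it is essentially the argument behind the cited result: the paper itself only quotes \cite[Proposition~10.1]{RidVDF}, whose proof likewise applies the hypothesis to the fibre of the quotient map (i.e.\ to the class \([a]_E\)) so that invariance of \(p\) over \(\acl(\code{\dX})\), together with the pairwise disjointness of \(E\)-classes, forces \(\sigma(\dX)=\dX\) and hence fixes the code. So your proposal matches the intended proof, including the decisive observation that one must feed the criterion an equivalence class rather than an arbitrary definable set.
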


In fact, it suffices to consider sets \(\dX\) in (powers of) a dominant sort.

While the above criterion holds with invariant types, we will actually show 
the existence of a \emph{definable} type in \(\dX\). This is shown in several 
steps. The following result from~\cite{RidVDF} produces an \(\ACVF\)-type 
consistent with a given definable set in a suitable expansion, definable in 
that expansion:

\begin{proposition}[{\cite[Proposition\,9.6]{RidVDF}}]\label{prop:dens def}
  Let \(\hT\supseteq\ACVF^{\G}\) be a theory in a countable language \(\hL\), 
  such that
  \begin{enumerate}
    \item \(\hT\) eliminates imaginaries;
    \item \(\k\) and \(\vg\) are stably embedded, and
    \item The induced theories on \(\k\) and \(\eq{\vg}\) eliminate 
      \(\exists^\infty\).
  \end{enumerate}
  Let \(\hA=\acl_{\hT}(\hA)\subseteq\hN\models\hT\) and let \(\dX\) be a non-empty 
  strict pro-\(\hA\)-definable set of \(\VF\) elements.  Then there exists a 
  global \(\ACVF^\G\)-type \(p\) consistent with \(\dX\), which is 
  \(\hA\)-definable (in \(\hT\)).
\end{proposition}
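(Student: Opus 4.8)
The plan is to reduce to a definable subset of $\VF$ in one variable, and there to locate the type by a descent through the tree of balls, stopping either at a point of $\hA$ — whose realised type is then the one we want — or at an $\hA$-definable ball inside which $\dX$ ``spreads out'', whose generic type then works.

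The reduction to $\dX$ \emph{definable} in one $\VF$-variable is routine: strictness makes each coordinate projection $\pi_{i}(\dX)$ a genuine definable set, so if $\dX$ lives in a single coordinate it is already definable, and in general one peels the $\VF$-coordinates off one at a time — applying the one-variable case to the projection onto a first coordinate, realising the resulting $\hA$-definable type by $a_{1}$ in a bigger model, and continuing over $\acl_{\hT}(\hA a_{1})$ with the fibre over $a_{1}$. Since each base stays algebraically closed in $\hT$, a standard argument with tensor products of definable types shows the type one assembles is definable over $\acl_{\hT}(\hA)=\hA$ and concentrates on $\dX$. (If $\dX$ has infinitely many $\VF$-coordinates this iteration is transfinite, but goes through in the same way.)

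Now let $\dX$ be a non-empty definable subset of $\VF$ over $\hA$. If $\dX$ is finite, elimination of imaginaries in $\hT$ puts a point of $\dX$ in $\acl_{\hT}(\hA)=\hA$ and the realised type over it works. Otherwise one runs a descent down the tree of balls. At a ball $B$ one asks, using stable embeddedness of $\k$ and $\vg$ (so the residue-field and value-group data of $\dX\cap B$ at every scale inside $B$ are definable sets in the pure reducts) together with elimination of $\exists^{\infty}$ on $\k$ and $\eq{\vg}$ (so this is a definable question), whether $\dX\cap B$ is unbounded in $B$ or meets infinitely many maximal open sub-balls of some closed sub-ball of $B$. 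If so, one stops: the generic type of the corresponding ball is $\hA$-definable — the ball is $\hA$-definable, and the generic types of (definable subsets of) $\k$ and of $\vg$ are definable by the same two hypotheses — and it is consistent with $\dX$. If not, $\dX\cap B$ is confined to finitely many maximal open sub-balls of its smallest closed container, and that container and those sub-balls are $\hA$-definable since finite sets are coded; one replaces $B$ by such a sub-ball still meeting $\dX$, of strictly larger radius, and iterates. Elimination of $\exists^{\infty}$ bounds the number of relevant scales, so the descent terminates, and in the ``no'' branch it does so at an $\hA$-definable point — the intersection of the $\hA$-definable chain produced, non-empty by definable spherical completeness of $\ACVF$ — which lies in $\dX$ because each stage met $\dX$.

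The main obstacle is precisely this termination: one must leverage the hypotheses to see that $\dX$ is controlled by only boundedly many scales in $\k$ and $\vg$, so that the descent cannot run forever through an undefinable value-group cut, where the naive ``type at the limit node'' would fail to be definable. That is where elimination of $\exists^{\infty}$ on $\k$ and $\eq{\vg}$ is essential; stable embeddedness of $\k$ and $\vg$ is what lets the descent be decided inside the pure reducts; and elimination of imaginaries in $\hT$ is what places all the canonical parameters involved in $\hA$ (recall $\dcl_{\hT}(\hA)\subseteq\acl_{\hT}(\hA)=\hA$), so that the type produced is genuinely $\hA$-definable and not merely $\acl(\hA)$-definable or invariant.
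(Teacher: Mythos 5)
Two genuine gaps, both at the places where the hypotheses actually have to be spent. (Note that the paper does not reprove this statement; it is imported from \cite[Proposition~9.6]{RidVDF}, so your argument has to stand on its own.)

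First, the reduction to one variable is not ``routine''. If you choose an \(\hA\)-definable type \(p_1\) consistent with the projection, realize it by \(a_1\), and then apply the one-variable case to the fibre over \(\acl_{\hT}(\hA a_1)\), the assembled type is only \(\acl_{\hT}(\hA a_1)\)-definable. To conclude it is \(\hA\)-definable you need the assignment \(a_1\mapsto q_{a_1}\) to be canonical (equivariant under \(\Aut(\hN/\hA)\), to get even \(\hA\)-invariance) \emph{and} uniformly definable in \(a_1\) (to turn invariance into an actual \(\hL(\hA)\)-definition scheme). Neither is a formal consequence of the non-uniform one-variable statement, and your one-variable construction cannot supply it, because it makes non-canonical choices throughout (``some closed sub-ball'', ``one replaces \(B\) by such a sub-ball still meeting \(\dX\)''). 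This uniformity/canonicity is precisely the central difficulty of the proposition, where elimination of imaginaries, stable embeddedness of \(\k\) and \(\vg\), and elimination of \(\exists^\infty\) in \(\k\) and \(\eq{\vg}\) are really used; it cannot be waved away as ``a standard argument with tensor products''. (It is also telling that your proof never uses the countability of \(\hL\), nor the strictness of \(\dX\) beyond definability of projections, both of which are explicit hypotheses.)

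Second, the one-variable descent has no termination argument, and as described it can stall. Since \(\dX\) is definable in \(\hT\), not in \(\ACVF\), its trace on the tree of balls need not look like a finite union of Swiss cheeses: nothing in the hypotheses rules out a definable set of ``pseudo-convergent'' shape, say points \(a_k\) with \(\val(a_k-a_j)=\rho+1/\max(j,k)\). For such a set every closed ball containing it meets it in finitely many maximal open sub-balls, so your ``yes'' branch never fires; and the smallest closed container (when it exists --- note that it need not, since the induced structure on \(\vg\) is only assumed to eliminate \(\exists^\infty\), not to be o-minimal, so infima of definable sets of radii may be cuts) has exactly one maximal open sub-ball meeting \(\dX\), namely the ball you already had, so the ``no'' branch returns the same ball and makes no progress. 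This is exactly the dangerous regime you acknowledge at the end --- the naive limit type along such a chain is the classical example of an invariant, non-definable \(\ACVF\)-type --- but saying that elimination of \(\exists^\infty\) ``bounds the number of relevant scales'' is an appeal to the conclusion, not an argument: you would have to exhibit, \(\hA\)-definably from \(\dX\), either a point, or a ball whose generic is consistent with \(\dX\), or some other \(\hT\)-definable datum (e.g.\ a definable cut in \(\vg\)) producing an \(\hL(\hA)\)-definition scheme, and that is the actual content of the proof in \cite{RidVDF}. The parts of your argument that do work --- consistency of the generic of a closed ball once \(\dX\) meets infinitely many of its maximal open sub-balls, and the fact that each ball in a coded finite set already lies in \(\hA\) because \(\hA=\acl_{\hT}(\hA)\) and \(\hT\) eliminates imaginaries --- are correct but do not close either gap.
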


To replace definability in \(\hT\) with definability in \(\ACVF^\G\), we will 
use the following result. We recall that a subset \(A\) of a structure 
\(\mM\) is \Def{uniformly stably embedded} if for every formula \(\phi(x,y)\) 
there is a formula \(\psi(x,z)\) such that for every \(m\in\mM\) there is 
\(a\in{}A\) such that \(\phi(A,m)=\psi(A,a)\). We have:

\begin{proposition}[{\cite[Corollary\,1.7]{RidSimNIP}}]\label{prop:def 
  enrich}
  Let $T$ be an $\NIP$ $\L$-theory that eliminates imaginaries.  Let 
  $\hT\supseteq T$ be a complete $\hL$-theory that also eliminates 
  imaginaries. Suppose that there exists $\widehat{M}\models\hT$ such that 
  $\restr{\widehat{M}}{\L}$ is uniformly stably embedded in every elementary extension.  
  Let $\hN\models\hT$, $\hA=\dcl_\hT(\hA)\subseteq\hN$ and $p$ be a global 
  \(\L\)-type.  If $p$ is $\hA$-definable in \(\hT\), then it is in fact 
  $\restr{\hA}{\L}$-definable in \(T\).
\end{proposition}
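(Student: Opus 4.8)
The goal is to check the definition of ``$\restr{\hA}{\L}$-definable in $T$'' one $\L$-formula at a time. Work in a monster model $\widehat{\mathcal{U}}\models\hT$ containing $\hA$, so that its $\L$-reduct $\mathcal{U}$ is a monster model of $T$ and $p$ is a complete $\L$-type over $\mathcal{U}$; after embedding the special model $\widehat{M}$ elementarily into $\widehat{\mathcal{U}}$, the set $\restr{\widehat{M}}{\L}$ is a uniformly stably embedded subset of $\widehat{\mathcal{U}}$ (with respect to $\L$-formulas) and an elementary $\L$-substructure of $\mathcal{U}$. Fix an $\L$-formula $\phi(x;y)$; by elimination of imaginaries in $T$ we may take $y$ in real sorts. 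Set $D_\phi:=\Set{b}{\phi(x;b)\in p}$ and $D_{\neg\phi}:=\Set{b}{\neg\phi(x;b)\in p}$. By hypothesis both are $\hL(\hA)$-definable, and since $p$ is a complete type they partition the $y$-sort of $\mathcal{U}$. We must replace the $\hL(\hA)$-formula defining $D_\phi$ by an $\L(\restr{\hA}{\L})$-formula.

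The first and main step is to show that $D_\phi$ is already $\L$-definable, over a small set of real $\L$-parameters. By uniform stable embeddedness, the trace on $\restr{\widehat{M}}{\L}$ of any $\hL$-definable (over arbitrary parameters) subset of the $y$-sort is $\L$-definable over $\restr{\widehat{M}}{\L}$, uniformly in the parameters; combining this with the existence of uniform \emph{honest definitions} in the $\NIP$ theory $T$, one finds $\L$-formulas $\psi(y;z)$, $\psi'(y;z)$ and parameters $a,a'\in\restr{\widehat{M}}{\L}$ such that $D_\phi$ agrees with $\psi(\cdot;a)$ and $D_{\neg\phi}$ with $\psi'(\cdot;a')$ on $\restr{\widehat{M}}{\L}$, while $\psi(\mathcal{U};a)\subseteq D_\phi$ and $\psi'(\mathcal{U};a')\subseteq D_{\neg\phi}$ globally. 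Since $\restr{\widehat{M}}{\L}$ is an elementary $\L$-substructure of $\mathcal{U}$ and $\psi(\cdot;a)$, $\psi'(\cdot;a')$ partition it (as $D_\phi$, $D_{\neg\phi}$ do), the $\L$-sentence $\forall y\,(\psi(y;a)\vee\psi'(y;a'))$ passes from $\restr{\widehat{M}}{\L}$ to $\mathcal{U}$; together with $\psi(\mathcal{U};a)\cap\psi'(\mathcal{U};a')=\emptyset$ this forces $D_\phi=\psi(\mathcal{U};a)$. Hence $D_\phi$, and therefore $\mathrm{d}_p x\,\phi(x;y)$, is $\L$-definable.

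It remains to bring the parameter into $\restr{\hA}{\L}$. By elimination of imaginaries in $T$, the $\L$-definable set $D_\phi$ has a canonical parameter $c_\phi$ — a real $\L$-tuple of $\mathcal{U}$ — over which it is $\L$-definable, and which is fixed by an $\L$-automorphism of $\mathcal{U}$ if and only if that automorphism stabilises $D_\phi$. Every $\sigma\in\Aut(\widehat{\mathcal{U}}/\hA)$ restricts to an $\L$-automorphism of $\mathcal{U}$ that stabilises $D_\phi$ (because $D_\phi$ is $\hA$-definable in $\hT$), hence fixes $c_\phi$. Thus $c_\phi$ belongs to the set of fixed points of $\Aut(\widehat{\mathcal{U}}/\hA)$, which by elimination of imaginaries in $\hT$ together with $\hA=\dcl_{\hT}(\hA)$ equals $\hA$; being a real $\L$-tuple, $c_\phi\in\restr{\hA}{\L}$. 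So $\mathrm{d}_p x\,\phi(x;y)$ is equivalent to an $\L(\restr{\hA}{\L})$-formula, and as $\phi$ was arbitrary, $p$ is $\restr{\hA}{\L}$-definable in $T$.

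The main obstacle is the first step, and specifically the passage from agreement of $\psi(\cdot;a)$ with $D_\phi$ on the small model $\restr{\widehat{M}}{\L}$ to the global equality $D_\phi=\psi(\mathcal{U};a)$. This is where $\NIP$ of $T$ is indispensable — it is what lets one replace the merely type-definable trace by a uniformly honest $\L$-definition — and where the completeness of $p$ is used, through the complementarity of $D_\phi$ and $D_{\neg\phi}$, to upgrade the honest inclusions to an equality. A further point that needs care is the exact reading of ``$\restr{\widehat{M}}{\L}$ uniformly stably embedded in every elementary extension'': the witnessing formulas are $\L$-formulas and the parameters are taken in the $\L$-reduct, and the ``every elementary extension'' clause is what makes the hypothesis applicable inside the monster $\widehat{\mathcal{U}}$ rather than only vacuously in $\widehat{M}$ itself.
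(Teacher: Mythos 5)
The paper does not prove this proposition itself: it is quoted from Rideau--Simon \cite[Corollary~1.7]{RidSimNIP}, so your attempt has to be measured against that (rather involved) argument. Your second half is fine and is the routine part: once each set $D_\phi=\Set{b}{\phi(x;b)\in p}$ is known to be $\L$-definable over some real parameters, elimination of imaginaries in $T$ gives a canonical parameter $c_\phi$, every $\sigma\in\Aut(\widehat{\mathcal{U}}/\hA)$ stabilizes $D_\phi$ (it is $\hL(\hA)$-definable) and hence fixes $c_\phi$, so $c_\phi\in\dcl_{\hT}(\hA)=\hA$ and, being in the $\L$-sorts, $c_\phi\in\restr{\hA}{\L}$.

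The genuine gap is in your first step, which is where the entire content of the cited result lies. First, the hypothesis that $\restr{\widehat{M}}{\L}$ is uniformly stably embedded in every elementary extension is a statement about \emph{$\L$-formulas} and $\L$-elementary extensions of the reduct; it says nothing about traces of $\hL$-definable sets, so your opening claim that the trace on $\widehat{M}$ of an arbitrary $\hL$-definable set is $\L(\widehat{M})$-definable is unjustified (and conflicts with the correct reading you state in your last paragraph). For the specific set $D_\phi$ this can be repaired: choosing $c\models\restr{p}{\widehat{M}}$ in an $\L$-elementary extension, $D_\phi\cap\widehat{M}=\phi(c;\cdot)\cap\widehat{M}$ is externally $\L$-definable, and uniform stable embeddedness then yields $D_\phi\cap\widehat{M}=\psi(\widehat{M};a)$ with $a\in\widehat{M}$. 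Second, and more seriously, the ``honest'' inclusions you need --- $\psi(\mathcal{U};a)\subseteq D_\phi$ and $\psi'(\mathcal{U};a')\subseteq D_{\neg\phi}$ \emph{globally}, with the parameters $a,a'$ inside $\widehat{M}$ --- are not delivered by honest definitions: in an $\NIP$ theory the honest definition of an externally definable subset of a set $A$ has its parameter in an elementary extension $A'$ of $A$ inside a pair-elementary extension, and its honesty is only the inclusion $\psi(A';a)\subseteq\phi(A';c)$ relative to the predicate, not an inclusion over the whole monster; uniform stable embeddedness gives trace equality with parameters in $\widehat{M}$ but no inclusion at all. You need both strengthenings simultaneously, and no argument is given for how the two facts ``combine''; note that once you have them, your covering argument immediately yields $D_\phi=\psi(\mathcal{U};a)$, i.e.\ the theorem, so the crux is asserted rather than proved. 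Agreement of $D_\phi$ with an instance of $\psi$ on a single small model carries no global information by itself; bridging from the distinguished model $\widehat{M}$ to the monster is exactly the delicate compactness/transfer step that the Rideau--Simon proof supplies and that is missing here.
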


We now go back to our context. We set \(T=\ACVF^\G_{p,p}\) and \(\hT=\eq{(\Tppv)}\). Combining the last two results, 
we obtain:

\begin{corollary}\label{cor:denseDef}
  Let \(\dX\) be a non-empty strict pro-definable set of \(\VF\) elements in 
  \(\Tppv\) (over parameters), and let \(A=\bG(\acl_{\eq{(\Tppv)}}(\code{\dX}))\). Then there is 
  a global $\ACVF^\G_{p,p}$-type \(p\), \(A\)-definable in $\ACVF^\G_{p,p}$ and consistent with 
  \(\dX\).
\end{corollary}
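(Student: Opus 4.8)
The plan is to chain Proposition~\ref{prop:dens def} and Proposition~\ref{prop:def enrich}, with base theory $T:=\ACVF^\G_{p,p}$ (which is exactly the $\LG$-reduct of $\Tppv$, hence of $\hT$) and enrichment $\hT:=\eq{(\Tppv)}$, working in the language $\hL:=\eq{(\Lppv)}$. This language is countable since $\Lppv$ is, as required in Proposition~\ref{prop:dens def}.

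First I would verify the hypotheses of Proposition~\ref{prop:dens def} for $\hT$. It eliminates imaginaries by construction. Its sorts $\k$ and $\vg$ are stably embedded by Corollary~\ref{cor:st emb pair}, which moreover identifies the structure induced on $\k$ as that of a pure algebraically closed field, strongly minimal and hence eliminating $\exists^\infty$, and the structure induced on $\vg$ as that of a pure divisible ordered abelian group, which eliminates imaginaries and, being o-minimal, eliminates $\exists^\infty$; so $\eq{\vg}$ eliminates $\exists^\infty$ as well. Now put $\hA:=\acl_{\hT}(\code{\dX})$, so that $\hA=\acl_{\hT}(\hA)=\dcl_{\hT}(\hA)$, and note that $\dX$, being strict pro-definable over $\code{\dX}\in\hA$, is a non-empty strict pro-$\hA$-definable set of $\VF$-elements. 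Proposition~\ref{prop:dens def} then produces a global $\ACVF^\G$-type $p$ consistent with $\dX$ and $\hA$-definable in $\hT$.

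Second I would apply Proposition~\ref{prop:def enrich} with $\L=\LG$. Its hypotheses hold: $\ACVF^\G_{p,p}$ is $\NIP$ and eliminates imaginaries (Fact~\ref{F:ACVF-EI}); $\hT\supseteq\ACVF^\G_{p,p}$ is complete by Corollary~\ref{cor:complete th pair} and eliminates imaginaries; and the required witness is $\widehat{M}:=\eq{M}$, where $M\models\Tppv$ is the model of Proposition~\ref{prop:complete model pair}, so that $\restr{\widehat{M}}{\LG}$ is the model $\alg{\VF(M)}$ of $\ACVF_{p,p}$, which is maximally complete by Lemma~\ref{lem:max complete SCVF} and has value group $\vg(M)=\R$. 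Such a model of $\ACVF$ has every type over it definable — maximal completeness kills the non-definable generic types of pseudo-Cauchy sequences, and Dedekind-completeness of $\R$ forces every value-group cut realized in an elementary extension to be principal — and, $\ACVF$ being $\NIP$, it is uniformly stably embedded in all of its elementary extensions. Proposition~\ref{prop:def enrich} therefore yields that $p$ is $\restr{\hA}{\LG}$-definable in $\ACVF^\G_{p,p}$. Finally, since the sorts of $\LG$ are precisely the geometric sorts $\bG$, we have $\restr{\hA}{\LG}=\bG(\hA)=\bG(\acl_{\eq{(\Tppv)}}(\code{\dX}))=A$, which gives the conclusion.

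The only non-bookkeeping step is the uniform stable embeddedness of $\restr{\widehat{M}}{\LG}$ in every elementary extension; this is exactly the point for which Proposition~\ref{prop:complete model pair} was prepared, and it is a standard consequence of maximal completeness together with $\vg=\R$ and the $\NIP$-ness of $\ACVF$. I expect this to be the main obstacle only in the sense that it is where one invokes a genuine property of $\ACVF$ rather than merely unwinding definitions; everything else is a transfer of the two cited propositions into the present setting.
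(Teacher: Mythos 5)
Your proposal follows the paper's own route step for step: the same chaining of Proposition~\ref{prop:dens def} and Proposition~\ref{prop:def enrich} with \(T=\ACVF^\G_{p,p}\) and \(\hT=\eq{(\Tppv)}\), the same appeal to Corollary~\ref{cor:st emb pair} for the hypotheses on \(\k\) and \(\vg\), to Corollary~\ref{cor:complete th pair} for completeness, and to Proposition~\ref{prop:complete model pair} for the witness model, ending with the observation that \(\bG(\hA)=A\). Structurally this is exactly the paper's proof.

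The one point where you diverge is also the one genuine gap: the verification that \(\restr{\widehat{M}}{\LG}\), the maximally complete algebraically closed valued field with value group \(\R\) furnished by Proposition~\ref{prop:complete model pair}, is \emph{uniformly} stably embedded in every elementary extension. Your sketch argues (and only for \(1\)-types) that every type over this model is definable: spherical completeness plus Dedekind completeness of \(\R\) handles chains of balls and cuts in the value group, but definability of all \(n\)-types over such a model already requires a further argument. More seriously, the inference ``every type over \(M\) is definable and \(\ACVF\) is \(\NIP\), hence \(M\) is uniformly stably embedded in all elementary extensions'' is not a quotable standard fact: what \(\NIP\) gives for free is uniform definability of \emph{generically stable} types (Fact~\ref{F:Uniform-def}) and uniform definability of types over finite sets, not uniform definability of arbitrary definable types over a model, and the passage from stable embeddedness to \emph{uniform} stable embeddedness is precisely the delicate point here. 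This is why the paper does not argue it abstractly but cites \cite[Corollary\,A.7]{RidVDF}, where uniform stable embeddedness of maximally complete models with value group \(\R\) is established by a direct analysis in \(\ACVF\). To make your write-up complete you should either invoke that result, as the paper does, or actually prove the uniformity rather than asserting it as standard.
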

\begin{proof}
  Let \(\hA\) be the algebraic closure of the code of \(\dX\) in 
  \(\hT=\eq{(\Tppv)}\). Corollary\,\ref{cor:st emb pair} shows that the 
  hypothesis of Proposition\,\ref{prop:dens def} holds, and therefore 
  provides us with an \(\hA\)-definable (in \(\hT\)) global \(T\)-type \(p\), 
  consistent with \(\dX\). Now, \cite[Corollary\,A.7]{RidVDF} asserts that 
  the model provided by Proposition\,\ref{prop:complete model pair} satisfies 
  the condition in Proposition\,\ref{prop:def enrich}, and 
  Corollary\,\ref{cor:complete th pair} shows that \(\Tppv\) (hence \(\hT\)) 
  is complete, so  Proposition\,\ref{prop:def enrich} applies to show that 
  \(p\) is definable in \(T\) over \(\bG(\hA) = A\).
\end{proof}

Recall that if \(a\) is an element of a model of \(\SCVH_{p,e}\), we denote by 
\(D_\omega(a)\) the infinite tuple obtained by applying the derivations to 
\(a\).  If \(p(x)\) is a (partial) type in the field sort of \(\SCVH_{p,e}\), we 
let \(\nabla(p)\) be the pro-definable set in the language of \(\ACVF\) 
determined by the condition: \(D_\omega(a)\models\nabla(p)\) if and only if 
\(a\models{}p\) for all tuples \(a\) in a model of \(\SCVH_{p,e}\) (in other words, 
\(\nabla(p)\) is the prolongation of \(p\)).

If \(\dX\) is a definable set in \(\SCVH_{p,e}\) (in some $\VF^n$), then its image \(D_\omega(\dX)\) 
is a strict pro-definable set over the same parameters, and for all types 
\(p\) of \(\SCVH_{p,e}\), \(p\) is consistent with \(\dX\) if and only if 
\(\nabla(p)\) is consistent with \(D_\omega(\dX)\).  Furthermore, any 
complete \(\ACVF_{p,p}\) type consistent with \(D_\omega(\dX)\) is of the form 
\(\nabla(p)\) for a complete \(\SCVF_{p,e}\) type in \(\dX\), over the same 
parameters. Thus, \(\nabla\) provides a bijection between complete 
\(\SCVH_{p,e}\)-types consistent with \(\dX\) and complete \(\ACVF_{p,p}\)-types 
consistent with \(D_\omega(\dX)\), over the same parameters.

\begin{theorem}\label{thm:SCVH dense def}
  Let $\tN\models\SCVH_{p,e}^{\G}$ and $\dX\subseteq\VF^n$ be 
  $\tN$-definable. Let $A=\acl(\code{\dX})$. Then, there exists an 
  $A$-definable type $p$ such that $p(x)\vdash x\in\dX$.
\end{theorem}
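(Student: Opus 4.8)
The plan is to transfer the problem to the dense pair $(\alg{\tN},\tN)$, apply Corollary~\ref{cor:denseDef}, and then pull the resulting $\ACVF_{p,p}$-type back to $\SCVH_{p,e}$ along the prolongation $\nabla$. We may of course assume $\dX\neq\emptyset$.

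First I would set up the pair. By Proposition~\ref{prop:SCVF-dense}, $\tN$ is dense in $\alg{\tN}$, and $[\alg{\tN}:\tN]$ is infinite; equipping $\tN$ with its Hasse derivations and $\alg{\tN}$ with the structure of a model of $\ACVF^{\G}_{p,p}$, the pair $(\alg{\tN},\tN)$ becomes a model of $\Tppv$ for $T=\ACVF^{\G}_{p,p}$ and $\tT=\SCVH^{\G}_{p,e}$. By the Remark following Theorem~\ref{thm:EQ valued}, Hypotheses~1 and~2 of that theorem hold in this case, so $\Tppv$ eliminates quantifiers, is complete (Corollary~\ref{cor:complete th pair}), and Corollary~\ref{cor:denseDef} is available for it.

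Next I would apply Corollary~\ref{cor:denseDef} to the prolongation $D_\omega(\dX)\subseteq\VF^{(\omega)}$, which is a non-empty strict pro-$\tN$-definable set of $\VF$-elements in $\Tppv$. This yields a global $\ACVF^{\G}_{p,p}$-type $q$ consistent with $D_\omega(\dX)$ and definable over $A_0:=\bG(\acl_{\eq{(\Tppv)}}(\code{D_\omega(\dX)}))$ in $\ACVF^{\G}_{p,p}$. Since $q$ is a complete global type consistent with $D_\omega(\dX)$, the correspondence recalled just before the statement gives $q=\nabla(p)$ for a unique complete global $\SCVH_{p,e}$-type $p$, over the same parameters, with $p(x)\vdash x\in\dX$. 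To deduce that $p$ is $A_0$-definable in $\SCVH^{\G}_{p,e}$, I would use quantifier elimination in $\LG^{D}$ (Proposition~\ref{prop:SCVH QE}): replacing a formula $\phi(x;y)$ with $x$ of field type by a quantifier-free equivalent, it takes the form $\psi(D_\omega(x);y)$ for a quantifier-free $\LG$-formula $\psi$, and then $\mathrm{d}_{p}x\,\phi(x;y)$ can be taken to be $\mathrm{d}_{q}x'\,\psi(x';y)$, which is over the parameters of $q$.

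The remaining point — and the one I expect to be the \textbf{main obstacle} — is to identify $A_0$ with $A=\acl(\code{\dX})$ (understood, as in Theorem~B, as $\bG(\eq{\acl}(\code{\dX}))$); this is a comparison of canonical parameters computed in two different theories. Since $D_\omega$ is $\emptyset$-definable and injective in $\SCVH_{p,e}$, an automorphism of $\tN$ stabilizes $\dX$ globally iff it stabilizes $D_\omega(\dX)$; and by the analogue for $\SCVH_{p,e}$ of Lemma~\ref{L:SCVF-ACVF}(1) (using uniqueness of the strict purely inseparable hull of a substructure and of the algebraic closure, together with uniqueness of the extension of the valuation and of the Hasse derivations to $\alg{\tN}$), every automorphism of $\tN$ extends uniquely to an automorphism of $\alg{\tN}$, hence to one of the pair $(\alg{\tN},\tN)$, and conversely every automorphism of the pair restricts to one of $\tN$. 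Thus the setwise stabilizer of $\dX$ in $\Aut(\tN)$ is identified with the setwise stabilizer of $D_\omega(\dX)$ in $\Aut((\alg{\tN},\tN))$; since $\bG(\alg{\tN})=\bG(\tN)$ by Corollary~\ref{cor:purely-geometric}, a geometric element has a finite orbit under one exactly when it does under the other, so $A_0=A$. This produces the desired $A$-definable type $p$ with $p(x)\vdash x\in\dX$.
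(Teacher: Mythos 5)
Your overall route is the same as the paper's: pass to the pair $(\alg{\tN},\tN)\models\Tppv$, apply Corollary~\ref{cor:denseDef} to the prolongation $D_\omega(\dX)$, pull the resulting $\ACVF^{\G}_{p,p}$-type back through $\nabla$ using quantifier elimination, and then compare parameter sets. The only genuine divergence is in the last step, where you replace the paper's appeal to stable embeddedness of $\tN$ in the pair (Theorem~\ref{thm:EQ valued}) by an extension/restriction argument for automorphisms (unique extension to the perfect hull $\alg{\tN}$); that substitute is fine in itself and does identify the relevant stabilizer groups.

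However, the conclusion you draw from it is wrong as stated. Corollary~\ref{cor:purely-geometric} gives $\Gim(\tN)=\Gim(\alg{\tN})$, \emph{not} $\bG(\tN)=\bG(\alg{\tN})$: the sort $\VF$ belongs to $\bG$, and $\VF(\alg{\tN})\supsetneq\VF(\tN)$ since $\tN$ is not perfect. Consequently $A_0=\bG(\acl_{\eq{(\Tppv)}}(\code{D_\omega(\dX)}))$ is in general strictly larger than $A$: for instance, if $a\in\VF(\tN)$ lies in $A$, then $a^{1/p}\in\alg{\tN}\setminus\tN$ is definable from $a$ in the pair, hence lies in $A_0$ but not in $A$. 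So ``$A_0=A$'' fails; what is true, and what suffices, is $A_0\subseteq\dcl_{\alg{\tN}}(A)$ (in $\ACVF_{p,p}$), from which $A_0$-definability of the type yields $A$-definability. Your stabilizer identification does give the inclusion on $\tN$-points, $A_0\cap\bG(\tN)\subseteq A$ (this is exactly what the paper extracts from stable embeddedness and purity of $\tN$ in $N_P$); to finish one must add the observation of Lemma~\ref{L:SCVF-ACVF}(2), namely that any tuple of $\bG(\alg{\tN})$ is $\ACVF_{p,p}$-interdefinable with a tuple of $\bG(\tN)$ (replace field coordinates by suitable $p^m$-th powers), so that every element of $A_0$ lies in $\dcl_{\alg{\tN}}(A_0\cap\bG(\tN))\subseteq\dcl_{\alg{\tN}}(A)$. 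With this one-line repair (which is precisely how the paper closes the argument), your proof is complete and essentially coincides with the published one.
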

\begin{proof}
  Let us assume that $\tN$ is sufficiently saturated and let $N$ denote its 
  algebraic closure. Then $N_P:=(N,\tN)\models\Tppv$. Replacing \(\dX\) with 
  \(D_\omega(\dX)\) as above, it suffices to find an \(A\)-definable \(\ACVF_{p,p}\)-type 
  \(p\).

  Let $B=\eq{\acl}_{\Lppv}(\code{\dX})$.  According to 
  Corollary~\ref{cor:denseDef}, \(\dX\) contains an \(\ACVF_{p,p}^\G\)-type \(p\) 
  definable over \(\bG(B)\). To complete the proof, it remains to show that $\bG(B)$ is contained in $\dcl_N(A)$. 
 To establish this, by Lemma~\ref{L:SCVF-ACVF}(2) it is enough to show that $\bG(B)\cap\tN\subseteq A$. This latter inclusion follows from the fact that $\tN$ is stably embedded in $N_P$ as a pure model of $\SCVH_{p,e}^{\G}$ (by Theorem~\ref{thm:EQ valued}).  
 \end{proof}

Let again \(T\) be an arbitrary theory. In case \(T\) eliminates imaginaries, 
the condition in Prop.~\ref{prop:EICrit} can be viewed as asserting the 
density of invariant types in the space of types over \(\acl(\code{X})\).  
Applying compactness, one obtains:

\begin{proposition}[{\cite[Proposition\,10.2]{RidVDF}}]\label{prop:invTpDense}
  For a theory \(T=\eq{T}\) and a set of parameters \(A\), the following are 
  equivalent:
  \begin{enumerate}
    \item Every \(A\)-definable set contains an \(A\)-invariant type.
    \item \emph{(the invariant extension property over \(A\))} Every type 
      over \(A\) extends to a global \(A\)-invariant type.
  \end{enumerate}
\end{proposition}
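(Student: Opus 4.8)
The plan is to treat the two implications separately; essentially all of the content lies in $(1)\Rightarrow(2)$, which I would obtain from a single compactness argument. The implication $(2)\Rightarrow(1)$ is immediate: given a non-empty $A$-definable set $\dX$, pick $c\in\dX(\mM)$, put $q:=\tp(c/A)$ (so the formula ``$x\in\dX$'' lies in $q$), and apply $(2)$ to obtain a global $A$-invariant type $p\supseteq q$; then $p\vdash x\in\dX$. I do not expect any difficulty here.

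For $(1)\Rightarrow(2)$, fix $q\in S(A)$. The first step is to recall that for a type $p$ over the universal domain $\mM$, being $A$-invariant is equivalent to not splitting over $A$ — i.e.\ $\phi(x;b)\in p\Leftrightarrow\phi(x;b')\in p$ whenever $b\equiv_A b'$ — since by homogeneity of $\mM$ any such pair $b\equiv_A b'$ is conjugate over $A$. Hence it is enough to find, in some $\mM'\succ\mM$, an element $c$ with $\tp(c/A)=q$ and $\tp(c/\mM)$ non-splitting over $A$; then $p:=\tp(c/\mM)$ works. I would realize such a $c$ as a solution of the partial type over $\mM$
\[ \Sigma(x)\;:=\;q(x)\;\cup\;\bigl\{\,\phi(x;b)\leftrightarrow\phi(x;b')\;:\;\phi(x;y)\in L,\ b,b'\in\mM\ \text{finite tuples},\ b\equiv_A b'\,\bigr\}, \]
so the task reduces to showing $\Sigma(x)$ is consistent. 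This is the only place where $(1)$ enters: a finite subset of $\Sigma$ amounts to a finite part $q_0\subseteq q$ together with finitely many bi-implications $\psi_i(x):=\bigl(\phi_i(x;b_i)\leftrightarrow\phi_i(x;b_i')\bigr)$ coming from pairs $b_i\equiv_A b_i'$; the set $\dX_0$ defined by $q_0$ is a non-empty $A$-definable set, so by $(1)$ it contains a global $A$-invariant type $p_0$. By $A$-invariance of $p_0$ and $b_i\equiv_A b_i'$ we get $\phi_i(x;b_i)\in p_0\Leftrightarrow\phi_i(x;b_i')\in p_0$, hence $\psi_i\in p_0$ for each $i$; since also ``$x\in\dX_0$'' lies in $p_0$, that finite subset of $\Sigma$ is realized in $\mM$. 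Compactness now yields $c\models\Sigma(x)$ in some $\mM'\succ\mM$: from $q\subseteq\tp(c/A)$ and completeness of $q$ we get $\tp(c/A)=q$, while $\Sigma$ forces $\tp(c/\mM)$ to be non-splitting over $A$, as required.

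The only point requiring a little care — not a genuine obstacle — is the equivalence ``$A$-invariant $\Leftrightarrow$ non-splitting over $A$'' for global types, which rests on the strong homogeneity of the universal domain, together with the routine observation that a global type, and its invariance, are determined by the formulas $\phi(x;b)$ with $b$ a finite tuple, so that each conjunct of $\Sigma(x)$ is a bona fide first-order formula over $\mM$.
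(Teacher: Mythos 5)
Your proposal is correct and is essentially the argument the paper has in mind: the paper gives no proof of its own, merely citing \cite[Proposition~10.2]{RidVDF} and remarking that the statement follows from (1) ``applying compactness'', and your finite-satisfiability argument for $\Sigma(x)$, combined with the standard equivalence of $A$-invariance and non-splitting over $A$ in a strongly homogeneous universal domain, is precisely that compactness argument written out (equivalently, one can phrase it topologically: the set of global types non-splitting over $A$ is closed in $S_x(\mM)$, its restriction to $S(A)$ has compact, hence closed, image, and (1) says this image is dense, so the restriction map is onto). The easy direction $(2)\Rightarrow(1)$ is handled as you say.
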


Combining everything, we obtain:

\begin{corollary}\label{C:SCVH-EI}
  The theory $\SCVH_{p,e}^{\G}$ eliminates imaginaries and has the invariant 
  extension property, i.e., every type over an algebraically closed set of 
  parameters has a global invariant extension.
\end{corollary}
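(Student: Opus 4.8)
The plan is to deduce the corollary directly from Theorem~\ref{thm:SCVH dense def}, feeding it into the two abstract criteria recalled above (Propositions~\ref{prop:EICrit} and~\ref{prop:invTpDense}) and adding a transfer of the coding of finite sets from $\ACVF_{p,p}$. First I would establish weak elimination of imaginaries: by Proposition~\ref{prop:EICrit} and the remark following it, since $\VF$ is a dominant sort it is enough to check that every non-empty definable $\dX\subseteq\VF^n$ contains an $\acl(\code{\dX})$-invariant global type, and Theorem~\ref{thm:SCVH dense def} provides exactly such a type (even a definable one).

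To upgrade weak elimination to full elimination of imaginaries, I would show that finite subsets of the geometric sorts $\bG$ are coded in $\bG$, transferring this from $\ACVF_{p,p}$ in the same way as Lemma~\ref{L:SCVF-ACVF}. Working in a saturated homogeneous $\tN\models\SCVH_{p,e}^{\G}$ with $L=\alg{\tN}\models\ACVF_{p,p}^{\G}$, Corollary~\ref{cor:purely-geometric} gives $\bGim(\tN)=\bGim(L)$, and since $L/\tN$ is purely inseparable on the valued-field sort, every element of $\bG(L)$ is interdefinable over $\tN$ with an element of $\bG(\tN)$. Hence a finite $F\subseteq\bG(\tN)$, regarded inside $\bG(L)$, has a code $e\in\bG(L)$ by elimination of imaginaries in $\ACVF_{p,p}$ (Fact~\ref{F:ACVF-EI}), which may then be taken in $\bG(\tN)$; since every automorphism of $\tN$ lifts uniquely to $L$ and fixes $e$ if and only if its lift does, $e$ is still a code for $F$ in $\tN$. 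Together with weak elimination of imaginaries, this yields that $\SCVH_{p,e}^{\G}$ eliminates imaginaries.

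For the invariant extension property, I would work in $\eq{(\SCVH_{p,e}^{\G})}$, fix an algebraically closed set $A=\eq{\acl}(A)$, and apply Proposition~\ref{prop:invTpDense}: it then suffices to put an $A$-invariant type in every $A$-definable set $\dY$. The idea is to pull $\dY$ back into the $\VF$ sort. Every sort of $\eq{(\SCVH_{p,e}^{\G})}$ is a $\emptyset$-definable quotient of a $\emptyset$-definable subset of a product of geometric sorts, and each geometric sort is a $\emptyset$-definable image of a definable subset of a Cartesian power of $\VF$; composing these maps gives a $\emptyset$-definable surjection $\rho\colon\dZ\to S$ with $\dZ\subseteq\VF^m$ definable and $S$ the ambient sort of $\dY$. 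Setting $\dX:=\rho^{-1}(\dY)$, the set $\dX\subseteq\VF^m$ is $A$-definable with $\code{\dX}\in A$, so $\acl(\code{\dX})\subseteq A$; by Theorem~\ref{thm:SCVH dense def} there is an $\acl(\code{\dX})$-definable type $q$ concentrating on $\dX$, and then $\rho_{*}q$ is a complete type concentrating on $\dY$, definable over $\acl(\code{\dX})\subseteq A$ and hence $A$-invariant. Proposition~\ref{prop:invTpDense} then gives that every type over $A$ extends to a global $A$-invariant type, which is the invariant extension property.

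I do not expect a serious obstacle here: the substantive content lies in Theorem~\ref{thm:SCVH dense def} and the machinery leading to it, and the argument above is essentially an assembly of already-available results. The two points that need a little care are the transfer of the coding of finite sets — which relies on $L/\tN$ being purely inseparable on the valued-field sort, so that field elements of $L$ are interdefinable over $\tN$ with field elements of $\tN$, together with the unique lifting of automorphisms — and, in the last paragraph, the observation that an arbitrary imaginary sort can be presented as a $\emptyset$-definable image of a $\VF$-definable set, which is what lets Theorem~\ref{thm:SCVH dense def} (stated only for subsets of $\VF^n$) be brought to bear on $\dY$.
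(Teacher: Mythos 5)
Your proposal is correct and follows essentially the same route as the paper: weak elimination of imaginaries from Theorem~\ref{thm:SCVH dense def} via Proposition~\ref{prop:EICrit}, coding of finite sets transferred from $\ACVF_{p,p}$ (this is exactly Lemma~\ref{L:SCVF-ACVF}, whose automorphism-lifting argument applies verbatim to $\SCVH_{p,e}$ as you note), and the invariant extension property from the same theorem via Proposition~\ref{prop:invTpDense}. The details you spell out --- the purely inseparable transfer of codes and the reduction of an arbitrary imaginary sort to a $\emptyset$-definable image of a subset of $\VF^m$ so that the density theorem applies --- are precisely what the paper leaves implicit through its citations and the remark that it suffices to consider sets in powers of a dominant sort.
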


\begin{proof}
  Weak elimination of imaginaries follows from Theorem~\ref{thm:SCVH dense 
  def} using Proposition \ref{prop:EICrit}. Finite sets are coded by 
  Lemma\,\ref{L:SCVF-ACVF}. Elimination of imaginaries follows.

  The invariant extension property follows again from the same theorem,
  using Proposition \ref{prop:invTpDense}.
\end{proof}

\begin{remark}
When a theory \(T\) is \(\NIP\) and eliminates imaginaries, as is the case 
for $\SCVH_{p,e}^{\G}$, the invariant extension property has various model 
theoretic consequences (cf. \cite[Proposition~2.11]{HruPil}):
\begin{itemize}
  \item Lascar strong type, Kim-Pillay strong type and strong type coincide.  
    That is, over an algebraically closed set \(A\), two points have the same 
    type if and only if there exists a model containing \(A\) over which they 
    have the same type.
  \item Every algebraically closed set is an extension base and thus, by 
    \cite{CheKap}, forking coincides with dividing in \(T\).
\end{itemize}

In fact, since non-forking types are Lascar invariant in \(\NIP\) theories, the invariant extension property is equivalent to the conjunction of the above two conditions.
\end{remark}
\section{Algebraic and definable closure}\label{S:dcl-acl}

In this section, we wish to describe the algebraic and definable closure in 
$\SCVF_{p,e}$ and $\SCVH_{p,e}$. Our main result is that they are no larger than 
what could be expected: they are the (relative) algebraic and definable 
closure in $\ACVF$ of the structure generated by the parameters. We will 
denote by $\langle A\rangle_\lambda$ (respectively $\langle A\rangle_D$) the 
$\LGpe^\lambda$-structure (respectively $\LGpe^D$-structure) generated by 
$A$.

\begin{lemma}\label{lem:acl SCVF field}
  Let $M\models\SCVF_{p,e}$ and $A\substr \VF(M)$ (in $\LDIVpe^{\lambda}$).  
  Then
  \[\VF(\acl(A)) \subseteq \alg{A}.\]
\end{lemma}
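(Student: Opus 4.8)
The plan is to show that an element $a \in \VF(M)$ which is algebraic over $A$ in the model-theoretic sense (inside $\SCVF_{p,e}$) must already be algebraic over $A$ in the field-theoretic sense, i.e.\ lie in $\alg{A}$, where here $\alg{A}$ denotes the field-theoretic algebraic closure of the fraction field of the ring $A$ (which, since $A$ is closed under $\lambda$-functions, is a separably closed subfield of $\VF(M)$, up to purely inseparable elements). The idea is the usual one for bounding model-theoretic algebraic closure: produce, for any $a \notin \alg{A}$, infinitely many conjugates of $a$ over $A$, using an automorphism argument via quantifier elimination.

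First I would reduce to the case where $A = \VF(A)$ is a field: replacing $A$ by its fraction field does not change $\acl(A)$ (the fraction field is interdefinable, element by element, with elements of $A$), and the fraction field of $A$ is still closed under $\lambda$-functions, hence $\VF(M)/\Frac(A)$ is a separable extension by the argument used in the proof of Proposition~\ref{prop:SCVF EQ}. So assume $A$ is a $\lambda$-closed subfield of $\VF(M)$. Now suppose $a \in \VF(M) \setminus \alg{A}$. I distinguish two cases. If $a$ is transcendental over $A$: enlarge $M$ to a sufficiently saturated model $M'$, and use Corollary~\ref{cor:ext separable} (or directly Lemma~\ref{lem:ext transc}, noting $\VF(M')$ is dense in $\alg{\VF(M')}$ by Proposition~\ref{prop:SCVF-dense}) repeatedly to find, for any given $n$, pairwise distinct $a = a_1, a_2, \dots, a_n \in \VF(M')$ each transcendental over $A$, together with $\L^\lambda(A)$-isomorphisms $A(a) \to A(a_i)$ fixing $A$ and sending $a \mapsto a_i$; by quantifier elimination (Proposition~\ref{prop:SCVF EQ}) these extend to automorphisms of the monster, so $a$ has infinitely many conjugates over $A$, contradicting $a \in \acl(A)$. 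If $a$ is algebraic over $A$ but $a \notin \alg{A}$ — this cannot happen, since $\alg{A}$ \emph{is} by definition the field-theoretic algebraic closure; so in the algebraic case there is nothing to prove. Hence the only content is the transcendental case.

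Let me restate the transcendental case more carefully, since that is the heart of the matter. Given $a$ transcendental over $A$ and a target $n$, work in a saturated elementary extension $M'$ of $M$. The point $a$ satisfies some quantifier-free $\L^\lambda(A)$-type; by Lemma~\ref{lem:lambda res}, this reduces — after applying $\lambda^k$ for suitable $k$ and thereby replacing $a$ by a finite tuple $\bar a' = \lambda^k(a)$ of field elements — to realizing a quantifier-free $\LDIV(A)$-type. Since $a$ is transcendental over $A$ and $A$ is $\lambda$-closed, $\bar a'$ is a tuple of elements with the same $\LDIV(A)$-type as a generic point of the corresponding $\ACVF_{p,p}$-definable set; and the set of realizations of this $\ACVF$-type in $\alg{\VF(M')}$ is infinite (it is Zariski dense, hence contains an open ball, by Fact~\ref{F:dim}). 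Density of $\VF(M')$ in $\alg{\VF(M')}$ (Proposition~\ref{prop:SCVF-dense}) then yields infinitely many realizations already in $\VF(M')$, each giving, via Corollary~\ref{cor:ext separable} and quantifier elimination, a distinct conjugate of $\bar a'$ — hence of $a$ — over $A$. This contradicts $a \in \acl(A)$, completing the proof. The main obstacle, which is really just bookkeeping, is the passage through the $\lambda$-resolution: one must check that distinctness of the conjugates of $\bar a' = \lambda^k(a)$ is preserved when pulling back to $a$, which holds because $\lambda^k$ is injective, so distinct values of $\lambda^k(a)$ force distinct values of $a$.
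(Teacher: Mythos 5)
Your reduction to a $\lambda$-closed subfield and your observation that only the transcendental case has content are fine, but the heart of the lemma is exactly the step you leave to the two mechanisms you propose, and neither works as stated. In the first formulation, an $\L(A)$-isomorphism $A(a)\to A(a_i)$ of valued fields over $A$ does not give conjugation in $\SCVF_{p,e}$: quantifier elimination (Proposition~\ref{prop:SCVF EQ}) requires an isomorphism of $\L^\lambda$-substructures, i.e.\ of the $\lambda$-closures of $A(a)$ and $A(a_i)$ sending $a\mapsto a_i$, and two elements transcendental over $A$ generally have very different $\lambda$-types; Lemma~\ref{lem:ext transc} and Corollary~\ref{cor:ext separable} only extend a given embedding in the $\lambda$-free language and give no way to prescribe infinitely many distinct, $\lambda$-compatible images. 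In the restated formulation the two key inferences are unjustified: the realization set of the $\ACVF_{p,p}$-type of $\bar a'=\lambda^k(a)$ over $A$ is Zariski dense only in the Zariski closure of $\bar a'$ over $A$, which in general is a proper subvariety of the ambient affine space, so Fact~\ref{F:dim} gives no open ball; and density of $\VF(M')$ in $\alg{\VF(M')}$ does not produce $\VF(M')$-points \emph{on} a positive-codimension semialgebraic set -- that is precisely the content of Hong's density theorem (Proposition~\ref{P:vdense}), whose proof needs the variety to be defined over the separably closed field with Zariski-dense $K$-points, smoothness and \'etale coordinates, i.e.\ essentially the kind of information you are trying to establish. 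That the inference ``infinitely many points in $\alg{\VF(M')}$ plus density gives infinitely many in $\VF(M')$'' is invalid for sets over $A$ can be seen on $\{(x,y): y^p=b_1x^p+1\}$ with $b_1$ a $p$-basis element: its $\alg{K}$-points form a line, but its only $K$-point is $(0,1)$. (A small side slip: the fraction field of a $\lambda$-closed subring is $\lambda$-closed, but not ``separably closed up to purely inseparable elements''.)

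For comparison, the paper's proof avoids conjugate-counting and topology altogether: since $A$ is $\lambda$-closed and contains the $p$-basis, $\sep{A}\subseteq\VF(M)$ and, provided $\val(A)\neq 0$, $\sep{A}$ is itself a model of $\SCVF_{p,e}$; by model completeness (a consequence of Proposition~\ref{prop:SCVF EQ}) it is an elementary submodel containing $A$, hence $\VF(\acl(A))\subseteq\sep{A}\subseteq\alg{A}$. The trivially valued case is then handled by adjoining some $c\in\ppowi{\VF}(M)$ of positive valuation, transcendental over $\VF(\acl(A))$, applying the first case to $\langle Ac\rangle_\lambda$ and concluding by the exchange property, since $a\in\alg{A(c)}$ while $c\notin\alg{A(a)}$. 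If you want to salvage your route, you would in effect have to reprove something of the strength of Hong's density theorem for sets over $A$, so the model-completeness argument is both shorter and genuinely easier.
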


\begin{proof}
  Let us first assume that $\val(A)\neq 0$. Since $A$ is closed under 
  $\lambda$-functions, the extension $A\subseteq \VF(M)$ is separable and, as $\VF(M)$ 
  is separably closed, $\sep{A}\subseteq M$. As $A$ contains a $p$-basis of 
  $\VF(M)$, $\sep{A}$ and $\VF(M)$ have the same imperfection degree and 
  hence $\sep{A}\models\SCVF_{p,e}$. By model completeness, 
  $\VF(\acl(A))\subseteq\sep{A}\subseteq\alg{A}$.

  Now, if $\val(A) = 0$, assume that $M$ is sufficiently saturated and let 
  $c\in\ppowi{\VF}(M)$ be transcendental over $\VF(\acl(A))$ and have 
  positive valuation. By the previous paragraph, $\VF(\acl(A)) \subseteq 
  \alg{\VF(\langle A c\rangle_\lambda)} = \alg{A(c)}$. Let $a\in\VF(\acl(A)) 
  \subseteq \alg{A(c)}$. By construction, $c\not\in\alg{A(a)}$ and hence 
  $a\in\alg{A}$.
\end{proof}

A similar argument allows us to reduce the study of $\acl_{\SCVH_{p,e}}$ on 
the field sort to the above result.

\begin{lemma}\label{lem:acl SCVH field}
  Let $M\models\SCVH_{p,e}$ and $A\substr \VF(M)$ (in $\LDIVpe^{D}$). Then
  \[\VF(\acl(A)) \subseteq \alg{A}.\]
\end{lemma}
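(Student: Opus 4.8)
The statement for $\SCVH_{p,e}$ should follow from the already-proved statement for $\SCVF_{p,e}$ (Lemma~\ref{lem:acl SCVF field}) by passing through the associated canonical $p$-basis and $\lambda$-functions, exactly as the quantifier elimination for $\SCVH_{p,e}$ was reduced to that for $\SCVF_{p,e}$. The point is that the $\LDIVpe^{D}$-structure $A$ generated by a set and the $\LDIVpe^{\lambda}$-structure generated by the same set over a suitable $p$-basis have the same field part (up to the smallest strict/purely inseparable closure, which lies in $\alg{A}$ anyway), so $\acl$ computed in the Hasse-derivation language is contained in $\acl$ computed in the $\lambda$-language, and we are done by the previous lemma.

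\textbf{First step.} I would first reduce to the case where $\VF(A)$ is strict. By \cite[Lemma\,2.4]{ZieSCH}, $\VF(A)$ has a smallest strict extension $A'$, which as a field extension is algebraic and purely inseparable, so $A'\subseteq\alg{A}$; replacing $A$ by the $\LDIVpe^D$-structure it generates changes nothing about the conclusion $\VF(\acl(A))\subseteq\alg{A}$ (note $\alg{A'}=\alg{A}$), and now $\ppow{A}=C_1(A)$. Moreover, as in the proof of Proposition~\ref{prop:SCVH QE}, since $A$ is closed under the $D_{i,n}$ and is strict, the field extension $\VF(A)\subseteq\VF(M)$ is separable by \cite[Corollary\,2.2]{ZieSCH}.

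\textbf{Second step.} Now I would choose, using saturation of $M$, a $p$-basis $b=(b_1,\dots,b_e)$ of $\VF(M)$ which is canonical for the Hasse derivations and whose elements together with $A$ generate a structure $B$ with $\VF(B)=\VF(A)(b)$; since the $D_{i,n}$ can be expressed in terms of $b$ and the $\lambda$-functions attached to $b$ (as recalled after the definition of canonical $p$-basis), the $\LDIVpe^D$-structure generated by $Ab$ coincides with the $\LDIVpe^\lambda$-structure $\langle Ab\rangle_\lambda$ generated by $Ab$ in the $\lambda$-language for that $p$-basis. Thus $\acl_{\SCVH_{p,e}}(A)\subseteq\acl_{\SCVH_{p,e}}(Ab)=\acl_{\SCVF_{p,e}}(Ab)$ (the theories have the same underlying definable sets once a canonical $p$-basis is named, so algebraic closure is the same). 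Applying Lemma~\ref{lem:acl SCVF field} to the $\LDIVpe^\lambda$-structure $\langle Ab\rangle_\lambda$ gives $\VF(\acl_{\SCVF_{p,e}}(Ab))\subseteq\alg{\VF(A)(b)}$.

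\textbf{Third step — removing the auxiliary $p$-basis.} The remaining issue, and the main obstacle, is that $b$ is not algebraic over $A$, so $\alg{\VF(A)(b)}$ is bigger than $\alg{A}$; I must argue that an element $a\in\VF(\acl(A))$ which happens to lie in $\alg{\VF(A)(b)}$ actually lies in $\alg{A}$. This is exactly the dimension/transcendence argument already used in Lemma~\ref{lem:acl SCVF field}: choosing $b$ so that each $b_i$ is transcendental and the $b_i$ are algebraically independent over $\VF(\acl(A))$ (possible by saturation, as in that proof one can even take them in $\bO$ with residues independent, cf. the very-canonical $p$-basis lemma), one has $a\in\alg{\VF(A)(b)}$ with $a\in\acl(A)$, while the $b_i$ remain algebraically independent over $\VF(A)(a)$; counting transcendence degrees forces $a\in\alg{\VF(A)}=\alg{A}$. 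Hence $\VF(\acl(A))\subseteq\alg{A}$, as desired. The only delicate point to check carefully is that the $p$-basis can be chosen simultaneously canonical for the Hasse derivations and algebraically independent over $\VF(\acl(A))$; the latter is a small set, so saturation handles it, and canonicity is arranged by the construction in Proposition~\ref{prop:SCVH QE} (adding absolute constants, which does not affect algebraic independence over a field containing them).
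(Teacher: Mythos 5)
Your proof is correct and follows essentially the same route as the paper: choose a canonical $p$-basis $b$ (via the very canonical $p$-basis lemma) algebraically independent over $\VF(\acl(A))$, use the interexpressibility of the $D_{i,n}$ and the $\lambda$-functions over $b$ to reduce to Lemma~\ref{lem:acl SCVF field} applied over $Ab$, and then eliminate $b$ by the transcendence-degree count. Your preliminary reduction to the strict case is harmless but unnecessary (the paper omits it), and the claim that the $D$-structure and $\lambda$-structure generated by $Ab$ literally coincide should be weakened to equality up to purely inseparable elements (as only equality of algebraic closures is used, this does not affect the argument).
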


\begin{proof}
  Let $b$ be a canonical $p$-basis of $\VF(M)$ with $\trdeg(b/\VF(\acl(A))) = 
  e$ (for example, $b$ is a very canonical $p$-basis over $\VF(\acl(A))$). As the 
  $D_{i,n}(x)$ can be expressed as polynomials in $\lambda(x)$ and $b$, it 
  follows that $\VF(\acl(Ab)) \subseteq \VF(\acl_{\SCVF_{p,e}}(A)) \subseteq 
  \alg{\langle A\rangle_\lambda}$. The last inclusion is proved in 
  Lemma\,\ref{lem:acl SCVF field}. Moreover, by \cite[Lemma\,4.3]{ZieSCH}, 
  $\lambda(x)^p$ can be expressed as a polynomial in $D(x)$ and $b$ and hence 
  $\langle A\rangle_\lambda \subseteq \alg{\langle A b\rangle_D} = 
  \alg{A(b)}$. Let $a\in\VF(\acl(A)) \subseteq \alg{A(b)}$.  By construction 
  the tuple $b$ is algebraically independent from $a$ over $A$ and hence 
  $a\in\alg{A}$.
\end{proof}

\begin{lemma}\label{lem:acl field}
  Let $T = \SCVF_{p,e}^\G$ or $T = \SCVH_{p,e}^\G$, $M\models T$ and 
  $A\substr M$. Then
  \[\VF(\acl(A)) = \VF(\acl(\VF(A))) = \alg{\VF(A)}\cap M.\]
\end{lemma}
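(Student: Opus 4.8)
The plan is to prove the chain of equalities
\[
  \VF(\acl(A)) = \VF(\acl(\VF(A))) = \alg{\VF(A)}\cap M
\]
by first reducing the left-hand side to the field sort and then invoking the previous two lemmas. The key observation is that, in both theories $\SCVF_{p,e}^\G$ and $\SCVH_{p,e}^\G$, an element of the field sort lying in $\acl(A)$ is already in the algebraic closure of the field part $\VF(A)$ of $A$. Since $A\substr M$ and the imaginary geometric sorts of $K = \VF(M)$ coincide with those of $L = \alg{K}$ (Proposition~\ref{prop:SCVF-dense} and Corollary~\ref{cor:purely-geometric}), one should not expect the geometric imaginary sorts in $A$ to contribute any new field-sort elements to the algebraic closure. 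Making this precise is the heart of the argument.

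First I would establish $\VF(\acl(A)) \subseteq \alg{\VF(A)}\cap M$. The inclusion in $M$ is automatic. For the algebraicity over $\VF(A)$, the point is that any $a\in\VF(\acl(A))$ has only finitely many conjugates over $A$; I want to conclude it has only finitely many conjugates over $\VF(A)$. The natural route is: the definable (partial) type of $a$ over $A$ is, by quantifier elimination in the geometric language (Proposition~\ref{prop:SCVH QE}, resp.\ Proposition~\ref{prop:SCVF EQ}), determined by quantifier-free formulas; since all geometric imaginary sorts are interpretable over $\VF$ and, by the density result, $\Gim(K) = \Gim(L)$, any parameter from $\bGim(A)$ needed to pin down $a$ can be replaced --- up to interdefinability over the field part, as in Lemma~\ref{L:SCVF-ACVF}(2) --- by field elements; but a priori those field elements come from $L$, not from $\VF(A)$. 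To avoid this, the cleaner approach is the one already used in the proofs of Lemmas~\ref{lem:acl SCVF field} and~\ref{lem:acl SCVH field}: pass to a sufficiently saturated $M$, and use a genericity/transcendence argument. Concretely, I would argue that if $a\in\VF(\acl(A))$ then $a\in\acl(\VF(A)\cup b)$ where $b$ is the imaginary-sort part of $A$; but since $b$ can be chosen (after adjusting $A$ within its interdefinability class over $\VF$) to lie in $\bGim$, and since by Lemma~\ref{lem:dense geom}/Proposition~\ref{prop:SCVF-dense} these imaginary sorts are unchanged on passing to $\ppowi{\VF}$, a transcendence-base argument as in Lemma~\ref{lem:acl SCVH field} forces $a\in\alg{\VF(A)}$. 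This step --- disentangling the field-sort algebraic closure from the imaginary-sort parameters --- is the main obstacle, and it is where one must be careful that no new field elements sneak in via lattice or torsor codes.

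Next I would show $\VF(\acl(\VF(A))) \subseteq \alg{\VF(A)}\cap M$. Here $\VF(A)$ is a field substructure of $\VF(M)$ in the language with $\lambda$-functions (resp.\ Hasse derivations), so this is exactly Lemma~\ref{lem:acl SCVF field} (resp.\ Lemma~\ref{lem:acl SCVH field}), possibly after noting that $\VF(A)$ being the field part of a $\LG$-substructure $A\substr M$ means it is closed under $\lambda$ (resp.\ $D_{i,n}$), which is what those lemmas require.

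Finally I would close the loop with the reverse inclusions. The inclusion $\alg{\VF(A)}\cap M \subseteq \VF(\acl(\VF(A)))$ is immediate since every element of $\alg{\VF(A)}$ has finitely many field conjugates over $\VF(A)$, and hence finitely many conjugates in $M$ under automorphisms of $M$ fixing $\VF(A)$ (using that such automorphisms extend to $\alg{M}$ by Lemma~\ref{L:SCVF-ACVF}(1)). And $\VF(\acl(\VF(A))) \subseteq \VF(\acl(A))$ is trivial from $\VF(A)\subseteq A$. Chaining these gives
\[
  \alg{\VF(A)}\cap M \subseteq \VF(\acl(\VF(A))) \subseteq \VF(\acl(A)) \subseteq \alg{\VF(A)}\cap M,
\]
so all three sets coincide, which is the claim.
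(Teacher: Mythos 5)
Your overall architecture (chain of inclusions, reduction of the second equality to Lemmas~\ref{lem:acl SCVF field} and~\ref{lem:acl SCVH field}, the trivial reverse inclusions) is correct and agrees with the paper. The genuine gap is in the first equality, i.e.\ the inclusion $\VF(\acl(A))\subseteq\alg{\VF(A)}$ when $A$ contains points of the imaginary sorts $\latt_n$, $\tor_n$: this is exactly the step you yourself call the main obstacle, and your proposal contains no actual argument for it. The appeal to ``a transcendence-base argument as in Lemma~\ref{lem:acl SCVH field}'' does not transfer: in Lemmas~\ref{lem:acl SCVF field} and~\ref{lem:acl SCVH field} the auxiliary field element is \emph{freely} chosen transcendental over everything relevant, whereas here any field tuple $c$ coding a prescribed imaginary $b\in\bGim(A)$ is confined to a fixed definable set (e.g.\ the coset of bases of a lattice), and $a\in\acl(\VF(A)c)$ holds for \emph{every} such code --- which is precisely the situation to be ruled out, so ``choosing $c$ generically'' begs the question. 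To make your route work one would need additional arguments you do not supply: that codes can be taken inside $\ppowi{\VF}$ (otherwise the $\lambda$- or $D$-closure of $\VF(A)(c)$ introduces uncontrolled transcendentals and Lemma~\ref{lem:acl SCVF field} only gives $a\in\alg{\langle\VF(A)c\rangle_\lambda}$); that, by saturation and Zariski-density of the set of codes, one can find \emph{two} codes algebraically independent over $\alg{\VF(A)}$; and that the algebraic closures of two independent extensions of an algebraically closed field intersect in that field, forcing $a\in\alg{\VF(A)}$.

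The paper proves the first equality by a different and much shorter device, which your proposal misses entirely: it suffices to show that every definable function from $\latt_n$ or $\tor_n$ into $\VF$ has finite image. This is established by a counting argument: there is a model of $\SCVF^{\G}_{p,e}$ of cardinality continuum containing a countable dense subfield, so that the sorts $\latt_n$ and $\tor_n$ are countable there, while every infinite definable subset of a power of $\VF$ has cardinality continuum (via a $\lambda$-resolution and Corollary~\ref{cor:red to open in affine space}, which produces a definable copy of $\bO^d$ inside it); a function with countable domain cannot have such an image, so the image is finite, and $\VF(\acl(A))\subseteq\acl(\VF(A))$ follows. As written, your proof is incomplete at its central step; it either needs the paper's counting argument or a fully worked-out version of the generic-code argument sketched above.
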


\begin{proof}
  The second equality follows from Lemmas\,\ref{lem:acl SCVF field} and 
  \ref{lem:acl SCVH field}.

  To prove the first equality it suffices to show that all definable 
  functions from some $\latt_n$ or $\tor_n$ into $\VF$ have a finite image.  
  It is enough to prove this for $T=\SCVF^{\G}_{p,e}$. Consider 
  $M\models\SCVF^{\G}_{p,e}$ of cardinality continuum that contains a dense 
  countable subfield (for example $\sep{\F_p(t_i\mid 0 < i < e)((t_0))}$). In 
  such a model, the sorts $\latt_n$ and $\tor_n$ are countable but any 
  definable subset $\dX$ of (some Cartesian power of) $\VF(M)$ is either 
  finite or has cardinality continuum. To prove that last statement, taking a 
  $\lambda$-resolution we may assume that $\dX$ is quantifier-free $\LDIV(M)$-definable. If 
  it is not finite then, by Corollary\,\ref{cor:red to open in affine space}, 
  there is a a semialgebraic subset $\dX'\subseteq\dX$ which is in 
  $K$-definable bijection with $\bO^d(K)$ for some $d>0$. It follows that 
  $\dX'$ and thus $\dX$ has cardinality continuum.

  As functions with countable domain cannot have an image with cardinality 
  continuum, it follows that any definable function from some $\latt_n$ or 
  $\tor_n$ into $\VF$ has a finite image and hence $\VF(\acl(A)) \subseteq 
  \acl(\VF(A))$.
\end{proof}

\begin{proposition}\label{prop:descr acl}
  Let $T = \SCVF_{p,e}^\G$ or $T = \SCVH_{p,e}^\G$, $M\models T$ and 
  $A\substr M$.
  \[\acl_T(A) = \acl_{\ACVF^{\G}_{p,p}}(A)\cap M\]
\end{proposition}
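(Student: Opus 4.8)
The plan is to establish the two inclusions sort by sort, using Lemma~\ref{lem:acl field} for the valued-field sort and Corollary~\ref{cor:purely-geometric} for the imaginary geometric sorts. Write $L:=\alg M$; since $\VF(M)$ is a separably closed valued field of characteristic $p$, we have $L\models\ACVF^\G_{p,p}$, and by Corollary~\ref{cor:purely-geometric} we know that $\bGim(M)=\bGim(L)$ and, moreover, that a subset of a Cartesian power of $\bGim(M)$ is $\LG^D(A)$-definable (resp.\ $\LG^\lambda(A)$-definable) in $M$ precisely when it is $\LG(A)$-definable in $L$. Each element of $\bG(M)$ — and each element of $\acl_{\ACVF^\G_{p,p}}(A)\cap M\subseteq\bG(L)$ — lies in exactly one sort, and since $S(L)=S(M)$ for every imaginary geometric sort $S$, it is enough to prove $\VF(\acl_T(A))=\VF(\acl_{\ACVF^\G_{p,p}}(A))\cap M$ together with $S(\acl_T(A))=S(\acl_{\ACVF^\G_{p,p}}(A))$ for each such $S$.

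For the imaginary geometric sorts this will be a direct consequence of Corollary~\ref{cor:purely-geometric}: a point of $S$ lies in $\acl$ over $A$ exactly when it belongs to some finite subset of $S$ definable over $A$, and the finite $\LG^D(A)$- (resp.\ $\LG^\lambda(A)$-)definable subsets of $S$ in $M$ are the same as the finite $\LG(A)$-definable subsets of $S$ in $L$; hence $S(\acl_T(A))=S(\acl_{\ACVF^\G_{p,p}}(A))$. For the valued-field sort, Lemma~\ref{lem:acl field} gives $\VF(\acl_T(A))=\alg{\VF(A)}\cap M$, so what remains is to identify $\VF(\acl_{\ACVF^\G_{p,p}}(A))$ with $\alg{\VF(A)}$. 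One inclusion is automatic; for the other I would argue, exactly as in the proof of Lemma~\ref{lem:acl field} (and using the remark preceding Lemma~\ref{lem:ext transc}), that in $\ACVF$ any definable function from a Cartesian power of the imaginary geometric sorts into $\VF$ has finite image — whence the geometric-sort coordinates of $A$ contribute nothing to the field-algebraic closure of $\VF(A)$, i.e.\ $\acl_{\ACVF^\G_{p,p}}(A)\cap\VF=\alg{\VF(A)}$. Combining the two sorts yields the proposition.

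The main (and only, and rather mild) obstacle is precisely this last $\ACVF$-internal point: one must make sure that in $\ACVF^\G_{p,p}$, when $A$ mixes field elements with geometric imaginaries, algebraic closure in the field sort is still computed inside the field. The cleanest route is the counting argument already used for Lemma~\ref{lem:acl field}: pass to a model of $\ACVF^\G_{p,p}$ of size $2^{\aleph_0}$ with only countably many imaginaries, observe that every infinite $\VF$-definable subset there has size $2^{\aleph_0}$, and conclude that a definable map from a power of the imaginary geometric sorts to $\VF$, having image of size at most $\aleph_0$, must have finite image; handling the field-sort parameters of $A$ is done by relativising this to those parameters. Everything else in the argument is routine bookkeeping with the two cited results.
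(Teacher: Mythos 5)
Your handling of the imaginary geometric sorts is essentially the paper's own argument: the paper takes $a\in\latt_n$ (or $\tor_n$), extracts via quantifier elimination a quantifier-free $\LG(A)$-formula with finitely many solutions, and uses $\latt_n(M)=\latt_n(\alg{M})$, $\tor_n(M)=\tor_n(\alg{M})$ to pass between $M$ and $\alg{M}$; your appeal to Corollary~\ref{cor:purely-geometric} is a packaged form of the same quantifier elimination and does give both inclusions on those sorts. The genuine divergence is on the field sort. The paper never needs to identify $\VF\cap\acl_{\ACVF^{\G}_{p,p}}(A)$ with $\alg{\VF(A)}$: for $\acl_T(A)\subseteq\acl_{\ACVF^{\G}_{p,p}}(A)\cap M$ it only uses Lemma~\ref{lem:acl field} together with the trivial inclusion $\alg{\VF(A)}\subseteq\acl_{\ACVF^{\G}_{p,p}}(A)$, and the converse inclusion (left implicit in the paper) is immediate in every sort, since a quantifier-free $\LG(A)$-formula witnessing algebraicity of $a\in M$ inside $\alg{M}$ restricts to a finite $T$-definable subset of $M$ containing $a$. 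Your sort-by-sort equalities instead force you to prove the stronger, purely $\ACVF$-internal statement that field points algebraic over a mixed set of field points and geometric imaginaries are field-algebraic over the field part alone.

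That statement is true --- it is what the remark preceding Lemma~\ref{lem:ext transc} asserts, and Cartesian powers of the imaginary sorts cause no extra difficulty --- but your justification of it is the thin spot. The counting argument as you state it proves the finite-image claim only in the one special model of size continuum with countable imaginary part (such a model does exist, by Lemma~\ref{lem:dense geom}), and only for parameters lying in that model; your $A$ lives in an arbitrary $\alg{M}$, and ``relativising to those parameters'' does not import them, because the special model is neither saturated nor universal (with the natural choice its value group is $\Q$, so already a countable configuration with non-archimedean value group does not embed). To transfer you need an extra ingredient: the condition ``the image is infinite'' must be definable in the parameters --- in $\ACVF$ a definable subset of $\VF$ is infinite if and only if it contains an open ball, i.e.\ one has elimination of $\exists^{\infty}$ for such families --- so that, by completeness of $\ACVF_{p,p}$, a bad parameter would also occur in the special model, where the cardinality count gives the contradiction. (You also need the claim not just for functions but for the $\VF(A)$-definable finite-valued correspondence extracted from the algebraicity formula, i.e.\ for the union of its uniformly bounded fibres.) None of this is hard, but it is exactly the content missing behind ``relativising''; and it is avoidable altogether, since the only place you use it follows in one line from quantifier-free witnesses plus Lemma~\ref{lem:acl field}, which is the paper's route.
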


\begin{proof}
  Pick $a\in \acl_T(A)$. If $a\in\VF$, by Lemma\,\ref{lem:acl field}, 
  $a\in\alg{\VF(A)} \subseteq \acl_{\ACVF_{p,p}^{\G}}(A)$.

  Let us now assume that $a\in \latt_n$ (the same proof also works if $a\in 
  \tor_n$). By quantifier elimination in the geometric language, there is a 
  quantifier free $\LG(A)$-formula $\phi(x)$ such that $\phi(M)$ is finite 
  and $M\models\phi(a)$.  As $\latt_n(\alg{M}) = \latt_n(M)$ and $\phi(x)$ is 
  quantifier free, we have $a\in\phi(M)=\phi(\alg{M})$, with $\phi(\alg{M})$ 
  finite.  It follows that $a\in\acl_{\ACVF_{p,p}^{\G}}(A)$.
\end{proof}

\begin{proposition}\label{prop:descr dcl}
  Let $T = \SCVF_{p,e}^\G$ or $T = \SCVH_{p,e}^\G$, $M\models T$ and 
  $A\substr M$.
  \[\dcl_T(A) = \dcl_{\ACVF_{p,p}^{\G}}(A)\cap M\]
\end{proposition}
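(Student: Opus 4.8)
\textbf{Proof proposal for Proposition~\ref{prop:descr dcl}.}

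The plan is to deduce the statement from the description of $\acl$ in Proposition~\ref{prop:descr acl} together with the fact that finite sets are coded in $T$ (Lemma~\ref{L:SCVF-ACVF}(3) in the $\SCVF$ case, and Corollary~\ref{C:SCVH-EI} in the $\SCVH$ case), which is the standard route from "equal algebraic closures" to "equal definable closures". First I would observe the easy inclusion: $\dcl_T(A)\subseteq\dcl_{\ACVF_{p,p}^{\G}}(A)\cap M$ because any $\ACVF_{p,p}^{\G}$-automorphism of (a saturated model of) $\ACVF_{p,p}^{\G}$ restricts to an automorphism of $M$ by Lemma~\ref{L:SCVF-ACVF}(1) (extended to the present setting), so anything not fixed by all $A$-automorphisms of $\ACVF_{p,p}^{\G}$ is a fortiori not in $\dcl_T(A)$; more carefully, if $a\in\dcl_T(A)$ then $a$ is fixed by every $\LG$-automorphism of $M$ over $A$, and since (by Lemma~\ref{L:SCVF-ACVF}(1)) every $\ACVF_{p,p}$-automorphism of $\alg{M}$ over $A$ restricts to such an automorphism of $M$, $a$ is fixed by all of them, whence $a\in\dcl_{\ACVF_{p,p}^{\G}}(A)$.

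For the converse, take $a\in\dcl_{\ACVF_{p,p}^{\G}}(A)\cap M$. Then in particular $a\in\acl_{\ACVF_{p,p}^{\G}}(A)\cap M$, so by Proposition~\ref{prop:descr acl}, $a\in\acl_T(A)$; let $\{a=a_1,\ldots,a_k\}$ be the (finite) orbit of $a$ under $\Aut(M/A)$. It suffices to show $k=1$. By Lemma~\ref{L:SCVF-ACVF}(1), every $\sigma\in\Aut(M/A)$ lifts uniquely to $\hat\sigma\in\Aut(\alg{M}/A)$, and $\hat\sigma$ permutes $\{a_1,\ldots,a_k\}$ in the same way $\sigma$ does (since $a_i\in M$). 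Now the set $\{a_1,\ldots,a_k\}$, being finite, is coded in $\ACVF_{p,p}^{\G}$: there is $c\in\bG(\alg{M})$, which we may take in $\bG(M)$ by Lemma~\ref{L:SCVF-ACVF}(2), that is fixed by exactly those $\ACVF_{p,p}$-automorphisms of $\alg{M}$ stabilising $\{a_1,\ldots,a_k\}$ setwise. Since $a\in\dcl_{\ACVF_{p,p}^{\G}}(A)$, the whole orbit $\{a_1,\ldots,a_k\}$ lies in $\dcl_{\ACVF_{p,p}^{\G}}(A)$ — actually $a_1=a$ already is — hence is stabilised setwise by every $\ACVF_{p,p}$-automorphism over $A$, so $c\in\dcl_{\ACVF_{p,p}^{\G}}(A)\cap\bG(M)$. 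Then $c\in\dcl_T(A)$ by Lemma~\ref{L:SCVF-ACVF}(1). But $a$ is $T$-definable over $c$ (it is the unique element of the coded finite set, or more precisely $\{a_1,\dots,a_k\}$ is $\dcl_T(c)$-definable and $a$ is one of its elements named via the $\ACVF$-definable relation, which is quantifier-free $\LG(c)$ and hence available in $T$), so $a\in\acl_T(c)\subseteq\acl_T(A)$ with the extra information that its orbit over $A$ is contained in the $c$-definable finite set; combined with $c\in\dcl_T(A)$ this forces the $\Aut(M/A)$-orbit of $a$ to be fixed pointwise, i.e.\ $k=1$ and $a\in\dcl_T(A)$.

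The step I expect to be the main obstacle — or at least the one requiring the most care — is making precise that the finite set $\{a_1,\dots,a_k\}$ is genuinely coded \emph{and that the code can be descended to $\bG(M)$ and then to $\dcl_T(A)$}, i.e.\ correctly chaining Lemma~\ref{L:SCVF-ACVF}(1),(2),(3). One must be slightly careful that the relevant automorphisms match up: an $A$-automorphism of $M$ in the language $\LG^\lambda$ (or $\LG^D$) extends uniquely to an $\ACVF$-automorphism of $\alg{M}$, and conversely the $\ACVF$-automorphisms of $\alg{M}$ over $A$ that one uses to detect the code all restrict to $M$ precisely because $\alg{M}$ is generated over $M$ by purely inseparable (hence automorphism-rigid relative to $M$) elements. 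Once this correspondence of automorphism groups is in hand, the argument is the routine "weak EI $+$ coding of finite sets $\Rightarrow$ $\dcl$ computed as in the reduct" manipulation, and the proposition follows.
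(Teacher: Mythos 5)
The decisive problem is that you have the two inclusions the wrong way round: the one you dismiss as ``easy'' is the substantive half of the proposition, and the justification you give for it is false. Lemma~\ref{L:SCVF-ACVF}(1) says that automorphisms of $M$ \emph{lift} (uniquely) to $\alg{M}$; it does not say, and it is not true, that every $\ACVF_{p,p}^{\G}$-automorphism of $\alg{M}$ over $A$ restricts to a $T$-automorphism of $M$. Such an automorphism need not preserve $\VF(M)$ setwise, and even when it does it has no reason to commute with the $\lambda$-functions or Hasse derivations, which are not part of the $\ACVF$-structure. Concretely, choose $a\in\VF(M)\setminus\ppow{\VF(M)}$ and $a'\in\ppow{\VF(M)}$ both realizing the generic type of $\bO$ over $A$ in $\ACVF_{p,p}$ (possible since both $\ppow{\VF(M)}$ and its complement are dense); an $\ACVF$-automorphism of $\alg{M}$ over $A$ sending $a$ to $a'$ exists, and it cannot restrict to a $T$-automorphism of $M$ because $\ppow{\VF}$ is $\emptyset$-definable in $T$. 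So your first paragraph leaves $\dcl_T(A)\subseteq\dcl_{\ACVF_{p,p}^{\G}}(A)\cap M$ unproved, and this is precisely the direction the paper's proof is about: for $a$ in $\latt_n$ or $\tor_n$ one uses quantifier elimination in $T$ (Proposition~\ref{prop:SCVH QE}) to get a quantifier-free $\LG(A)$-formula $\phi$ with $\phi(M)=\{a\}$, which also satisfies $\phi(\alg{M})=\{a\}$ since those sorts coincide; for $a\in\VF$ one first places $a$ in $F:=\alg{\VF(A)}\cap M$ by Lemma~\ref{lem:acl field}, and then uses that the Hasse derivations (hence the $\lambda$-functions) extend uniquely from $\VF(A)$ to $F$, so that the restriction of any $\sigma\in\Aut_{\ACVF_{p,p}^{\G}}(\alg{M}/A)$ to $A\cup F$ is a partial $T$-elementary map; hence $\sigma(a)$ is a $T$-conjugate of $a$ over $A$ and so $\sigma(a)=a$. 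Nothing in your proposal supplies this step, and your closing remark that $\alg{M}$ is ``automorphism-rigid'' over $M$ only addresses uniqueness of a restriction, not its existence or its compatibility with the extra structure.

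By contrast, the direction you treat at length, $\dcl_{\ACVF_{p,p}^{\G}}(A)\cap M\subseteq\dcl_T(A)$, really is the easy one and needs none of the machinery you invoke: if $a\in\dcl_{\ACVF_{p,p}^{\G}}(A)\cap M$ and $\sigma\in\Aut_T(M/A)$, then the unique lift $\hat\sigma\in\Aut_{\ACVF_{p,p}^{\G}}(\alg{M}/A)$ fixes $a$, whence $\sigma(a)=a$. Your detour through $\acl$, orbits and codes of finite sets ends with an inference that does not follow as written (knowing the orbit of $a$ is contained in a finite set definable over $\dcl_T(A)$ does not yield $k=1$), although it is repaired at once by the one-line argument just given, which you in fact have available when you note that $a_1=a$ already lies in $\dcl_{\ACVF_{p,p}^{\G}}(A)$.
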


\begin{proof}
  Pick $a\in \dcl_T(A)$. If $a\in \latt_n$ or $\tor_n$, then, as above, there 
  is a quantifier free $\LG(A)$-formula $\phi(x)$ such that $\phi(M) = \{a\} 
  = \phi(\alg{M})$ and thus $a\in\dcl_{\ACVF_{p,p}^{\G}}(A)$.

  If $a\in\VF$, by Lemma\,\ref{lem:acl field}, $a\in\alg{\VF(A)}\cap M=:F$. Let 
  $\sigma\in\Aut_{\ACVF_{p,p}^{\G}}(\alg{M}/A)$. The Hasse derivations (and hence 
  the $\lambda$-functions) extend uniquely from $\VF(A)$ to $F$. It 
  follows that $\restr{\sigma}{A\cup F}$ respects all the new 
  structure on $\VF$ in $T$ and therefore $\sigma(a)$ is a $T$-conjugate of 
  $a$ over $A$. In particular $\sigma(a) = a$ and $a\in\dcl_{\ACVF^{\G}_{p,p}}(A)$.
\end{proof}

\section{Metastability}\label{S:metastability}

In this section, we will show that $\SCVF_{p,e}$ is metastable (as defined by 
Haskell, Hrushovski and Macpherson in \cite{HaHrMa08}). But first, let us 
give some definitions.

An $\L(A)$-definable set $\dX$ is said to be \emph{stably embedded} if every 
$\L(M)$-definable set $\dY\subseteq \dX^n$ is $\L(A\cup \dX(M))$-definable. The 
set $\dX$ is said to be \emph{stable} (if it is stably embedded and) if the 
structure on $\dX$ induced by $\L(A)$ is stable. For a thorough discussion of 
(stably embedded) stable sets, we refer the reader to the appendix of 
\cite{ChHr99}. We denote by $\St_A$ the structure whose sorts are the stable 
(stably embedded) sets which are $\L(A)$-definable, equipped with their 
$\L(A)$-induced structure.  We will denote by $\ind_{A}$ forking independence 
in $\St_A$.

\begin{lemma}\label{lem:VF-unstable}
  Let $T=\SCVH_{p,e}$ or $T=\SCVF_{p,e}$, and let $\dX$ be an infinite 
  definable subset of $\VF^n$ for some $n\in\N$.  Then there is a definable 
  function $f:\dX\rightarrow\vg$ with infinite image. In particular, $\dX$ is 
  unstable.
\end{lemma}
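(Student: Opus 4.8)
The plan is to produce a definable surjection-onto-an-infinite-set from $\dX$ to the value group $\vg$, and then invoke the fact that $\vg$, carrying the structure of a divisible ordered abelian group, is an unstable interpretable set; since $\dX$ maps definably onto an infinite subset of an unstable set, $\dX$ cannot be stable. To get such a function, I would first use a $\lambda$-resolution (Lemma \ref{lem:lambda res}): replacing $\dX$ by $\lambda^n(\dX)$ for a suitable $n$ (which is an $\emptyset$-definable injective image of $\dX$, hence shares the relevant properties), I may assume $\dX$ is quantifier-free $\LDIV$-definable, i.e.\ semi-algebraic, in the case $T = \SCVF_{p,e}$; in the case $T = \SCVH_{p,e}$ one first rewrites the Hasse derivations in terms of $\lambda$-functions and $b$ as in the proof of Proposition \ref{prop:SCVH QE} and reduces to the same situation. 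So it suffices to treat an infinite semi-algebraic $\dX \subseteq \VF^m$ over some $K \models \SCVF$.

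Next I would apply Corollary \ref{cor:red to open in affine space}: since $\dX$ is infinite, its Zariski closure $\dV$ has dimension $d \geq 1$, and there is a semi-algebraic subset $\dX' \subseteq \dX$ together with a $K$-definable polynomial map $f : \dX' \to \VF^d$ which is a homeomorphism onto $\bO^d(K)$. Composing $f$ with the first coordinate projection $\VF^d \to \VF$ yields a $K$-definable surjection $\dX' \twoheadrightarrow \bO(K)$, and then composing with $\val : \bO \to \vg_{\geq 0}$ gives a $K$-definable map $g : \dX' \to \vg$. The image of $g$ is $\val(\bO(K)) = \vg(K)_{\geq 0} \cup \{\infty\}$, which is infinite because the valuation is non-trivial. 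Extending $g$ to all of $\dX$ (say by sending $\dX \setminus \dX'$ to $0$) produces the desired definable function $f : \dX \to \vg$ with infinite image.

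Finally, for unstability: $\vg$ is a pure divisible ordered abelian group (this is stated for models of $\Tppv$ in Corollary \ref{cor:st emb pair}, and holds a fortiori inside $\SCVF_{p,e}$ or $\SCVH_{p,e}$ since $\vg$ with its induced structure is already an ordered abelian group definable via $\div$), and the linear order $<$ on $\vg$ restricts to an infinite definable linear order on the infinite image $g(\dX) \subseteq \vg$; pulling this order back along $g$ gives a definable relation on $\dX$ that linearly orders an infinite subset, which witnesses the order property. Hence $\dX$ is unstable. The only mildly delicate point is the bookkeeping in the reduction to the semi-algebraic case in the Hasse setting — making sure the $\lambda$-resolution passes through the rewriting of the $D_{i,n}$ — but this is exactly the reduction already carried out in the proof of Proposition \ref{prop:SCVH QE}, so no new idea is needed there; the substantive content is entirely contained in Corollary \ref{cor:red to open in affine space}.
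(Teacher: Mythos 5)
Your proposal is correct and follows essentially the same route as the paper: reduce to $T=\SCVF_{p,e}$ over parameters (your rewriting of the $D_{i,n}$ via $b$ and $\lambda$ is the same observation), use a $\lambda$-resolution to pass to a semi-algebraic set, invoke Corollary~\ref{cor:red to open in affine space} to get $\dX'\subseteq\dX$ definably homeomorphic to $\bO^d(K)$ with $d>0$, and compose with $\val$ of a coordinate to obtain a definable map to $\vg$ with infinite image. Your extra paragraph spelling out why an infinite definable image in the ordered group $\vg$ yields the order property is fine and matches what the paper leaves implicit.
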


\begin{proof}
  We may work over parameters, and it is thus enough to prove the result for 
  $T=\SCVF_{p,e}$. Assume $\dX$ is defined over $K\models\SCVF_{p,e}$.  Using 
  $\lambda$-resolutions, we may assume that $\dX$ is a semialgebraic subset 
  of $K^n$, i.e. defined by a quantifier-free $\LDIV(K)$-formula. By 
  Corollary \ref{cor:red to open in affine space}, there is a a semialgebraic 
  subset $\dX'\subseteq\dX$ which is in $K$-definable bijection with 
  $\O_K^d$, where $d>0$ is the dimension of the Zariski closure of $\dX$. The 
  result follows, by considering the function $f:\O_K^d\rightarrow\vg$ 
  sending $x$ to $\val(x_1)$.
\end{proof}

\begin{proposition}\label{P:StA}
  Let $T=\SCVH^{\G}_{p,e}$ or $T=\SCVF^{\G}_{p,e}$, and let $A\substr 
  M\models T$. Suppose that $\dX$ is an $A$-definable set.  Then the 
  following are equivalent:

  \begin{enumerate}
    \item $\dX$ is stable stably embedded.
    \item $\dX$, expanded by relations for $A$-definable subsets of $\dX^n$ 
      for all $n$, has finite Morley rank.
    \item $\dX\perp\vg$, i.e., any definable subset of $\dX^n\times\vg^m$ is a 
      finite union of rectangles.
    \item There is no definable function $f:\dX\rightarrow\vg$ with infinite 
      image.
    \item $\dX$ is $\rf$-internal.
    \item $\dX$ is $\rf$-analyzable.
    \item Possibly after a permutation of coordinates, $\dX$ is contained in 
      a finite union of sets of the form $s_1/\boldmax s_1\times\cdots\times s_m/\boldmax 
      s_m\times F$, where the $s_i$ are $\acl_T(A)$-definable lattices and 
      $F$ is an $A$-definable finite set of tuples from $\bG(M)$.
  \end{enumerate}
\end{proposition}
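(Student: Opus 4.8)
The plan is to transfer the whole statement, through the imaginary geometric sorts, to the known description of the stable part of $\ACVF_{p,p}$. Since each of the seven conditions is preserved under passing to an elementary extension, I would first replace $M$ by a sufficiently saturated elementary extension; then, by Proposition~\ref{prop:SCVF-dense} and Lemma~\ref{lem:dense geom}, $L:=\alg{M}\models\ACVF_{p,p}$ is an immediate extension of $M$, so that $\bGim(M)=\bGim(L)$.

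The first real step is to dispose of the $\VF$-coordinates of $\dX$. Writing $\dX\subseteq\VF^a\times\dW$ with $\dW$ a product of sorts from $\bGim$, I would note that if (1) or (4) holds then every projection of $\dX$ to a $\VF$-coordinate is finite: otherwise it is an infinite definable subset of $\VF$, hence (Lemma~\ref{lem:VF-unstable}) unstable and admitting a definable map onto an infinite subset of $\vg$; composing with the projection contradicts (1), resp.\ (4). So the image $\dX_0$ of $\dX$ in $\VF^a$ is a finite $A$-definable set, its elements lie in $\acl_T(A)$, and $\dX=\bigsqcup_{\bar v\in\dX_0}\{\bar v\}\times\dX_{\bar v}$ with each $\dX_{\bar v}\subseteq\dW$ an $\acl_T(A)$-definable set. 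One then checks routinely that each of (1)--(7) holds for $\dX$ if and only if it holds for every $\dX_{\bar v}$, now with base $\acl_T(A)$ in place of $A$ (for (7), one reassembles the decompositions of the $\dX_{\bar v}$, using $\acl_T(\acl_T(A))=\acl_T(A)$ and that $\dX_0$ and the map $\bar v\mapsto\dX_{\bar v}$ are $A$-definable). Thus one reduces to $A=\acl_T(A)$ and $\dX\subseteq\dW$, a product of imaginary geometric sorts.

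Next I would transfer to $\ACVF_{p,p}$. By quantifier elimination (Proposition~\ref{prop:SCVH QE}) together with the observation that no term of the geometric language has target sort $\VF$ with all source sorts in $\bGim$, the set $\bGim$ is stably embedded in $M$ with induced structure exactly that of the multisorted structure $\bGim(M)$. By Corollary~\ref{cor:purely-geometric}, restriction to $M$-points is an equivalence of categories between the $A$-definable sets and maps in $\bGim(M)$ and the $A$-definable sets and maps in $\bGim(L)$, the imaginary geometric part of $L\models\ACVF_{p,p}$. Let $\dX^L$ be the $\ACVF_{p,p}$-definable set corresponding to $\dX$ under this equivalence; it has the same points as $\dX$, and all of the conditions (1)--(7) for $\dX$ match the same conditions for $\dX^L$: stable embeddedness and stability (resp.\ finite Morley rank) of the induced structure transfer because the definable-set categories are equivalent; definable maps to $\vg$, and internality and analyzability relative to $\rf$, transfer for the same reason; and containment as in (7) transfers using $\bGim(M)=\bGim(L)$ to identify lattices and $\tor$-fibres, and Proposition~\ref{prop:descr acl} (applied in the geometric sorts) to see that $\acl_T(A)$ and $\acl_{\ACVF_{p,p}^{\G}}(A)$ have the same elements among the imaginary geometric sorts, so the lattices and finite sets in the decomposition may be taken over the same base on both sides. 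Finally, the statement for $\dX^L$ — a definable set in the imaginary geometric sorts of $\ACVF_{p,p}$ over an algebraically closed set of parameters — is part of the Haskell--Hrushovski--Macpherson description of the stable part of $\ACVF$ \cite{HaHrMa06,HaHrMa08}; transferring back and reassembling over $\dX_0$ completes the proof.

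The step I expect to require the most care is twofold. First, one must carry out the reduction of the second paragraph uniformly for all seven conditions at once, in particular checking that ``stably embedded'' is unaffected both by slicing off finitely many $\VF$-coordinates and by passing between $M$ and the reduct $\bGim(M)$. Second — and this is where the bulk of the work would lie, were it not already available — one must extract from the $\ACVF$-literature the precise chain (1)$\Leftrightarrow\cdots\Leftrightarrow$(7) for geometric sorts: clause (2) because an $\rf$-internal set is interpretable in the strongly minimal residue field; the equivalence of (5) and (6) because in $\ACVF$ any set orthogonal to $\vg$ is already $\rf$-internal; and clause (7) from the fact that every finite-dimensional $\rf$-space is, over a suitable base, a $\tor_n$-fibre $s/\boldmax s$. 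An alternative route, staying inside $\SCVH_{p,e}$, would treat the $\VF$-coordinates by $\lambda$-resolutions (Lemma~\ref{lem:lambda res}) and prolongations rather than by cutting them away, but then the interplay between $\acl_T$ and the algebraic closure in $L=\alg{M}$ must be controlled much more delicately.
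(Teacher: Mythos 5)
Your proposal is correct and follows essentially the same route as the paper: use Lemma~\ref{lem:VF-unstable} to show that an infinite projection to the $\VF$-coordinates kills the conditions (so one may assume that projection is finite and reduce to a definable subset of $\bGim$), then transfer via Corollary~\ref{cor:purely-geometric} to the known characterization of stable stably embedded sets in $\ACVF$ from \cite[Lemma 2.6.2]{HaHrMa06}. The paper's proof is just a more compressed version of this, noting additionally that (3)$\Leftrightarrow$(4) uses stable embeddedness and elimination of imaginaries of $\vg$ in $T$.
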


\begin{proof}
  The same characterization of stable stably embedded definable sets holds in 
  $\ACVF$ by \cite[Lemma 2.6.2]{HaHrMa06}. Note that (3) and (4) are 
  equivalent, since $\vg$ is stably embedded in $T$ and eliminates 
  imaginaries. The characterization thus holds in $T$ for any $A$-definable 
  set $\dX$ which lives in $\bGim$, by Corollary \ref{cor:purely-geometric}.

  Now let $\dX$ be a definable subset of $\VF^n\times G$, where $G$ is a 
  finite product of sorts from $\bGim$. If the projection of $\dX$ to $\VF^n$ 
  is infinite, the negation of (4) holds by Lemma \ref{lem:VF-unstable}, and 
  the negation of all other statements follows easily from this. We may thus 
  assume that the projection $F$ of $\dX$ to $\VF^n$ is finite, and so we are 
  reduced to definable subsets of $\Gim$.
\end{proof}

\begin{definition}[Stable domination]
  Let $M$ be some $\L$-structure, $C\subseteq M$, $f$ a pro-definable map to 
  $\St_C$ (defined with parameters in $C$) and $a\in M$. We say that 
  $\tp(a/C)$ is \emph{stably dominated via $f$} if for every $a\models p$ and 
  $B\subseteq M$ such that $\St_C(\dcl(CB))\ind_{C} f(a)$,
  \[\tp(B/Cf(a))\vdash\tp(B/Ca).\]

  We say that $p$ is \emph{stably dominated} if it is stably dominated via some map 
  $f$.
\end{definition}

Note that if $p$ is stably dominated, it is stably dominated via any map 
enumerating $\St_C(\dcl(Ca))$ for any $a\models p$.

\begin{remark}\label{rem:suff large}
  In the definition of stable domination, it suffices to show that for any 
  $B$ there exists $B'$ such that $B\subseteq\dcl(B')$ and if 
  $\St_C(\dcl(CB'))\ind_{C} f(a)$, then $\tp(B'/Cf(a))\vdash\tp(B'/Ca)$.

  Indeed, as $\St_C$ is stably dominated, if $\St_C(\dcl(CB))\ind_{C} 
  f(a)$, we may also assume (taking a conjugate of $B'$ over $B$) that 
  $\St_C(\dcl(CB'))\ind_{C} f(a)$ and if $\tp(B'/Cf(a))\vdash\tp(B'/Ca)$, 
  then $\tp(B/Cf(a))\vdash\tp(B/Ca)$.
\end{remark}

\begin{definition}[Metastability]
  Let $T$ be a theory and $\vg$ an $\emptyset$-definable stably embedded set.  
  We say that $T$ is \emph{metastable over $\vg$} if:
  \begin{enumerate}
    \item The theory $T$ has the invariant extension property (as in 
      Corollary \ref{C:SCVH-EI}).
    \item For $M\models T$ sufficiently saturated and for every small subset 
      $A\subseteq M$, there exists a small subset $C\subseteq M$ containing 
      $A$ such that for all tuples $a\in M$, $\tp(a/C\vg(\dcl(Ca)))$ is 
      stably dominated.

      Such a \(C\) is called a metastability basis.
  \end{enumerate}
\end{definition}

Let $T$ be a theory and $U\models T$ a monster model of $T$.
Let $p(x),q(y)\in S(U)$ be definable types. Then one may define the tensor 
product $(p\otimes q)(x,y)\in S(U)$ as follows.  Let $C\subseteq U$ be small 
such that $p$ and $q$ are $C$-definable. Then $p\otimes q$ is the unique 
$C$-definable type in $S(U)$ satisfying
\[(a,b)\models(p\otimes q)| B\text{ if and only if } a\models p|Bb\text{ and 
}b\models q| B\]
for all small \(B\subseteq U\) containing \(C\) and all \((a,b)\in U\). We 
refer to \cite{Sim15} for the basic properties of definable types, 
\(\otimes\) and generically stable types which we will define now.

\begin{definition}
  Let \(T\) be \(\NIP\) and \(U\models T\) a monster model.  An invariant 
  type \(p(x)\in S(U)\) is called \emph{generically stable }if \(p(x)\otimes 
  p(y)=p(y)\otimes p(x)\).
\end{definition}

Let us also define orthogonality.

\begin{definition}
  Let \(Q\) be an \(\emptyset\)-definable stably embedded set.  A type \(p\in 
  S(C)\) is said to be \emph{almost orthogonal} to \(Q\) if \(\dcl(Ca)\cap 
  \eq{Q}=\dcl(C)\cap\eq{Q}\) for any \(a\models p\). An invariant type \(p\in 
  S(U)\) is \emph{orthogonal} to \(Q\), denoted by \(p\perp Q\), if \(p| B\) 
  is almost orthogonal to \(Q\) for every set \(B\subseteq U\) over which 
  \(p\) is invariant.
\end{definition}

As a consequence of metastability (and $\NIP$), we get an alternative 
characterization of stably dominated types in case \(\vg\) is totally 
ordered.

\begin{proposition}\label{P:char-st-dom}
  Let \(T\) be an \(\NIP\) theory which is metastable over the stably 
  embedded \(\emptyset\)-definable set \(\vg\).  Assume that \(\vg\) admits a 
  definable total ordering and eliminates imaginaries. A global invariant 
  type $p\in S(U)$ is stably dominated if and only if $p$ is generically 
  stable if and only if  $p\perp\vg$.
\end{proposition}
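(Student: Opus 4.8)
The plan is to prove the two equivalences by a cycle of implications, namely ``stably dominated $\Rightarrow$ generically stable $\Rightarrow$ $p\perp\vg$ $\Rightarrow$ stably dominated'', using metastability and $\NIP$ at the crucial last step. The first implication ``stably dominated $\Rightarrow$ generically stable'' is essentially a general fact: a stably dominated type is dominated by a type concentrated on the stable part $\St_C$, where $\otimes$ is symmetric (stable types are generically stable, and this is preserved under domination); one can cite the treatment of stable domination in~\cite{HaHrMa08} or deduce it formally from the definition of stable domination together with the symmetry of $\otimes$ on $\St_C$. The second implication ``generically stable $\Rightarrow p\perp\vg$'' is also standard in the $\NIP$ setting: if $p$ were not orthogonal to $\vg$, some $p|B$ would have a realization $a$ with $\dcl(Ba)\cap\eq{\vg}\supsetneq\dcl(B)\cap\eq{\vg}$; since $\vg$ is totally ordered and (by $\NIP$) any generically stable type pushed forward to a totally ordered stably embedded set must be realized (an order has no generically stable types other than the ``constant'' ones), one gets a new element of $\vg$ in $\dcl(Ba)$ which is the realization of a non-algebraic invariant type on the order — contradicting generic stability, as pushforwards of generically stable types are generically stable. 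I would spell this out as: $f_\ast p$ is a generically stable type on $\vg^m$ for an appropriate definable $f$, but a totally ordered set carries no non-realized generically stable type, so $f(a)\in\dcl(B)$, giving almost orthogonality; since this holds over every $B$ over which $p$ is invariant, $p\perp\vg$.

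The heart of the matter is the implication ``$p\perp\vg$ $\Rightarrow$ stably dominated'', which is where metastability enters. Here is the strategy. Let $C_0$ be any small set over which $p$ is invariant, and let $a\models p|C_0$. By metastability, choose a metastability basis $C\supseteq C_0$ (we may enlarge $C_0$, noting that $p$ remains invariant over $C$ since $p$ is invariant over any model containing $C_0$, and passing to a model is harmless). Then $\tp(a/C\,\vg(\dcl(Ca)))$ is stably dominated. Now I would argue that orthogonality $p\perp\vg$ forces $\vg(\dcl(Ca))=\vg(\dcl(C))\subseteq\eq{C}$: indeed $p$ is invariant over $C$ (after the enlargement), hence $p|C$ is almost orthogonal to $\vg$, which is exactly the statement $\dcl(Ca)\cap\eq{\vg}=\dcl(C)\cap\eq{\vg}$. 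Therefore $\tp(a/C)$ itself is stably dominated, via the map enumerating $\St_C(\dcl(Ca))$. To upgrade this from ``$\tp(a/C)$ stably dominated'' to ``the global type $p$ stably dominated'', I would invoke the standard fact (available in the metastable $\NIP$ setting, cf. the theory of stable domination in~\cite{HaHrMa08}) that a global invariant type whose restriction to a metastability basis is stably dominated is itself stably dominated; concretely, stable domination of $\tp(a/C)$ propagates to $\tp(a/B)$ for all $B\supseteq C$ over which $p$ is invariant, because $\St_C$ is stably embedded and stably dominated, so the independence and domination statements transfer along extensions.

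The main obstacle I anticipate is the last transfer step — moving from stable domination of the restriction over the metastability basis $C$ to stable domination of the global type $p$ — together with the bookkeeping needed to ensure the metastability basis can be taken to contain a set over which $p$ is already invariant. This requires knowing that one may enlarge $C_0$ to a metastability basis while (a) keeping invariance of $p$ and (b) preserving the orthogonality input in the form ``$p|C$ almost orthogonal to $\vg$''. Point (b) is automatic from the definition of $p\perp\vg$ once $p$ is invariant over $C$, and (a) holds because invariant types stay invariant over larger parameter sets; so the genuine content is the propagation lemma for stable domination under extension of the base inside a metastable theory, which I would either cite from the literature on stable domination or prove by a short argument using that $\St_C$ is stably embedded and that nonforking in $\St_C$ is preserved under base extension. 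I would also remark that, since $T$ is $\NIP$ and eliminates imaginaries with the invariant extension property, every global invariant type is definable or at least Borel-definable in the relevant sense, which is what makes $\otimes$ and the notion of generic stability well behaved; this justifies the fluent use of $\otimes$ throughout.
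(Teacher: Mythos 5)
Your proposal is correct and takes essentially the route the paper itself relies on: the paper's proof is just a citation of Hrushovski--Loeser (Proposition 2.9.1(a)) with the remark that the argument generalizes, and your cycle of implications --- stably dominated \(\Rightarrow\) generically stable (symmetry of stably dominated types, as in Haskell--Hrushovski--Macpherson), generically stable \(\Rightarrow p\perp\vg\) (pushforwards of generically stable types to a definable linear order are realized, using elimination of imaginaries in \(\vg\)), and \(p\perp\vg\Rightarrow\) stably dominated (metastability basis containing a base of invariance, with almost orthogonality killing \(\vg(\dcl(Ca))\)) --- is exactly that argument carried out in the abstract metastable \(\NIP\) setting. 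The transfer step you flag, from stable domination of \(p|_C\) to stable domination of the global invariant type \(p\), is indeed the only real content beyond bookkeeping, and it is the standard stationarity/base-change fact for stably dominated types over (algebraically closed) bases from Haskell--Hrushovski--Macpherson, so handling it by citation as you propose is exactly what the paper's reference does implicitly.
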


\begin{proof}
  In \cite[Proposition 2.9.1.a]{HrLo16}, Hrushovski and Loeser show that the 
  above equivalences hold in \(\ACVF\). The proof given there generalizes 
  easily to this more abstract setting.
\end{proof}

In \cite{HaHrMa08}, Haskell, Hrushovski and Macpherson showed that maximally 
complete fields are metastability bases in $\ACVF$. In $\SCVF_{p,e}$ and 
$\SCVH_{p,e}$, we will prove that the situation is quite similar: separably 
maximally complete fields are metastability bases.

But first, let us characterize stably dominated types in $\SCVH_{p,e}$. As 
$\SCVF_{p,e}$ is an enrichment of $\SCVH_{p,e}$ by constants, the same 
results will follow for $\SCVF_{p,e}$. We extend \(D_\omega\) to all of \(\G\) by setting \(D_\omega(a) = a\) for non field points.

\begin{proposition}\label{P:char st dom}
  Let $M\models\SCVH_{p,e}^\G$, $C \subseteq M$ be closed under $D$, 
  $a\in M$ a tuple and $f$ a pro-definable function.  The following are 
  equivalent:
  \begin{enumerate}[(i)]
    \item The type $\tp_{M}(a/C)$ is stably dominated via $f$ (in $M$).
    \item There exists, in $\alg{M}$, a pro-definable function $g$ such that 
      $f = g \circ D_\omega$ and the type $\tp_{\alg{M}}(D_\omega(a)/C)$ is 
      stably dominated via $g$ (in $\alg{M}$).
  \end{enumerate}
\end{proposition}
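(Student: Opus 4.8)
The plan is to transfer the whole situation along the prolongation, using that $\ACVF_{p,p}^{\G}$ governs $\SCVH_{p,e}^{\G}$ at the level of stable parts, definable closures, and complete types. Write $L:=\alg{M}\models\ACVF_{p,p}^{\G}$, and note that $C=D_\omega(C)$ since $C$ is closed under $D$. First I would assemble a short dictionary.
\emph{(a)} By Proposition~\ref{P:StA} the stable stably embedded sets of $\SCVH_{p,e}^{\G}$ over $C$ live in $\bGim$, and by Corollary~\ref{cor:purely-geometric} the structure induced on $\bGim$ is the same in $\SCVH_{p,e}^{\G}$ and in $\ACVF_{p,p}^{\G}$; together with $\bGim(M)=\bGim(L)$ and Proposition~\ref{prop:descr acl}, this shows that $\St_C$ and the forking independence relation $\ind_C$ are literally the same whether computed in $M$ or in $L$. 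In particular the values of a pro-definable map into $\St_C$ lie in $\bGim$, hence are $D_\omega$-fixed.
\emph{(b)} For a $\bG$-tuple $b$ from $M$, the substructure $\langle Cb\rangle_D$ is the subfield generated by $C\cup D_\omega(b)$ (geometric coordinates being $D_\omega$-fixed), so Proposition~\ref{prop:descr dcl} gives $\dcl_{\SCVH}(Cb)=\dcl_{\ACVF}(C\,D_\omega(b))\cap M$; since stable sorts sit inside $\bGim\subseteq M$, this yields $\St_C(\dcl_{\SCVH}(Cb))=\St_C(\dcl_{\ACVF}(C\,D_\omega(b)))$.
\emph{(c)} By the discussion preceding Theorem~\ref{thm:SCVH dense def}, $p\mapsto\nabla(p)$ is a bijection between complete $\SCVH_{p,e}$-types and complete $\ACVF_{p,p}$-types over the same parameters, and any $\ACVF_{p,p}$-type containing ``$x\in D_\omega(\VF^{m})$'' arises this way; hence, for tuples $b$ from $M$ and $\bG$-tuples $\gamma$ from $M$ with $D_\omega(\gamma)=\gamma$, $\tp_{\SCVH}(b/C\gamma)\vdash\tp_{\SCVH}(b/Ca)$ if and only if $\tp_{\ACVF}(D_\omega(b)/C\gamma)\vdash\tp_{\ACVF}(D_\omega(b)/C\,D_\omega(a))$.

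I would also record a reduction on the $\ACVF$ side: as $M$ is separably closed, $L/M$ is purely inseparable, and since Frobenius is bijective on $L$, any tuple $B\subseteq L$ lies in $\dcl_{\ACVF}(CB_1)$ for some $B_1\subseteq M$ (replace each field coordinate by a suitable $p$-power; geometric coordinates are unchanged as $\bGim(L)=\bGim(M)$). Replacing $B_1$ by $D_\omega(B_1)$ and invoking Remark~\ref{rem:suff large}, it then suffices, when testing stable domination of a type over $C$ in $L$, to consider tuples of the form $D_\omega(B_1)$ with $B_1\subseteq M$.

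With the dictionary in place, both implications are bookkeeping. For $(i)\Rightarrow(ii)$: $f$ is $C$-pro-definable in $\SCVH_{p,e}^{\G}$, so $f(a)\in\dcl_{\SCVH}(Ca)=\dcl_{\ACVF}(C\,D_\omega(a))\cap M$ by (b), and rewriting the defining formulas of $f$ through the identity expressing $D_{i,n}$ in terms of the ring structure on $D_\omega$ produces a $C$-pro-definable map $g$ in $L$ with $f=g\circ D_\omega$. To see $\tp_{\ACVF}(D_\omega(a)/C)$ is stably dominated via $g$, take $B=D_\omega(B_1)$ with $B_1\subseteq M$ such that $\St_C(\dcl_{\ACVF}(CB))\ind_C g(D_\omega(a))$; by (b) this is $\St_C(\dcl_{\SCVH}(CB_1))\ind_C f(a)$, so $(i)$ gives $\tp_{\SCVH}(B_1/Cf(a))\vdash\tp_{\SCVH}(B_1/Ca)$, which by (c) (with $\gamma=f(a)$) becomes $\tp_{\ACVF}(D_\omega(B_1)/Cg(D_\omega(a)))\vdash\tp_{\ACVF}(D_\omega(B_1)/C\,D_\omega(a))$, as needed. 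The direction $(ii)\Rightarrow(i)$ is the mirror image: for $B\subseteq M$ with $\St_C(\dcl_{\SCVH}(CB))\ind_C f(a)$, (b) turns this into $\St_C(\dcl_{\ACVF}(C\,D_\omega(B)))\ind_C g(D_\omega(a))$, so stable domination of $\tp_{\ACVF}(D_\omega(a)/C)$ via $g$, applied to $D_\omega(B)$, yields $\tp_{\ACVF}(D_\omega(B)/Cf(a))\vdash\tp_{\ACVF}(D_\omega(B)/C\,D_\omega(a))$, and (c) converts this back to $\tp_{\SCVH}(B/Cf(a))\vdash\tp_{\SCVH}(B/Ca)$.

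The only genuinely nontrivial point is the dictionary itself — above all, fact (a), that the pair $(\St_C,\ind_C)$ is insensitive to the passage between $\SCVH_{p,e}^{\G}$ and $\ACVF_{p,p}^{\G}$ (this is exactly where Proposition~\ref{P:StA} and Corollary~\ref{cor:purely-geometric} are used), and fact (c), that $\nabla$ transports the relation $\vdash$ faithfully. Once these are nailed down, the two implications are purely formal.
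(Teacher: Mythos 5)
Your argument is correct and follows essentially the same route as the paper's own proof: existence of $g$ from the description of definable closure (Proposition~\ref{prop:descr dcl}), reduction of the test sets $B$ to tuples coming from $M$ via Remark~\ref{rem:suff large} (plus the Frobenius/purely-inseparable observation), identification of $\St_C$ and forking between $M$ and $\alg{M}$ via Proposition~\ref{P:StA} and Corollary~\ref{cor:purely-geometric}, and transfer of the entailments by quantifier elimination through the prolongation. The only caveat is that stable stably embedded sets need not literally live in $\bGim$ (finite $C$-definable sets of field tuples are stable as well, so $\St_C^{\alg{M}}$ may contain finitely many extra points of $\alg{M}\setminus M$), but, as the paper notes, these finite pieces are irrelevant to forking, so your dictionary items (a)--(b) go through as stated.
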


\begin{proof}
  First, note that the existence of $g$ follows immediately from 
  Proposition\,\ref{prop:descr dcl}.

  By Remark\,\ref{rem:suff large}, to prove stable domination of 
  $\tp_{M}(a/C)$ and $\tp_{\alg{M}}(D_\omega(a)/C)$, it suffices to consider 
  $B = \VF(\dcl_{M}(CB))\subseteq \VF(M)$.

  Moreover, for such a $B$, we have that $\St_C^{M}(B)\ind^{M}_{C} 
  g(D_\omega(a))$ if and only if 
  $\St_C^{\alg{M}}(\dcl_{\alg{M}}(B))\ind^{\alg{M}}_{C} g(D_\omega(a))$.  
  Indeed, by Proposition\,\ref{P:StA}, $\St_C^{M}$ and $\St_C^{\alg{M}}$ are 
  essentially the same structure up to the fact that that $\St_C^{\alg{M}}$ 
  has some more finite sorts that are irrelevant to forking. Also, by quantifier elimination,
  $\tp_{M}(B/Ca)$ is equivalent to $\tp_{\alg{M}}(B/CD_\omega(a))$ and 
  $\tp_{M}(B/Cf(a))$ is equivalent to $\tp_{\alg{M}}(B/Cg(D_\omega(a)))$ 
  (note that we are implicitly using the fact that $Cg(D_\omega(a))$ is 
  closed under $D$ as the image of $g$ is in $\St_C$).

  The equivalence of (i) and (ii) is an immediate consequence.
\end{proof}

\begin{proposition}\label{P:SCVF-met-basis}
  Let $M\models\SCVH_{p,e}$, $C \substr \VF(M)$ be separably maximally 
  complete and $a\in M$ be a tuple, then $\tp(a/C\vg(\dcl(Ca)))$ is 
  stably dominated.
\end{proposition}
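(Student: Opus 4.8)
The plan is to transfer the statement, through prolongations, to the metastability of $\ACVF_{p,p}$, in which maximally complete subfields are metastability bases by \cite{HaHrMa08}. We may assume $M$, hence $\alg M$, sufficiently saturated. Set $\Gamma_a:=\vg(\dcl_M(Ca))$; since $D_\omega(a)\in\dcl_M(Ca)$ we have $\dcl_M(Ca)=\dcl_M(C\cup D_\omega(a))$, and $C\cup\Gamma_a$ is closed under $D$ (as $C$ is and $\Gamma_a$ lies in $\vg$, on which $D_\omega$ is the identity). Applying Proposition~\ref{P:char st dom} with base $C\cup\Gamma_a$, it is enough to show that $\tp_{\alg M}(D_\omega(a)/C\cup\Gamma_a)$ is stably dominated in $\alg M\models\ACVF_{p,p}$. (Coordinates of $a$ in the imaginary geometric sorts cause no trouble, as $D_\omega$ fixes them; and although $D_\omega(a)$ is an infinite tuple, one works with the pro-definable prolongation, exactly as in the discussion preceding Theorem~\ref{thm:SCVH dense def}.)

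Since $C$ is separably maximally complete, Lemma~\ref{lem:max complete SCVF} gives that $\alg C$ is maximally complete, hence a metastability basis for $\ACVF_{p,p}$ by \cite{HaHrMa08}. Consequently $\tp_{\alg M}(D_\omega(a)/\alg C\cup\Gamma')$ is stably dominated, where $\Gamma':=\vg(\dcl_{\alg M}(\alg C\cup D_\omega(a)))$. It remains to reconcile the base $C\cup\Gamma_a$ prescribed by the statement with the base $\alg C\cup\Gamma'$ produced by metastability. By Proposition~\ref{prop:SCVF-dense}, $M$ is dense in $\alg M$, so $\alg M/M$ is immediate by Lemma~\ref{lem:dense geom}; in particular $\vg(\alg M)=\vg(M)$. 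Together with Proposition~\ref{prop:descr dcl} this yields $\Gamma_a=\vg(\dcl_{\ACVF_{p,p}}(C\cup D_\omega(a)))$. As $\alg C\subseteq\acl_{\ACVF_{p,p}}(C)$ and, in $\ACVF$, the value group of the algebraic closure of a set is the divisible hull of that of its definable closure, we obtain
\[
\Gamma_a\ \subseteq\ \Gamma'\ \subseteq\ \vg\bigl(\acl_{\ACVF_{p,p}}(C\cup D_\omega(a))\bigr)\ =\ \Q\otimes\Gamma_a\ \subseteq\ \acl_{\ACVF_{p,p}}(\Gamma_a),
\]
whence $\acl_{\ACVF_{p,p}}(\alg C\cup\Gamma')=\acl_{\ACVF_{p,p}}(C\cup\Gamma_a)$. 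Since stable domination in $\ACVF$ is unaffected by replacing the base of a type by a set with the same algebraic closure \cite{HaHrMa08}, $\tp_{\alg M}(D_\omega(a)/C\cup\Gamma_a)$ is stably dominated, as required.

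The main obstacle is the base comparison step: one has to check that forming the jet $D_\omega(a)$ adds nothing to the value-group part of the definable closure beyond $\Gamma_a$ — this is precisely where Proposition~\ref{prop:descr dcl} and the immediacy of $\alg M/M$ are used — and that the elements newly introduced on passing to $\alg C$ and to the divisible hull are algebraic over the original parameters. The remaining ingredients are formal: the prolongation dictionary of Proposition~\ref{P:char st dom}, the metastability of $\ACVF$ with maximally complete bases from \cite{HaHrMa08}, and the elementary behaviour of $\dcl$, $\acl$ and value groups.
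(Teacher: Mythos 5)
Your strategy is the same as the paper's: reduce via Proposition~\ref{P:char st dom} to stable domination of the prolongation in $\ACVF_{p,p}$, identify the value-group part of the base using Proposition~\ref{prop:descr dcl} (your equality $\Gamma_a=\vg(\dcl_{\ACVF_{p,p}}(C\cup D_\omega(a)))$ is exactly the paper's first displayed identity), and invoke the Haskell--Hrushovski--Macpherson theorem over a maximally complete field obtained from Lemma~\ref{lem:max complete SCVF}. The divergence is in the final base comparison, and that is where your argument has a genuine soft spot. What you need is the \emph{descent} direction: you have stable domination of $\tp_{\alg{M}}(D_\omega(a)/\alg{C}\cup\Gamma')$ and want it over the smaller base $C\cup\Gamma_a$, and you justify this by ``stable domination is unaffected by replacing the base by a set with the same algebraic closure''. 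Enlarging the base inside the algebraic closure does preserve stable domination, but shrinking it is not a formality: in \cite{HaHrMa08} descent is a substantive theorem with hypotheses (an invariant extension of the type of the added parameters, plus an independence condition), and there is no unconditional ``same $\acl$, same status'' statement you can simply quote. As written, the last step is therefore not justified.

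The gap is easy to close, and closing it recovers exactly the paper's proof. In the intended reading $C$ is separably closed (this is also what licenses your appeal to Lemma~\ref{lem:max complete SCVF}: for a substructure that is merely separably maximally complete, e.g.\ $\F_p((t))$ sitting inside the absolute constants, neither $\alg{C}$ nor $\perf{C}$ need be maximally complete). But then $\alg{C}=\perf{C}$ is purely inseparable over $C$, hence contained in $\dcl_{\ACVF}(C)$ since Frobenius is a definable bijection; and $\Q\otimes\Gamma_a\subseteq\dcl(\Gamma_a)$ because $\vg$ is divisible and division by $n$ has a unique, hence definable, solution. So $\alg{C}\cup\Gamma'$ and $C\cup\Gamma_a$ are \emph{interdefinable}, not merely $\acl$-equivalent, and stable domination transfers trivially between them. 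This is precisely why the paper phrases the argument with $\perf{C}$ and with $\vg(\dcl_{\alg{M}}(CD_\omega(a)))$: the two bases differ only by definable closure, and no descent argument is ever needed.
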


\begin{proof}
  Let
  \begin{gather*}
    E = C\vg(\dcl_{\alg{M}}(CD_\omega(a))) = C\vg(\dcl_{M}(Ca))\\
    \alg{E} = 
     E \cup\perf{C}\\
    g(D_\omega(a)) = \St_E^{M}(\dcl_{M}(ED_\omega(a)))\\
    \alg{g}(D_\omega(a))=\St_\alg{E}^\alg{M}(\dcl_\alg{M}(\alg{E}D_\omega(a)))=
      g(D_\omega(a))\cup\perf{C}
  \end{gather*}
  By Lemma\,\ref{lem:max complete SCVF}, $\perf{C}$ is maximally complete.  
  Thus, 
  $\tp_{\alg{M}}(D_\omega(a)/\alg{E})$ is stably dominated via $\alg{g}$ by \cite[Theorem\,12.18.(ii)]{HaHrMa08}, 
 and  hence $\tp_{\alg{M}}(D_\omega(a)/E)$ is stably dominated via $g$. By 
  Proposition\,\ref{P:char st dom}, $\tp_{M}(a/E) = \tp(a/C\vg(\dcl(Ca)))$ is 
  also stably dominated.
\end{proof}

\begin{corollary}\label{C:SCVH-metastable}
  The theories $\SCVH_{p,e}$ and $\SCVF_{p,e}$ are metastable over $\vg$.
\end{corollary}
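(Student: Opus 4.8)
The plan is to read off the two clauses in the definition of metastability over $\vg$ from results already in place, working throughout in the geometric language (in which both theories eliminate imaginaries). First, clause~(1), the invariant extension property, is exactly Corollary~\ref{C:SCVH-EI} for $\SCVH_{p,e}^{\G}$; and since $\SCVF_{p,e}^{\G}$ is a constant expansion of $\SCVH_{p,e}^{\G}$ by a canonical $p$-basis, clause~(1) — and, by the same reduction, clause~(2) — will transfer to it. The mathematical content of clause~(2) is Proposition~\ref{P:SCVF-met-basis}; what remains is to manufacture, for a prescribed small parameter set, an honest metastability basis out of a separably maximally complete field.

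So fix a sufficiently saturated $M\models\SCVH_{p,e}^{\G}$ and a small set $A\subseteq M$. Every element of an imaginary geometric sort lies in $\dcl$ of a tuple from $\VF$ (via $\val$, $\res$, or a matrix or vector representative of a lattice or torsor element), so I would fix a small substructure $F\substr M$ with $A\subseteq\dcl(F)$ and with $\VF(F)$ separably maximally complete: extend $\VF(\langle A\rangle_D)$ together with $\VF$-representatives of the imaginaries occurring in $A$ to an abstract separable-maximal completion, on which the Hasse derivations extend uniquely by continuity, and re-embed it into $M$ by saturation. Then put $C:=\acl_{T}(F)$. Now $A\subseteq C$, the set $C$ is small, $C=\acl_{T}(\VF(C))$, and by Lemma~\ref{lem:acl field} (together with Proposition~\ref{prop:descr acl}) one has $\VF(C)=\alg{\VF(F)}\cap M$. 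As $\VF(F)\subseteq\VF(C)\subseteq\alg{\VF(F)}$, the field $\VF(C)$ has algebraic closure $\alg{\VF(F)}$, which is maximally complete because $\VF(F)$ is separably maximally complete; so Lemma~\ref{lem:max complete SCVF} shows $\VF(C)$ is again separably maximally complete.

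At this point Proposition~\ref{P:SCVF-met-basis}, applied to the separably maximally complete field $\VF(C)$, yields that $\tp(a/\VF(C)\,\vg(\dcl(\VF(C)a)))$ is stably dominated for every tuple $a\in M$. Since $C$ is algebraic over $\VF(C)$, the base $C\,\vg(\dcl(Ca))$ is an algebraic extension of the base $\VF(C)\,\vg(\dcl(\VF(C)a))$ (the value-group part can only grow into its divisible hull, which is the algebraic closure inside $\vg$), and stable domination is preserved under algebraic extensions of the base; therefore $\tp(a/C\,\vg(\dcl(Ca)))$ is stably dominated as well. Hence $C$ is a metastability basis and $\SCVH_{p,e}^{\G}$ is metastable over $\vg$; choosing all metastability bases to contain the named $p$-basis, the case of $\SCVF_{p,e}^{\G}$ follows since it is a constant expansion. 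The step that will need the most care is exactly this passage from ``$\VF(C)$ separably maximally complete'' (what Proposition~\ref{P:SCVF-met-basis} provides) to ``$C$ is a metastability basis'' (what the definition demands): one must check both that enlarging a separably maximally complete field by the imaginary part of $A$ and by algebraic elements does not destroy separable maximal completeness of the field part — handled by Lemmas~\ref{lem:acl field} and~\ref{lem:max complete SCVF} — and that stable domination is insensitive to algebraic parameters over the base. Neither point requires a new idea beyond the excerpt.
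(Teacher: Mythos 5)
Your overall reduction is exactly the one the paper intends: the corollary is stated without a separate proof because clause (1) of metastability is Corollary~\ref{C:SCVH-EI}, clause (2) is Proposition~\ref{P:SCVF-met-basis}, and the passage to $\SCVF_{p,e}$ is the ``enrichment by constants'' remark made at the beginning of Section~\ref{S:metastability}. So the skeleton of your argument is the right one, and the only real content is the routine step you isolate, namely producing a metastability basis that literally contains a prescribed small set $A$ (possibly containing imaginaries).

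Two caveats about how you carry out that step. First, the justification ``the Hasse derivations extend uniquely by continuity'' to an abstract separable-maximal completion is not valid: in $\SCVH_{p,e}$ there is no compatibility axiom between $D$ and the valuation, so the $D_{i,n}$ are not continuous for the valuation topology, and extensions of Hasse derivations to transcendental extensions are in any case not unique. All you need is the existence, inside the sufficiently saturated $M$, of a small $D$-closed separably maximally complete subfield of $\VF(M)$ whose definable closure contains $A$; this is obtained by saturation inside $M$ (interleaving closure under the $D_{i,n}$ with realizing pseudo-limits of the relevant pseudo-Cauchy sequences), and it is exactly what the paper itself takes for granted in the proof of Corollary~\ref{C:SCVF-strict-pro} (``any parameter set is contained in a separably maximal model''). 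Second, your detour through $C=\acl_T(F)$ forces you to invoke preservation of stable domination under algebraic extensions of the base --- a fact from \cite{HaHrMa08} that is nowhere proved in this paper --- together with the claim that the $\vg$-part grows only into its divisible hull. This is avoidable: by your own construction every element of $A$ and of $F$ lies in $\dcl(\VF(F))$, so $C:=F$ (or $A\cup\VF(F)$) already contains $A$, and the base $C\,\vg(\dcl(Ca))$ is interdefinable with $\VF(F)\,\vg(\dcl(\VF(F)a))$; since the definition of stable domination involves only $\dcl$, $\St_C$ and forking therein, it is insensitive to replacing the base by an interdefinable set, and Proposition~\ref{P:SCVF-met-basis} then applies verbatim, with no base-change lemma needed. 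With these two repairs your proof is correct and coincides with the argument the paper leaves implicit.
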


\section{The stable completion of a definable set in 
\texorpdfstring{$\SCVF$}{SCVF}}

The goal of this section is to generalize a result of Hrushovski and Loeser \cite{HrLo16} on the strict pro-definability of the space of stably dominated types. We show that their proof holds in a context general enough to also encompass separably closed valued fields of finite imperfection degree and Scanlon's theory of contractive valued differential fields (see \cite{Sca00}).

For a proof of the following result, see, e.g., \cite[Remark 2.32]{Sim15}.
\begin{fact}\label{F:Uniform-def}
  Let $T$ be \(\NIP\) and $U\models T$. Then generically stable types are 
  uniformly definable in $T$: for any formula $\phi(x;y)$ there is a formula 
  $\theta(y;z)$ such that for every generically stable type $p(x)\in S(U)$ 
  there is $b\in U$ such that $d_px\phi(x;y)=\theta(y,b)$.
\end{fact}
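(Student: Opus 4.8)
The plan is to build $\theta$ as a ``majority vote'' over an initial segment of a Morley sequence, with the length of that segment controlled by the $\NIP$ bound for $\phi$. Fix a formula $\phi(x;y)$. Since $T$ is $\NIP$, $\phi$ has finite alternation number, and so there is $k=k(\phi)\in\N$ such that: for every infinite indiscernible set $(a_i)_{i\in I}$ (i.e.\ a sequence that remains indiscernible under every linear order on $I$) and every parameter $b$, the smaller of the two sets $\{i\in I:\models\phi(a_i;b)\}$ and $\{i\in I:\models\neg\phi(a_i;b)\}$ has at most $k$ elements. (Each of these sets is a union of boundedly many convex pieces under \emph{every} order on $I$, which forces the smaller one to be finite, of size bounded in terms of $\phi$ alone.) Put $N:=2k+1$ and let
\[
\theta(y;z_1,\dots,z_N)\ :=\ \bigvee_{\substack{J\subseteq\{1,\dots,N\}\\ \lvert J\rvert\,=\,k+1}}\ \bigwedge_{j\in J}\phi(z_j;y),
\]
which expresses ``$\phi(z_j;y)$ holds for more than $k$ of the indices $j\le N$''. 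This formula depends only on $\phi$, and the claim is that it uniformly defines the $\phi$-definitions of all generically stable types.

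So let $p(x)\in S(U)$ be generically stable, invariant over a small model $M\prec U$. By the standard structure theory of generically stable types in $\NIP$ theories (see \cite{Sim15}), $p$ is definable — so $d_p x\,\phi(x;y)$ is a genuine $\L(U)$-formula — and every Morley sequence of $p$ over $M$ is an indiscernible \emph{set} over $M$. Using saturation of $U$, choose $a_1,\dots,a_N\in U$ forming an initial segment of a Morley sequence of $p$ over $M$ (each $a_i$ realizing the restriction of $p$ to the small set $M\cup\{a_j:j<i\}\subseteq U$). Fix $b\in U$ and extend the segment, in the monster, to an infinite Morley sequence $(a_i)_{i\ge1}$ of $p$ over $M$ in which $a_i$ realizes $p$ over $M\cup\{a_j:j<i\}\cup\{b\}$ for every $i>N$. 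By generic stability this infinite sequence is an indiscernible set over $M$, and all of its terms beyond $N$ satisfy $\phi(\cdot;b)$ if and only if $\phi(x;b)\in p$. Since infinitely many terms share that truth value, it is the value of the \emph{large} side of the partition, so the opposite value occurs at most $k$ times in the whole sequence, in particular at most $k$ times among $a_1,\dots,a_N$. Hence $\phi(x;b)\in p$ forces at least $N-k=k+1$ of $a_1,\dots,a_N$ to satisfy $\phi(\cdot;b)$, i.e.\ $\models\theta(b;a_1,\dots,a_N)$; symmetrically, $\phi(x;b)\notin p$ forces at most $k$ of them to satisfy $\phi(\cdot;b)$, i.e.\ $\not\models\theta(b;a_1,\dots,a_N)$. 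As $b\in U$ was arbitrary, $d_p x\,\phi(x;y)$ and $\theta(y;a_1,\dots,a_N)$ define the same subset of $U$, hence are equivalent, and $(a_1,\dots,a_N)\in U$ is the required witness.

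Most of this is bookkeeping; the two inputs that actually matter are (i) the uniform finiteness bound furnished by $\NIP$, stated for indiscernible \emph{sets} rather than mere sequences, and (ii) the characterization of generic stability — definability of $p$ and total indiscernibility of its Morley sequences — which I would simply cite from \cite{Sim15}. The one point requiring a little care, rather than a genuine obstacle, is arranging the witnessing tuple to lie in $U$ while still being able to push a ``generic-over-$b$'' tail onto the large side of the partition; the device of extending the $U$-internal initial segment to a monster Morley sequence generic over $b$ handles this. I do not anticipate any further difficulty.
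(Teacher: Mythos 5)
Your proof is correct, and it is essentially the same argument as the paper's source for this statement: the paper does not prove the Fact but cites \cite[Remark 2.32]{Sim15}, where uniform definability of generically stable types is obtained by exactly this majority-vote scheme over a length-$(2k+1)$ initial segment of a Morley sequence, using the NIP bound for indiscernible sets together with total indiscernibility and definability of generically stable types. The only point worth flagging is cosmetic: you should note explicitly (as you implicitly do) that the extended sequence, whose tail realizes $p$ over sets containing $b$, is still a Morley sequence of $p$ over $M$ and hence still an indiscernible set, which is what lets the NIP bound apply to the whole sequence.
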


Hrushovski and Loeser \cite{HrLo16} use this fact, together with Proposition 
\ref{P:char-st-dom}, to encode the set $\widehat{\dX}$ of global stably 
dominated types concentrating on some definable set $\dX$ in \(\ACVF\) as a 
pro-definable set.

\begin{notation*}
  Given an $\emptyset$-definable stably embedded set $\Sort{Q}$, a $C$-definable set 
  $\dX$ and a set $A$ containing $C$, $S_{\mathrm{def},\dX}(A)$ denotes the set 
  of global $A$-definable types $p(x)$ such that $p(x)\vdash x\in \dX$, and 
  $S_{\mathrm{def},\dX}^{\Sort{Q}}(A):=\{p\in S_{\mathrm{def},\dX}(A)\, |\, p\perp \Sort{Q}\}$.  
  Finally, $\widehat{\dX}(A):=\{p\in S_{\mathrm{def},\dX}(A)\, |\, p\text{ is 
  stably dominated}\}$.
\end{notation*}

\begin{fact}[{\cite[Lemma 2.5.1]{HrLo16}}]\label{F:HL-prodef}
  Assume $T$ eliminates imaginaries. Let $\Sort{Q}$ be an $\emptyset$-definable 
  stably embedded set. Assume that the definable types orthogonal to $\Sort{Q}$ are 
  uniformly definable.  Then for any $C$-definable set $\dX$, 
  $S_{\mathrm{def},\dX}^{\Sort{Q}}$ is $C$-pro-definable: there is a $C$-pro-definable set $\Sort{Z}$ such that for any $A\supseteq C$ there is a 
  canonical bijection $\Sort{Z}(A) \simeq S_{\mathrm{def},\dX}^{\Sort{Q}}(A)$.

  Moreover, if $f:\dX\rightarrow \Sort{Y}$ is a definable function, then, identifying 
  $S_{\mathrm{def},\dX}^{\Sort{Q}}$ and $S_{\mathrm{def},\Sort{Y}}^{\Sort{Q}}$ with the corresponding 
  pro-definable sets, the map 
  $f_{\mathrm{def},\dX}:S_{\mathrm{def},\dX}^{\Sort{Q}}\rightarrow S_{\mathrm{def},\Sort{Y}}^{\Sort{Q}}$, 
  $p\mapsto f_\star(p)$ is pro-definable.
\end{fact}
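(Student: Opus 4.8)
This is \cite[Lemma~2.5.1]{HrLo16}; here is the plan of proof. A global $A$-definable type $p(x)$ is determined by its \emph{definition scheme}, the family $\bigl(d_p x\,\phi(x;y)\bigr)_\phi$ as $\phi(x;y)$ ranges over $\L(\emptyset)$-formulas, so the idea is to realise $S_{\mathrm{def},\dX}^{\Sort{Q}}$ as the set of ``admissible'' schemes and to check that admissibility is a small conjunction of definable conditions. First I would invoke the hypothesis: for each $\phi(x;y)$ fix an $\emptyset$-formula $\theta_\phi(y;z_\phi)$ such that every $p\in S_{\mathrm{def},\dX}^{\Sort{Q}}(U)$ has $d_p x\,\phi(x;y)=\theta_\phi(y;b)$ for some $b$; using elimination of imaginaries in $T$, arrange that $z_\phi$ is a canonical parameter of $\theta_\phi(\cdot;z_\phi)$, so $b\mapsto\theta_\phi(\cdot;b)$ is injective on the definable sort $W_\phi$ of such parameters. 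Then $p\mapsto\iota(p):=(b_\phi)_\phi$, where $d_px\,\phi=\theta_\phi(\cdot;b_\phi)$, is an injection of $S_{\mathrm{def},\dX}^{\Sort{Q}}(U)$ into $W(U)$, with $W:=\prod_\phi W_\phi$, since a definable type is recovered from its scheme. It remains to identify the image of $\iota$.

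The image of $\iota$ is the set of $(b_\phi)\in W$ satisfying two families of conditions. The first is the usual axiomatisation of a ``complete definable type concentrating on $\dX$'': coherence (the scheme entries for $\neg\phi$, $\phi\wedge\psi$, etc.\ are the Boolean combinations of the entries for $\phi,\psi$), completeness ($\forall y\,(\theta_\phi(y;b_\phi)\vee\theta_{\neg\phi}(y;b_{\neg\phi}))$), satisfiability and concentration on $\dX$ ($\forall y\,(\theta_\phi(y;b_\phi)\to\exists x\,(x\in\dX\wedge\phi(x;y)))$, plus that the entry for ``$x\in\dX$'' is $\top$); each of these is first-order and mentions only finitely many of the $b_\phi$. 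The second family expresses orthogonality to $\Sort{Q}$, and here is the one point that needs care: a definable $p$ concentrating on $\dX$ is orthogonal to $\Sort{Q}$ iff for every small $B$ over which $p$ is defined and every $B$-definable $g\colon\dX\to\eq{\Sort{Q}}$ the pushforward $g_\star p$ is realised; and $g_\star p$ is realised iff $(g(x)=s)\in p$ for some $s$, i.e.\ iff $\models\theta_\psi((s,v);b_\psi)$ for some $s$, where $\psi(x;s,v)$ is a formula whose fibres over $s$ (for the parameter $v$) are the fibres of $g$. Thus orthogonality to $\Sort{Q}$ is the conjunction, over all formulas $\psi(x;s,v)$ with $s$ in a sort of $\eq{\Sort{Q}}$, of the definable condition $\forall v\,\bigl(\mathrm{Fun}_\psi(v)\to\exists s\,\theta_\psi((s,v);b_\psi)\bigr)$ on $b_\psi$, where $\mathrm{Fun}_\psi(v)$ says ``$\psi(\cdot;\cdot,v)$ is the graph of a function $\dX\to\eq{\Sort{Q}}$''. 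Both families therefore cut out a $C$-pro-definable subset $\Sort{Z}\subseteq W$ (present $\Sort{Z}$ as the inverse limit, over finite sets $S$ of formulas, of the definable subset of $\prod_{\phi\in S}W_\phi$ defined by those conditions mentioning only coordinates in $S$); and for $A\supseteq C$, an $A$-point of $\Sort{Z}$ is a scheme whose canonical parameters all lie in $\eq{\dcl}(A)$, i.e., by elimination of imaginaries, an $A$-definable type concentrating on $\dX$ and orthogonal to $\Sort{Q}$, so $\Sort{Z}(A)\simeq S_{\mathrm{def},\dX}^{\Sort{Q}}(A)$ canonically.

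For the ``moreover'' part, let $f\colon\dX\to\Sort{Y}$ be $C$-definable. From $d_{f_\star p}x'\,\psi(x';y)=d_p x\,\psi(f(x);y)$ one reads off that the $\psi$-entry of the scheme of $f_\star p$ is a fixed $C$-definable function of the entry of the scheme of $p$ for the formula $x\mapsto\psi(f(x);y)$ (with parameter variables $(y,\code{f})$); hence $\iota\circ f_\star\circ\iota^{-1}$ is coordinatewise $C$-definable, i.e.\ $f_{\mathrm{def},\dX}$ is $C$-pro-definable. The main obstacle in carrying this out is the orthogonality translation in the second family of conditions, namely verifying that this finite-quantifier condition on the scheme is genuinely equivalent to $p\perp\Sort{Q}$; the coherence and consistency conditions in the first family are routine but must be written so that each depends on only finitely many coordinates.
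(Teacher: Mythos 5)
Your plan is correct, but it implements the statement by a genuinely different encoding than the sketch the paper gives (following \cite[Lemma 2.5.1]{HrLo16}). Both arguments rest on the same two inputs, uniform definability of the types in $S_{\mathrm{def},\dX}^{\Sort{Q}}$ plus elimination of imaginaries, and both embed the type space into a product of definable sorts of canonical codes. The paper encodes $p$ by the codes $s_i$ of the induced functions $w\mapsto p_\star(f_{i,w})$ for all definable families $f_i:\dX\times\Sort{W}_i\to\eq{\Sort{Q}}$, using orthogonality up front to make these pushforwards realized; injectivity of that encoding is left implicit, and the image is only identified coordinatewise as an $\infty$-definable set, pro-definability then coming from closure of the code sorts under finite products. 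You instead encode the definition scheme itself, so injectivity is automatic, and you cut out the image by explicit conditions each involving finitely many coordinates, including a first-order rendering of $p\perp\Sort{Q}$. That rendering is the one place needing an extra word: almost orthogonality concerns $B$-definable \emph{partial} functions defined at $a\models p|B$, whereas your condition quantifies over total functions $\dX\to\eq{\Sort{Q}}$; either extend a partial function by a default value (coding a disjoint extra point as a quotient of a power of $\Sort{Q}$ when $\Sort{Q}$ has at least two elements, the degenerate cases being trivial) or quantify over partial functions whose domain lies in $p$, which is again a definable condition on the scheme. With that fixed, your inverse-limit presentation and the coordinatewise treatment of $f_\star$ for the ``moreover'' clause are fine; note only that the paper's $\eq{\Sort{Q}}$-valued-family encoding is the one reused in Theorem \ref{T:criterion-strictpro} for strictness, so it buys a smoother continuation, while yours buys automatic injectivity and an explicit image description.
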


We briefly sketch the argument. For notational simplicity, we will assume 
$C=\emptyset$.
Let $f:\dX\rightarrow\eq{\Sort{Q}}$ be definable (with parameters) and let $p\in 
S_{\mathrm{def},\dX}^{\Sort{Q}}(U)$.  As $p\perp \Sort{Q}$, $f_\star(p)$ is a realized type, 
i.e., there is $\gamma\in \eq{\Sort{Q}}$ such that $\mathrm{d}_{p} x(f(x)=\gamma)$.  
We will denote this by $p_\star(f)=\gamma$.

Now let $f:\dX\times \Sort{W}\rightarrow \eq{\Sort{Q}}$ be $\emptyset$-definable, 
$f_w:=f(-,w)$.
It follows from the assumptions that there is a set $\Sort{S}$ and a function 
$g:\Sort{S}\times \Sort{W}\rightarrow \eq{\Sort{Q}}$, both $\emptyset$-definable, such that for 
every $p\in S_{\mathrm{def},\dX}^\Sort{Q}(U)$, the function 
\[p_\star(f):\Sort{W}\rightarrow\eq{\Sort{Q}}, \,\,w\mapsto p_\star(f_w)\]
is equal to $g_s=g(s,-)$ for a unique $s\in \Sort{S}$.

Now choose an enumeration $f_i:\dX\times \Sort{W}_i\rightarrow\eq{\Sort{Q}}$ ($i\in I$) of 
the functions as above (with corresponding $g_i:S_i\times 
\Sort{W}_i\rightarrow\eq{\Sort{Q}}$).  Then \[p\mapsto c(p):=\{(s_i)_{i\in I}\mid 
p_\star(f_i)=g_{i,s_i } \text{ for all $i$}\}\] defines an injection of 
$S_{\mathrm{def},\dX}^{\Sort{Q}}$ into $\prod_{i\in I}\Sort{S}_i$, and one may show that the 
image $\Sort{Y}_i$ of $c(S_{\mathrm{def},\dX}^{\Sort{Q}})$ under the projection map to $\Sort{S}_i$ is 
$\infty$--definable. Since the set of $\Sort{S}_i$'s is closed under taking finite 
products (this may be seen using products of the corresponding $f_i$'s), 
pro-definability of $c(S_{\mathrm{def},\dX}^{\Sort{Q}})$ follows.

\begin{corollary}\label{C:def-types-pro-def}
  Let $T$ be a theory which eliminates imaginaries, and let $\dX$ be a 
  $C$-definable set.
  \begin{enumerate}
    \item Assume $T$ is stable. Then $S_{\mathrm{def},\dX}$ is canonically a 
      $C$-pro-definable set.
    \item Assume $T$ is \(\NIP\) and metastable over the stably embedded 
      $\emptyset$-definable set $\vg$.  Assume that $\vg$ admits a definable 
      total ordering and eliminates imaginaries. Then $\widehat{\dX}$ is 
      canonically $C$-pro-definable. Moreover, if $f:\dX\rightarrow \Sort{Y}$ is a 
      definable function, then the map 
      $\widehat{f}:\widehat{\dX}\rightarrow\widehat{\Sort{Y}}$, $p\mapsto f_\star(p)$ 
      is pro-definable, once $\widehat{\dX}$ and $\widehat{\Sort{Y}}$ are identified 
      with the corresponding pro-definable sets.
  \end{enumerate}
\end{corollary}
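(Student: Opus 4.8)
The plan is to derive both items from Fact~\ref{F:HL-prodef}, which already performs the delicate encoding of a space of definable types as a (strict) pro-definable set; what remains is, in each case, to choose the stably embedded set $\Sort{Q}$ appropriately and to verify its one substantive hypothesis, namely that the definable types orthogonal to $\Sort{Q}$ are uniformly definable.

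For (1), I would take $\Sort{Q}$ to be a one-element $\emptyset$-definable set (such a set is available in all the theories to which we apply the corollary, since they have a $\emptyset$-definable element; in general one names a constant first, which affects neither $S_{\mathrm{def},\dX}$ nor the conclusion). Then $\eq{\Sort{Q}}\subseteq\dcl(\emptyset)$, so every type is almost orthogonal to $\Sort{Q}$ and $S_{\mathrm{def},\dX}^{\Sort{Q}}=S_{\mathrm{def},\dX}$. The hypothesis of Fact~\ref{F:HL-prodef} thus reduces to uniform definability of \emph{all} definable types, which in a stable theory is the classical fact that for each formula $\phi(x;y)$ there is a single formula $\theta(y;z)$ such that the $\phi$-definition of every global type is an instance of $\theta$. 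Feeding this into Fact~\ref{F:HL-prodef} gives that $S_{\mathrm{def},\dX}$ is canonically $C$-pro-definable whenever $\dX$ is $C$-definable.

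For (2), the natural choice is $\Sort{Q}=\vg$. The first step is to identify $\widehat{\dX}(A)$ with $S_{\mathrm{def},\dX}^{\vg}(A)$ for every $A\supseteq C$: a definable type is in particular invariant, so Proposition~\ref{P:char-st-dom} applies and shows that a definable type concentrating on $\dX$ is stably dominated precisely when it is orthogonal to $\vg$ (equivalently, generically stable). The same proposition, together with the fact that generically stable types in an $\NIP$ theory are definable, identifies the definable types orthogonal to $\vg$ with the generically stable types on $\dX$, and by Fact~\ref{F:Uniform-def} these are uniformly definable. Hence the hypotheses of Fact~\ref{F:HL-prodef} are met with $\Sort{Q}=\vg$: it gives that $\widehat{\dX}=S_{\mathrm{def},\dX}^{\vg}$ is canonically $C$-pro-definable, and the clause asserting that $\widehat{f}\colon p\mapsto f_\star(p)$ is pro-definable is exactly the ``moreover'' of Fact~\ref{F:HL-prodef}.

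I do not anticipate a genuine obstacle: the substance lies in Proposition~\ref{P:char-st-dom}, Fact~\ref{F:Uniform-def} and Fact~\ref{F:HL-prodef}, all of which are available, and the argument is essentially an assembly. The points demanding a little care are the bookkeeping in (2) — invoking the equivalences of Proposition~\ref{P:char-st-dom} only for invariant (here, definable) types, and using ``stably dominated $\Rightarrow$ definable'' so that the two descriptions of $\widehat{\dX}(A)$ genuinely coincide for all $A\supseteq C$ — and, in (1), the harmless preliminary reduction to a theory possessing a $\emptyset$-definable element.
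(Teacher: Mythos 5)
Your overall route is the paper's: both parts are deduced from Fact~\ref{F:HL-prodef}, with uniform definability supplied by stability (resp.\ by Fact~\ref{F:Uniform-def} for generically stable types) and with Proposition~\ref{P:char-st-dom} giving $\widehat{\dX}(A)=S_{\mathrm{def},\dX}^{\vg}(A)$ in part (2); your treatment of (2), including the ``moreover'' clause via the last sentence of Fact~\ref{F:HL-prodef}, matches the paper.

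The one genuine problem is your choice of $\Sort{Q}$ in part (1). You take $\Sort{Q}$ to be a one-element set (after naming a constant). You are right that then every definable type is orthogonal to $\Sort{Q}$, so $S_{\mathrm{def},\dX}^{\Sort{Q}}=S_{\mathrm{def},\dX}$; but Fact~\ref{F:HL-prodef} cannot be applied with this $\Sort{Q}$. Its proof encodes a type $p\in S_{\mathrm{def},\dX}^{\Sort{Q}}$ by the pushforwards $p_\star(f)$ of definable families $f:\dX\times\Sort{W}\to\eq{\Sort{Q}}$, and the ``canonical bijection'' in its conclusion rests on this assignment being injective. When $\Sort{Q}$ is a singleton, every sort of $\eq{\Sort{Q}}$ (a quotient of a power of $\Sort{Q}$) is again a singleton, so every such $f$ is constant and $p_\star(f)$ carries no information about $p$; the map $c$ of the sketch collapses $S_{\mathrm{def},\dX}$ to a point, and the lemma as proved simply does not cover this degenerate case. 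The repair is exactly what the paper does: take $\Sort{Q}$ to be a \emph{two}-element $\emptyset$-definable set. Orthogonality of every invariant (hence every definable) type to a finite $\emptyset$-definable set is automatic, so one still has $S_{\mathrm{def},\dX}^{\Sort{Q}}=S_{\mathrm{def},\dX}$; and now $\eq{\Sort{Q}}$ contains a two-point sort both of whose points are $\emptyset$-definable (the classes of the diagonal and off-diagonal pairs in $\Sort{Q}^2$), so indicator functions of definable sets are available, the pushforwards recover all $\phi$-definitions of $p$, and the encoding is injective. This also makes your preliminary constant-naming unnecessary, which is preferable anyway since passing to $T$ with a named constant only yields pro-definability over $C$ together with that constant, slightly weakening the ``canonically $C$-pro-definable'' statement. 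Your alternative justification of uniform definability in the stable case (the classical uniform definability of types) is fine and interchangeable with the paper's appeal to generic stability plus Fact~\ref{F:Uniform-def}.
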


\begin{proof}
  Both parts follow from Fact \ref{F:HL-prodef}. For (1), note that if $\Sort{Q}$ is 
  a 2-element set, then $S_{\mathrm{def},\dX}^{\Sort{Q}}=S_{\mathrm{def},\dX}$. Since in a 
  stable theory all definable types are generically stable, uniform 
  definability of types follows from Fact \ref{F:Uniform-def}. 

  In (2), one has $\widehat{\dX}(A)=S_{\mathrm{def},\dX}^{\vg}(A)$ for all 
  $A\supseteq C$, by Proposition\,\ref{P:char-st-dom}.  As in the proof of 
  (1), uniform definability holds by Fact \ref{F:Uniform-def}.
\end{proof}

Recall that a theory $T$ has the \emph{finite cover property} if there is a 
formula $\phi(x,y)$ (where $x$ and $y$ may be tuples of variables) such that 
for every $n\in\N$ there are $a_1,\ldots,a_n\in U\models T$ such that 
$\models\neg\exists x\bigwedge_{i\leq n}\phi(x,a_i)$ and $\models\exists 
x\bigwedge_{i\leq n,i\neq k}\phi(x,a_i)$ for every $k\leq n$.  The theory $T$ 
is \emph{nfcp} if it does not have the finite cover property. By a result of 
Shelah \cite[II Theorems\,4.2, 4.4]{ShClass}, $T$ is nfcp if and only if $T$ is stable and $\eq{T}$ 
eliminates $\exists^\infty x$.

The following characterization is due to Poizat.

\begin{fact}[{\cite[Th\'eor\`eme\,5]{PoiBellesPaires}}]\label{F:Poizat-nfcp}
  Let $T$ be stable. Then $T$ is nfcp if and only if for every pair of 
  formulas $\phi(x;y)$ and $\theta(y;z)$ the set \[\Sort{D}_{\phi,\theta}=\{c\in U\, 
  | \, \theta(y,c)\text{ is the $\phi$-definition of a (complete) global 
  type}\}\] is definable.
\end{fact}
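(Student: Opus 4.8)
The statement is a theorem of Poizat, so the plan is to recover his argument. The first move is to recast membership in $\Sort{D}_{\phi,\theta}$ as a consistency statement. For $c$ in the monster model $U$, set
\[\Phi_c:=\{\phi(x;b)\mid U\models\theta(b;c)\}\cup\{\neg\phi(x;b)\mid U\not\models\theta(b;c)\},\]
a ``complete-by-construction'' $\phi$-type. I claim that $c\in\Sort{D}_{\phi,\theta}$ if and only if $\Phi_c$ is consistent, and that this holds using nothing but the stability of $T$. Indeed, if $\theta(y;c)$ is the $\phi$-definition of a complete global type $p$, then $\Phi_c\subseteq p$ because $p$ is complete, so $\Phi_c$ is consistent; conversely, if $\Phi_c$ is realised by some $a$ in an elementary extension of $U$, then $q:=\tp(a/U)$ is a complete global type whose $\phi$-part is exactly $\Phi_c$, $q$ is definable since $T$ is stable, and its $\phi$-definition $\mathrm{d}_qx\,\phi(x;y)$ defines the same subset of $U$ as $\theta(y;c)$, so $\theta(y;c)$ represents the $\phi$-definition of $q$.

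Granting this, the direction ``$T$ nfcp $\Rightarrow$ every $\Sort{D}_{\phi,\theta}$ is definable'' is short: consistency of $\Phi_c$ is equivalent to $k$-consistency (every $k$-element subset consistent) for all $k\in\N$, so $\Sort{D}_{\phi,\theta}=\bigcap_{k\in\N}\chi_k(U)$, where $\chi_k(z)$ is the first-order formula
\[\forall y_1\cdots\forall y_k\,\exists x\,\bigwedge_{i=1}^{k}\bigl[(\theta(y_i;z)\to\phi(x;y_i))\wedge(\neg\theta(y_i;z)\to\neg\phi(x;y_i))\bigr],\]
and the sets $\chi_k(U)$ descend with $k$. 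Applying nfcp to the auxiliary formula $\psi(x;y,u,v):=\bigl((u=v)\wedge\phi(x;y)\bigr)\vee\bigl((u\neq v)\wedge\neg\phi(x;y)\bigr)$, whose instances are precisely the instances of $\phi$ and of $\neg\phi$, one obtains an integer $k_0$ depending only on $\phi$ such that every $k_0$-consistent set of such instances is consistent; hence $\chi_{k_0}(U)\subseteq\Sort{D}_{\phi,\theta}$, and therefore $\Sort{D}_{\phi,\theta}=\chi_{k_0}(U)$ is definable.

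For the converse I would argue by contraposition. If $T$ has the finite cover property, witnessed by a formula $\phi_0(x;y)$, then for every $k$ there is a family $F_k$ of instances of $\phi_0$ which is $k$-consistent but inconsistent. I would enlarge $\phi_0$ to a formula $\phi$ by adjoining dummy parameter variables, and choose a selection formula $\theta$ whose parameter ranges over a suitable imaginary sort indexing these families, in such a way that for appropriate parameters $c_k$ the type $\Phi_{c_k}$ consists precisely of the conditions of $F_k$ together with $\neg\phi$-conditions that are all tautologically true; then $\Phi_{c_k}$ is $k$-consistent but inconsistent. As before $\Sort{D}_{\phi,\theta}=\bigcap_{k}\chi_k(U)$ with the $\chi_k(U)$ descending, but now each $c_k$ lies in $\chi_k(U)\setminus\Sort{D}_{\phi,\theta}$, so $\Sort{D}_{\phi,\theta}\subsetneq\chi_k(U)$ for every $k$. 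Since a \emph{definable} set which equals a descending intersection of definable sets must, by compactness, equal one of the sets in the intersection, $\Sort{D}_{\phi,\theta}$ cannot be definable.

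The delicate point is exactly this encoding in the converse direction. A complete global type decides $\phi(x;b)$ for \emph{every} parameter $b$, so $\Phi_{c_k}$ unavoidably carries $\neg\phi$-conditions on all the non-selected parameters; the dummy variables in $\phi$ and the selection formula $\theta$ must be set up so that precisely those conditions are vacuous and cannot destroy $k$-consistency. Everything else — the reformulation of $\Sort{D}_{\phi,\theta}$, definability of types in stable theories, and the compactness arguments — is routine and available from the material already developed in the paper.
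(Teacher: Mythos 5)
Your reformulation ($c\in \Sort{D}_{\phi,\theta}$ if and only if the $\phi$-type $\Phi_c$ is consistent) is correct, and your left-to-right direction goes through: the auxiliary formula $\psi$ turning negated instances into positive ones, the bound $k_0$, and the identity $\Sort{D}_{\phi,\theta}=\chi_{k_0}(U)$ all work, granting the standard (but not completely immediate) equivalence between the paper's definition of nfcp and the statement that for each formula there is a $k$ such that $k$-consistency of any set of its instances implies consistency. Note also that the paper offers no proof of this Fact -- it is quoted from Poizat -- so the only question is whether your argument stands on its own.

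It does not, and the gap is exactly at the point you yourself flag as delicate in the converse. Your contrapositive needs, for one fixed pair $(\phi,\theta)$, parameters $c_k$ of a fixed sort such that $\Phi_{c_k}$ is $k$-consistent but inconsistent. The fcp witnesses $F_k$, however, are arbitrary finite families of instances of unbounded size, and there is no ``suitable imaginary sort indexing these families'': finite sets of size $n$ are coded in $\eq{T}$ in sorts that depend on $n$, whereas $\theta(y;z)$ has a parameter variable $z$ of fixed length, so no single $\theta$ can select such families uniformly in $k$. To make the argument work one must first upgrade the fcp, using stability, to a \emph{uniformly definable} family of bad configurations -- for instance via Shelah's theorem that a stable theory with the fcp admits a formula $E(x,y;z)$ defining equivalence relations with finite but unboundedly many classes (equivalently, $\eq{T}$ fails to eliminate $\exists^\infty$) -- and then design $\phi$ and $\theta$ around that family, verifying that the $\neg\phi$-conditions forced on all non-selected parameters (which, as you observe, any complete $\phi$-definition must decide) do not ruin $k$-consistency; with the naive choices, e.g.\ $\phi(x;y,z)=E(x,y;z)$ and $\theta$ selecting the class of a distinguished point for one relation, those negative conditions typically make $\Phi_c$ outright inconsistent because $x$ must still lie in some class of every other relation of the family. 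This uniformization and the accompanying construction are the real content of the right-to-left direction of Poizat's theorem, and they are missing from your proposal.
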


\begin{corollary}\label{C:nfcp-strictpro}
  Let $T$ be stable. Then $T$ is nfcp if and only if for every definable set 
  $\dX$, the set $S_{\mathrm{def},\dX}$ is strict pro-definable.
\end{corollary}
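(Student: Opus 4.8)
The plan is to combine Corollary \ref{C:def-types-pro-def}(1), which already gives that $S_{\mathrm{def},\dX}$ is canonically $C$-pro-definable whenever $T$ is stable, with the characterization of strict pro-definability recalled in the introduction: a pro-definable set $\dD = \varprojlim_i \dD_i$ is strict if and only if each projection $\pi_i(\dD) \subseteq \dD_i$ is a definable set (not merely type-definable). So the task reduces to analyzing exactly which of the coordinate projections of the pro-definable presentation of $S_{\mathrm{def},\dX}$ are definable, and to relate this to the finite cover property. For this I would unwind the construction sketched after Fact \ref{F:HL-prodef} in the stable case, where $\Sort{Q}$ is a two-element set, so that the functions $p \mapsto p_\star(f_i)$ are nothing but the $\phi$-definitions of $p$ as $\phi$ ranges over all formulas: the index set $I$ can be taken to enumerate pairs $(\phi,\theta)$ and the coordinate sets $\Sort{S}_i$ are the parameter sets for the candidate $\phi$-definitions $\theta(y;z)$. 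The image of $S_{\mathrm{def},\dX}$ under the $i$-th projection is then, up to the constraint $p \vdash x \in \dX$, essentially Poizat's set $\Sort{D}_{\phi,\theta}$ from Fact \ref{F:Poizat-nfcp} (intersected with the condition that the type concentrates on $\dX$, which is itself a definable constraint on the $\phi$-definition parameters since consistency with $\dX$ of a complete type is controlled by finitely many of its $\phi$-definitions).

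For the forward direction, assume $T$ is nfcp. Then by Fact \ref{F:Poizat-nfcp} each $\Sort{D}_{\phi,\theta}$ is definable; intersecting with the (definable) requirement that the type lies in $\dX$, and observing that $S_{\mathrm{def},\dX}$ is closed under the finite products used to build the pro-system (a product of two coordinates corresponds to a single pair $(\phi',\theta')$ obtained by conjoining formulas), one concludes that every coordinate projection of $S_{\mathrm{def},\dX}$ is definable. Hence by the criterion above, $S_{\mathrm{def},\dX}$ is strict pro-definable. For the converse, suppose $S_{\mathrm{def},\dX}$ is strict pro-definable for every definable $\dX$; apply this with $\dX$ a single home sort (or $\dX$ equal to the whole sort containing $x$). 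Strictness means every coordinate projection is definable, and the coordinate indexed by $(\phi,\theta)$ recovers precisely Poizat's $\Sort{D}_{\phi,\theta}$, so all the $\Sort{D}_{\phi,\theta}$ are definable and $T$ is nfcp by Fact \ref{F:Poizat-nfcp}.

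The main obstacle I anticipate is bookkeeping rather than conceptual: one must check carefully that the pro-definable presentation of $S_{\mathrm{def},\dX}$ from Fact \ref{F:HL-prodef} can be arranged so that its coordinate projections literally coincide with (definable modifications of) the sets $\Sort{D}_{\phi,\theta}$, and that the extra constraint "$p \vdash x\in\dX$" does not spoil definability of the projections — this needs the observation that, in a stable theory, a complete global $\phi$-type extends to a complete global type concentrating on $\dX$ iff a bounded amount of its defining data satisfies a definable condition (using that $\dX$ is defined by a single formula and stability gives definability of the relevant "consistency with $\dX$" predicate). One should also be slightly careful that strictness of $S_{\mathrm{def},\dX}$ is being used for \emph{all} definable $\dX$ (including $\dX$ a full sort), so that no information is lost in the converse. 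Once these compatibility points are pinned down, both implications are immediate from Corollary \ref{C:def-types-pro-def}(1) and Fact \ref{F:Poizat-nfcp}.
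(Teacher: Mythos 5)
Your proposal is correct and follows essentially the same route as the paper: both identify the coordinate projections of the pro-definable presentation coming from Fact \ref{F:HL-prodef} (via Corollary \ref{C:def-types-pro-def}(1)) with Poizat's sets $\Sort{D}_{\phi,\theta}$ and then conclude by Fact \ref{F:Poizat-nfcp}. The bookkeeping you flag --- absorbing the constraint $p\vdash x\in\dX$ into a definable modification of $\Sort{D}_{\phi,\theta}$, and taking $\dX$ to be a (product of) full sort(s) in the converse --- is precisely what the paper elides when it writes $\pi_i(\Sort{Z})=\Sort{D}_{\phi,\theta}$, and it causes no difficulty.
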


\begin{proof}
  Let $\Sort{Z}=\varprojlim \Sort{Z}_i$ be the pro-definable set given by the proof of Fact 
  \ref{F:HL-prodef}, with $\Sort{Z}(A)\simeq S_{\mathrm{def},\dX}(A)$ canonically.  
  Then $\Sort{Z}_i$ corresponds to canonical parameters of instances of a formula 
  $\theta=\theta_\phi(y;z)$, and $\pi_i(\Sort{Z})\subseteq \Sort{Z}_i$ is precisely the set 
  of those parameters corresponding to $\phi$-definitions of complete global 
  types, i.e., $\pi_i(\Sort{Z})=\Sort{D}_{\phi,\theta}$.  We conclude by Fact 
  \ref{F:Poizat-nfcp}.
\end{proof}

Hrushovski and Loeser showed that for every definable set $\dX$ in \(\ACVF\), 
the set $\widehat{\dX}$ of stably dominated types concentrated on $\dX$ is strict 
pro-definable \cite[Theorem 3.1.1]{HrLo16}.  Analyzing their proof, we obtain 
the following abstract version of this important technical result, which 
covers the theories we are interested in.

\begin{theorem}\label{T:criterion-strictpro}
  Let $T$ be a complete theory which eliminates imaginaries, is \(NIP\) and 
  metastable over some $\emptyset$-definable stably embedded set $\vg$. Let 
  $\rf_{i\in I}$ be $\emptyset$-definable stable stably embedded sets. Assume 
  the following properties hold:
  \begin{enumerate}
    \item $\vg$ eliminates imaginaries and admits a definable total ordering;
    \item every set of parameters $A$ is included in a metastability basis $C$ such that for every parameter set $B = \dcl(CB)$, $\St_C(B)$ 
      is interdefinable with $\bigcup_i \rf_i(B)$;
    \item The (multisorted) structure $\bigcup_i \rf_i$ (with the induced 
      structure) is nfcp.
  \end{enumerate}
  Then, for every $A$-definable set $\dX$, the set $\widehat{\dX}$ is strict 
  $A$-pro-definable.
\end{theorem}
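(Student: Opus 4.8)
The plan is to follow the proof of \cite[Theorem 3.1.1]{HrLo16}, isolating exactly the two features it uses: that $\widehat{\dX}$ is \emph{already} pro-definable under the metastability hypotheses, and that the ``stable part'' of a stably dominated type lives in an nfcp stable structure, where strict pro-definability of spaces of definable types is available by Corollary~\ref{C:nfcp-strictpro}.

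First I would record that $\widehat{\dX}$ is canonically $A$-pro-definable by Corollary~\ref{C:def-types-pro-def}(2) (this uses only that $T$ is $\NIP$, metastable over $\vg$, and hypothesis~(1), via $\widehat{\dX}(A')=S_{\mathrm{def},\dX}^{\vg}(A')$ from Proposition~\ref{P:char-st-dom}). Realize it as $\widehat{\dX}=\varprojlim_i\Sort{Z}_i$ through the construction underlying Fact~\ref{F:HL-prodef} with $\Sort{Q}=\vg$, so that each $\Sort{Y}_i:=\pi_i(\widehat{\dX})\subseteq\Sort{Z}_i$ is $A$-$\infty$-definable, and $\widehat{\dX}$ is strict precisely when every $\Sort{Y}_i$ is definable. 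Since $\Sort{Y}_i$ is $\Aut(U/A)$-invariant and $T$ eliminates imaginaries, it is enough to produce, for each $i$, \emph{some} parameter set $C\supseteq A$ over which $\Sort{Y}_i$ is definable: its canonical parameter is then $\Aut(U/A)$-fixed, hence in $\dcl(A)$, so $\Sort{Y}_i$ is even $A$-definable. Accordingly I fix, by hypothesis~(2), a metastability basis $C\supseteq A$ with $\St_C(B)$ interdefinable with $\bigcup_i\rf_i(B)$ for all $B=\dcl(CB)$, enlarging $C$ so that $\dX$ is $C$-definable.

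The main step is to describe $\widehat{\dX}$, over every parameter set $A'\supseteq C$, through the stable structure. If $p$ is a global $A'$-definable stably dominated type concentrated on $\dX$, then $p\perp\vg$ (Proposition~\ref{P:char-st-dom}), so by almost orthogonality $\vg(\dcl(A'a))=\vg(\dcl(A'))$ for $a\models p|A'$; using that $p|C$ is stably dominated ($C$ being a metastability basis and $\tp(a/C\vg(\dcl(Ca)))=p|C$) and that stable domination of a global type persists over larger bases, $p|A'$ is stably dominated. Hence $p$ is dominated via a pro-$A'$-definable map $f$ into $\St_{A'}$, its image $f_\star(p)$ is a global $A'$-definable type of $\St_{A'}$, and $p$ is recovered from $f_\star(p)$ by the domination. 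By hypothesis~(2), $\St_{A'}$ is interdefinable with $\bigcup_i\rf_i$ with parameters from $A'$ named; this structure is still nfcp, since nfcp passes to extensions by constants and, the $\rf_i$ being stably embedded, the induced structure coincides with the pure one. Thus, by Corollary~\ref{C:nfcp-strictpro} applied inside $\bigcup_i\rf_i$, the relevant spaces $S_{\mathrm{def}}$ of definable types there are strict pro-definable over $A'$, uniformly in $A'$ (i.e. $C$-pro-definably). The correspondence $p\mapsto f_\star(p)$ then identifies $\widehat{\dX}$ with a $C$-pro-definable subset of such a strict pro-definable set, and one checks that the cutting-out condition --- that a definable type $q$ of $\bigcup_i\rf_i$ is the stable part of \emph{some} stably dominated type concentrated on $\dX$ --- is, at each finite level, a definable condition on the ($\phi$-)definitions of $q$: this is where $\dX$ being definable and $\perp\vg$-ness of the reconstructed type enter, via $\NIP$ uniform definability and Poizat's criterion (Fact~\ref{F:Poizat-nfcp}). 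Concretely, $s_i\in\Sort{Y}_i$ if and only if $s_i$ is obtained from the finite-level definitions of such a $q$ by the fixed definable rule supplied by the domination, so $\Sort{Y}_i$ is the image of a $C$-definable set under a $C$-definable map, hence $C$-definable; combined with the first step, $\widehat{\dX}$ is strict $A$-pro-definable.

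The real work, as I see it, is this last step: making the passage ``stably dominated type $\longleftrightarrow$ definable type of its stable part'' uniform and pro-definable enough to transport strict pro-definability, and verifying that ``being the stable part of a stably dominated type concentrated on the fixed $\dX$'' descends to a genuinely \emph{definable} condition at each finite level, not merely a type-definable one --- this is exactly where nfcp of $\bigcup_i\rf_i$ is indispensable, mirroring the verification in \cite{HrLo16} that $\rf$ ($=$ ACF) is nfcp. The surrounding bookkeeping --- that a set containing a metastability basis is again one, that a global definable type orthogonal to $\vg$ over a metastability basis is stably dominated, and the descent of definability from $C$ to $A$ via elimination of imaginaries --- is routine.
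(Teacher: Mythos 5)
Your overall framing matches the paper's: establish pro-definability via Corollary~\ref{C:def-types-pro-def}(2), reduce strictness to showing each projection $\Sort{Y}_i=\pi_i(\widehat{\dX})$ is definable (knowing it is $\infty$-definable already), and bring in the metastability basis, hypothesis (2), and Poizat's criterion (Fact~\ref{F:Poizat-nfcp}). The descent step you add (a definable, $\Aut(U/A)$-invariant set is $A$-definable by elimination of imaginaries) is fine, though the paper avoids it by writing its witness set with the auxiliary parameters existentially quantified. But at the decisive point your argument is an assertion, not a proof: you claim that ``$q$ is the stable part of \emph{some} stably dominated type concentrated on $\dX$'' descends, at each finite level, to a definable condition on the definitions of $q$, and that $\Sort{Y}_i$ is then the image of a definable set under ``the fixed definable rule supplied by the domination.'' Poizat's criterion does not give this: it only says that \emph{being the $\phi$-definition of a consistent global $\phi$-type} in the stable structure is a definable condition. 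The extra requirement --- that the type so defined actually arises as the push-forward of a stably dominated type living on $\dX$ and reproduces the prescribed behaviour of $p_\star(f_w)$ --- is exactly what has to be proved, and you explicitly defer it (``one checks\dots'', ``the real work\dots'') without carrying it out. Moreover the domination map is pro-definable, not definable, so even formulating a ``finite level'' condition requires a reduction that you do not supply.

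The paper closes this gap with three moves you would need. First, a compactness argument (the $\Sort{Y}\subseteq\Sort{Z}$ direction): over a metastability basis $C$ as in hypothesis (2), stable domination of $p$ yields a \emph{single} definable map $h:\dX\to\Sort{L}$ into a \emph{finite} product $\Sort{L}=\prod_j\rf_j^{n_j}$ controlling the values $f(x,w)$. Second, the witness set $\Sort{Z}$ is defined using only a definable $\phi$-type $q_0$ on $\Sort{L}$ for the one formula $\phi(y;c,v,w)=(\exists x\in\dX\, h(x)=y)\wedge(\forall x\in\dX\,(h(x)=y\to f(x,w)=v))$ --- no requirement that $q_0$ be complete or be a stable part of anything --- and it is precisely this weakening that lets nfcp/Poizat make $\Sort{Z}$ ind-definable. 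Third, and this is the heart of the matter, the reconstruction direction $\Sort{Z}\subseteq\Sort{Y}$: from an arbitrary datum $(\Sort{L},h,q_0)$ one must \emph{produce} a stably dominated $p$ on $\dX$ with $p_\star f=g_0$; the paper does this by picking $a\in\dX$ with $h(a)\models\restr{q_0}{C}$, using that $\tp(a/\acl(C\vg(\dcl(Ca))))$ is stably dominated since $C$ is a metastability basis, and using orthogonality of $\vg$ to the stable part together with hypothesis (2) to conclude $h_\star p$ is $C$-definable and equals $q_0$. Without this direction, Poizat only bounds $\Sort{Y}_i$ above by an ind-definable set, which does not yield definability; with it, $\Sort{Y}_i$ equals an ind-definable set and, being $\infty$-definable, is definable. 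So your proposal is the right skeleton, but the step it labels as the real work --- which is indeed the entire content of the theorem beyond Corollary~\ref{C:def-types-pro-def} and Fact~\ref{F:Poizat-nfcp} --- is missing.
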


\begin{proof}
  Let $\dX$ be $A$-definable. Pro-definability of $\widehat{\dX}$ is the content 
  of Corollary \ref{C:def-types-pro-def}(2). We recall the construction of 
  the pro-definable set $\Sort{D}$ satisfying $\Sort{D}(B)=\widehat{\dX}(B)$ canonically, for 
  every parameter set $B\supseteq A$.  Let $f_i:\dX\times 
  \Sort{W}_i\rightarrow\vg^{n_i}$ ($i\in I$) be an enumeration of all $A$-definable 
  families of functions from $\dX$ to $\eq{\vg}$.  As $\vg$ eliminates 
  imaginaries, it is enough to consider functions with values in $\vg^{n}$ 
  for some $n$. Let $g_i:\Sort{S}_i\times \Sort{W}_i\rightarrow\vg^{n_i}$ be $A$-definable 
  such that for any $p\in\widehat{\dX}(U)$ the function 
  $p_\star(f_i):\Sort{W}_i\rightarrow\vg^{n_i}$ is equal to $g_{i,s_i}=g_i(s_i,-)$ for 
  some unique $s_i\in \Sort{S}_i$.  Then $\Sort{D}$ is the image of the injective map 
  $c:\widehat{\dX}\hookrightarrow\prod_{i\in I}\Sort{S}_i$, $p\mapsto(s_i)_{i\in I}$. 

  In order to show that $\Sort{D}$ is strict pro-definable, it is enough to show 
  that the projection $\Sort{Y}_i:=\pi_i(\Sort{D})\subseteq \Sort{S}_i$ is definable for all $i\in 
  I$. (Note that the $\Sort{S}_i$'s are closed under taking finite products.) We 
  already know that $\Sort{Y}_i$ is $\infty$-definable, so there only remains to 
  show that it is a union of definable sets, i.e.,
  $\mathrm{ind}$-definable. Now fix $i\in I$. We will omit indices and write 
  $f:\dX\times \Sort{W}\rightarrow\vg^n$ and $g:\Sort{S}\times \Sort{W}\rightarrow \vg^n$ in what 
  follows. 

  Let $\Sort{Z}$ be the set of functions $g_0 : \Sort{W} \to \vg^n$ such that there exist:
  \begin{itemize}
    \item a finite product $\Sort{L} = \prod_j \rf_j^{n_j}$;
    \item a function $h : \dX \to \Sort{L}$, definable with parameters $c$, and
    \item for $\phi(y;c,v,w)= (\exists x\in \dX\,h(x) = y)\wedge (\forall x\in 
      \dX\,h(x) = y \to f(x,w) = v)$, a definable $\phi(y;c,v,w)$-type $q_0$ 
      concentrating on $\Sort{L}$
  \end{itemize}
  satisfying $g_0(w) = \gamma\text{ if and only if 
  }d_{q_0}y\,\phi(y;c,\gamma,w)$.

  \smallskip

  By Fact \ref{F:Poizat-nfcp}, for every formula $\theta(v,w;z)$, the fact 
  that, for a given $t$, $\theta(v,w,t)$ is the $\phi(y;c,v,w)$-definition of 
  a consistent type is a definable condition in $t$. It follows that $\Sort{Z}$ is 
  $\mathrm{ind}$-definable. We will now show that $\Sort{Z} = \Sort{Y}$.

  First, pick $p\in\widehat{\dX}$. Let $C$ be a set as in Hypothesis 2 such 
  that $C \supseteq A$ and $p$ is $C$-definable. By stable domination, 
  Hypothesis 2 (and compactness), there exists $h$ and $\Sort{L}$ as above such that 
  for all $a\models\restr{p}{C}$ and all $b$ and $\gamma$, if $h(a)\ind_C 
  \St_C(\dcl(Cb\gamma))$, $\tp(b,\gamma,h(a)) \vdash f(a,b) = \gamma$.  
  Actually, making $h$ bigger we may assume that the above holds for all 
  $a\in \dX$. Let $q = h_\star p$, then $\restr{q}{Cb\gamma}(y)\vdash \forall 
  x\in \dX\,h(x) = y \to f(x,b) = \gamma$ and the tuple 
  $(\Sort{L},h,\restr{q}{\theta})$ proves that $p_\star f \in \Sort{Z}$ and hence 
  $\Sort{Y}\subseteq \Sort{Z}$.

  Now, let $g_0\in \Sort{Z}$ and $\Sort{L}$, $h$ and $q_0$ as in the definition of $\Sort{Z}$. Let 
  $C$ be as in Hypothesis 2, such that $g_0$, $h$ and $q_0$ are defined over 
  $C$. Let $b\models \restr{q_0}{C}$ and $a\in \dX$ such that $h(a) = b$. Let 
  $C' = \acl(C\vg(\dcl(Ca)))$. Since $C$ is a metastability basis, 
  $\tp(a/C')$ is stably dominated (and thus in particular definable over $C'$), so $h_\star p$ is definable over $C'$.  
  By orthogonality of $\vg$ and the stable part, $h_\star p$ is definable 
  over $\St_C(C')=\St_C(C)$, which is interdefinable with $C$. As $h_\star p$ extends 
  $\restr{q_0}{C}$, we have that $h_\star p = q_0$. Let $b\in \Sort{W}$, 
  $\gamma = g_0(b)$ and $a\models\restr{p}{C'b\gamma}$, then 
  $h(a)\models\restr{q_0}{Cb\gamma}$ and, by definition of $\Sort{Z}$, $f(a,b) = 
  \gamma$. It follows that $p_\star f(b) = g_0(b)$, hence $g_0 = p_\star f 
  \in \Sort{Y}$ and so $\Sort{Z}\subseteq \Sort{Y}$.
\end{proof}

\begin{corollary}\label{C:SCVF-strict-pro}
  Let $T=\SCVH_{p,e}$ (or any completion of $\SCVF_{p,e}$).  Then for every 
  $A$-definable set $\dX$, $\widehat{\dX}$ may be canonically identified with a 
  strict $A$-pro-definable set.
\end{corollary}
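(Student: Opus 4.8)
The plan is to apply Theorem~\ref{T:criterion-strictpro} to $T=\SCVH_{p,e}^{\G}$; the case where $T$ is a completion of $\SCVF_{p,e}^{\G}$ then follows, since such a completion is, up to interdefinability, the expansion of $\SCVH_{p,e}^{\G}$ by constants naming a canonical $p$-basis (the $D_{i,n}$ are then term-definable from $b$ and $\lambda$, and conversely), and every hypothesis of Theorem~\ref{T:criterion-strictpro} is preserved by naming constants — in particular a metastability basis may be chosen to contain them. So it remains to verify, for $\SCVH_{p,e}^{\G}$: completeness (Proposition~\ref{prop:SCVH QE}), elimination of imaginaries (Corollary~\ref{C:SCVH-EI}), $\NIP$ (Corollary~\ref{C:SCVF-NIP}), metastability over $\vg$ (Corollary~\ref{C:SCVH-metastable}), and hypotheses (1)--(3). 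Hypothesis~(1) is immediate: $\vg$ is a non-trivial divisible ordered abelian group, which is o-minimal in $\LOAG$, hence carries a definable total ordering and eliminates imaginaries.

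For the stable sorts I would take the single sort $\rf_{\ast}:=\rf$, so that $\bigcup_i\rf_i=\rf$. Then hypothesis~(3) asserts that the residue field, with its induced structure, is nfcp; and by quantifier elimination in $\LG^{D}$ (Proposition~\ref{prop:SCVH QE}) together with $\rf(M)=\rf(\alg{M})$, the structure induced on $\rf$ is that of a pure algebraically closed field, which is $\omega$-stable and eliminates $\exists^{\infty}$, hence nfcp by the criterion of Shelah recalled before Fact~\ref{F:Poizat-nfcp}.

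The substance is hypothesis~(2). Given a small parameter set $A$, I would enlarge a separably maximally complete field $D\substr\VF(M)$, closed under $D$, so that it contains a representative of each value group element, a lift of each residue element, and a basis of each lattice occurring in $A$; this keeps $D$ small and separably maximally complete, and then $A\subseteq\dcl_{T}(D)$, so $C:=\dcl_{T}(D)$ contains $A$ and is a metastability basis by Proposition~\ref{P:SCVF-met-basis}. It remains to see that for every $B=\dcl_{T}(CB)$ the stable part $\St_C(B)$ is interdefinable with $\rf(B)$ over $C$. By Proposition~\ref{P:StA}, every stable stably embedded $C$-definable set is, up to finite sets and a permutation of coordinates, a product of sorts $s/\boldmax s$ with $s$ an $\acl_{T}(C)$-definable lattice; by Proposition~\ref{prop:descr acl} these lattices are $\acl_{\ACVF_{p,p}^{\G}}(C)$-definable, and since $\perf{C}\subseteq\dcl_{T}(C)$ is maximally complete (Lemma~\ref{lem:max complete SCVF}), the structure theory of modules over maximally complete fields from~\cite{HaHrMa08} lets us split each such $s$ over $\dcl_{T}(C)$ into rank one lattices, so that each $s/\boldmax s$ is, over $\dcl_{T}(C)$, in definable bijection with a power of $\rf$. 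One then transfers this from $\alg{M}$ back to $M$ exactly as in the proofs of Propositions~\ref{P:char st dom} and~\ref{P:SCVF-met-basis}: $\St_C^{M}$ and $\St_C^{\alg{M}}$ coincide up to the parameters $\perf{C}\subseteq\dcl_{T}(C)$ and up to finite sorts irrelevant to forking. Theorem~\ref{T:criterion-strictpro} then yields that $\widehat{\dX}$ is strict $A$-pro-definable. The main obstacle is precisely this last verification: ensuring that over a separably maximally complete base the stable part is not merely internal to $\rf$ but genuinely \emph{interdefinable} with it — i.e.\ controlling the finite-set discrepancy in Proposition~\ref{P:StA}(7) and checking that the lattices can be split over $\dcl_{T}(C)$, not just over $\acl_{T}(C)$, after passing through the prolongation.
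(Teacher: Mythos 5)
Your overall strategy is the paper's: apply Theorem~\ref{T:criterion-strictpro}, quoting Corollary~\ref{C:SCVF-NIP}, Corollary~\ref{C:SCVH-metastable}, purity of $\vg$ and $\rf$, and nfcp of ACF$_p$; hypotheses (1) and (3) and the reduction of $\SCVF_{p,e}$ to $\SCVH_{p,e}$ plus constants are fine. The problem is your verification of hypothesis (2). You take for $C$ (the definable closure of) a separably maximally complete field $D$ containing lifts and bases only of the geometric data \emph{occurring in $A$}; such a $D$ need not be a model of $\SCVH_{p,e}$ (it need not be separably closed, and $\val(\VF(D)^\star)$ need not be divisible). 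Hypothesis (2) quantifies over \emph{all} $B=\dcl(CB)$, and by Proposition~\ref{P:StA}(7) the relevant lattices are arbitrary $\acl_T(C)$-definable ones, not the ones you trivialized by hand. Over a non-model $C$ there are typically $\dcl(C)$-definable lattices $s$ (already of rank one, e.g.\ with invariant $\gamma\in\Gamma(\dcl(C))\setminus\val(\VF(C)^\star)$, such as $\frac12\val(t)$ over $\F_p((t))$) whose torsor $s/\m s$ is $\rf$-internal but admits no $C$-definable bijection with $\rf$ — only an identification over $\acl(C)$, or after naming a point of valuation $\gamma$. Taking $B$ to contain a generic point $u$ of such a torsor, one gets $u\in\St_C(B)$ but $u\notin\dcl(C\,\rf(B))$, so $\St_C(B)$ is \emph{not} interdefinable with $\rf(B)$ over $C$. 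Your proposed remedy — splitting lattices into rank-one pieces via the module theory over maximally complete fields from \cite{HaHrMa08} — does not close this gap: even a split lattice only gives a product of $\rf$-torsors, and turning a torsor into $\rf$ over $\dcl_T(C)$ requires the corresponding valuation (resp.\ a point of the lattice of minimal valuation) to be realized in $C$, which your construction does not provide; the same issue affects the finite sets $F$ of Proposition~\ref{P:StA}(7), which live in $\acl_T(C)$ rather than $\dcl_T(C)$.

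The fix is exactly what the paper does, and it is simpler than your route: choose the metastability basis to be a separably maximally complete \emph{model} $K$ of $\SCVH_{p,e}$ containing $A$ (such a $K$ is a metastability basis by Proposition~\ref{P:SCVF-met-basis}). Then $\G(\acl_T(K))=\G(K)$, and every lattice $s\in\latt_n(K)$ has a basis in $\VF(K)$ since $\latt_n(K)=\Gln(\VF(K))/\Gln(\bO(K))$, so each $s/\m s$ is $K$-definably isomorphic to $\rf^{\,n}$ and Proposition~\ref{P:StA}(7) gives directly $\St_K(B)=\dcl(K\,\rf(B))$ for every $B=\dcl(KB)$. With that substitution, no splitting theorem is needed and the rest of your argument goes through as in the paper.
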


\begin{proof}
  This follows from Theorem \ref{T:criterion-strictpro}.  Indeed, 
  $\SCVH_{p,e}$ is \(\NIP\) (Corollary \ref{C:SCVF-NIP}), and it is 
  metastable over $\vg$ by Corollary \ref{C:SCVH-metastable}. Since $\vg$ 
  is a stably embedded pure divisible ordered Abelian group, it eliminates 
  imaginaries. Any parameter set $A$ is contained in a separably maximal 
  model $K$, and such a $K$ is a metastability basis (see Proposition 
  \ref{P:SCVF-met-basis}). 

  Over $K$, indeed over any model of $\SCVH_{p,e}$, $\St_K(B)=\dcl(\rf(B))$ 
  for every parameter set $B=\dcl(BK)$.  This follows from Proposition 
  \ref{P:StA}(7) and the fact that every lattice $s\in \latt_n(K)$ has a 
  $K$-definable basis. Since the residue field $\rf$ is a pure model of 
  ACF$_p$, it is nfcp. (Note that purity and stable embeddedness of $\vg$ 
  and $\rf$ follow from the corresponding results in $\ACVF_{p,p}$, by 
  Corollary \ref{cor:purely-geometric}.) Thus, all hypotheses of Theorem 
  \ref{T:criterion-strictpro} are satisfied.
\end{proof}

We now discuss a similar context in which Theorem 
\ref{T:criterion-strictpro} applies. Let \(\VDF\) be the theory 
of existentially closed valued differential fields $(K,v,\partial)$ of 
residue characteristic 0 satisfying $v(\partial(x))\geq v(x)$ for all $x$.  
This theory had first been studied by Scanlon \cite{Sca00}. The theory \(\VDF\) is \(\NIP\), and the residue field is stably embedded 
and a pure model of DCF$_0$, with the derivation induced by $\partial$. 

The third author has shown in \cite{RidVDF} that \(\VDF\) 
eliminates imaginaries in the geometric sorts and that it is metastable 
over the value group $\vg$, a stably embedded pure divisible ordered 
Abelian group. As shown there, every set of parameters is included in a metastability basis which is a model, and over any model any stably embedded stable set is interdefinable with the residue field, since it is the case in the underlying algebraically closed valued field. As DCF$_0$ is stable and eliminates imaginaries, it is enough to 
show that it eliminates $\exists^\infty x$. But this follows from the fact 
that the algebraic closure of a set $A$ is the field theoretic algebraic 
closure of the differential field generated by $A$, by quantifier 
elimination.

We have thus proved the following result.

\begin{corollary}\label{C:VDF-strict-pro}
  Let $\dX$ be any $A$-definable set in a model of \(\VDF\). Then 
  $\widehat{\dX}$ may be canonically identified with a strict 
  $A$-pro-definable set.
\end{corollary}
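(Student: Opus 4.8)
The plan is to verify that $\VDF$ satisfies the three hypotheses of Theorem~\ref{T:criterion-strictpro}, just as was done for $\SCVH_{p,e}$ in the proof of Corollary~\ref{C:SCVF-strict-pro}. First, $\VDF$ eliminates imaginaries in the geometric sorts and is metastable over $\vg$: both facts are established in \cite{RidVDF}, and $\VDF$ is $\NIP$. The distinguished stably embedded set is the value group $\vg$, which in $\VDF$ is a pure divisible ordered Abelian group, hence eliminates imaginaries and admits a definable total ordering; this gives Hypothesis~(1).

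For Hypothesis~(2), I would invoke the metastability analysis of $\VDF$ from \cite{RidVDF}: every parameter set is contained in a metastability basis $C$ which may be taken to be a model, and over such a $C$ (indeed over any model) every stable stably embedded set is interdefinable with the residue field. The reason is that this reduction already holds in the underlying theory $\ACVF_{0,0}$ of the algebraic closure --- where $\St_C$ is controlled by the residue field together with the lattices, each of which admits a $C$-definable basis over a model --- and the differential structure on the residue field does not enlarge the stable part, as the residue field is stably embedded with induced structure that of a pure model of $\mathrm{DCF}_0$. So one may take the single sort $\rf_1 = \rf$ with its induced (differential field) structure.

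For Hypothesis~(3), I need to check that $\rf$, as a pure model of $\mathrm{DCF}_0$, is nfcp. By Shelah's characterization \cite[II Theorems~4.2, 4.4]{ShClass}, this amounts to showing that $\mathrm{DCF}_0$ is stable (which is classical) and that $\eq{(\mathrm{DCF}_0)}$ eliminates $\exists^\infty x$. Since $\mathrm{DCF}_0$ eliminates imaginaries, it suffices to eliminate $\exists^\infty$ in $\mathrm{DCF}_0$ itself; and this follows from quantifier elimination, because the algebraic closure of a set $A$ in a model of $\mathrm{DCF}_0$ is just the field-theoretic algebraic closure of the differential field generated by $A$, so a definable set is finite exactly when it is contained in such an algebraic closure, a uniformly definable condition.

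With all three hypotheses verified, Theorem~\ref{T:criterion-strictpro} applies directly and yields that for every $A$-definable set $\dX$ in a model of $\VDF$, the set $\widehat{\dX}$ of stably dominated types concentrating on $\dX$ is strict $A$-pro-definable, which is exactly the statement of Corollary~\ref{C:VDF-strict-pro}. The only point requiring genuine care is Hypothesis~(2), and specifically the claim that passing to the differential closure does not add new stable stably embedded sets beyond (differential expansions of) $\rf$; but this is already contained in the structural results on $\VDF$ in \cite{RidVDF}, so no new work is needed here.
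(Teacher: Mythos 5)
Your proposal is correct and follows essentially the same route as the paper: verify the three hypotheses of Theorem~\ref{T:criterion-strictpro} using the results of \cite{RidVDF} (elimination of imaginaries, metastability over $\vg$, metastability bases being models with $\St_C$ reduced to the residue field as in the underlying algebraically closed valued field), and obtain nfcp for $\mathrm{DCF}_0$ via Shelah's criterion, elimination of imaginaries, and elimination of $\exists^\infty$ from the description of algebraic closure via quantifier elimination. This matches the paper's argument essentially step for step.
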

\bibliography{SCVF-EI}

\end{document}